\theoremstyle{plain}
\newtheorem*{theorem*}{Theorem}
\newtheorem{theorem}{Theorem}[section]
\newtheorem{claim}[theorem]{Claim}
\newtheorem{lemma}[theorem]{Lemma}
\newtheorem{proposition}[theorem]{Proposition}
\newtheorem{corollary}[theorem]{Corollary}
\theoremstyle{definition}
\newtheorem{definition}[theorem]{Definition}
\newtheorem{question}[theorem]{Question}
\newtheorem*{cor*}{Corollary}
\theoremstyle{remark}
\newtheorem{remark}[theorem]{Remark}
\newtheorem{fact}[theorem]{Fact}
\newcommand\Fcal{\mathcal{F}}
\newcommand\Wcal{\mathcal{W}}
\newcommand\Vcal{\mathcal{V}}
\newcommand\RK{\mathrm{RK}}
\newcommand\Abb{\mathbb{A}}
\newcommand\Pbb{\mathbb{P}}
\newcommand\Qbb{\mathbb{Q}}
\newcommand\Ht{\mathrm{ht}}
\newcommand\Ucal{\mathcal{U}}
\newcommand\Th{{}^{th}}
\newcommand\MA{\mathrm{MA}}
\newcommand\PFA{\mathrm{PFA}}
\newcommand\Seq[1]{\langle #1 \rangle}
\def\mathunderaccent#1#2 {\let\theaccent#1\skewfactor#2
\mathpalette\putaccentunder}
\def\putaccentunder#1#2{\oalign{$#1#2$\crcr\hidewidth
\vbox to.2ex{\hbox{$#1\skew\skewfactor\theaccent{}$}\vss}\hidewidth}}
\def\smallbox#1{\leavevmode\thinspace\hbox{\vrule\vtop{\vbox
   {\hrule\kern1pt\hbox{\vphantom{\tt/}\thinspace{\tt#1}\thinspace}}
   \kern1pt\hrule}\vrule}\thinspace}
\DeclareMathOperator{\dom}{dom}
\DeclareMathOperator{\supp}{Supp}
\DeclareMathOperator{\Add}{Add} 
\DeclareMathOperator{\Pri}{Prikry}
\newcommand{\cf}{{\rm cf}}
\newcommand{\fin}{\text{fin}}
\author{Tom Benhamou}
\address[Benhamou]{Department of Mathematics, Rutgers University, New Brunswick, NJ USA}
\email{tom.benhamou@rutgers.edu}
\author[J. Tatch Moore]{Justin Tatch Moore}
\address[Moore]{Department of Mathematics, Cornell University, Ithaca, NY, USA}
\email{justin@math.cornell.edu}
\author{Luke Serafin}
\address[Serafin]{Department of Mathematics, Cornell University, Ithaca, NY, USA}
\email{lss255@cornell.edu}
\thanks{The research of the first author was supported by the National Science Foundation under Grant
No. DMS-2346680;
the research of the second and third authors was supported by NSF grants DMS-2153975 and DMS-2451350.}
\title{Ultrafilters over Successor Cardinals and the Tukey Order}
\begin{document}

\begin{abstract}
We study ultrafilters on regular uncountable cardinals, with a primary focus on $\omega_1$, and particularly in relation to the Tukey order on directed sets.
Results include the independence from ZFC of the assertion that every uniform ultrafilter over $\omega_1$ is Tukey-equivalent to $[2^{\aleph_1}]^{<\omega}$, and
for each cardinal $\kappa$ of uncountable cofinality, a new
construction of a
uniform ultrafilter over $\kappa$ which extends the club filter and is Tukey-equivalent
to $[2^\kappa]^{<\omega}$.
We also analyze Todorcevic's ultrafilter $\Ucal(T)$ under PFA, proving that it is Tukey-equivalent to
$[2^{\aleph_1}]^{<\omega}$ and that it is minimal in the Rudin-Keisler order with respect to being a uniform ultrafilter over $\omega_1$.
We prove that, unlike PFA, $\MA_{\omega_1}$ is consistent with the existence
of a coherent Aronszajn tree $T$ for which $\Ucal(T)$ extends the club filter.
A number of other results are obtained
concerning the Tukey order on uniform ultrafilters and on uncountable
directed systems.
\end{abstract}

\maketitle
\section{Introduction}
Ultrafilters, and particularly their cofinalities and combinatorial properties, are of special interest in several areas of mathematics such as topology, combinatorics, group theory; and more centrally to model theory, mathematical logic, and set theory.
In this paper we deal with several fundamental questions concerning uniform ultrafilters over regular uncountable cardinals in general and $\omega_1$ in particular.
Recall that an ultrafilter over $\kappa$ is \emph{uniform} if all of its elements have cardinality $\kappa$---hence it is not isomorphic to a trivial extension of an ultrafilter over a smaller cardinal.

Our results were motivated by the following longstanding open problem of Kunen:
\begin{question}[Kunen]
Is it consistent that there is a uniform ultrafilter over $\omega_1$ which is generated by fewer than $2^{\aleph_1}$-many sets?
\end{question}
\noindent
It is also natural to pose this question for any uncountable cardinal $\kappa$;
we will refer to this variant as \emph{Kunen's problem at $\kappa$}.

There are several known methods to obtain ultrafilters over $\omega$ which are generated by fewer than $2^{\aleph_0}$-many elements.
Perhaps the most basic of them is Kunen's method~\cite{Kunen} to iterate Mathias forcing with
respect to an ultrafilter.
In unpublished work, Carlson generalized Kunen's 
construction to produce ultrafilters with small
generating sets over supercompact cardinals.
However, this method
cannot be straightforwardly adapted to produce such
ultrafilters over small uncountable cardinals.
Recently, Raghavan and Shelah have shown that
Kunen's problem at $\aleph_{\omega+1}$ and at $2^{\aleph_0}$ have positive answers modulo a large
cardinal hypothesis \cite{RagShel} (in the latter
model $2^{\aleph_0}$ is weakly inaccessible).
Still, new methods seem to be required to yield solutions to Kunen's problem at successors of regular cardinals.

Kunen's problem can be viewed as asking whether uniform
ultrafilters over $\omega_1$ are necessarily maximally
complicated, at least when measured by their
\emph{character}---the number of elements
which are required to generate them.
There are other natural notions of complexity on
ultrafilters which are finer.
The \emph{Tukey order} is defined and studied in the wider generality of directed sets,
and originated in the study of Moore-Smith convergence of nets from topology
(definitions and notation are reviewed in section~\ref{Section: Preliminaries} below).
The basic theory was set up by Tukey~\cite{Tukey40} in the 1940s, then further studied by Schmidt and Isbell~\cite{Schmidt55,Isbell65}.

Tukey showed that if
$\kappa$ is an infinite cardinal, the collection $[\kappa]^{<\omega}$ of all finite subsets of $\kappa$ ordered by inclusion
serves as an important benchmark in the Tukey order:
if $D$ is any directed set of cardinality at most $\kappa$, $D \leq_T [\kappa]^{<\omega}$.
A directed set $D$ of cardinality $\kappa$ such that $D \equiv_T [\kappa]^{<\omega}$, is said to be \emph{Tukey-top}.
Isbell~\cite{Isbell65} and, independently, Juh\'asz~\cite{Juhasz67}
constructed Tukey-top ultrafilters over any cardinal $\kappa$
using independent families.
Isbell posed what came to be known as \emph{Isbell's problem}: is every ultrafilter over $\omega$ Tukey-top?

While several constructions of Tukey-top ultrafilters are known~\cite{tomFanxin,Milovich08,DowZhou},
the construction of non-Tukey-top ultrafilters was addressed much later by
Milovich~\cite{Milovich08}, and Dobrinen and Todorcevic~\cite{Dobrinen/Todorcevic11},
and brought about the active subject of the Tukey order on ultrafilters over $\omega$.
They showed that consistently there are non-Tukey-top ultrafilters over $\omega$.
More precisely, Milovich constructed one from $\diamondsuit$, while Dobrinen and Todorcevic showed that a $p$-point over $\omega$ is non-Tukey-top.
In the last decade, the subject has been studied intensively by Dobrinen, Raghavan, Shelah,
Todorcevic, and others~\cite{TodorcevicDirSets85,Dobrinen/Todorcevic11,Raghavan/Shelah17,Raghavan/Todorcevic12};
for a survey on the matter see~\cite{DobrinenTukeySurvey15}.
Recently, Cancino and Zaplatal~\cite{CancinoZaplatal} announced the full resolution of Isbell's problem by showing that it is consistent that every nonprincipal ultrafilter over $\omega$ is Tukey-top.

As with Kunen's problem, it is natural to generalize Isbell's problem to other cardinals.
It is easily seen that a positive answer to Isbell's problem at $\kappa$ (in ZFC) implies a negative answer to Kunen's problem:
if $[\lambda]^{<\omega} \leq_T \Ucal$, then
$\Ucal$ has character at least $\lambda$.
Also, since every uniform ultrafilter on a regular cardinal $\kappa$ has character at least $\kappa^+$,
$2^\kappa = \kappa^+$ implies that all uniform ultrafilters over $\kappa$ have character exactly $2^\kappa$.
On the other hand, the equality $2^\kappa = \kappa^+$ does not trivialize Isbell's problem at $\kappa$ in the same way.
For instance, while the Proper Forcing Axiom (PFA) implies $2^{\aleph_1} = \aleph_2$,
it is not known if PFA implies that every uniform ultrafilter over $\omega_1$ is Tukey-top.

We extend this study and consider the Tukey order of ultrafilters over uncountable cardinals.
We establish a full independence result for Isbell's question on $\omega_1$.
For the first half of this result we prove:
\begin{theorem*}
    It is consistent that every uniform ultrafilter over $\omega_1$ is Tukey-top.
\end{theorem*}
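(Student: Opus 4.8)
The plan is to isolate a purely combinatorial consequence of ``$\mathcal{U}$ is Tukey-top'' and then to force a model in which the relevant combinatorial object exists. Recall that for a directed set $D$ with $|D| = \lambda$ one has $[\lambda]^{<\omega} \leq_T D$ if and only if there is a map $g \colon \lambda \to D$ whose image of every infinite subset of $\lambda$ is unbounded in $D$. Applying this with $D = (\mathcal{U}, \supseteq)$ for a uniform ultrafilter $\mathcal{U}$ over $\omega_1$ (so that $|\mathcal{U}| = 2^{\aleph_1}$ and hence $\mathcal{U} \leq_T [2^{\aleph_1}]^{<\omega}$ is automatic by Tukey's theorem) shows that $\mathcal{U}$ is Tukey-top if and only if there is a family $\{A_\xi : \xi < 2^{\aleph_1}\} \subseteq \mathcal{U}$ with $\bigcap_{\xi \in X} A_\xi \notin \mathcal{U}$ for every infinite $X \subseteq 2^{\aleph_1}$. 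I would therefore aim for a model of ZFC carrying a family $\{A_\xi : \xi < 2^{\aleph_1}\}$ of subsets of $\omega_1$ that is \emph{independent} (every finite Boolean combination is uncountable) and enjoys the extra property $(\star)$: every infinite Boolean combination $\bigcap_{\xi \in X} A_\xi^{\epsilon(\xi)}$, with $X \in [2^{\aleph_1}]^{\aleph_0}$ and $\epsilon \colon X \to 2$ (writing $A^1 := A$ and $A^0 := \omega_1 \setminus A$), is countable. Indeed, given such a family and any uniform ultrafilter $\mathcal{U}$ over $\omega_1$, replace each $A_\xi$ by whichever of $A_\xi, \omega_1 \setminus A_\xi$ lies in $\mathcal{U}$: independence makes these $2^{\aleph_1}$ sets pairwise distinct members of $\mathcal{U}$, and by $(\star)$ (intersecting first over a countable subset of the index set) every infinite subintersection is countable, hence not in the uniform ultrafilter $\mathcal{U}$; so $\mathcal{U}$ is Tukey-top. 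This reduction says nothing about the value of $2^{\aleph_1}$; the whole difficulty lies in constructing the family.

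A first attempt would be to add $\{A_\xi\}$ by the $\sigma$-closed poset of countable partial functions from $2^{\aleph_1} \times \omega_1$ to $2$; this adds a large independent family and collapses nothing, but it fails $(\star)$, since a density argument shows that every infinite Boolean combination of the generic sets is forced to be \emph{unbounded} in $\omega_1$: any condition can be extended so as to place a fresh countable ordinal into that combination. One is thus led to a forcing whose conditions are countable approximations to the $A_\xi$ \emph{together with} finitely (or countably) many ``promises,'' a promise being an infinite $X$, a pattern $\epsilon$, and a countable bound $c \subseteq \omega_1$, with the stipulation that no ordinal outside $c$ may ever be placed in $\bigcap_{\xi \in X} A_\xi^{\epsilon(\xi)}$ --- this is the mechanism by which one builds ladder systems, towers and gaps with prescribed behavior. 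Working over a model of GCH, one would want to verify that: (i) the poset is $\sigma$-closed and $\aleph_2$-c.c.\ (a $\Delta$-system argument on the countable supports, using CH), hence preserves cardinals, preserves CH, and forces $2^{\aleph_1} = \aleph_2$, so that the $\aleph_2$ generic sets form a family of the right size; (ii) for every infinite $X$ and pattern $\epsilon$, the set of conditions carrying a promise that bounds $\bigcap_{\xi \in X} A_\xi^{\epsilon(\xi)}$ is dense, so $(\star)$ holds in the extension (crucially, $\sigma$-closure means every such $X$ and $\epsilon$ already lie in the ground model, so only ground-model combinations need to be caught); and (iii) the sets of conditions forcing a given finite Boolean combination to become uncountable are dense as well.

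The main obstacle is the tension between (ii), (iii), and cardinal preservation: the promises must be rigid and plentiful enough that, by genericity, \emph{every} infinite Boolean combination is eventually bounded, yet flexible enough that no finite combination is accidentally bounded and, above all, that $\omega_1$ is not collapsed. What this comes down to is an amalgamation (``capturing'') lemma for the poset: any condition can be extended simultaneously to honor all of its promises and to act freely on a prescribed finite Boolean combination --- when one throws a new ordinal $\beta$ into a finite combination indexed by $s, t$, one must, for each active promise $(X, \epsilon, c)$ with $\beta \notin c$, also fix $A_\xi(\beta)$ to the ``wrong'' value at some $\xi \in X \setminus (s \cup t)$ not already committed, and showing that enough such $\xi$ are available is exactly where the combinatorial work lies. (A $\diamondsuit_{\aleph_2}$-sequence, available since $2^{\aleph_1} = \aleph_2$, can be used to organize which promises to insert and when.) Once the poset and its capturing lemma are established, the reduction of the first paragraph, applied inside the extension, yields that every uniform ultrafilter over $\omega_1$ is Tukey-top.
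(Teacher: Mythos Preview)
Your reduction to property $(\star)$ is exactly right and matches the paper's: a family $\{A_\xi : \xi < 2^{\aleph_1}\}$ of subsets of $\omega_1$ such that every countably infinite Boolean combination is countable forces every uniform ultrafilter over $\omega_1$ to be Tukey-top, by the flip argument you give. The paper calls this the \emph{flipping bounded intersection property}.

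Where you diverge is in the construction. You correctly observe that countable-condition forcing $\Add(\omega_1,2^{\aleph_1})$ fails to give $(\star)$, and then propose to repair this with a $\sigma$-closed promise-style poset whose analysis you leave open (the capturing lemma is stated as the main obstacle and not proved). The paper instead goes in the opposite direction: rather than making the conditions larger and adding side conditions, it uses \emph{finite} conditions---ordinary Cohen forcing $\Add(\omega,2^{\aleph_1})$, viewed as finite partial functions $2^{\aleph_1}\times\omega_1\to 2$. This is c.c.c., so cardinal preservation is free, and $(\star)$ falls out of a short genericity argument: for any countably infinite ground-model $I$ and any $\alpha<\omega_1$, the conditions forcing $\alpha\notin\bigcap_{i\in I}X_i^{\epsilon(i)}$ are dense (just pick $i\in I$ outside the finite support and set $X_i(\alpha)$ to the wrong value), so the intersection is empty; an arbitrary countable $I$ in the extension lies, by c.c.c., in the intermediate extension by $\{X_i\cap\alpha : i<2^{\aleph_1}\}$ for some $\alpha<\omega_1$, and the tails $X_i\setminus\alpha$ are generic over that model, bounding the intersection by $\alpha$. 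No promises, no amalgamation lemma, no $\diamondsuit_{\aleph_2}$.

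So your plan is not wrong in spirit, but the hard part you isolate---building a $\sigma$-closed poset with promises and proving its capturing lemma---is entirely avoidable, and as written your argument has a genuine gap precisely there. The moral is that the ``promise'' mechanism you want is already implicit in finite-support Cohen forcing: finiteness of conditions is what lets density arguments kill infinite intersections.
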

\noindent
We present several  models for this:
\begin{enumerate}
    \item The usual forcing extension adding $2^{\aleph_1}$-many Cohen or random reals.
    \item The Cancino-Zaplatal model where every ultrafilter over $\omega$ is Tukey-top.
    \item A model due to P{\v r}ikr\'y where GCH holds.
    \item The Abraham-Shelah model~\cite{AbrahamShelah1986} and its generalization for successors of singular cardinals~\cite{bgp}.
\end{enumerate}
Note that after adding $\omega_2$-many Cohen reals (via $\Add(\omega, \omega_2)$) to a model of GCH, there is a non-Tukey-top ultrafilter over $\omega$, since $\mathfrak{d}=\mathfrak{c}$ in that model and Ketonen~\cite{Ketonen1976} proved that this is sufficient for there to be a $p$-point.

The other half of the independence of Isbell's question at $\omega_1$ is obtained via
a classical construction due to Laver~\cite{LAVER1982297} of a uniform ultrafilter over $\omega_1$ which is $\omega_1$-generated modulo
a countably complete ideal over $\omega_1$.
It is consistent relative to large cardinals that this construction can be carried out.
We will also leverage work of Galvin to show that the constructed ultrafilter exhibits even stronger combinatorial properties.
More generally, we establish that weakly normal ultrafilters are not Tukey-top.
Several other notions and constructions will be addressed, relevant to the 
extraordinary work from the 1970's on non-regular ultrafilters over $\omega_1$ \cite{KanamoriWeakly,Kanamori1978,KetonenBenda,TAYLOR197933}.
Recently, Usuba~\cite{Usuba} used related ideas to address questions about the monotonicity of the ultrafilter number.
In section~\ref{Section: Usuba} we show that this investigation is more general, and in fact yields comparisons of the Tukey types
of ultrafilters over different cardinals.

The Tukey order on uniform ultrafilters over measurable cardinals was recently studied by Benhamou and Dobrinen~\cite{TomNatasha}.
Many results from the Tukey order of ultrafilters over $\omega$ generalize to measurable cardinals, but also some fundamental differences appear.
For example, over a measurable cardinal $\kappa$ there is always a non-Tukey-top ultrafilter, and in fact a $\kappa$-complete non-$\kappa$-Tukey-top ultrafilter
(see definition~\ref{Def: Tukey top}).
This is because a measurable cardinal $\kappa$ always carries a normal ultrafilter, which is necessarily
non-$\kappa$-Tukey-top.
Moreover, in contrast to Isbell's result on $\omega$, $\kappa$-complete $\kappa$-Tukey-top ultrafilters might not exist; 
for example, Benhamou and Gitik~\cite{Parttwo} noticed that in Kunen's
$L[U]$, where $U$ is a the normal measure, there is no $\kappa$-Tukey-top $\kappa$-complete ultrafilter over $\kappa$. This was later generalized by Benhamou~\cite{SatInCan} and
Benhamou-Goldberg~\cite{TomGabe24} to other canonical inner models. 
In section~\ref{Sction: Tukey-top ultrafilters ZFC} we provide another construction for Tukey-top ultrafilters which extend the club filter over any cardinal $\kappa$ of uncountable cofinality.
For this, we introduce the notion of stationarily-independent families and show that such families exist in ZFC for any cardinal of uncountable cofinality.
This gives an answer to~\cite[Q. 5.4]{SatInCan}, and improves the construction from~\cite{TomNatasha}.

In the remaining part of this paper, we analyze Todorcevic's ultrafilter $\Ucal(T)$ using fragments of the PFA.
This ultrafilter is defined for a coherent Aronszajn tree (A-tree) $T$ on $\omega_1$ and in general yields a uniform filter $\Ucal(T)$.
Moreover, if the class of c.c.c. forcings is closed under taking products (a consequence of $\MA_{\omega_1}$),
$\Ucal(T)$ is an ultrafilter~\cite{l-maps}.

We show that PFA implies that $\Ucal(T)$ is Tukey-top and also minimal in the Rudin-Keisler order among uniform ultrafilters over $\omega_1$.
This complements previous work of Todorcevic~\cite{l-maps,UT_selective}.

\begin{theorem*}
Assume $\PFA(\omega_1)$.
For any coherent A-tree $T$, $[\omega_2]^{<\omega} \leq_T \Ucal(T)$.
In particular, PFA implies $\Ucal(T)$ is Tukey-top.
\end{theorem*}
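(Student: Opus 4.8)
The plan is to reduce $[\omega_2]^{<\omega}\leq_T\Ucal(T)$ to the construction, inside $\Ucal(T)$, of a family witnessing a strong failure of countable completeness, and then to build that family by invoking $\PFA(\omega_1)$ along a recursion of length $\omega_2$, each step being an application to a ccc poset extracted from the coherent tree $T$ in the spirit of~\cite{l-maps}.

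\emph{Reduction.} Order $\Ucal(T)$ by reverse inclusion, so that $\mathcal A\subseteq\Ucal(T)$ is bounded iff $\bigcap\mathcal A\in\Ucal(T)$. Suppose $\{A_\xi:\xi<\omega_2\}\subseteq\Ucal(T)$ satisfies: for every $C\in\Ucal(T)$, the set $\{\xi<\omega_2:C\subseteq A_\xi\}$ is finite --- equivalently, $\bigcap_{\xi\in a}A_\xi\notin\Ucal(T)$ for every infinite $a\subseteq\omega_2$, and it suffices to require this for $a$ of order type $\omega$. Then $b\mapsto\bigcap_{\xi\in b}A_\xi$ is a map $[\omega_2]^{<\omega}\to\Ucal(T)$ whose preimage of any bounded set $\{A:A\supseteq C\}$ equals $[\{\xi:C\subseteq A_\xi\}]^{<\omega}$, hence is bounded; thus it is a Tukey map and $[\omega_2]^{<\omega}\leq_T\Ucal(T)$. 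Passing to complements $C_\xi=\omega_1\setminus A_\xi$, the goal becomes: \emph{produce $\omega_2$-many $\Ucal(T)$-null sets $C_\xi$ such that $\bigcup_{\xi\in a}C_\xi\in\Ucal(T)$ for every infinite $a\subseteq\omega_2$} --- a Luzin-type family for the ideal dual to $\Ucal(T)$. Since $\PFA$ implies $2^{\aleph_1}=\aleph_2$, and $\Ucal(T)$ has size $2^{\aleph_1}$ and so is Tukey below $[2^{\aleph_1}]^{<\omega}=[\omega_2]^{<\omega}$, the ``in particular'' clause follows from $[\omega_2]^{<\omega}\leq_T\Ucal(T)$; moreover $\PFA(\omega_1)$ implies $\MA_{\omega_1}$, so $\Ucal(T)$ is an ultrafilter throughout.

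\emph{Construction.} The forcing tool is the one that makes $\Ucal(T)$ an ultrafilter: for a prescribed $X\subseteq\omega_1$ there is a ccc poset forcing $X\in\Ucal(T)$ or $\omega_1\setminus X\in\Ucal(T)$, the ccc-ness resting on the coherence of $T$ (and on $T$ being special, which $\MA_{\omega_1}$ provides). What we need is a refinement producing, from the data $\langle C_\eta:\eta\in I\rangle$ ($I$ of size $\leq\aleph_1$) already built, a fresh $\Ucal(T)$-null $C_\xi$ together with witnesses forcing $C_\xi\cup\bigcup_{\eta\in F}C_\eta\in\Ucal(T)$ for the pertinent finite $F$; conditions are finite approximations to $C_\xi$ augmented by finite ``reservations'' inside the $C_\eta$'s that are forced to grow into $\Ucal(T)$-large unions, and one checks ccc-ness by a $\Delta$-system argument on the reservations using coherence of $T$. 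These countable-union largeness demands are the crux, and the subtle point is the arithmetic: in a plain recursion the successor stages cost nothing (if $a$ is infinite with $\max a=\xi$, then $a\cap\xi$ is already infinite and inductively $\bigcup_{\eta\in a}C_\eta\supseteq\bigcup_{\eta\in a\cap\xi}C_\eta\in\Ucal(T)$), so all the work is concentrated at limits $\xi$ of cofinality $\omega$, where one must have \emph{prearranged} $\bigcup_{\eta\in a}C_\eta\in\Ucal(T)$ for every $a\subseteq\xi$ of type $\omega$ cofinal in $\xi$ --- and there are $\aleph_2$-many such $\xi$, with $\aleph_1$-many such $a$ for each, whereas $\PFA(\omega_1)$ meets only $\aleph_1$ dense sets per application.

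\emph{Main obstacle.} This tension is, I expect, the genuine difficulty. One way around it is anticipation: fix an enumeration $\langle a_\nu:\nu<\omega_2\rangle$ of the type-$\omega$ subsets of $\omega_2$ and at stage $\nu$ spend one application of $\PFA(\omega_1)$ deciding the not-yet-decided members of the $C_\eta$ ($\eta\in a_\nu$) so that $\bigcup_{\eta\in a_\nu}C_\eta$ is forced into $\Ucal(T)$; one then has to check that no later $a_\nu$ can have all its members already fixed incompatibly, which requires the step forcing to ``spread'' its choices generically and is exactly where the coherent (hence oscillation-rich, and under $\MA_{\omega_1}$ special) structure of $T$ --- and the extra strength of $\PFA(\omega_1)$ over $\MA_{\omega_1}$ --- is used. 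An alternative is to define the $C_\xi$ ($\xi<2^{\aleph_0}$) explicitly from the coherent generating sequence $\langle e_\alpha:\alpha<\omega_1\rangle$ of $T$ together with a size-$2^{\aleph_0}$ family of reals (for instance via oscillation of the $e_\alpha$ against codes for these reals) and to verify $\bigcup_{\eta\in a}C_\eta\in\Ucal(T)$ for infinite $a$ from a partition property of that oscillation map that $\PFA$ supplies. In either route the hard part is the interaction between the $\aleph_2$-many global largeness requirements and the $\aleph_1$-at-a-time reach of $\PFA(\omega_1)$, which is precisely what forces one to exploit the coherence of $T$ in an essential way and explains why the hypothesis is stated as $\PFA(\omega_1)$.
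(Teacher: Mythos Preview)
Your reduction is correct, but your construction is only a sketch and the main step is not carried out: you identify the tension between the $\aleph_2$-many countable-intersection requirements and the $\aleph_1$-at-a-time reach of $\PFA(\omega_1)$, propose two ways around it (``anticipation'' and ``oscillation''), and then stop. Neither route is executed, and the phrases ``one then has to check'' and ``a partition property \dots\ that $\PFA$ supplies'' mark exactly where a proof would have to begin. In particular, in the anticipation scheme you need the step forcing at stage $\nu$ to remain ccc while simultaneously enlarging infinitely many not-yet-finished $C_\eta$ and forcing their union into $\Ucal(T)$; you never define this poset or verify ccc, and it is not clear the coherence of $T$ alone gives it to you.

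The paper avoids your length-$\omega_2$ recursion entirely by factoring through the directed set $(\omega^{\omega_1},\le)$. Fix injections $e_\beta\colon\beta\to\omega$ and set $U_{\alpha,n}=\{\beta:\beta\le\alpha\text{ or }e_\beta(\alpha)\ge n\}$ and $U_f=\bigcap_{\alpha\in\dom f}U_{\alpha,f(\alpha)}$. The single nontrivial lemma, proved by one application of $\PFA(\omega_1)$ to a proper (not ccc) poset of triples $(A,C,O)$ --- $A$ a finite antichain in $T$, $C$ a finite approximation to a club, $O$ a countable clopen complement --- is: for every $f\colon\omega_1\to\omega$ there is a club $C$ with $U_{f\restriction C}\in\Ucal(T)$. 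This is the statement $(\dagger)_{NS_{\omega_1},\Ucal(T)}$, and by an earlier proposition it yields $(\omega^{\omega_1},\le)\le_T\Ucal(T)$. The remaining inequality $[\omega_2]^{<\omega}\le_T\omega^{\omega_1}$ is a ZFC fact (via $\mathfrak b_{\omega_1}\ge\omega_2$, or an almost-disjoint family of size $\omega_2$ in $[\omega_1]^{\omega_1}$). So the ``$\omega_2$ requirements vs.\ $\aleph_1$ dense sets'' tension you flag is dissolved rather than managed: the $\omega_2$-sized witnessing family lives already inside $\omega^{\omega_1}$, and $\PFA(\omega_1)$ is invoked once per function $f$ to push $\omega^{\omega_1}$ below $\Ucal(T)$, not once per ordinal below $\omega_2$.
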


\begin{theorem*}
Assume $\PFA(\omega_1)$.
If $T$ is any coherent A-tree and $f \mathrel{:} \omega_1 \to \omega_1$, then there is a $U \in \Ucal(T)$ such that $f \restriction U$ is either bounded or one-to-one.
\end{theorem*}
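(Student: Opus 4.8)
The plan is to reduce to the case that $f$ is unbounded modulo $\Ucal(T)$ and then to produce the required set by an application of $\PFA(\omega_1)$ to a proper poset. Since $\PFA(\omega_1)$ implies $\MA_{\omega_1}$, the filter $\Ucal(T)$ is an ultrafilter, so either there is $\gamma < \omega_1$ with $U := \{\alpha < \omega_1 : f(\alpha) < \gamma\} \in \Ucal(T)$ --- in which case $f \restriction U$ is bounded and we are done --- or for every $\gamma < \omega_1$ the set $\{\alpha < \omega_1 : f(\alpha) \ge \gamma\}$ lies in $\Ucal(T)$; assume the latter. The only consequence of this assumption used below is that $f^{-1}(F) \notin \Ucal(T)$ for every finite $F \subseteq \omega_1$. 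We must produce $U \in \Ucal(T)$ on which $f$ is one-to-one.

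I would obtain $U$ by forcing with a proper poset $\mathbb{Q} = \mathbb{Q}(T, f)$ and pulling the generic object back into $V$ by $\PFA(\omega_1)$. A condition of $\mathbb{Q}$ is a pair $(p, q)$ where $p \subseteq \omega_1$ is finite with $f \restriction p$ one-to-one --- a finite approximation to $U$ --- and $q$ is a finite side condition: a finite fragment of the $\aleph_1$-sized combinatorial object which, by the analysis of $\Ucal(T)$ under $\MA_{\omega_1}$, witnesses membership of a set in $\Ucal(T)$, its precise form being dictated by the definition of $\Ucal(T)$ (one may also carry finitely many countable $M \prec H(\theta)$ with $T, f \in M$ as auxiliary side conditions to secure properness). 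Extension keeps $p$ an end-extension, enlarges $q$, and forces every new point of $p$ to lie in the $\Ucal(T)$-sets named by $q$ while avoiding $f^{-1}(f[p_{\mathrm{old}}])$; the latter is always possible because $f^{-1}(f[p_{\mathrm{old}}]) \notin \Ucal(T)$. The set $U$ assembled from a filter meeting the relevant dense sets is then unbounded in $\omega_1$, hence uniform; has $f \restriction U$ one-to-one; and belongs to $\Ucal(T)$, because the side conditions build the $\Ucal(T)$-witness along the generic. The point that makes $\PFA(\omega_1)$ applicable is that only $\aleph_1$-many dense sets are needed: one for each stage of the construction of the $\Ucal(T)$-witness, and one per $\gamma < \omega_1$ to push $U$ and the injectivity of $f \restriction U$ past $\gamma$ (possible exactly because a finite intersection of $\Ucal(T)$-sets, minus $f^{-1}$ of a finite set, minus $[0,\gamma)$, is still in $\Ucal(T)$).

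The main obstacle is twofold. First, one must show $\mathbb{Q}$ is proper; with countable models among the side conditions this is a models-as-side-conditions argument in the spirit of Todorcevic's forcings attached to coherent trees, where coherence of $T$ is precisely what lets a finite partial piece of the $\Ucal(T)$-witness be extended through a model. Second --- and this is the real content --- one must verify that the $\aleph_1$-many generic requirements genuinely force $U \in \Ucal(T)$. This is where the specific structure of $\Ucal(T)$ is indispensable: since $[\omega_2]^{<\omega} \le_T \Ucal(T)$ by the previous theorem, $\Ucal(T)$ is not $\aleph_1$-generated, so one cannot afford one dense set per generator, and the naive poset that would merely make $U$ with $f\restriction U$ one-to-one meet a prescribed $\aleph_1$-family of $\Ucal(T)$-sets cannot by itself force $U \in \Ucal(T)$; indeed, for an arbitrary uniform ultrafilter $W$ over $\omega_1$ the conclusion can fail outright --- it fails for a Fubini power of $\Ucal(T)$ with $f$ a coordinate projection. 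Thus the membership test for $\Ucal(T)$ must be carried by the $T$-derived witness rather than by meeting an unbounded number of generating sets; this is the heart of the argument. The remaining verifications (density of the listed dense sets, uniformity of $U$) are routine.

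Finally, the statement yields the Rudin-Keisler minimality of $\Ucal(T)$ among uniform ultrafilters over $\omega_1$ asserted above: if $W = f_*\Ucal(T)$ is uniform over $\omega_1$ then no $[0,\gamma)$ with $\gamma < \omega_1$ lies in $W$, so $f$ is bounded on no set in $\Ucal(T)$, and the theorem supplies $U \in \Ucal(T)$ with $f \restriction U$ one-to-one, so $f \restriction U$ witnesses $W \equiv_{\RK} \Ucal(T)$; whereas if $f_*\Ucal(T)$ is not uniform over $\omega_1$ then it concentrates on a countable $C$, and $f^{-1}(C) \in \Ucal(T)$ witnesses that $f \restriction f^{-1}(C)$ is bounded by $\sup C$.
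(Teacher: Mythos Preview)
Your high-level strategy matches the paper's: reduce to the case that $\{\delta : f(\delta) > \alpha\} \in \Ucal(T)$ for every $\alpha$, then apply $\PFA(\omega_1)$ to a proper poset whose generic produces a $\Ucal(T)$-set on which $f$ is one-to-one. You also correctly isolate the real obstacle: $\Ucal(T)$ is not $\aleph_1$-generated, so membership in $\Ucal(T)$ must be secured by building the tree-theoretic witness, not by meeting generators. But your description of the poset is too vague at exactly the point you flag as ``the heart of the argument,'' and the decomposition you suggest --- $p$ a finite approximation to $U$ with $f\restriction p$ injective, $q$ a side condition ``naming $\Ucal(T)$-sets'' that new points of $p$ must enter --- does not work as written. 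A finite antichain $q\subseteq T$ does not name a $\Ucal(T)$-set; the relationship runs the other way: $\Delta(q)$ is the approximation to $U$, and there is nothing for a separate $p$ to do.

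What the paper does is make the antichain primary. A condition is a pair $(E_q,A_q)$ with $A_q\subseteq T$ a finite antichain such that $f\restriction\Delta(A_q)$ is injective, and $E_q\subseteq\omega_1$ a finite set of ``markers'' satisfying: whenever $\nu\in E_q$ and $s\ne t\in A_q$ have $\nu\le\Delta(s,t)$, then $\nu\le f(\Delta(s,t))$. This last clause is the missing idea in your outline; it is what lets you amalgamate a condition $q$ with a reflected copy $p\in M$. One places $\delta=M\cap\omega_1$ into $E_q$, so every $\Delta$-value in $\Delta(A_q)\setminus\delta$ has $f$-value at least $\delta$, separating it from the $f$-values of $\Delta(A_p)$; and coherence of $T$ (via lemma~\ref{new_Delta}) arranges that $\Delta(A_p\cup A_q)$ picks up exactly one new point, which one steers into the right $f$-interval using the unboundedness hypothesis on $f$. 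No models-as-side-conditions are used; the $E_q$ play that role. The generic yields an uncountable antichain $A$ with $f\restriction\Delta(A)$ injective, and $U=\Delta(A)\in\Ucal(T)$ by definition.
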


Combining this with work of Todorcevic~\cite{UT_selective} yields the following corollary.
\begin{cor*}
Assume $\PFA(\omega_1)$.
If $T$ is any coherent A-tree and $f \mathrel{:} \omega_1 \to \omega$ is any function which is not constant on a set in $\Ucal(T)$, then $f$ is a finest partition with respect to $\Ucal(T)$.
\end{cor*}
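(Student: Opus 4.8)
The plan is to combine the preceding theorem — that $\Ucal(T)$ behaves like a selective ultrafilter at $\omega_1$, in the sense that every $f \colon \omega_1 \to \omega_1$ is bounded or one-to-one on a set in $\Ucal(T)$ — with Todorcevic's result from \cite{UT_selective} that every $f \colon \omega_1 \to \omega$ which is not constant on a set in $\Ucal(T)$ pushes $\Ucal(T)$ forward to a \emph{selective} (Ramsey) ultrafilter on $\omega$. First I would record the routine setup: $\PFA(\omega_1)$ implies $\MA_{\omega_1}$, so $\Ucal(T)$ is a uniform ultrafilter over $\omega_1$, and for $f$ as above the image $f_*\Ucal(T)$ is a nonprincipal ultrafilter on $\omega$.

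Next I would unpack what must be shown: for $f \colon \omega_1 \to \omega$ not constant on any set in $\Ucal(T)$, and for any competing $g$ inducing a nontrivial partition of $\omega_1$ that refines $f$ modulo $\Ucal(T)$ — where nontriviality includes that $g$ is not one-to-one on a set in $\Ucal(T)$, since the discrete partition induced by the identity would otherwise be a spurious ``finer'' competitor — one must find $A \in \Ucal(T)$ on which the partitions induced by $f$ and $g$ coincide, equivalently $g$ factors through $f$ on a set in $\Ucal(T)$. The first step is a reduction to the case where $g$ is $\omega$-valued: since $g$ is not one-to-one on any set in $\Ucal(T)$, the preceding theorem forces $g$ to be bounded on some $U \in \Ucal(T)$, so composing $g \restriction U$ with a bijection of $\sup g[U]$ onto $\omega$ produces $\bar g \colon \omega_1 \to \omega$ whose partition agrees with that of $g$ on $U$ and through which $f$ still factors on a set in $\Ucal(T)$; modifying $\bar g$ off that set with fresh values keeps all its fibres out of $\Ucal(T)$. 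After this reduction $f$ and $\bar g$ are $\omega$-valued, each nonconstant modulo $\Ucal(T)$, and $f = h \circ \bar g$ modulo $\Ucal(T)$ for some $h \colon \omega \to \omega$.

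Now I would invoke Todorcevic's theorem: $\bar g_*\Ucal(T)$ is selective on $\omega$, hence Rudin--Keisler minimal among nonprincipal ultrafilters on $\omega$, and moreover any map which is not constant on a set in it is one-to-one on a set in it. Since $h$ carries the nonprincipal ultrafilter $\bar g_*\Ucal(T)$ onto the nonprincipal ultrafilter $f_*\Ucal(T)$, it cannot be constant on a set in $\bar g_*\Ucal(T)$, so it is one-to-one on some $B \in \bar g_*\Ucal(T)$; let $k \colon \omega \to \omega$ be a left inverse of $h \restriction B$. Then on $\bar g^{-1}(B) \in \Ucal(T)$ we have $k \circ f = k \circ h \circ \bar g = \bar g$, so $\bar g$ factors through $f$ on a set in $\Ucal(T)$; together with $f = h \circ \bar g$ this shows that $f$ and $\bar g$ induce the same partition modulo $\Ucal(T)$, and tracing back through the reduction the same holds for $f$ and the original $g$ on a set in $\Ucal(T)$. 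This is precisely the assertion that $f$ is a finest partition with respect to $\Ucal(T)$.

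The main obstacle is the first reduction together with the bookkeeping around it: one must be sure that excluding competitors which are one-to-one on a set in $\Ucal(T)$ is built into the notion of a finest partition (some such restriction is forced, since the identity refines every $\omega$-valued map), and that the boundedness alternative from the previous theorem, used in conjunction with $\Ucal(T)$ being an ultrafilter, genuinely converts an arbitrary competitor into an $\omega$-valued one without disturbing the factorizations modulo $\Ucal(T)$. Once that reduction is in hand, the remainder is the standard Rudin--Keisler minimality argument for selective ultrafilters on $\omega$, carried out inside the quotient structure, and no further difficulty is expected.
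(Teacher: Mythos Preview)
Your approach matches the paper's: reduce an arbitrary nontrivial $g\colon\omega_1\to\omega_1$ to an $\omega$-valued $\bar g$ via theorem~\ref{UT_RK-min}, then invoke theorem~\ref{UT_selective} (selectivity of projections to $\omega$) to finish. The paper compresses exactly this into a single sentence.

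One point to watch: Kanamori's definition of ``finest partition'' (the one the paper cites) does \emph{not} assume that $g$ refines $f$; rather, one must show that \emph{every} nontrivial $g$ refines $f$ on a set in $\Ucal(T)$ (equivalently, that $f$ factors through $g$ there). Your argument instead takes ``$f = h\circ \bar g$ modulo $\Ucal(T)$'' as a hypothesis and derives the converse factorization, so as written it only shows that nothing strictly refines $f$. The fix is immediate and uses the same ingredient: apply theorem~\ref{UT_selective} to the pair map $(f,\bar g)\colon\omega_1\to\omega\times\omega\cong\omega$. Its pushforward is selective, so each coordinate projection is one-to-one on a set, and hence $f$ and $\bar g$ determine one another---i.e., induce the same partition---on a set in $\Ucal(T)$. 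This is precisely what the paper's phrase ``any two projections of $\Ucal(T)$ to $\omega$ are RK-isomorphic'' is meant to deliver at the partition level. With that tweak your argument is complete and coincides with the paper's.
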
    

It is not hard to show that a uniform ultrafilter over $\omega_1$ is weakly normal if and only if it both extends the club filter and is $\leq_{RK}$-minimal with respect to being a uniform ultrafilter over $\omega_1$.
While Laver has shown that $\MA_{\omega_1}$ implies there
are no weakly normal ultrafilters over $\omega_1$~\cite{LAVER1982297},
we show that this result does not decide whether $\Ucal(T)$ 
extends the club filter.

\begin{theorem*}
    It is relatively consistent with $\MA_{\omega_1}$ that there is a coherent A-tree $T$ such that $\Ucal(T)$ extends the club filter.
\end{theorem*}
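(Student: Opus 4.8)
The plan is to build the required model by a finite-support iteration $\langle P_\alpha, \dot Q_\alpha : \alpha < \omega_2\rangle$ of c.c.c.\ forcings over a ground model of GCH, engineered to force $\MA_{\omega_1}$ while, along the way, repeatedly enlarging $\Ucal(T)$ for a fixed coherent A-tree $T$ until it swallows every club. I would fix in the ground model a coherent Aronszajn tree $T$ --- for instance the tree $T(\vec e)$ attached to a nontrivial coherent sequence $\vec e = \langle e_\alpha : \alpha < \omega_1\rangle$ of finite-to-one maps $e_\alpha : \alpha \to \omega$, which exists in ZFC --- and at the first step of the iteration specialize $T$ by the poset of finite approximations to a specializing function (c.c.c.\ for any Aronszajn tree, a classical fact; alternatively one may build $T$ itself generically by a c.c.c.\ poset adding a generic coherent A-tree). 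Since a special Aronszajn tree stays Aronszajn in every forcing extension, and the nodes of $T$ --- hence its coherence --- are never disturbed, $T$ remains a coherent A-tree at every stage of the iteration. So $\Ucal(T)$ is defined throughout; it is a uniform filter at every stage; the successive filters $\Ucal(T)$ increase along the iteration (a witness to membership in $\Ucal(T)$ refers only to $T$, hence persists); and in the final model, where $\MA_{\omega_1}$ holds, $\Ucal(T)$ is a uniform ultrafilter.

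The heart of the matter is a one-step lemma: if $W$ is a c.c.c.\ extension of the ground model in which $T$ is still a special coherent Aronszajn tree and $C \subseteq \omega_1$ is a club in $W$, then there is a c.c.c.\ poset $Q_C \in W$ forcing that $C \in \Ucal(T)$. Concretely, $Q_C$ would consist of finite approximations to a witness for ``$C \in \Ucal(T)$'' of the kind demanded by the definition of $\Ucal(T)$ --- an uncountable object over $T$ (a subtree, a map, or whatever form the definition uses) all of whose relevant levels lie in $C$ --- and the crux is that such finite conditions always amalgamate. Given uncountably many of them one applies the $\Delta$-system lemma and thins out so that the conditions agree on the root and are isomorphic over it, and then merges two of them using the coherence of $T$ together with the closedness of $C$ (which allows any finite configuration of nodes to be continued above a suitable limit point of $C$). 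Verifying the c.c.c.\ of $Q_C$ --- pinning down the correct notion of finite approximation and pushing through this $\Delta$-system argument --- is where coherence of $T$, rather than that of an arbitrary A-tree, is used essentially, and I expect this to be the main obstacle.

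Granting the lemma, I would assemble the iteration by a standard length-$\omega_2$ bookkeeping interleaving two jobs: catching all c.c.c.\ posets of size at most $\aleph_1$, so that $\MA_{\omega_1}$ holds in the end (by the usual nice-names count there are only $\aleph_2$ of them to consider, as the iteration preserves $2^{\aleph_1} = \aleph_2$), and applying the lemma to clubs. For the second job one uses that $\omega_2$ is regular and the support is finite, so every subset of $\omega_1$ in the final model --- in particular every club --- already appears in some $P_\beta$-generic extension with $\beta < \omega_2$, and so is caught by the bookkeeping at a later stage $\gamma$, where one forces with $Q_C$. In the final model: $\MA_{\omega_1}$ holds, so $\Ucal(T)$ is a uniform ultrafilter; and for each club $C$ the witness added at the stage where $C$ was caught survives the remainder of the (c.c.c.) iteration, since the witnessing property is upward absolute among c.c.c.\ extensions in which $T$ stays Aronszajn. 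Hence $C \in \Ucal(T)$ for every club $C$, i.e.\ $\Ucal(T)$ extends the club filter.

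This causes no clash with the surrounding results: Laver's theorem that $\MA_{\omega_1}$ rules out weakly normal ultrafilters over $\omega_1$ is respected because in this model $\Ucal(T)$, although it extends the club filter, is \emph{not} $\leq_{RK}$-minimal among uniform ultrafilters over $\omega_1$, and hence --- by the characterization of weak normality recalled above --- is not weakly normal; the minimality established earlier genuinely used the strength of $\PFA$ beyond $\MA_{\omega_1}$.
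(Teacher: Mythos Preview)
Your one-step lemma is where the argument breaks. The natural poset $Q_C$ consists of finite antichains $q\subseteq T$ with $\Delta(q)\subseteq C$, and Todorcevic's analysis (Lemma~4.3 of \cite{l-maps}, restated in the paper as lemma~\ref{lem:atree-c.c.c.-coideal}) shows that $Q_C$ is c.c.c.\ \emph{if and only if} $C\in\Ucal(T)^+$, i.e.\ $\omega_1\setminus C\notin\Ucal(T)$. The closedness of $C$ plays no role in the amalgamation step: after the $\Delta$-system refinement, merging two conditions $q_\xi,q_\eta$ produces exactly one new value $\Delta(t_\xi,t_\eta)$ (lemma~\ref{new_Delta}), and you need to find $\xi,\eta$ for which that value lands in $C$ --- this is precisely the statement that $C$ is $\Ucal(T)$-positive, and it has nothing to do with $C$ being closed. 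Nothing in your setup guarantees $C\in\Ucal(T)^+$ at the stage where you want to use $Q_C$: an earlier iterand catching an arbitrary c.c.c.\ poset for $\MA_{\omega_1}$ may well have added an uncountable $A\subseteq T$ with $\Delta(A)\subseteq\omega_1\setminus C$, after which $Q_C$ is no longer c.c.c. (Indeed, under $\PFA(\omega_1)$ there \emph{are} clubs outside $\Ucal(T)^+$, so the lemma cannot hold for arbitrary c.c.c.\ extensions of a special coherent A-tree.)

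The paper's proof is organized specifically to control this. It begins not with an arbitrary ZFC coherent A-tree but with a coherent \emph{Souslin} tree $T$ (obtained by adding a Cohen real). The Souslin property is what makes lemma~\ref{gentle_specialization} go through: the standard specializing poset, applied to a Souslin tree, forces that \emph{every} ground-model uncountable set is $\Ucal(T)$-positive. In that extension all clubs are in $\Ucal(T)^+$, and moreover finite intersections of clubs are clubs, so lemma~\ref{liptree-delta-stable} gives that the full finite-support product of the $Q_C$'s is c.c.c.\ and puts the entire ground-model club filter into $\Ucal(T)$ in one shot. Only \emph{after} this is done does the paper force $\MA_{\omega_1}$; since any club in a c.c.c.\ extension contains a ground-model club, the inclusion $\mathrm{Cub}_{\omega_1}\subseteq\Ucal(T)$ survives. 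Your interleaved iteration, by contrast, lets the $\MA_{\omega_1}$ stages interfere with the positivity needed for the $Q_C$ stages, and starting from a non-Souslin tree deprives you of the gentle-specialization step that sets up the required positivity in the first place.
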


This paper is organized as follows.
In section~\ref{Section: Preliminaries} we provide the basics of the relevant theory of the Tukey order and previously known results about the Tukey types of ultrafilters over uncountable cardinals.
Section~\ref{Seciton: destinguised} reviews and establishes some basic facts about certain benchmarks
in the Tukey order which will be needed later in the paper.
We establish the independence of Isbell's question on $\omega_1$ in section~\ref{section: Isbell} and
establish the consistency of every ultrafilter over $\omega_1$ is Tukey-top, and in section~\ref{Section: Non-Tukeytop} we settle Isbell's problem at higher cardinals.
In section~\ref{Seciton: PFA} we prove our results about $\Ucal(T)$.
We close with a list of questions and possible future directions in section~\ref{Section: Questions}.

\section{Preliminaries and Basic Results}\label{Section: Preliminaries}

We now fix some notational conventions and review some of the standard terminology which we will use throughout the rest of the paper.

Throughout much of the paper, we assume the reader has a background in modern set theory.
The texts~\cite{Jech2003} and \cite{Kunen} provide a broad foundation in set theory;
\cite{kanamori1994} covers large cardinals and related concepts.
Information on $\MA_{\omega_1}$ can be found in~\cite{Kunen};
information on PFA can be found in~\cite{AbrahamHandbook} and \cite{notes_FA}.
 
If $f$ is a function and $A$ is a subset of the domain of $f$, we will use $f[A]$ to denote the
image of $A$ under $f$.
For a set $A$, and a
cardinal $\lambda$, $[A]^\lambda$ denotes the
collection of subsets of $A$ of cardinality
$\lambda$. Similarly, $[A]^{<\lambda}$ is the
collection of subsets of $A$ of cardinality
$<\lambda$. The set $A^\lambda$ denotes the set of
all functions $f\mathrel{:}\lambda\to A$ and $A^{<\lambda}$ 
denotes the set of functions of the form
$f\mathrel{:}\alpha\to A$ for some $\alpha<\lambda$.
For $f,g\in \lambda^\kappa$, any binary relation
$R$ on $\lambda$, and any ideal $I$ over $\kappa$
we write $g \mathrel{R_I} f$ if and only if
$\{\alpha<\kappa\mid g(\alpha) \mathrel{\centernot{R}} f(\alpha)\}\in I$.
In particular, we write $f\leq g$ when for every $\alpha$, $f(\alpha)\leq g(\alpha)$ and $f\leq^* g$ if $g\leq_{J^\kappa_{bd}}f$, where $J_{bd}^\kappa=\{X\subseteq\kappa\mid \sup(X)<\kappa\}$ is the bounded ideal over $\kappa$.
These relations are typically not antisymmetric but induce antisymmetric relations on the associated
equivalence classes.
We will often abuse notation by working with representatives rather than equivalences classes even though
we will treat these as partial orders.
We denote by $\Add(\mu,\lambda)$ the Cohen forcing consisting of partial functions $f \mathrel{:} \mu \times \lambda\to 2$ such that $|f|<\mu$. 

    \subsection{The Tukey order}

Recall that a \emph{poset} is a set $P$ equipped
with a transitive, reflexive, antisymmetric
relation $\leq$.
A poset is (upward) \emph{directed} if for any $p, q \in P$ there is $r \in P$ with $r \ge p, q$.
A \emph{directed set} is a poset which is directed.
For $A, B \subseteq P$, we write $A \le B$
when for every $a \in A$ and $b \in B$, $a \le b$.
If an element of $P$ appears in a relation with a
set, the meaning is to replace it with its
singleton (e.g. $p \le A$ means $\{ p \} \le A$).

For $\mu$ a cardinal, a poset $(P,\leq)$ is called
\emph{$\mu$-directed} when for any $A \subseteq P$
with $|A| < \mu$ there is $p \in P$ with $p \ge A$.
Note that directed is the same as $\omega$-directed.

A subset $A$ of a poset $(P,\leq)$ is:
        \begin{enumerate}
            \item \emph{bounded} if there is $p \in P$ such that $A \le p$,
            \item \emph{cofinal} if for every $p \in P$ there is $a \in A$ such that $p \le a$.
        \end{enumerate}
        The \emph{cofinality} of a poset $P$, denoted $\cf \, P$, is the minimum cardinality of a cofinal subset. 

    \begin{definition}
        Let $(P, \le_P)$,$(Q, \le_Q)$ be posets.
        A function $f \mathrel{:} P \rightarrow Q$ is
        \begin{enumerate}
            \item \emph{monotone} if whenever $p, q \in P$ and $p \le_P q$, $f(p) \le_Q f(q)$,
            \item \emph{Tukey} if for every bounded $B \subseteq Q$, $f^{-1}(B)$ is bounded in $P$.
            \item \emph{cofinal} if for every cofinal $A \subseteq P$, $f[A]$ is cofinal in $Q$.
        \end{enumerate}
        The poset $P$ is \emph{Tukey-reducible} to $Q$, written $P \le_T Q$, if
        there is a Tukey map $f \mathrel{:} P \rightarrow Q$, or equivalently if there is a cofinal map $g \mathrel{:} Q \rightarrow P$.
    \end{definition}
    It is immediate from the definitions that if $(P,\leq_P)\leq_T (Q,\leq_Q)$, then $\cf(P,\leq_P)\leq \cf(Q,\leq_Q)$.

    The equivalence classes of the Tukey reducibility order are called \emph{Tukey types}.

    For any infinite cardinal, Tukey proved that
    there is a $\leq_T$-maximum directed set of cardinality
    $\kappa$.

    \begin{proposition}[{\cite[Thm. 5.1]{Tukey40}}]
        For any directed set $(P, \le_P)$ such that $|P|\leq\kappa$, there is a Tukey reduction $(P, \le_P) \le_T ([\kappa]^{<\omega}, \subseteq)$.
    \end{proposition}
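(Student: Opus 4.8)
The plan is to produce a cofinal map $g \mathrel{:} ([\kappa]^{<\omega},\subseteq) \to (P,\leq_P)$, since by the equivalence recorded in the definition of Tukey reducibility this already witnesses $(P,\leq_P) \leq_T ([\kappa]^{<\omega},\subseteq)$. The case $P = \emptyset$ is trivial, so assume $P \neq \emptyset$ and fix a surjection $e \mathrel{:} \kappa \to P$ (possible since $1 \leq |P| \leq \kappa$). The only structural input we will use is that $P$ is directed: by an immediate induction on cardinality, every finite subset of a directed set has an upper bound in it.

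Next I would define $g$ by recursion on $|a|$ for $a \in [\kappa]^{<\omega}$, arranging along the way that $g$ is monotone. Put $g(\emptyset)$ equal to some fixed element of $P$; and if $a \in [\kappa]^{<\omega}$ is such that $g$ has been defined on all proper subsets of $a$, let $g(a)$ be an upper bound in $P$ of the finite set $\{e(\xi) \mathrel{:} \xi \in a\} \cup \{g(b) \mathrel{:} b \subsetneq a\}$, which exists by the remark above. Then whenever $a \subseteq b$ we have either $a = b$ or $a \subsetneq b$, and in the latter case $g(a) \leq_P g(b)$ directly from the construction; hence $g$ is monotone. (Monotonicity is not strictly required to witness $\leq_T$, but it is convenient.)

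Finally I would verify that $g$ is a cofinal map. Let $A \subseteq [\kappa]^{<\omega}$ be cofinal and let $p \in P$ be arbitrary. Choose $\xi < \kappa$ with $e(\xi) = p$; since $A$ is cofinal there is $a \in A$ with $\{\xi\} \subseteq a$, and then $g(a) \geq_P e(\xi) = p$ by construction. Thus $g[A]$ is cofinal in $P$, as needed. If one prefers to exhibit the Tukey map $f \mathrel{:} P \to [\kappa]^{<\omega}$ explicitly, it suffices to choose, for each $p \in P$, a finite set $f(p)$ with $p \leq_P g(f(p))$ (possible since the range of $g$ is cofinal, the special case $A = [\kappa]^{<\omega}$ above); then if $B \subseteq [\kappa]^{<\omega}$ is bounded, say by $c \in [\kappa]^{<\omega}$, monotonicity of $g$ gives $p \leq_P g(f(p)) \leq_P g(c)$ for every $p \in f^{-1}(B)$, so $f^{-1}(B)$ is bounded in $P$.

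There is no real obstacle here; the content is entirely elementary. The one point deserving attention is the precise meaning of \emph{cofinal map} — it demands that $g$ carry \emph{every} cofinal subset of the domain to a cofinal subset of the codomain, not merely that the range of $g$ be cofinal — and this is exactly what the last paragraph checks, using that a cofinal family in $([\kappa]^{<\omega},\subseteq)$ must contain a set extending any prescribed finite set. Directedness of $P$ is used only to guarantee that the finitely many elements appearing at each recursion step admit a common upper bound.
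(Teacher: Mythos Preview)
The paper does not supply its own proof of this proposition; it is simply cited from Tukey's original work. Your argument is correct: the recursive construction of a monotone cofinal map $g\mathrel{:}[\kappa]^{<\omega}\to P$ works, and your verification that $g$ carries every cofinal family to a cofinal family is clean and addresses exactly the right point.

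For comparison, the standard direct argument is shorter: fix an injection $i\mathrel{:}P\to\kappa$ and set $f(p)=\{i(p)\}$. If $B\subseteq[\kappa]^{<\omega}$ is bounded above by some $c$, then each $p\in f^{-1}(B)$ satisfies $i(p)\in c$, so $f^{-1}(B)$ is finite and hence bounded in the directed set $P$. This avoids the recursion altogether. Your route has the minor advantage of yielding an explicit monotone cofinal map, which can be convenient elsewhere, but is otherwise more work than needed here.
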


    \begin{definition}\label{Def: Tukey top}
    A directed set $P$ is \emph{$(\mu,\lambda)$-Tukey-top} if
    there exists a collection $A \in [P]^{\lambda}$ such that every $B\in [A]^{\mu}$ is unbounded in $P$.
    In the context of ultrafilters over a cardinal $\kappa$, by \emph{$\mu$-Tukey-top} we mean $(\mu, 2^\kappa)$-Tukey-top.
    \emph{Tukey-top} means $\omega$-Tukey-top, as clarified in the following theorem.
    \end{definition}

    \begin{theorem}[Tukey~\cite{Tukey40}]\label{Thm: Tukey Combi Characterization of Tukey Top} 
        Let $\lambda$ and $\mu$ be regular cardinals, and suppose that $\cf([\lambda]^{<\mu},\subseteq)=\lambda$.
        The following are equivalent for any poset $P$:
        \begin{enumerate}
            \item $P$ is $(\mu,\lambda)$-Tukey-top.
            \item $[\lambda]^{<\mu}\leq_T P$.
        \end{enumerate}
    \end{theorem}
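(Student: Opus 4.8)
The plan is to prove the two implications separately; in both directions one passes between the combinatorial witness of $(\mu,\lambda)$-Tukey-topness and an explicit Tukey map, and the hypothesis $\cf([\lambda]^{<\mu},\subseteq)=\lambda$ is needed only for $(1)\Rightarrow(2)$.

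For $(1)\Rightarrow(2)$ I would fix an injectively enumerated $A=\{a_\xi\mid\xi<\lambda\}\in[P]^{\lambda}$ witnessing that $P$ is $(\mu,\lambda)$-Tukey-top, together with a cofinal sequence $\langle c_\eta\mid\eta<\lambda\rangle$ in $([\lambda]^{<\mu},\subseteq)$, which exists by hypothesis. Define $\phi\colon[\lambda]^{<\mu}\to\lambda$ by $\phi(s)=\min\{\eta<\lambda\mid s\subseteq c_\eta\}$ and $f\colon[\lambda]^{<\mu}\to P$ by $f(s)=a_{\phi(s)}$. The first observation is that for every $p\in P$ the set $I_p:=\{\xi<\lambda\mid a_\xi\leq_P p\}$ has cardinality $<\mu$: otherwise, by injectivity of the enumeration, $\{a_\xi\mid\xi\in I_p\}$ would contain some $B\in[A]^{\mu}$ with $B\leq_P p$, contradicting the choice of $A$. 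The second observation is that $\phi(s)=\eta$ forces $s\subseteq c_\eta$, so $\{s\in[\lambda]^{<\mu}\mid f(s)\leq_P p\}=\{s\mid\phi(s)\in I_p\}$ is contained in, and hence bounded by, $\bigcup_{\eta\in I_p}c_\eta$; moreover $|\bigcup_{\eta\in I_p}c_\eta|<\mu$ since $\mu$ is regular, $|I_p|<\mu$, and $|c_\eta|<\mu$ for each $\eta$, so this union is a genuine element of $[\lambda]^{<\mu}$. As every bounded subset of $P$ lies in some principal cone $\{q\mid q\leq_P p\}$, it follows that $f$ pulls bounded sets back to bounded sets, i.e. $f$ is a Tukey map, so $[\lambda]^{<\mu}\leq_T P$.

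For $(2)\Rightarrow(1)$, given a Tukey map $f\colon[\lambda]^{<\mu}\to P$, set $b_\xi=f(\{\xi\})$ for $\xi<\lambda$. For $p\in P$, the family of singletons $\{\{\xi\}\mid b_\xi\leq_P p\}$ is contained in $f^{-1}(\{q\mid q\leq_P p\})$, which is bounded in $[\lambda]^{<\mu}$; since the union of this family is exactly $\{\xi\mid b_\xi\leq_P p\}$, the latter has cardinality $<\mu$. Letting $p$ range over the image shows each fibre $\{\xi\mid b_\xi=b\}$ has size $<\mu$, and because $\lambda=\bigcup_b\{\xi\mid b_\xi=b\}$ with $\lambda$ regular (and $\mu\leq\lambda$, since otherwise $\cf([\lambda]^{<\mu})=1\neq\lambda$), the set $A:=\{b_\xi\mid\xi<\lambda\}$ has cardinality $\lambda$, so $A\in[P]^{\lambda}$. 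Finally, for every $p$ the set $\{a\in A\mid a\leq_P p\}$ is the image under $\xi\mapsto b_\xi$ of a set of size $<\mu$, hence has size $<\mu$; so no $B\in[A]^{\mu}$ is bounded, and $A$ witnesses that $P$ is $(\mu,\lambda)$-Tukey-top.

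I do not expect a serious obstacle. The cardinal arithmetic (regularity of $\lambda$ and of $\mu$) and the fact that bounded subsets of a poset are precisely the subsets of principal lower cones are routine. The one point that requires an idea is the coloring $\phi$ in $(1)\Rightarrow(2)$: the content of the hypothesis $\cf([\lambda]^{<\mu},\subseteq)=\lambda$ is exactly that $[\lambda]^{<\mu}$ admits a coloring by $\lambda$ colors in which every color class has union of size $<\mu$, and this is the only place it is used.
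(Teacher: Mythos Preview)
The paper does not actually prove this theorem---it is stated with attribution to Tukey \cite{Tukey40} and no proof is given. So there is no ``paper's own proof'' to compare against; your argument must be judged on its own.

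Your proof is correct and is essentially the standard one. Both directions work as written: in $(1)\Rightarrow(2)$ the key point is that the cofinal family $\langle c_\eta\mid\eta<\lambda\rangle$ lets you relabel $[\lambda]^{<\mu}$ by $\lambda$ and then feed into the witnessing set $A$, with regularity of $\mu$ ensuring $\bigcup_{\eta\in I_p}c_\eta$ stays small; in $(2)\Rightarrow(1)$ you correctly use regularity of $\lambda$ together with $\mu\leq\lambda$ to see that $A=\{b_\xi\mid\xi<\lambda\}$ really has size $\lambda$.

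One small expository inaccuracy: you assert at the outset that the hypothesis $\cf([\lambda]^{<\mu})=\lambda$ is needed only for $(1)\Rightarrow(2)$, but you yourself invoke it in $(2)\Rightarrow(1)$ to conclude $\mu\leq\lambda$. This is not a gap in the mathematics (you do use it where needed), but the opening sentence slightly misstates the logical dependence. Indeed, without $\mu\leq\lambda$ the equivalence fails: if $\mu>\lambda$ then $[\lambda]^{<\mu}$ has a top element, so (2) holds for every nonempty $P$, whereas (1) requires $|P|\geq\lambda$.
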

    
    \begin{theorem}[Schmidt {\cite[Thm. 14]{Schmidt55}}] \label{Thm: Schmidt maximality of the Tukey sets}
    If a $\mu$-directed poset $P$ has cofinality $\lambda$ then $P \le_T [\lambda]^{<\mu}$.
    \end{theorem}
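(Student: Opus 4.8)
The plan is to invoke the equivalence recorded in the definition of $\le_T$ — that a reduction $P \le_T Q$ is witnessed by a cofinal map $Q \to P$ — and to produce such a map from $[\lambda]^{<\mu}$ to $P$. First I would fix a cofinal subset $C = \{c_\alpha \mid \alpha < \lambda\}$ of $P$, which exists since $\cf(P) = \lambda$. Then, for each $a \in [\lambda]^{<\mu}$, the set $\{c_\alpha \mid \alpha \in a\}$ has cardinality $\le |a| < \mu$, so by $\mu$-directedness it has an upper bound in $P$; I would let $g(a)$ be one such upper bound (taking $g(\emptyset) \in P$ arbitrarily). This is the only point at which the hypothesis of $\mu$-directedness is used.

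It then remains to check that $g$ is cofinal. Given a cofinal $A \subseteq [\lambda]^{<\mu}$ and an arbitrary $p \in P$, I would first use cofinality of $C$ to find $\alpha < \lambda$ with $p \le c_\alpha$, and then use cofinality of $A$ together with $\{\alpha\} \in [\lambda]^{<\mu}$ to find $a \in A$ with $\alpha \in a$. Then $p \le c_\alpha \le g(a)$ and $g(a) \in g[A]$, so $g[A]$ is cofinal in $P$, as desired. This gives $P \le_T [\lambda]^{<\mu}$.

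There is no genuine obstacle here; the only points deserving a moment's care are the orientation of the reduction — the cofinal map runs from the large poset $[\lambda]^{<\mu}$ down to $P$, not the other way — and the innocuous restriction $\mu \ge 2$, which is what makes singletons available in $[\lambda]^{<\mu}$ (for $\mu \le 1$ the hypothesis of $\mu$-directedness is essentially content-free and the statement is false in general, e.g. for $P = \omega$). For the record, one can equally well package the same idea as a Tukey map $f \colon P \to [\lambda]^{<\mu}$, namely $f(p) = \{\alpha(p)\}$ for some fixed choice of $\alpha(p)$ with $c_{\alpha(p)} \ge p$: if $B \subseteq [\lambda]^{<\mu}$ is bounded then $\bigl|\bigcup B\bigr| < \mu$, so $\{c_\alpha \mid \alpha \in \bigcup B\}$ has an upper bound $q$ in $P$, and every $p \in f^{-1}(B)$ satisfies $p \le c_{\alpha(p)} \le q$, so $f^{-1}(B)$ is bounded.
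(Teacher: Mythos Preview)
Your proof is correct; both the cofinal-map version and the Tukey-map version go through exactly as you describe. The paper does not give its own proof of this theorem---it is simply quoted from Schmidt with a citation---so there is nothing to compare against, but your argument is the standard one.
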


    \begin{fact}[Folklore]\label{downwards closed directed}
    If $P \le_T Q$ and $Q$ is $\mu$-directed then $P$ is $\mu$-directed.
    \end{fact}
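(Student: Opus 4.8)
The plan is to exploit the Tukey map supplied by the hypothesis $P \le_T Q$. Recall from the definition of $\le_T$ that $P \le_T Q$ yields a function $f \mathrel{:} P \to Q$ which is \emph{Tukey}, i.e. the $f$-preimage of every bounded subset of $Q$ is bounded in $P$. Fix an arbitrary $A \subseteq P$ with $|A| < \mu$; the goal is to produce an upper bound for $A$ in $P$, which will establish that $P$ is $\mu$-directed.

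First I would push $A$ forward through $f$. The image $f[A] \subseteq Q$ has $|f[A]| \le |A| < \mu$, so by $\mu$-directedness of $Q$ there is some $q \in Q$ with $q \ge f[A]$. Then $B := \{\, q' \in Q : q' \le_Q q \,\}$ is a bounded subset of $Q$ (bounded by $q$) which contains $f[A]$.

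Next, apply the Tukey property of $f$ to $B$: since $B$ is bounded in $Q$, the set $f^{-1}(B)$ is bounded in $P$, say $f^{-1}(B) \le_P p$ for some $p \in P$. Because $f[A] \subseteq B$ we have $A \subseteq f^{-1}(f[A]) \subseteq f^{-1}(B)$, and hence $A \le_P p$. As $A$ was an arbitrary subset of $P$ of cardinality $< \mu$, this shows $P$ is $\mu$-directed.

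There is essentially no obstacle here beyond bookkeeping; the one point meriting attention is the choice of which witness to $P \le_T Q$ to use. One should work with the Tukey map $f \mathrel{:} P \to Q$ rather than the equivalent cofinal map $Q \to P$, since it is precisely the ``preimage-of-bounded-is-bounded'' formulation that meshes directly with $\mu$-directedness of the target $Q$.
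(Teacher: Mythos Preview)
Your proof is correct. Both your argument and the paper's hinge on the same observation—that a Tukey map reflects boundedness—but the executions differ. You argue directly: push $A$ forward, bound $f[A]$ in $Q$ by $\mu$-directedness, and pull the bound back via the Tukey property. The paper instead argues by contradiction, starting from an unbounded $A \in [P]^{\lambda}$ with $\lambda < \mu$, and routes through $[\kappa]^{<\mu}$ so as to arrange the Tukey map to be injective before concluding that $f[A]$ is unbounded of size $\lambda$. Your version is cleaner: the injectivity detour is unnecessary (since $|f[A]| \le |A| < \mu$ in any case), and working with $Q$ itself avoids invoking any auxiliary poset. One minor simplification you could make: there is no need to pass to the downset $B$; once $f[A]$ is bounded, the Tukey property applied to $f[A]$ directly gives that $f^{-1}(f[A]) \supseteq A$ is bounded.
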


    \begin{proof}
    Suppose for contradiction that $A \in [P]^\lambda$ has no upper bound, where $\lambda < \mu$. 
    Let $\kappa = |P|$.
    Since $P \leq_T [\kappa]^{<\mu}$, there is an unbounded map $f \mathrel{:} P\rightarrow [\kappa]^{<\mu}$, which can easily be made injective (say by reserving $\kappa$ elements of $\kappa$ to serve as labels).
    Hence $f[A]$ is also unbounded.
    This is a contradiction, since $[\kappa]^{<\mu}$ is $\mu$-directed.
    \end{proof}

    \subsection{Ultrafilters}

    Recall that $\Fcal$ is a \emph{filter}
    over a set $X$ if $\Fcal \subseteq \mathscr{P}(X)$
    is nonempty, upwards closed, downwards directed,
    and does not contain $\emptyset$.
    A filter $\Ucal$ is an \emph{ultrafilter} if it is
    maximal under inclusion with respect to being a filter or,
    equivalently, for every $Y \subseteq X$
    either $Y$ or $X \setminus Y$ is in $\Ucal$.
    An ultrafilter is \emph{nonprincipal}
   if it does not contain any singletons.
   It is \emph{uniform} if all sets in the ultrafilter have the same cardinality.

    In this paper we shall primarily be concerned with Tukey types of uniform ultrafilters, considered as directed posets under reverse inclusion.
   The next lemma is useful when comparing
   ultrafilters using the Tukey order.
   
    \begin{lemma}[{\cite[Fact 6]{Dobrinen/Todorcevic11}}]
        If $\mathcal U$, $\mathcal V$ are uniform ultrafilters over an infinite cardinal $\kappa$ and $\mathcal U \le_T \mathcal V$, then there is a monotone cofinal map
        $f \mathcal{:} \mathcal V \rightarrow \mathcal U$.
    \end{lemma}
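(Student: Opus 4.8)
The plan is to take the cofinal map supplied by the definition of $\le_T$ and repair it into a monotone one by ``unioning downward.'' Since $\mathcal U \le_T \mathcal V$, the definition of Tukey reducibility hands us a cofinal map $g_0 \colon \mathcal V \to \mathcal U$; the only task is to replace it by a \emph{monotone} cofinal map. Throughout I regard both ultrafilters as directed under reverse inclusion, so that for subsets of $\kappa$ lying in the relevant ultrafilter ``$\le$'' means ``$\supseteq$'', a member $A$ of $\mathcal U$ is below $A'$ exactly when $A \supseteq A'$, a family is cofinal exactly when it contains arbitrarily small (under $\subseteq$) members, and a map is monotone exactly when it is inclusion-preserving.

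First I would write down the candidate monotonization
\[
 g(V) \;=\; \bigcup \{\, g_0(V') : V' \in \mathcal V,\ V' \subseteq V \,\}.
\]
Two of the three required properties are then immediate and essentially free. Membership $g(V) \in \mathcal U$ holds because the displayed union is a nonempty union (it contains the term $g_0(V)$) of members of the filter $\mathcal U$, hence contains a member of $\mathcal U$ and so lies in $\mathcal U$ by upward closure. Monotonicity is visible from the formula: if $V_1 \supseteq V_2$ then the index family $\{V' : V' \subseteq V_2\}$ is contained in $\{V' : V' \subseteq V_1\}$, whence $g(V_2) \subseteq g(V_1)$, which is precisely monotonicity in the reverse-inclusion order. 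The point of unioning over all smaller sets, rather than intersecting, is exactly to make these two properties automatic; the price, to be paid in the last step, is that cofinality is no longer obvious.

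The hard part will be showing that $g$ is cofinal, and this is where the full strength of $g_0$ being a cofinal map (as opposed to merely having cofinal range) is used. Because $g$ is monotone and $\mathcal V$ is directed, it suffices to show $g$ has cofinal range, i.e.\ that for each $U \in \mathcal U$ there is some $V_0$ with $g(V_0) \subseteq U$. I would argue by contradiction through the auxiliary set $A = \{\, V' \in \mathcal V : g_0(V') \not\subseteq U \,\}$: if $A$ were cofinal in $\mathcal V$, then since $g_0$ carries cofinal sets to cofinal sets, $g_0[A]$ would be cofinal in $\mathcal U$, forcing some $V' \in A$ with $g_0(V') \subseteq U$ and contradicting $V' \in A$. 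Hence $A$ is not cofinal, so there is $V_0 \in \mathcal V$ below which $A$ is avoided---that is, every $V' \subseteq V_0$ in $\mathcal V$ satisfies $g_0(V') \subseteq U$---and therefore $g(V_0) = \bigcup\{g_0(V') : V' \subseteq V_0\} \subseteq U$, as needed. Finally I would record that monotonicity together with directedness upgrades cofinal range to the cofinal-map property: given a cofinal $C \subseteq \mathcal V$ and $U \in \mathcal U$, choose $V_0$ with $g(V_0) \subseteq U$, then $c \in C$ with $c \subseteq V_0$, so that $g(c) \subseteq g(V_0) \subseteq U$ and $g[C]$ is cofinal. The only genuine subtlety is the non-cofinality of $A$, which I expect to be the crux; everything else is bookkeeping with the reverse-inclusion conventions, and notably no use of maximality or uniformity of the ultrafilters is required beyond upward closure.
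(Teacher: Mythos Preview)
Your argument is correct. The paper does not actually supply its own proof of this lemma but only cites Dobrinen--Todorcevic; for comparison, the proof there is the dual of yours, starting from an unbounded map $h\colon\mathcal U\to\mathcal V$ and defining $g(V)=\bigcap\{U\in\mathcal U:h(U)\supseteq V\}$, whereas you start from a cofinal map and monotonize via unions. Both constructions are standard, yield the same conclusion, and---as you correctly observe---use only that $\mathcal U$ is upward closed, not uniformity or maximality.
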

\noindent
    Thus Tukey reductions between uniform ultrafilters over the same set are always witnessed by monotone cofinal maps.

While it will be more tangential to our discussion,
the most important order on ultrafilters is a further refinement of the Tukey order known as 
the \emph{Rudin-Keisler order}.
    \begin{definition}
        Let $\mathcal U$ be an ultrafilter over a set $X$, and let $f \mathrel{:} X \rightarrow Y$ be a function.
        The projection $f_* \mathcal U$ of $\mathcal U$ to $Y$ along $f$ is the ultrafilter
        \[ \{ B \subseteq Y \mid f^{-1}(B) \in \mathcal U \}.\]
        For $\mathcal V$ an ultrafilter over $Y$, we say that $\mathcal V$ is \emph{Rudin-Keisler reducible} to $\mathcal U$ and write $\mathcal V \le_{RK} \mathcal U$ when there is $f \mathrel{:} X \rightarrow Y$ such that $\mathcal V = f_* \mathcal U$.
    \end{definition}

    It is a straightforward consequence of the definitions that for ultrafilters $\mathcal U$ and $\mathcal V$, if $\mathcal U \le_{RK} \mathcal V$ then $\mathcal U \le_T \mathcal V$.
    Ultrafilters $\mathcal U$ and $\mathcal V$ are said to be \emph{isomorphic} when there is a bijection $f$ between their underlying sets such that $\mathcal V = f_* \mathcal U$, and it is known that if $\mathcal U \le_{RK} \mathcal V$ and $\mathcal V \le_{RK} \mathcal U$ then $\mathcal U$ and $\mathcal V$ are in fact isomorphic.

We will pause here to remark that ultrafilters appear in many different contexts in the literature
and tend to be denoted in many different ways: by $p$ and $q$ in the study of the \v{C}ech-Stone compactification of $\omega$ to emphasize their role as points; by $\Ucal$ and $\Vcal$ for ultrafilters over $\omega$ or other small cardinals when one wishes to emphasize that they are collections of sets; by $U$ and $V$ in the context of large cardinals.
As different parts of this paper are closest to the contexts of these different notational traditions, our conventions will shift.
This should cause no confusion; nonetheless, we alert the reader to promote clarity.

    We are interested in Tukey types of uniform ultrafilters over regular uncountable cardinals, with $\omega_1$ as the most salient cardinal and the central questions being whether all ultrafilters over a given cardinal are $(\mu,\lambda)$-Tukey-top for fixed regular cardinals $\mu\le\lambda$. The study of such ultrafilters traces back to Keisler~\cite{KeislerChang}, who introduced the following notion motivated from a model-theoretic point of view:

    \begin{definition}
        Let $\lambda \le \mu$ be cardinals.
        An ultrafilter $\mathcal U$ is \emph{$(\lambda,\mu)$-regular} if there is a set $\mathcal A \subseteq \mathcal U$ such that $|\mathcal A| = \mu$ and for every $\mathcal B \subseteq A$ with $|\mathcal B| = \lambda$, $\bigcap \mathcal B = \emptyset$. If $\mathcal{U}$ is a uniform ultrafilter over $\kappa$ we say that $\mathcal{U}$ is \textit{regular} if it is $(\omega,\kappa)$-regular.
    \end{definition}
    Regularity-like properties were later studied  in the 1970s in a series of influential papers by Ketonen-Benda~\cite{KetonenBenda}, Kanamori~\cite{KanamoriWeakNormal,finest_part,Kanamori1976UltrafiltersOA,Kanamori1978}, Kunen~\cite{Kunen1972} and Taylor~\cite{TAYLOR197933}. The following definition is highly connected to regularity:

    \begin{definition}
    A uniform ultrafilter $\Ucal$ over $\kappa$ is called \emph{weakly normal} if for any regressive function $f\mathrel{:}\kappa\rightarrow \kappa$, there is $\theta<\kappa$ such that $f^{-1}[\theta]\in \Ucal$. Equivalently, $[id]_\Ucal=\sup_{\theta<\kappa}[c_\theta]_\Ucal$.  
    \end{definition}
    It is well-known that weakly normal ultrafilters extend the club filter. The next theorem establishes the equivalence of the existence of non-regular ultrafilters with the existence of weakly normal ones:
\begin{theorem}[Kanamori \cite{KanamoriWeakNormal}, Ketonen-Benda \cite{KetonenBenda}] Let $\mathcal{U}$ be a uniform ultrafilter over $\kappa^+$, then: 
    \begin{enumerate}
        \item If $\Ucal$ is weakly normal then $\Ucal$ is non-regular.
        \item If $\Ucal$ is non-regular, then $\Ucal$ is above a weakly normal ultrafilter in the Rudin-Keisler order.
    \end{enumerate}
\end{theorem}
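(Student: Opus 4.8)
Write $\lambda := \kappa^+$. Since $\cf\lambda = \lambda > \kappa$, a subset of $\lambda$ has size $<\lambda$ iff it is bounded; hence an ultrafilter on $\lambda$ is uniform iff it is unbounded (contains every co-bounded set). Consequently, for $f:\lambda\to\lambda$ the pushforward $f_*\Ucal$ is a uniform ultrafilter on $\lambda$ exactly when $f$ is \emph{cofinal mod $\Ucal$}, meaning $\{\xi : f(\xi)>\eta\}\in\Ucal$ for all $\eta<\lambda$; in particular $\mathrm{id}$ is cofinal mod $\Ucal$, by uniformity of $\Ucal$. Unwinding the definition (and using that $\{h(f(\xi))<\theta\}\notin\Ucal$ iff $\{h(f(\xi))\ge\theta\}\in\Ucal$), for cofinal $f$ the ultrafilter $f_*\Ucal$ \emph{fails} to be weakly normal iff there is a regressive $h:\lambda\to\lambda$ with $h\circ f$ again cofinal mod $\Ucal$. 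We will also use that a $(\omega,\lambda)$-regular family in $\Ucal$ amounts (after the normalization $A_\eta\mapsto A_\eta\setminus(\eta+1)$ used below) to a function $G:\lambda\to[\lambda]^{<\omega}$ with $G(\xi)\subseteq\xi$ and $\{\xi : \eta\in G(\xi)\}\in\Ucal$ for every $\eta<\lambda$.

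\textbf{Part (1).} I would argue by contradiction: let $\Ucal$ be weakly normal and suppose $\mathcal A = \{A_\eta : \eta<\lambda\}\subseteq\Ucal$ witnesses $(\omega,\lambda)$-regularity, so no $\xi$ lies in infinitely many $A_\eta$. Replacing each $A_\eta$ by $A_\eta\setminus(\eta+1)$ (still in $\Ucal$ by uniformity, and still a witness since we only shrank) we may assume $\xi\in A_\eta \Rightarrow \xi>\eta$. Put $G(\xi) := \{\eta : \xi\in A_\eta\}$, a finite subset of $\xi$, and let $h(\xi) := \max G(\xi)$ (and $h(\xi) := 0$ when $G(\xi) = \emptyset$); then $h$ is regressive. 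By weak normality fix $\theta<\lambda$ with $B := \{\xi : h(\xi)<\theta\}\in\Ucal$; since $\emptyset\notin\Ucal$ we have $\theta>0$. But if $\xi\in A_\theta$ then $\theta\in G(\xi)$, so $h(\xi)\ge\theta$ and $\xi\notin B$; hence $A_\theta\cap B = \emptyset$ with $A_\theta, B\in\Ucal$, a contradiction.

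\textbf{Part (2), the plan.} I would prove the contrapositive: assuming that no uniform ultrafilter $\leq_{RK}\Ucal$ on $\lambda$ is weakly normal, construct a $(\omega,\lambda)$-regular family in $\Ucal$, i.e. a $G$ as above. By the Reduction, every cofinal $f:\lambda\to\lambda$ admits a regressive $h$ with $h\circ f$ cofinal mod $\Ucal$, and then $h(f(\xi))<f(\xi)$ for $\Ucal$-most $\xi$. The plan is a transfinite recursion of length $\lambda$: maintain a system of cofinal functions together with a partial assignment $\xi\mapsto G(\xi)\in[\lambda]^{<\omega}$, and at stage $\eta<\lambda$ apply the failure of weak normality of a suitable pushforward to peel off a set $A_\eta\in\Ucal$ and record one further value into the $G(\xi)$ for $\xi\in A_\eta$, arranging throughout that each point is peeled off only finitely often (so each $G(\xi)$ stays finite). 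If this succeeds through all $\eta<\lambda$, the resulting $G$ is the desired regularity witness; contrapositively, $\Ucal$ is non-regular. This is in essence the argument of Ketonen--Benda and Kanamori.

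\textbf{The main obstacle.} The danger, and the reason the construction is delicate, is that the obvious move, iterating $f_{n+1} := h_{f_n}\circ f_n$, reaches $0$ after only finitely many steps at each point (a strictly decreasing sequence of ordinals is finite), and so yields only \emph{countably} many sets in $\Ucal$ with empty intersection; that is merely the countable incompleteness of $\Ucal$, which is automatic and far weaker than $(\omega,\lambda)$-regularity. To obtain a genuinely $\lambda$-indexed almost-disjoint-over-$\Ucal$ family one must run $\lambda$-many such descents (or a $\lambda$-branching tree of cofinal functions) in a coordinated way, ensuring both that every $G(\xi)$ remains finite and that the set peeled off at stage $\eta$ really records $\eta$ into a $\Ucal$-large collection of the $G(\xi)$'s; and one must negotiate limit stages of the recursion, where taking pointwise infima of the cofinal functions built so far need not preserve cofinality mod $\Ucal$ (since $\Ucal$ is typically far from $\kappa^+$-complete). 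Making this bookkeeping work is the heart of the matter.
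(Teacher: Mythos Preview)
The paper does not prove this theorem; it is quoted as background with attribution to Kanamori and Ketonen--Benda, so there is no in-paper argument to compare against. I will therefore assess your proposal on its own terms.

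Your Part~(1) is correct and standard: normalizing so that $A_\eta\subseteq(\eta,\lambda)$, setting $h(\xi)=\max\{\eta:\xi\in A_\eta\}$, and applying weak normality yields an immediate contradiction with $A_\theta\in\Ucal$. Nothing to add.

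Part~(2), however, is not a proof; by your own admission the ``bookkeeping'' that is ``the heart of the matter'' is left undone. More importantly, your sketch never uses the hypothesis that $\lambda$ is a \emph{successor} cardinal --- you only use that $\lambda$ is regular --- and this is precisely the missing idea. The Ketonen--Benda/Kanamori argument hinges on the fact that every $\alpha<\kappa^+$ has cardinality at most $\kappa$: fixing surjections $e_\alpha:\kappa\to\alpha$, any regressive step below a cofinal $f$ can be encoded by a function $c:\kappa^+\to\kappa$ via $h(f(\xi))=e_{f(\xi)}(c(\xi))$. This lets one organize the descent as a $\kappa$-branching (not $\lambda$-branching, as you suggest) tree of height $\omega$ with only $\kappa^{<\omega}=\kappa$ many nodes, and the analysis of which nodes carry cofinal functions --- together with the non-regularity hypothesis --- is what produces the least function. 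Your ``$\lambda$-many descents'' picture, and the worry about limit stages of a length-$\lambda$ recursion, are on the wrong track: the construction is fundamentally of height $\omega$ with width $\kappa$, and the successor-cardinal structure is what makes the width manageable. Without invoking $|\alpha|\le\kappa$ somewhere, no amount of bookkeeping will close the gap.
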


\begin{theorem}[Laver \cite{LAVER1982297}]\label{Thm: MA implies every ultrafilter is regular}
$MA_{\omega_{1}}$ implies that every uniform ultrafilter over $\omega_1$ is regular.
In particular, $\MA_{\omega_1}$ implies there are no weakly normal ultrafilters over $\omega_1$.
\end{theorem}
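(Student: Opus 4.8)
The plan is to prove the first (main) assertion---that $\MA_{\omega_1}$ forces every uniform ultrafilter $\Ucal$ over $\omega_1$ to be regular---and then read off the ``in particular'' clause from part~(1) of the preceding Kanamori--Ketonen--Benda theorem, since a regular ultrafilter is by definition not weakly normal and $\omega_1 = \omega^+$.

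\textbf{Reduction.} I would first record that a uniform $\Ucal$ over $\omega_1$ extends the co-bounded filter (a bounded set is countable, hence not in $\Ucal$), and reformulate regularity combinatorially: $\Ucal$ is $(\omega,\omega_1)$-regular iff there is $h\mathrel{:}\omega_1\to[\omega_1]^{<\omega}$ with $h(\alpha)\subseteq\alpha$ such that each $A_\xi:=\{\alpha<\omega_1\mid\xi\in h(\alpha)\}$ lies in $\Ucal$. Indeed the family $\{A_\xi\mid\xi<\omega_1\}$ is then point-finite---each $\alpha$ meets $A_\xi$ only for the finitely many $\xi\in h(\alpha)$---and point-finiteness is exactly the statement that every infinite, hence every countable, subfamily has empty intersection; point-finiteness also forces $\xi\mapsto A_\xi$ to be finite-to-one, so the witnessing family genuinely has size $\omega_1$. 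Thus it suffices to produce such an $h$, which I would obtain as a generic object for a ccc partial order, invoking $\MA_{\omega_1}$ against $\omega_1$-many dense sets.

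\textbf{The forcing.} A condition of the poset $\Pbb$ is a finite stem $h_p\mathrel{:}d_p\to[\omega_1]^{<\omega}$ with $d_p\in[\omega_1]^{<\omega}$ and $h_p(\alpha)\subseteq\alpha$ (the decided, ``frozen'' columns), together with a finite set $e_p$ of ``tracked rows'' and a promise $B^p_\xi\in\Ucal$ for each $\xi\in e_p$ with $B^p_\xi\subseteq(\xi,\omega_1)$, subject to the honouring rule that $\alpha\in d_p$, $\xi\in e_p$ and $\alpha\in B^p_\xi$ imply $\xi\in h_p(\alpha)$. An extension keeps frozen columns fixed, may freeze new ones (honouring the current promises), may track new rows (with a promise avoiding the finitely many already-frozen columns $\alpha$ with $\xi\notin h_p(\alpha)$), and may only shrink promises. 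For each $\xi$ the set $E_\xi=\{p\mid\xi\in e_p\}$ and for each $\alpha$ the set $R_\alpha=\{p\mid\alpha\in d_p\}$ are dense; a filter $G$ meeting these $\omega_1$-many dense sets yields a total $h$ whose columns are all finite, so the family $\{A_\xi\}$ is point-finite. The chain condition is where the restriction $h(\alpha)\subseteq\alpha$ is used decisively: two conditions are compatible exactly when their stems agree on the common part of their domains (promises amalgamate by intersecting and deleting finitely many points, staying in $\Ucal$), and given uncountably many conditions a $\Delta$-system applied to the finite sets $d_p$ fixes a root $R$ on which, since $h_p(\rho)\subseteq\rho$ ranges over the \emph{countable} set $[\rho]^{<\omega}$, only countably many restrictions $h_p\restriction R$ occur; hence uncountably many conditions share one and are pairwise compatible, so $\Pbb$ is Knaster and in particular ccc.

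\textbf{Main obstacle.} I expect the crux to be verifying that each generic row $A_\xi$ actually belongs to $\Ucal$. This requirement is not finitely approximable---membership in $\Ucal$ cannot be decided by a finite stem---which is precisely why the promises are carried: once a condition $p_0\in G$ tracks $\xi$, the honouring rule forces $\xi\in h(\alpha)$ for every $\alpha\in B^{p_0}_\xi$ that is frozen while $\xi$ is tracked, so $A_\xi\supseteq B^{p_0}_\xi\setminus N_\xi$, where $N_\xi$ is the set of columns frozen without $\xi$. The delicate point---the real heart of Laver's argument, and where the work goes beyond the bare genericity supplied by meeting $E_\xi$ and $R_\alpha$---is to organise the bookkeeping (each promise chosen inside $(\xi,\omega_1)$, together with the reconciliation of incomparable conditions in the directed filter) so that $N_\xi\cap B^{p_0}_\xi$ is $\Ucal$-small, whence $A_\xi\in\Ucal$. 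Granting this, every $A_\xi\in\Ucal$, the family $\{A_\xi\mid\xi<\omega_1\}$ witnesses $(\omega,\omega_1)$-regularity of $\Ucal$, and the ``in particular'' statement follows at once: by the cited theorem a weakly normal uniform ultrafilter over $\omega_1$ is non-regular, so under $\MA_{\omega_1}$ no such ultrafilter can exist.
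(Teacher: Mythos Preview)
The paper does not prove this theorem; it is quoted from Laver and used as background. So the only question is whether your argument stands on its own.

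The overall architecture is right, and the reduction, the density of $E_\xi$ and $R_\alpha$, and the Knaster argument (a $\Delta$-system on the $d_p$ together with the countability of $\prod_{\rho\in R}[\rho]^{<\omega}$) are all correct. The gap is exactly where you place it: you never prove $A_\xi\in\Ucal$, writing instead ``Granting this.'' But the ``delicate point'' you isolate---that $N_\xi\cap B^{p_0}_\xi$ be $\Ucal$-small---is not a reduction. If $N_\xi$ denotes $\{\alpha>\xi:\xi\notin h(\alpha)\}$, then since $B^{p_0}_\xi\in\Ucal$ this smallness is literally equivalent to $A_\xi\in\Ucal$: the goal has been restated, not advanced. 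If $N_\xi$ is meant to be the columns frozen by some condition not tracking $\xi$, then even the inclusion $A_\xi\supseteq B^{p_0}_\xi\setminus N_\xi$ is unjustified, because promises may shrink.

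This is a structural problem, not a cosmetic one. With the extension relation you actually use in the ccc step (promises shrink freely; amalgamation ``by intersecting and deleting finitely many points''), meeting only the $E_\xi$ and $R_\alpha$ does not force $A_\xi\in\Ucal$. Concretely, take for each $\xi$ the condition with empty stem, $e=\{\xi\}$, and promise $(\xi,\omega_1)$, and for each $\alpha$ the condition with stem $\{\alpha\mapsto\emptyset\}$ and empty $e$; the filter these generate meets every $E_\xi$ and every $R_\alpha$ yet yields $h\equiv\emptyset$ and hence $A_\xi=\emptyset$ for all $\xi$. If instead you tighten the order so that newly frozen columns must honour the \emph{original} (unshrunk) promise, then $A_\xi\supseteq B^{p_0}_\xi\setminus d_{p_0}\in\Ucal$ would follow, but your Knaster step collapses: the columns in $d_{p_j}\setminus R$ were fixed without regard to $p_i$'s promises, $h_{p_j}$ cannot be altered, and the simple union is no longer $\leq p_i$. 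Resolving this tension is the real content of Laver's proof, and it is exactly what is missing here.
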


Kanamori~\cite{Kanamori1978} studied $(\mu,\lambda)$-Tukey-top ultrafilters, though under a different name, as a weakening of regular ultrafilters, proving that any uniform ultrafilter over an uncountable cardinal is $(2^\kappa,2^\kappa)$-Tukey-top. Shortly after, Taylor and Galvin proved the following results which constitute a starting point for the investigation of this paper:
\begin{theorem}[Taylor {\cite[Thm 2.4(2)]{TAYLOR197933}}]
If $U$ is a uniform ultrafilter over a successor cardinal $\kappa^+$, then $U$ is $(\kappa,\kappa^+)$-Tukey-top.
\end{theorem}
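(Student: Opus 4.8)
View $U$ as the directed poset $(U,\supseteq)$. For a family $\{A_\beta : \beta<\kappa^+\}\subseteq U$ and a set $X\in[\kappa^+]^\kappa$, an upper bound of $\{A_\beta : \beta\in X\}$ in $(U,\supseteq)$ is a $C\in U$ with $C\subseteq\bigcap_{\beta\in X}A_\beta$, and such a $C$ exists if and only if $\bigcap_{\beta\in X}A_\beta\in U$. So, by Definition~\ref{Def: Tukey top}, it suffices to produce $\kappa^+$ distinct sets $A_\beta\in U$ with $\bigcap_{\beta\in X}A_\beta\notin U$ for every $X\in[\kappa^+]^\kappa$; in fact it is enough to index the $A_\beta$ by an arbitrary set of size $\kappa^+$.

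The plan is first to isolate the combinatorial content. Since $\kappa^+$ is a successor, $|\alpha|=\kappa$ for every $\alpha$ with $\kappa\le\alpha<\kappa^+$; fix bijections $e_\alpha\mathrel{:}\kappa\to\alpha$ and, for $\beta<\alpha$, put $h_\beta(\alpha):=e_\alpha^{-1}(\beta)<\kappa$ (and $h_\beta(\alpha):=0$ otherwise). Given a threshold function $\zeta\mathrel{:}\kappa^+\to\kappa$, set $A_\beta:=\{\alpha : \beta<\alpha\text{ and }h_\beta(\alpha)\ge\zeta(\alpha)\}$. Since by uniformity every tail $\{\alpha : \alpha>\beta\}$ lies in $U$, we get $A_\beta\in U$ exactly when $\{\alpha : h_\beta(\alpha)<\zeta(\alpha)\}\notin U$; and since $\bigcap_{\beta\in X}A_\beta=\{\alpha : \alpha>\sup X\text{ and }\min_{\beta\in X}h_\beta(\alpha)\ge\zeta(\alpha)\}$, we get $\bigcap_{\beta\in X}A_\beta\notin U$ exactly when $\{\alpha : \min_{\beta\in X}h_\beta(\alpha)<\zeta(\alpha)\}\in U$. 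So the whole problem reduces to choosing the $e_\alpha$ and a single $\zeta$ so that (i) for every $\beta<\kappa^+$ the set $\{\alpha : h_\beta(\alpha)\ge\zeta(\alpha)\}$ is in $U$, while (ii) for every $X\in[\kappa^+]^\kappa$ the set $\{\alpha : \min_{\beta\in X}h_\beta(\alpha)<\zeta(\alpha)\}$ is in $U$.

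Conditions (i) and (ii) pull against each other: (i) asks $\zeta$ to lie below each single $h_\beta$ on a set in $U$, while (ii) asks $\zeta$ to dominate the pointwise minimum over any $\kappa$ of the $h_\beta$ on a set in $U$. I would reconcile them by splitting on Keisler-regularity. If $U$ is $(\kappa,\kappa^+)$-regular there is nothing to do, since a regularity witness is already a family of $\kappa^+$ sets in $U$ with empty $\kappa$-fold intersections. If $U$ is not $(\kappa,\kappa^+)$-regular then it is not $(\omega,\kappa^+)$-regular either (an $(\omega,\kappa^+)$-regular family is a fortiori $(\kappa,\kappa^+)$-regular), so by the Kanamori--Ketonen--Benda theorem quoted above $U$ lies Rudin--Keisler above a weakly normal ultrafilter $W$ on $\kappa^+$; hence $W\leq_T U$, and using the monotone cofinal map $U\to W$ supplied by the lemma of Dobrinen and Todorcevic quoted above one checks that any family of size $\kappa^+$ witnessing that $W$ is $(\kappa,\kappa^+)$-Tukey-top pulls back to such a family in $U$. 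Thus it is enough to treat a weakly normal $W$. For such $W$ the defining property — every function that is below the identity on a set in $W$ is bounded below some $\theta<\kappa^+$ on a set in $W$ — applied to the regressive functions $\alpha\mapsto e_\alpha(\xi)$ for $\xi<\kappa$ confines each of them $W$-almost everywhere to a subset of $\kappa^+$ of size $\le\kappa$; with the $e_\alpha$ chosen coherently this is what forces the pointwise minima in (ii) to be $W$-almost everywhere small, while the failure of $\kappa^+$-completeness (no uniform ultrafilter on the successor $\kappa^+$ is $\kappa^+$-complete, so $\kappa^+$ admits a partition into $\kappa$ pieces, none in $W$) prevents any individual $h_\beta$ from being pinned down and thereby yields (i). Making this last step precise — the coherent choice of the $e_\alpha$ and the extraction of $\zeta$ — is the only place where real work is needed, and is where I expect the main obstacle to lie; it is the technical heart of the theorem (cf.~\cite{TAYLOR197933}). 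Everything else — that unboundedness in $(U,\supseteq)$ is equivalent to $\bigcap_{\beta\in X}A_\beta\notin U$, the $\leq_T$-transfer, and that the $A_\beta$ are $\kappa^+$-many distinct sets — is routine.
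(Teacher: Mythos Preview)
The paper does not prove this statement --- it is quoted from \cite{TAYLOR197933} as background --- so there is no in-paper argument to compare against.

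Your proposal is, by your own description, a strategy rather than a proof. The reduction to the weakly normal case is logically valid: the case split on $(\kappa,\kappa^+)$-regularity is correct, the passage to a weakly normal $W\leq_{RK}U$ uses the Kanamori--Ketonen--Benda theorem exactly as stated in the paper, and the Tukey transfer is fine (an unbounded map $g\colon W\to U$ carries a $(\kappa,\kappa^+)$-witness to a $(\kappa,\kappa^+)$-witness, and cannot shrink it below cardinality $\kappa^+$, since any fiber of size $\geq\kappa$ would be unbounded in $W$ with bounded one-point image). But you then stop precisely where the content lies, writing that producing the threshold $\zeta$ ``is where I expect the main obstacle to lie'' and deferring to \cite{TAYLOR197933}. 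The heuristics you offer for that step are not an argument: applying weak normality to the regressive maps $\alpha\mapsto e_\alpha(\xi)$ bounds each of them $W$-almost everywhere below some $\theta_\xi$, but this says nothing about whether $e_\alpha(\xi)$ lands in a prescribed $X\in[\kappa^+]^\kappa$, which is what your condition (ii) requires; and bare non-$\kappa^+$-completeness does not explain why any particular $A_\beta$ belongs to $W$. Moreover the detour gains nothing: Taylor's argument treats an arbitrary uniform $U$ directly from the same Ulam-type data $h_\beta(\alpha)=e_\alpha^{-1}(\beta)$, so passing to the weakly normal case adds a layer without simplifying the core combinatorial step you have left undone.
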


\begin{theorem}[{Galvin; appears as~\cite[Thm. 3.3]{baumgartner-hajnal-mate}}]\label{Thm: Galvin}
    Let $\mu$ be a cardinal such that $\mu^{<\mu} = \mu$.
    Then for any normal filter $\mathcal F$ over $\mu$, $\mathcal F$ is not $(\mu, \mu^+)$-Tukey-top.
\end{theorem}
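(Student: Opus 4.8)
The plan is to show directly that no family of $\mu^+$ members of $\mathcal F$ witnesses $(\mu,\mu^+)$-Tukey-topness. Unravelling Definition~\ref{Def: Tukey top}, and noting that for $\mathcal F$ ordered by reverse inclusion a subfamily $B$ is bounded exactly when $\bigcap B\in\mathcal F$, this amounts to: given any $A=\{X_\alpha\mid\alpha<\mu^+\}\subseteq\mathcal F$ with $|A|=\mu^+$, I must produce $\mu$ distinct members of $A$ whose intersection still lies in $\mathcal F$. So fix such an $A$; reindexing, assume the $X_\alpha$ are pairwise distinct. From $\mu^{<\mu}=\mu$ it follows that $\mu$ is regular and $2^{<\mu}=\mu$, hence $|\mathcal P(\beta)|\le\mu$ for every $\beta<\mu$; in particular, for each $\beta<\mu$ there are at most $\mu$ possible ``traces'' $X_\alpha\cap\beta$.

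The heart of the argument is a counting step isolating a single ``master'' set. Call $\alpha<\mu^+$ \emph{$\beta$-small} if $\{\alpha'<\mu^+\mid X_{\alpha'}\cap\beta=X_\alpha\cap\beta\}$ has size $<\mu$. For fixed $\beta$ there are at most $\mu$ trace-classes, hence at most $\mu$ of them of size $<\mu$, so the set of $\beta$-small ordinals has size $\le\mu$; taking the union over all $\beta<\mu$, at most $\mu$ ordinals are $\beta$-small for some $\beta$. Since $\mu^+>\mu$, I can fix $\alpha^*<\mu^+$ that is $\beta$-small for no $\beta<\mu$, and set $B:=X_{\alpha^*}\in\mathcal F$. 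Then $|\{\alpha<\mu^+\mid X_\alpha\cap\beta=B\cap\beta\}|\ge\mu$ for every $\beta<\mu$.

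Next I would recursively build a sequence $\langle\alpha_\xi\mid\xi<\mu\rangle$ of pairwise distinct ordinals below $\mu^+$ with $X_{\alpha_\xi}\cap(\xi+1)=B\cap(\xi+1)$ for all $\xi$; this is possible at stage $\xi$ because $\{\alpha\mid X_\alpha\cap(\xi+1)=B\cap(\xi+1)\}$ has size $\ge\mu>|\xi|$, so it contains an ordinal not yet used. Since the $X_\alpha$ are pairwise distinct, $\{X_{\alpha_\xi}\mid\xi<\mu\}$ is a subfamily of $A$ of size exactly $\mu$. It then remains to check $\bigcap_{\xi<\mu}X_{\alpha_\xi}\in\mathcal F$, the contradiction sought. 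Let $D:=\{\nu<\mu\mid \nu\in X_{\alpha_\xi}\text{ for all }\xi<\nu\}$ be the diagonal intersection of $\langle X_{\alpha_\xi}\mid\xi<\mu\rangle$, which lies in $\mathcal F$ by normality. I claim $\bigcap_{\xi<\mu}X_{\alpha_\xi}=B\cap D$. For ``$\subseteq$'': if $\eta$ is in the left side then $\eta\in X_{\alpha_\xi}$ for all $\xi<\eta$, so $\eta\in D$, and taking $\xi=\eta$ with $X_{\alpha_\eta}\cap(\eta+1)=B\cap(\eta+1)$ gives $\eta\in B$. For ``$\supseteq$'': if $\eta\in B\cap D$ and $\xi\ge\eta$ then $\eta<\xi+1$ and $\eta\in B$, so $\eta\in B\cap(\xi+1)=X_{\alpha_\xi}\cap(\xi+1)\subseteq X_{\alpha_\xi}$, while $\eta\in D$ handles $\xi<\eta$; thus $\eta$ lies in every $X_{\alpha_\xi}$. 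Hence $\bigcap_{\xi<\mu}X_{\alpha_\xi}=B\cap D\in\mathcal F$, since $B,D\in\mathcal F$ and $\mathcal F$ is a filter, contradicting the assumption that every $B\in[A]^\mu$ is unbounded.

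The one delicate point — and what I expect to be the real obstacle to phrasing the proof correctly — is arranging the recursion so that the intersection genuinely lands in $\mathcal F$: forcing $X_{\alpha_\xi}$ to agree with the master set $B$ on the \emph{closed} segment $[0,\xi]$ rather than on $[0,\xi)$ is precisely what lets one diagonal intersection absorb the whole tail, so that there is no bookkeeping to do at limit stages of the recursion (where a naive ``stem''-type construction would collapse, since a $<\mu$-indexed intersection of $\mu^+$-sized subsets of $\mu^+$ can be empty). With that arrangement, only $\mu^{<\mu}=\mu$ together with closure of $\mathcal F$ under diagonal intersections are needed.
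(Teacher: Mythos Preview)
Your proof is correct and is precisely the classical Galvin argument. The paper itself does not give a proof of this theorem but merely cites \cite{baumgartner-hajnal-mate}; your write-up matches that standard argument (pigeonhole on initial-segment traces using $2^{<\mu}=\mu$ to find a master set $B$, then a length-$\mu$ recursion with the agreement on $[0,\xi]$ so that the full intersection equals $B$ intersected with the diagonal intersection), so there is nothing further to compare.
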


   Although we will mostly be interested in small uncountable cardinals, let us mention that recently the topic of (non)-$(\mu,\lambda)$-Tukey-top ultrafilters gained renewed interest in the case of measurable cardinals under yet another name---the Galvin property---due to several new applications (e.g.~\cite{TomNatasha,bgp,Parttwo,TomGabe24}).

\section{Distinguished Tukey-types Related to Ultrafilters over Uncountable Cardinals}\label{Seciton: destinguised}
In this section, we study some Tukey types which relate to ultrafilters over a regular uncountable cardinal $\kappa$, namely cofinal types of cardinality at most $2^{\kappa}$. 

\subsection{The Abraham-Shelah model and the Tukey-type of the club filter}
Let us denote the club filter by $\text{Cub}_\kappa$; this is the filter generated by clubs\footnote{i.e. set which are closed in the order topology of $\kappa$ and unbounded.} in $\kappa$. First observe that the Tukey type of the club filter is the following:
\begin{lemma}[Folklore]
For $\kappa$ regular, $(\text{Cub}_{\kappa},\supseteq)\equiv_T(\kappa^{\kappa},\leq)$ 
\end{lemma}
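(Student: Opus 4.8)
The plan is to exhibit Tukey reductions in both directions between $(\Cub_\kappa, \supseteq)$ and $(\kappa^\kappa, \leq)$, or, more efficiently, to produce a monotone cofinal map in one direction and a monotone cofinal map in the other and invoke the standard fact that mutual Tukey reducibility gives Tukey equivalence. Since both are directed sets of cardinality $2^\kappa$ (well, $\kappa^\kappa$ has cardinality $2^\kappa$, and $\Cub_\kappa$ also does), it will be cleanest to work directly with cofinal maps.

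First I would show $(\kappa^\kappa, \leq) \leq_T (\Cub_\kappa, \supseteq)$. Given $f \in \kappa^\kappa$, associate to it the club $C_f$ of closure points of $f$, i.e.\ $C_f = \{\alpha < \kappa : f[\alpha] \subseteq \alpha\}$ (equivalently, the set of $\alpha$ closed under $\alpha \mapsto \max(\alpha+1, f(\alpha))$). This is a club since $\kappa$ is regular, and the map $f \mapsto C_f$ is monotone from $(\kappa^\kappa, \leq)$ to $(\Cub_\kappa, \supseteq)$: if $f \leq g$ then every closure point of $g$ is a closure point of $f$, so $C_g \subseteq C_f$. To see it is cofinal (hence a Tukey map, since a monotone cofinal map between directed sets is a witness for $\leq_T$ in the correct direction — precisely, if $g \colon Q \to P$ is monotone and cofinal then $P \leq_T Q$), note that given any club $C$, its increasing enumeration $e_C \colon \kappa \to \kappa$ satisfies: any $f \geq e_C$ has $C_f \subseteq C$ (closure points of $f$ are limits of points of $C$, hence in $C$ by closedness). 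Thus the image is cofinal.

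For the reverse reduction $(\Cub_\kappa, \supseteq) \leq_T (\kappa^\kappa, \leq)$, I would map a club $C$ to its increasing enumeration $e_C \in \kappa^\kappa$. This is monotone: if $C \supseteq D$ then $e_C \leq e_D$ pointwise, since a subset of $C$ is enumerated "faster." For cofinality, given any $f \in \kappa^\kappa$, let $C_f$ again be the club of closure points of $f$; then $e_{C_f} \geq f$... wait, more carefully: I want to show that for every $f$ there is a club $C$ with $e_C \geq f$, which is immediate — take $C = C_f'$ the set of closure points, whose enumeration dominates $f$ on a tail; to be safe one replaces $f$ by $\alpha \mapsto \sup_{\beta \leq \alpha} f(\beta) + 1$ first, which doesn't change the Tukey type of the target. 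So the image $\{e_C : C \in \Cub_\kappa\}$ is cofinal in $(\kappa^\kappa, \leq)$.

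The main obstacle is bookkeeping rather than conceptual: making sure the direction conventions for "cofinal map witnesses $\leq_T$" are applied correctly (a cofinal map $g \colon Q \to P$ witnesses $P \leq_T Q$, per the definition in the excerpt), and handling the mild mismatch between pointwise domination $\leq$ and the structure of clubs — specifically that not every $f \in \kappa^\kappa$ is itself of the form $e_C$, which is why one passes through the "closure point" club and possibly pre-composes with the monotonization $f \mapsto \bar f$ where $\bar f(\alpha) = \sup\{f(\beta)+1 : \beta \leq \alpha\}$; one checks $f \mapsto \bar f$ is a Tukey self-equivalence of $(\kappa^\kappa,\leq)$ since $f \leq \bar f$ and $\bar f$ depends monotonically on $f$. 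Once these conventions are pinned down, both maps are visibly monotone and cofinal, and $\equiv_T$ follows.
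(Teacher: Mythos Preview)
Your two maps are the same ones the paper uses, but there is a genuine gap in the direction you call ``$(\Cub_\kappa,\supseteq)\le_T(\kappa^\kappa,\le)$'' (your direction labels are swapped, incidentally, though since you argue both directions this is harmless). The map $C\mapsto e_C$ is \emph{not} cofinal in $(\kappa^\kappa,\le)$: the increasing enumeration of a club is a normal (continuous, strictly increasing) function, so it has a club of fixed points, and therefore no $e_C$ can dominate the function $f(\alpha)=\alpha+1$ everywhere. Your attempted patches do not help: replacing $f$ by its monotonization $\bar f$ does not change the fact that the \emph{image} $\{e_C:C\in\Cub_\kappa\}$ misses everything above $\alpha\mapsto\alpha+1$, and ``dominates on a tail'' fails for the same reason (the fixed points of $e_{C_f}$ form a club, and at each such point $e_{C_f}(\alpha)=\alpha<\alpha+1\le f(\alpha)$ whenever $f(\alpha)>\alpha$).

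The paper's remedy is exactly the one-step shift you are circling around: use $C\mapsto(\alpha\mapsto e_C(\alpha+1))$ instead of $C\mapsto e_C$. This map is still monotone from $(\Cub_\kappa,\supseteq)$ to $(\kappa^\kappa,\le)$, but it is no longer continuous at limits, so the fixed-point obstruction disappears; given any $g\in\kappa^\kappa$ one can build a club $C$ recursively with $e_C(\alpha+1)\ge g(\alpha)$ by choosing successor stages freely and taking suprema at limits. With that shift in place, your closure-point map in the other direction (modulo the trivial issue that successor ordinals and $0$ can be spurious closure points, which is handled by taking $f(\alpha)=\min(C\setminus(\alpha+1))$ rather than $e_C$) gives the remaining reduction, and the proof is complete along essentially the paper's lines.
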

\begin{proof}
    $\text{Cub}_{\kappa}\leq _T \kappa^{\kappa}$ is witnessed by the unbounded function $X\mapsto f_X$, where $f_X$ is the increasing enumeration of the club $X$. The other direction is witnessed by the cofinal function $f\mathrel{:}\text{Cub}_{\kappa}\rightarrow \kappa^{\kappa}$ defined by $f(X)(\alpha)=f_X(\alpha+1)$. Certainly both of these functions are monotone, and one checks that they are unbounded and cofinal, respectively, by examining clubs of closure points of elements of $\kappa^\kappa$.
\end{proof}

It follows that the generalized dominating number $\mathfrak{d}_\kappa=\cf(\kappa^\kappa,\leq^*)=\cf(\kappa^\kappa,\leq)$ is just $\chi(\text{Cub}_\kappa)$; as usual, $\mathfrak{d}$ denotes $\cf (\omega^\omega,\leq)$.
The fact that the club filter is $\sigma$-complete automatically rules out the possibility of it being Tukey-top, but can it be $\omega_1$-Tukey-top?
As stated in the previous section (see theorem~\ref{Thm: Galvin}), Galvin showed that under $\kappa^{<\kappa}=\kappa$, every normal filter over $\kappa$ is not Tukey-above $[\kappa^+]^{<\kappa}$.
Abraham-Shelah proved the following theorem~\cite{AbrahamShelah1986}:
\begin{theorem*}[Abraham-Shelah Model]
\label{thmabsh} Assume $\mathsf{GCH}$ holds. Suppose $\kappa,\lambda$ are infinite cardinals such that $\cf(\kappa)=\kappa<\kappa^+<\cf(\lambda)\leq\lambda$. 
Then, in a forcing extension there is a family $\mathcal{C}$ of $\lambda$-many clubs in $\kappa^+$, such that:
\begin{center}
    For every subfamily $\mathcal{D}\subseteq \mathcal{C}$ with $|\mathcal{D}|=\kappa^+$, $|\bigcap \mathcal{D}|<\kappa$.
\end{center}
Moreover,  $2^\kappa=2^{\kappa^+}=\lambda$ holds in this model provided $\cf(\lambda)>\kappa^+$.
\end{theorem*}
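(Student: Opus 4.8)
The plan is to force over a model of GCH with a product (or finite-support iteration) of forcings that ``specializes'' or ``kills'' potential large intersections while shooting many clubs through $\kappa^+$, and to control cardinals and the size of the continuum functions via a $\Delta$-system / chain-condition argument. Concretely, I would index a family $\langle C_\xi \mid \xi < \lambda\rangle$ of subsets of $\kappa^+$ to be added generically, with conditions being functions $p$ with $|\dom(p)| < \kappa^+$, $\dom(p) \subseteq \lambda$, assigning to each $\xi \in \dom(p)$ a bounded closed subset of $\kappa^+$ (an approximation to $C_\xi$), together with a ``promise'' component: a set of pairs witnessing that certain $\kappa^+$-sized subfamilies are already forced to have intersection bounded below some ordinal $< \kappa$. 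The generic clubs are then $C_\xi = \bigcup\{p(\xi) \mid p \in G\}$, and closedness is arranged because conditions only add closed bounded pieces and we take unions along the $\le_{\kappa^+}$-directed generic; unboundedness comes from density. The key point is that when we extend a condition we must, for any new ordinal $\gamma$ we wish to put into $\kappa^+$-many of the $C_\xi$ simultaneously, record in the promise that the relevant subfamily's common part is capped below some $\delta < \kappa$ unless we already have a witness; this is where the hypothesis $\kappa^+ < \cf(\lambda)$ is used, since a $\Delta$-system argument on any purported $\kappa^+$-sized subfamily $\mathcal D$ of names lets us find a condition forcing its intersection into the root's ``stem'' below $\kappa$.

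First I would set up the forcing $\Pbb$ precisely and verify it is $\kappa^+$-closed (so no new bounded subsets of $\kappa^+$, no new $\kappa$-sequences, and $\kappa^+$ is preserved) --- here GCH on the ground model and the bound $|\dom(p)|<\kappa^+$ make the closure work. Second I would prove $\Pbb$ has the $\kappa^{++}$-chain condition via a $\Delta$-system lemma, using GCH to count the number of possible ``roots'': this preserves all cardinals $\ge \kappa^{++}$, and in particular $\lambda$ (which has cofinality $> \kappa^+$, hence $\ge \kappa^{++}$). Third, density arguments: for each $\xi$ and each $\alpha < \kappa^+$, it is dense to have $\max(p(\xi)) > \alpha$, giving unboundedness of each $C_\xi$; and genericity plus the closed-approximation format gives that each $C_\xi$ is closed, hence club. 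Fourth, and this is the heart: given a $\Pbb$-name $\name{\mathcal D}$ for a subfamily of $\{C_\xi\}$ of size $\kappa^+$, I argue in $V$ that any condition can be extended to decide $\kappa^+$-many indices $\xi_i$ lying in $\name{\mathcal D}$, apply the $\Delta$-system lemma (now using $\cf(\lambda) > \kappa^+$ to get $\kappa^+$-many conditions with a common root) to find a single condition whose promise component forces $\bigcap_{i} C_{\xi_i}$ to be contained in a fixed bounded closed set of size $< \kappa$; since promises are never violated by stronger conditions, this is forced outright, so $|\bigcap \name{\mathcal D}| < \kappa$ in the extension.

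Finally I would compute the cardinal arithmetic in $V[G]$: the forcing has size $\lambda^{<\kappa^+} = \lambda$ (using GCH and $\cf(\lambda) > \kappa^+$, which also gives $\lambda^{\kappa^+} = \lambda$), the $\kappa^{++}$-c.c. bounds the number of nice names for subsets of $\kappa$ and of $\kappa^+$ by $\lambda^{\kappa^+} = \lambda$, and a nice-names count of antichains shows $2^\kappa \le \lambda$ and $2^{\kappa^+} \le \lambda$; the reverse inequality $2^\kappa \ge \lambda$ follows because the $\lambda$-many generic clubs (or rather their characteristic functions restricted suitably, using the almost-disjointness of intersections) are pairwise distinct and in fact code $\lambda$-many distinct subsets --- more carefully, the almost-disjointness property itself yields $\lambda$-many distinct subsets of $\kappa^+$, so $2^{\kappa^+}=\lambda$, and since $\kappa^{++} \le \lambda$ and GCH below is preserved enough, $2^{\kappa} = \lambda$ as well when $\cf(\lambda) > \kappa^+$. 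The main obstacle I anticipate is the design of the ``promise'' component of conditions so that (a) it is preserved downward under extension, (b) it does not prevent the density arguments needed for the clubs to be unbounded, and (c) it is rich enough that the $\Delta$-system argument in step four can always install the needed cap below $\kappa$; balancing these three requirements --- essentially a side-condition bookkeeping problem --- is the technical crux, and is exactly where the gap $\kappa^+ < \kappa^{++} < \cf(\lambda)$ gets fully exploited.
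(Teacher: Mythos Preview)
The paper does not give its own proof of this theorem --- it is quoted from Abraham--Shelah~\cite{AbrahamShelah1986} and used as a black box. So there is no ``paper's proof'' to compare against directly. That said, the paper does record one structural fact about the actual Abraham--Shelah forcing $\mathbb{S}$ (in the case $\kappa=\omega$): $\Add(\omega,\omega_2)$ is a regular suborder and the quotient is $\sigma$-distributive. This already exposes a genuine gap in your plan.

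Your proposed forcing is $\kappa^+$-closed, hence adds no new subsets of $\kappa$. Over a GCH ground model this forces $2^\kappa=\kappa^+$ in the extension, flatly contradicting the ``moreover'' clause $2^\kappa=\lambda$. Your attempted justification at the end (``since $\kappa^{++}\le\lambda$ and GCH below is preserved enough, $2^\kappa=\lambda$'') is simply false: closure at $\kappa^+$ pins $2^\kappa$ where it was. The real Abraham--Shelah forcing is not $\kappa^+$-closed; it genuinely adds $\lambda$-many Cohen subsets of $\kappa$, and the clubs are built from these generic objects. The small-intersection property is then a consequence of mutual genericity rather than of bookkeeping promises.

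There is a second, independent gap: your promise mechanism cannot do what you ask of it. A condition in your poset has domain of size $<\kappa^+$, so it can only mention fewer than $\kappa^+$ coordinates. But the promise you want to install in step four concerns a specific family of $\kappa^+$-many coordinates $\{\xi_i : i<\kappa^+\}$ --- no single condition can even name that family, let alone cap its intersection. A $\Delta$-system argument produces $\kappa^+$-many pairwise-compatible conditions, not one condition controlling $\kappa^+$-many coordinates. Without a completely different side-condition architecture (or, as in the actual construction, without promises at all), this step does not go through.
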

In particular, in the Abraham-Shelah model, the club filter $\text{Cub}_{\kappa^+}$ is $\kappa^+$-Tukey-top. At the successor of a singular cardinal, this was established in~\cite{bgp}, and for a (weakly) inaccessible cardinal $\kappa$, it is still open whether the club filter can be $\kappa$-Tukey-top~\cite[Q. 5.7]{bgp}.
Applying this theorem to $\omega_1$, we can find a model with $2^{\aleph_1}$-many clubs such that the intersection of any $\aleph_1$-many of these clubs is finite. It follows that any extension of the club filter is not Tukey-top. More generally, the club filter enjoys the property of being \textit{deterministic}, as introduced in~\cite{Commutativity}.
A filter $\mathcal{F}$ is \emph{deterministic} if it is generated by a set $\mathcal{B}$ such that for any $\mathcal{A}\subseteq \mathcal{B}$, if $\bigcap \mathcal{A}\notin \mathcal{F}$ then $\bigcap\mathcal{A}\in \mathcal{F}^*$. Deterministic filters have the property that if $F \subseteq F'$ then $F\leq_T F'$.

\begin{proposition}
    $\text{Cub}_{\kappa}$ is a deterministic filter. Hence, any uniform extension of the club filter is Tukey-above it.
\end{proposition}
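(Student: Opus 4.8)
The plan is to establish two things: first, that $\text{Cub}_\kappa$ is deterministic, and second, that any deterministic filter $F$ has the property that $F \subseteq F'$ implies $F \leq_T F'$; the claimed consequence about uniform extensions then follows immediately, since if $\Ucal$ is a uniform ultrafilter extending $\text{Cub}_\kappa$, applying the second fact with $F = \text{Cub}_\kappa$ and $F' = \Ucal$ gives $\text{Cub}_\kappa \leq_T \Ucal$.

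For determinism, I would take $\mathcal{B}$ to be the collection of all clubs of $\kappa$, which generates $\text{Cub}_\kappa$ by definition. Given any subfamily $\mathcal{A} \subseteq \mathcal{B}$, I must show that if $\bigcap \mathcal{A} \notin \text{Cub}_\kappa$ then $\bigcap \mathcal{A} \in \text{Cub}_\kappa^*$, i.e. the complement of $\bigcap\mathcal{A}$ contains a club (equivalently, $\bigcap\mathcal{A}$ is nonstationary). The key point is that $C := \bigcap \mathcal{A}$ is always \emph{closed}: an intersection of closed sets is closed. So either $C$ is unbounded, in which case it is a club and hence in $\text{Cub}_\kappa$, or $C$ is bounded, say $\sup(C) = \gamma < \kappa$, in which case $\kappa \setminus C \supseteq \kappa \setminus (\gamma+1)$, a club, so $C \in \text{Cub}_\kappa^*$. (More generally one could observe $C$ closed and not unbounded means $\kappa \setminus C$ contains the club of points above $\sup C$.) Thus the dichotomy required by determinism holds, with the middle case — $C$ stationary but not in the filter — simply not arising.

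For the abstract fact that a deterministic filter $F$ generated by $\mathcal{B}$ satisfies $F \subseteq F' \Rightarrow F \leq_T F'$: I would define a cofinal map $g \mathrel{:} F' \to F$ witnessing the reduction, or dually a Tukey map $f \mathrel{:} F \to F'$. The natural choice is the inclusion map $\iota \mathrel{:} F \hookrightarrow F'$ (viewing both as ordered by $\supseteq$); I claim $\iota$ is Tukey. Suppose $\mathcal{X} \subseteq F$ and $\iota[\mathcal{X}] = \mathcal{X}$ is bounded in $(F', \supseteq)$, i.e. there is $Y \in F'$ with $Y \subseteq X$ for all $X \in \mathcal{X}$. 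Replacing each $X$ by a subset from $\mathcal{B}$ inside it (legitimate since $\mathcal{B}$ generates $F$ — a routine adjustment that does not change boundedness), we may assume $\mathcal{X} \subseteq \mathcal{B}$. Then $\bigcap \mathcal{X} \supseteq Y$, and since $Y \in F'$ and $\emptyset \notin F'$ and $Y \neq \emptyset$... the crucial consequence is $\bigcap\mathcal{X} \notin F^* = F'^* $ — more precisely, $\bigcap \mathcal{X} \supseteq Y \in F'$ forces $\bigcap\mathcal{X} \notin (F')^*$, hence $\bigcap\mathcal{X} \notin F^*$ (as $F \subseteq F'$ gives $F^* \subseteq (F')^*$). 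By determinism of $F$, this means $\bigcap \mathcal{X} \in F$, so $\mathcal{X}$ is bounded in $(F,\supseteq)$ with bound $\bigcap\mathcal{X}$. Hence $\iota$ is Tukey and $F \leq_T F'$.

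The main obstacle, such as it is, lies in the bookkeeping of the second step: one must be careful that "replacing each $X \in \mathcal{X}$ by an element of $\mathcal{B}$ below it" is done \emph{uniformly} enough that the resulting family still has its intersection bounded below by $Y$, and that the Tukey property is genuinely about preimages of bounded sets rather than just intersections. Concretely, for each $X \in \mathcal{X}$ choose $B_X \in \mathcal{B}$ with $Y \subseteq B_X \subseteq X$ (possible because $Y \in F' \supseteq F$ and $\mathcal{B}$ generates $F$, so some generator sits between them — though one should check $Y \in F$, which may fail; the safe route is instead to pick $B_X \in \mathcal{B}$ with $B_X \subseteq X$ arbitrary and note $\bigcap_X B_X \subseteq \bigcap \mathcal{X}$, then argue that if $\bigcap_X B_X \in F$ then a fortiori $\bigcap\mathcal{X} \in F$, reducing to the generator case). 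This is a standard manoeuvre and I do not expect it to present real difficulty, but it is the only place where the argument needs genuine attention rather than being a one-line observation.
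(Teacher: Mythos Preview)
Your proof that $\text{Cub}_\kappa$ is deterministic is correct and is the intended argument: the intersection of any family of clubs is closed, hence either a club (so in the filter) or bounded (so in the dual ideal).

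However, your argument for the general fact ``$F$ deterministic and $F \subseteq F'$ implies $F \leq_T F'$'' has a real gap: the inclusion map $\iota\mathrel{:} F \hookrightarrow F'$ is \emph{not} Tukey in general, and your ``safe route'' does not repair it. Concretely, take $F = \text{Cub}_{\omega_1}$, let $S \subseteq \omega_1$ be stationary co-stationary, let $F'$ be any ultrafilter extending $\text{Cub}_{\omega_1}$ with $S \in F'$, and set $\mathcal{X} = \{S \cup C : C \text{ club}\}$. Then $\mathcal{X} \subseteq \text{Cub}_{\omega_1}$ and $\mathcal{X}$ is bounded in $F'$ by $S$, but $\bigcap \mathcal{X} = S \notin \text{Cub}_{\omega_1}$, so $\mathcal{X}$ is unbounded in $\text{Cub}_{\omega_1}$. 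Your replacement $X \mapsto B_X \subseteq X$ with $B_X$ a club does not help, because the bound $Y = S$ need not sit inside any $B_X$; thus you cannot conclude $\bigcap_X B_X \notin F^*$, which is exactly what determinism requires.

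The fix is a one-line change: use $f\mathrel{:} F \to F'$ defined by $f(X) = B_X$ (a chosen generator below $X$) as the Tukey map itself, rather than the inclusion. If $\{B_X : X \in \mathcal{X}\}$ is bounded in $F'$ by $Y$, then $Y \subseteq \bigcap_X B_X$, so $\bigcap_X B_X \in F'$ and hence $\bigcap_X B_X \notin (F')^* \supseteq F^*$; determinism now gives $\bigcap_X B_X \in F$, and since $\bigcap_X B_X \subseteq X$ for all $X \in \mathcal{X}$, this bounds $\mathcal{X}$ in $F$. (The paper itself does not prove this proposition, taking the general property of deterministic filters as known from \cite{Commutativity}.)
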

 So in the Abraham-Shelah model, in fact any extension of $\text{Cub}_{\omega_1}$ is $\omega_1$-Tukey-top.  
In the next section, we will moreover see that in this model every uniform ultrafilter over $\omega_1$ is Tukey-top.

\subsection{On the cofinal type of \texorpdfstring{$\kappa^{(\kappa^+)}$}{k(k+)}}
\label{Section: omegaomega1}  

The directed set $(\omega^{\omega_1},\leq)$ will play an important role later in the paper.
Much of the basic analysis we will need readily generalizes to higher cardinals.

\begin{lemma}\label{lemgentoeverywhere}
    Suppose that $I$ is a $\kappa^+$-complete ideal over $\kappa^+$. Then $$(\kappa^{(\kappa^+)},\leq_I)\equiv_T (\kappa^{(\kappa^+)},\leq).$$
\end{lemma}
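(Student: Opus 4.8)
The plan is to prove the two Tukey reductions separately. The direction $(\kappa^{(\kappa^+)},\leq_I)\leq_T(\kappa^{(\kappa^+)},\leq)$ is immediate: the identity map from $(\kappa^{(\kappa^+)},\leq_I)$ to $(\kappa^{(\kappa^+)},\leq)$ is Tukey, since if $B$ is pointwise bounded, say $b\leq g$ for all $b\in B$, then also $b\leq_I g$ for all $b\in B$, so $B$ is bounded mod $I$. All the work is in the reduction $(\kappa^{(\kappa^+)},\leq)\leq_T(\kappa^{(\kappa^+)},\leq_I)$, which I would establish by exhibiting an explicit ``spreading'' Tukey map.

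First I would invoke Ulam's theorem: since $\kappa^+$ is a successor cardinal and $I$ is a $\kappa^+$-complete ideal on $\kappa^+$ containing all bounded sets (equivalently, by $\kappa^+$-completeness, all singletons), there is a family $\langle T_\beta : \beta<\kappa^+\rangle$ of pairwise disjoint $I$-positive subsets of $\kappa^+$. Concretely one runs an Ulam matrix: for each ``row'' the $\kappa$ cells cover a co-bounded, hence $I$-co-null, subset of $\kappa^+$, so some cell in each row is $I$-positive; the associated choice function maps $\kappa^+$ into $\kappa$, so some fiber has size $\kappa^+$, and the corresponding cells form the desired disjoint family. Fix $\pi\mathrel{:}\kappa^+\to\kappa^+$ with $\pi[T_\beta]=\{\beta\}$ for every $\beta$, arbitrary off $\bigcup_\beta T_\beta$, and define $F\mathrel{:}\kappa^{(\kappa^+)}\to\kappa^{(\kappa^+)}$ by $F(h)(\alpha)=h(\pi(\alpha))$; thus $F(h)$ is constant with value $h(\beta)$ on $T_\beta$.

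It remains to check that $F$ is a Tukey map from $(\kappa^{(\kappa^+)},\leq)$ to $(\kappa^{(\kappa^+)},\leq_I)$, i.e.\ that whenever $F[X]$ is $\leq_I$-bounded, $X$ is $\leq$-bounded. So suppose $F(h)\leq_I g$ for all $h\in X$; it suffices to show $\sup_{h\in X}h(\beta)<\kappa$ for each $\beta<\kappa^+$, since then $\beta\mapsto\sup_{h\in X}h(\beta)$ is a pointwise upper bound for $X$. Fix $\beta$ and assume for contradiction that $\{h(\beta):h\in X\}$ is cofinal in $\kappa$; choose an increasing cofinal sequence $\langle\delta_i : i<\cf(\kappa)\rangle$ in $\kappa$ and $h_i\in X$ with $h_i(\beta)\geq\delta_i$. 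Since $F(h_i)\restriction T_\beta$ is constantly $h_i(\beta)\geq\delta_i$ and $F(h_i)\leq_I g$, the set $\{\gamma\in T_\beta : g(\gamma)<\delta_i\}$ is contained in $\{\gamma : F(h_i)(\gamma)>g(\gamma)\}\in I$, hence lies in $I$. But as $g(\gamma)<\kappa=\sup_i\delta_i$ for every $\gamma$, we get $T_\beta=\bigcup_{i<\cf(\kappa)}\{\gamma\in T_\beta : g(\gamma)<\delta_i\}$, a union of $\cf(\kappa)<\kappa^+$ members of $I$, which is in $I$ by $\kappa^+$-completeness — contradicting that $T_\beta$ is $I$-positive.

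The main obstacle, and the point the argument turns on, is that the naive choice — the identity map — fails: a family of functions can be $\leq_I$-bounded while having unbounded values at a single coordinate, and more generally the pointwise supremum of a mod-$I$-bounded family can escape $\kappa$. The spreading map converts ``unbounded at coordinate $\beta$'' into ``$T_\beta$ is a union of only $\cf(\kappa)$ null sets'', and the decisive observation is that, having fixed $\beta$, one only ever unions $\cf(\kappa)\leq\kappa<\kappa^+$ many sets, which a $\kappa^+$-complete ideal tolerates. I would also note where the hypothesis is used: the assumption that $I$ contains all bounded sets is essential for the Ulam step (and indeed the statement fails otherwise, e.g.\ for $I=\{X\subseteq\kappa^+ : 0\notin X\}$, where $(\kappa^{(\kappa^+)},\leq_I)$ has cofinality $\kappa$). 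The proof is uniform in $\kappa$, in particular covering the case $\kappa=\omega$ relevant to Laver's construction.
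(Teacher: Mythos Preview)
Your proof is correct and follows essentially the same approach as the paper: both use the identity for the easy direction and, for the hard direction, the same ``spreading'' map $F(h)=h\circ\pi$ built from a family of $\kappa^+$ pairwise disjoint $I$-positive sets obtained via an Ulam matrix. The only cosmetic difference is in the verification that $F$ is Tukey---the paper finds a positive subset of $A_{i^*}$ on which $g$ is constant, whereas you write $T_\beta$ as a union of $\cf(\kappa)$ many $I$-null level sets of $g$; these are dual uses of $\kappa^+$-completeness, and your remark that the lemma tacitly assumes $I$ contains all singletons (hence, by completeness, all bounded sets) is well taken.
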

\begin{proof}
      Clearly, the identity map is a Tukey reduction witnessing that $$(\kappa^{(\kappa^+)},\leq_I) \leq_T (\kappa^{(\kappa^+)},\leq).$$
      For the other direction, the assumption that $I$ is $\kappa^+$-complete ensures the existence of a partition $\kappa^+=\biguplus_{i<\kappa^+}A_i$ where each $A_i\in I^+$ (see~\cite[16.3]{kanamori1994}).  Consider the map $f\mapsto F(f)$, where
    $F(f)(\alpha):=f(i)$ for the unique $i<\kappa^+$ such that $\alpha\in A_i$. To see that $F$ is a Tukey reduction, let $\mathcal{A}\subseteq \kappa^{(\kappa^+)}$ be unbounded in $\leq$, meaning there is $i^*<\kappa^+$ such that $\{f(i^*)\mid f\in\mathcal{A}\}$ is unbounded in $\kappa$. Suppose for contradiction that $g$ is a $\leq_I$-bound for $F[\mathcal{A}]$. Consider $g\restriction A_{i^*}$, and note that since $I$ is $\kappa^+$-complete, there must be a positive $A'\subseteq A_{i^*}$ such that $g\restriction A'$ is constantly $\alpha^*$ for some $\alpha^* < \kappa$. This is impossible since for every $f\in\mathcal{A}$, $F(f)\leq_I g$, so there is $\alpha_f\in A'$ such that $\alpha^*=g(\alpha_f)\geq F(f)(\alpha_f)= f(i^*)$, in which case $\alpha^*$ bounds $\{f(i^*)\mid f\in \mathcal{A}\}$ within $\kappa$.
\end{proof}
The following lemma provides a significant lower bound for the Tukey type of $\kappa^{(\kappa^+)}$.
\begin{lemma}
Suppose that there is a family $\mathcal{F} \subseteq [\kappa^+]^{\kappa^+}$ which is almost disjoint modulo bounded.
Then $[\lambda]^{<\kappa} \leq_T \kappa^{(\kappa^+)}$, where $\lambda = \lvert \mathcal F \rvert$.
\end{lemma}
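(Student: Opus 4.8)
The plan is to use the almost-disjoint-mod-bounded family $\mathcal F$ to build a Tukey reduction $[\lambda]^{<\kappa} \to \kappa^{(\kappa^+)}$, i.e.\ a map sending a finite-in-the-$<\kappa$-sense subset of $\mathcal F$ to a function in $\kappa^{\kappa^+}$ in such a way that preimages of bounded sets are bounded. First I would fix, for each $A \in \mathcal F$, its increasing enumeration $e_A \colon \kappa^+ \to A$. The key point is that if $A \neq B$ in $\mathcal F$, then $A \cap B$ is bounded in $\kappa^+$, so the two enumerations $e_A, e_B$ eventually list disjoint ordinals; in particular, for each fixed $\xi < \kappa^+$, any two distinct $A, B$ have $e_A(\eta) \neq e_B(\eta)$ for all sufficiently large $\eta$.

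Next I would define, for a subset $\mathcal S \in [\mathcal F]^{<\kappa}$, a function $g_{\mathcal S} \in \kappa^{\kappa^+}$ that ``codes'' $\mathcal S$. A natural attempt: think of $\kappa^+$ as split into $\kappa^+$-many blocks, and on the $\xi$-th block record, roughly, how the ordinals $\{e_A(\xi) : A \in \mathcal S\}$ sit; more concretely, since $|\mathcal S| < \kappa$ and $\kappa$ is regular, for each $\xi$ the set $\{e_A(\xi) : A \in \mathcal S\}$ has size $<\kappa$ hence is bounded in $\kappa$ — wait, that is not automatic unless $\kappa$ is regular, so I would assume (or note it is harmless to assume, by the hypothesis on $[\lambda]^{<\kappa}$ being the relevant benchmark) $\kappa$ regular. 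Then set $g_{\mathcal S}(\xi) = \sup\{ e_A(\xi) : A \in \mathcal S\} + 1$. The claim to verify is: if $\mathcal X \subseteq [\mathcal F]^{<\kappa}$ and $\{g_{\mathcal S} : \mathcal S \in \mathcal X\}$ is bounded in $(\kappa^{\kappa^+},\leq)$ by some $h$, then $\bigcup \mathcal X$ has size $<\kappa$, which is exactly the statement that preimages of bounded sets under the Tukey map (composed with $A \mapsto \{A\}$) are bounded in $[\lambda]^{<\kappa}$.

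To prove that claim I would argue by contradiction: suppose $\bigcup \mathcal X$ has size $\geq \kappa$, pick distinct $A_\alpha \in \bigcup\mathcal X$ for $\alpha < \kappa$, with $A_\alpha \in \mathcal S_\alpha \in \mathcal X$. Using almost-disjointness mod bounded, I would find a single $\xi < \kappa^+$ and a cofinal-in-$\kappa$ set of indices $\alpha$ such that the values $e_{A_\alpha}(\xi)$ are pairwise distinct and unbounded in $\kappa$ — this is a pigeonhole/$\Delta$-system style step, exploiting that the pairwise intersections are bounded so the enumerations spread out. Then each $g_{\mathcal S_\alpha}(\xi) > e_{A_\alpha}(\xi)$, so $\{g_{\mathcal S_\alpha}(\xi)\}$ is unbounded in $\kappa$, contradicting $g_{\mathcal S_\alpha} \leq h$ which would force $g_{\mathcal S_\alpha}(\xi) \leq h(\xi) < \kappa$. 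The main obstacle I anticipate is precisely this coordinate-selection step: getting a \emph{single} $\xi$ that simultaneously separates $\kappa$-many of the sets and along which their enumeration values are unbounded. One clean way around it is to use a partition of $\kappa^+$ into $\kappa^+$ stationary (or merely cofinal) pieces and spread the coding out, or to invoke Lemma~\ref{lemgentoeverywhere} to work modulo a convenient ideal; I would present whichever makes the separation argument cleanest, most likely passing through $\leq_I$ for $I$ the bounded ideal on $\kappa^+$ and then citing Lemma~\ref{lemgentoeverywhere} (which requires $\kappa^+$-completeness — so I would instead use a $\kappa^+$-complete ideal if needed, or just do the bookkeeping by hand).
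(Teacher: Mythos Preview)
There is a genuine gap. Your map $g_{\mathcal S}$ does not land in $\kappa^{\kappa^+}$: the enumeration $e_A\colon\kappa^+\to A$ takes values in $A\subseteq\kappa^+$, so $e_A(\xi)$ is an ordinal below $\kappa^+$, not below $\kappa$. The step ``$\{e_A(\xi):A\in\mathcal S\}$ has size $<\kappa$ hence is bounded in $\kappa$'' is a non sequitur --- the set is not even a subset of $\kappa$. Consequently $g_{\mathcal S}(\xi)$ as defined is an ordinal below $\kappa^+$, and you have produced a map into $(\kappa^+)^{\kappa^+}$, not $\kappa^{\kappa^+}$. Moreover, the ``coordinate selection'' you flag as the main obstacle is in fact easy (for $\kappa$ many members of $\mathcal F$ there are only $\kappa$ pairwise bounds, whose sup gives a single $\xi<\kappa^+$ beyond which all the $e_{A_\alpha}(\xi)$ are distinct), but it does not help: $\kappa$ many distinct ordinals in $\kappa^+$ form a bounded set, so no contradiction with a bound $h$ arises.

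What is missing is a device to compress values from $\kappa^+$ down into $\kappa$. The paper does this via fixed injections $e_\beta\colon\beta\to\kappa$ for $\beta<\kappa^+$ and works with the index set $[\kappa^+]^2$ (harmless since $\kappa^{[\kappa^+]^2}\equiv_T\kappa^{\kappa^+}$): for $X\in\mathcal F$ set $f_X(\alpha,\beta)=e_\beta(\min(X\cap[\alpha,\beta)))$ when this is defined. Given $\kappa$ many $X$'s, choose $\alpha$ past all pairwise intersections and then $\beta$ above all the ordinals $\min(X\setminus\alpha)$; these minima are $\kappa$ many distinct ordinals below $\beta$, and $e_\beta$ sends them injectively into $\kappa$, where $\kappa$ distinct values are automatically unbounded. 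That compression step is the key idea your proposal lacks.
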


\begin{proof}
    Fix injections $e_\beta\mathrel{:}\beta \to \kappa$ for each $\beta < \kappa^+$, and for $X \in \mathcal{F}$
    define $f_X\mathrel{:}[\kappa^+]^2 \to \kappa$ by 
    \[
    f_X(\alpha,\beta) :=
    \begin{cases}
    e_\beta(\min(X \cap [\alpha,\beta)) & \textrm{ if } X \cap [\alpha,\beta) \ne \emptyset \\
    0 & \textrm{ otherwise}
    \end{cases}
    \]
    Since $\kappa^{[\kappa^+]^2} \equiv_T \kappa^{\kappa^+}$, it suffices to show that $X \mapsto f_X$ has the property that whenever
    $\mathcal{F}_0 \subseteq \Fcal$ has cardinality $\kappa$, $\{f_X \mid X \in \mathcal{F}_0\}$ is unbounded.
Let $\mathcal{F}_0 \subseteq \mathcal{F}$ be a set of cardinality $\kappa$.
Let $\alpha<\kappa^+$ be sufficiently large that $\{X \setminus \alpha \mid X \in \mathcal{F}_0\}$
is pairwise disjoint and let $\beta > \alpha$ be such that $\min(X \setminus \alpha) < \beta$ for all $X \in \mathcal{F}_0$.
Then $X \mapsto f_X(\alpha,\beta)$ is one-to-one and hence $\{f_X \mid X \in \mathcal{F}_0\}$ is unbounded in
$\kappa^{[\kappa^+]^2}$.
\end{proof}
\begin{remark}
 Any family $\mathcal{F}\subseteq \kappa^{(\kappa^+)}$ of functions different modulo bounded\footnote{i.e. for any distinct $f,g\in\mathcal{F}$, $\{\alpha\mid f(\alpha)=g(\alpha)\}$ is bounded in $\kappa$.} induces a family of subsets of $\kappa^+$ almost disjoint modulo bounded of the same cardinality. This is proven by transferring the graphs of the functions through a bijection of $\kappa^+\times\kappa^+$ with $\kappa^+$. The other direction is also clear: any family of functions different modulo bounded sets induces a family of almost disjoint functions modulo bounded of the same cardinality.
\end{remark}
Recall that the generalized bounding number, denoted by $\mathfrak{b}_\mu$, is the minimal size of an unbounded family in $(\mu^\mu,\leq^*)$.  It is well known that $\mu^+\leq\mathfrak{b}_{\mu}\leq 2^{\mu}$.
\begin{corollary}\label{Cor: omegaomega1 lowerbounds}
    \ {}\begin{enumerate}
        \item \label{b_kappa+_Tukey} $[\mathfrak{b}_{\kappa^+}]^{<\omega}\leq_T \kappa^{(\kappa^+)}$. 
        \item  \label{subsetcoding}$2^\kappa=\kappa^+$ implies that $\kappa^{(\kappa^+)}$ is Tukey-top.
    \end{enumerate} 
\end{corollary}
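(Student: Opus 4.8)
The plan is to derive Corollary~\ref{Cor: omegaomega1 lowerbounds} from the two preceding lemmas, since both parts are essentially immediate once the hypotheses are matched up correctly.

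For part~\eqref{b_kappa+_Tukey}: by the Remark immediately preceding the corollary, a family $\mathcal{F} \subseteq \kappa^{(\kappa^+)}$ of functions pairwise different modulo bounded induces a family of subsets of $\kappa^+$ of the same cardinality which is almost disjoint modulo bounded. Take $\mathcal{F}$ to be an unbounded family in $(\kappa^{+(\kappa^+)}, \leq^*)$ of size $\mathfrak{b}_{\kappa^+}$; first I would note that, after thinning, one may assume such an unbounded family consists of functions pairwise different modulo bounded (if two functions agree on an unbounded set one of them is redundant for unboundedness — more carefully, a standard argument produces a $\leq^*$-unbounded, hence $\leq^*$-increasing after reindexing, family, any two of whose members differ on a co-bounded set; alternatively one can simply invoke that $\mathfrak{b}_{\kappa^+}$ is also the least size of such a family). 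Then the previous lemma applies with $\lambda = |\mathcal{F}| = \mathfrak{b}_{\kappa^+}$ to give $[\mathfrak{b}_{\kappa^+}]^{<\kappa} \leq_T \kappa^{(\kappa^+)}$, and since $[\mathfrak{b}_{\kappa^+}]^{<\omega} \leq_T [\mathfrak{b}_{\kappa^+}]^{<\kappa}$ trivially (the inclusion map is Tukey), the claim follows.

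For part~\eqref{subsetcoding}: assume $2^\kappa = \kappa^+$. Then $|[\kappa^+]^{\kappa^+}| = 2^{\kappa^+} = 2^\kappa \cdot \kappa^+ = \kappa^+$ — wait, more to the point, the relevant bound is $\mathfrak{b}_{\kappa^+} \leq 2^{\kappa^+}$, but under $2^\kappa = \kappa^+$ I want a family of size $2^{\kappa^+} = \kappa^{++}$, which does not obviously come from $\mathfrak{b}_{\kappa^+}$. So instead I would apply the lemma directly: under $2^\kappa = \kappa^+$ one has $2^{\kappa^+} = 2^\kappa \cdot 2^{\kappa^+}$... the cleanest route is to observe that $2^\kappa = \kappa^+$ gives, by a standard construction (e.g. an independent family on $\kappa^+$, or splitting along branches of the tree $2^{<\kappa^+}$ which has $2^{\kappa} \cdot \kappa^+ = \kappa^+$ nodes and $2^{\kappa^+}$ branches), a family $\mathcal{F} \subseteq [\kappa^+]^{\kappa^+}$ of size $2^{\kappa^+}$ which is almost disjoint — indeed almost disjoint modulo bounded, since distinct branches of $2^{<\kappa^+}$ split at a bounded stage. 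The previous lemma then yields $[2^{\kappa^+}]^{<\kappa} \leq_T \kappa^{(\kappa^+)}$; since $|\kappa^{(\kappa^+)}| = (\kappa^+)^\kappa = 2^\kappa \cdot \kappa^+ = \kappa^+$... no: $(\kappa^+)^{\kappa^+} = 2^{\kappa^+}$, so $\kappa^{(\kappa^+)}$ has cardinality $2^{\kappa^+}$, and being Tukey-above $[2^{\kappa^+}]^{<\omega}$ while having cardinality $2^{\kappa^+}$ makes it Tukey-equivalent to $[2^{\kappa^+}]^{<\omega}$ by Tukey's proposition, i.e. Tukey-top.

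The main obstacle, and the only place requiring genuine care, is the passage between "unbounded family in $\leq^*$" and "family pairwise different modulo bounded" in part~\eqref{b_kappa+_Tukey}, and the explicit production of a modulo-bounded almost disjoint family of full size $2^{\kappa^+}$ in part~\eqref{subsetcoding}; both are standard but should be stated, perhaps by citing that branches of $2^{<\kappa^+}$ through a fixed bijection $2^{<\kappa^+} \leftrightarrow \kappa^+$ split boundedly. Everything else is a direct invocation of the lemma and of Tukey's maximality proposition, together with the Remark's translation between almost-disjoint-mod-bounded families of sets and of functions.
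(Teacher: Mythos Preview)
Your overall strategy matches the paper's: for both parts you produce a large family that is pairwise different (or almost disjoint) modulo bounded, then invoke the Remark and the preceding Lemma. For part~(1) the paper likewise builds a $\leq^*$-strictly-increasing chain of length $\mathfrak{b}_{\kappa^+}$; for part~(2) the paper uses a different but equally standard construction---subset coding via injections $\pi_\alpha\colon\mathscr{P}(\alpha)\to\kappa^+$, setting $f_X(\alpha)=\pi_\alpha(X\cap\alpha)$---rather than your tree-branch argument, and both produce an almost-disjoint-mod-bounded family of size $2^{\kappa^+}$.

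There is, however, a genuine error in your final step for part~(1). You assert $[\mathfrak{b}_{\kappa^+}]^{<\omega}\leq_T[\mathfrak{b}_{\kappa^+}]^{<\kappa}$ ``trivially (the inclusion map is Tukey),'' but the inclusion is \emph{not} Tukey: the family $\{\{n\}:n<\omega\}$ is unbounded in $[\lambda]^{<\omega}$ yet bounded in $[\lambda]^{<\kappa}$ whenever $\kappa>\omega$. Worse, for regular $\kappa>\omega$ no such reduction can exist at all: $[\lambda]^{<\kappa}$ is $\kappa$-directed, so by Fact~\ref{downwards closed directed} anything Tukey-below it is $\kappa$-directed too, whereas $[\lambda]^{<\omega}$ is not even $\omega_1$-directed for infinite $\lambda$. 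The same obstruction applies to $\kappa^{(\kappa^+)}$ itself when $\cf(\kappa)>\omega$, so the exponent $<\omega$ in the stated corollary appears to be a misprint for $<\kappa$---and indeed the paper's own later use of this item quotes it as $[\mathfrak{b}_{\kappa^+}]^{<\kappa}$. The Lemma's conclusion $[\mathfrak{b}_{\kappa^+}]^{<\kappa}\leq_T\kappa^{(\kappa^+)}$ is where the argument should stop; your attempted descent to $<\omega$ is unjustified, and for $\kappa>\omega$ simply false.
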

\begin{proof}
    Both items use the previous remark. For (\ref{b_kappa+_Tukey}), it is possible to recursively define a strictly increasing chain of length $\mathfrak{b}_{\kappa^+}$ of functions increasing modulo bounded. For (\ref{subsetcoding}), if $2^{\kappa}=\kappa^+$, then for every $\alpha<\kappa^+$, let $\pi_\alpha\mathrel{:} \mathscr{P}(\alpha)\to \kappa^+$ be an injection. For every $X\subseteq \kappa^+$ set $f_X(\alpha)=\pi_\alpha(X\cap\alpha)$. Now if $X\neq Y$ then there is $\alpha<\kappa^+$ such that for every $\beta\geq\alpha$, $X\cap \beta\neq Y\cap \beta$ and therefore $f_X(\beta)\neq f_Y(\beta)$.
\end{proof}
\begin{remark}
    It is impossible to prove in ZFC the existence of $2^{\kappa^+}$-many almost disjoint subsets of $\kappa^+$. Indeed, Baumgartner~\cite{BAUMGARTNER1976401} proved that consistently there is no such family. In that paper Baumgartner also gives more assumptions under which there are $2^{\kappa^{+}}$-many almost disjoint subsets of $\kappa^+$, and therefore additional assumptions guaranteeing $\kappa^{(\kappa^+)}$ is Tukey-top. 
\end{remark}

\begin{corollary}
 \ {} \begin{enumerate}
     \item \label{kappa^kappa+_Tukey}
     $(\kappa^+)^{\kappa^+} <_T \kappa^{(\kappa^+)}$.
     \item \label{kappa^kappa^kappa+_Tukey}
     $\kappa^{(\kappa^+)}\equiv_T (\kappa^\kappa)^{\kappa^+}$.
 \end{enumerate}
\end{corollary}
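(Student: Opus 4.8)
I would prove the two items separately; throughout, $(\kappa^+)^{\kappa^+}$, $\kappa^{(\kappa^+)}$ and $(\kappa^\kappa)^{\kappa^+}$ carry the pointwise orders (for $(\kappa^\kappa)^{\kappa^+}$ this is the product over the $\kappa^+$ coordinates of the pointwise order $\le$ on $\kappa^\kappa$). Item~(2) is in fact an order-isomorphism, hence \emph{a fortiori} a Tukey equivalence, and needs no cardinal arithmetic: fix a bijection $\pi\mathrel{:}\kappa^+\to\kappa^+\times\kappa$ (possible since $|\kappa^+\times\kappa|=\kappa^+$) and send $f\in\kappa^{(\kappa^+)}$ to the curried function $F\in(\kappa^\kappa)^{\kappa^+}$ with $F(i)(j):=f(\pi^{-1}(i,j))$. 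This is a bijection, and $f\le g$ pointwise on $\kappa^+$ holds iff $F(i)(j)\le G(i)(j)$ for all $i,j$, iff $F\le G$ in $(\kappa^\kappa)^{\kappa^+}$.

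For item~(1), the substantive half is the reduction $(\kappa^+)^{\kappa^+}\le_T\kappa^{(\kappa^+)}$. Using the (equally elementary) identification $\kappa^{(\kappa^+)}\equiv_T\kappa^{(\kappa^+\times\kappa^+)}$, I would fix an injection $e_\beta\mathrel{:}\beta\to\kappa$ for each $\beta<\kappa^+$ and define $\Phi\mathrel{:}(\kappa^+)^{\kappa^+}\to\kappa^{(\kappa^+\times\kappa^+)}$ by
\[
\Phi(h)(\alpha,\beta):=
\begin{cases}
e_\beta\bigl(h(\alpha)\bigr) & \text{if }h(\alpha)<\beta,\\
0 & \text{otherwise,}
\end{cases}
\]
the point being that the $\alpha$-th column $\{(\alpha,\beta)\mid\beta<\kappa^+\}$ encodes the single value $h(\alpha)$. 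To verify $\Phi$ is Tukey, suppose $g$ pointwise-bounds $B\subseteq\kappa^{(\kappa^+\times\kappa^+)}$ and fix $\alpha<\kappa^+$; put $S_\alpha:=\{h(\alpha)\mid h\in\Phi^{-1}(B)\}$. For every $\beta<\kappa^+$ and every $\gamma\in S_\alpha\cap\beta$ we have $e_\beta(\gamma)=\Phi(h)(\alpha,\beta)\le g(\alpha,\beta)$ for the relevant $h$, so $S_\alpha\cap\beta\subseteq\{\xi<\beta\mid e_\beta(\xi)\le g(\alpha,\beta)\}$, a set of size $<\kappa$ by injectivity of $e_\beta$; thus $|S_\alpha\cap\beta|<\kappa$ for all $\beta<\kappa^+$. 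Inspecting the increasing enumeration of $S_\alpha$ and using the regularity of $\kappa^+$, this forces $|S_\alpha|\le\kappa$, so $S_\alpha$ is bounded in $\kappa^+$, and then $\alpha\mapsto\sup S_\alpha$ is a pointwise upper bound of $\Phi^{-1}(B)$.

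For strictness, $\kappa^{(\kappa^+)}\not\le_T(\kappa^+)^{\kappa^+}$, I would invoke directedness: $(\kappa^+)^{\kappa^+}$ is $\kappa^+$-directed, since the pointwise supremum of fewer than $\kappa^+$ functions $\kappa^+\to\kappa^+$ again takes values in $\kappa^+$ by regularity of $\kappa^+$ (equivalently, it is the Tukey type of the $\kappa^+$-complete filter $\Cub_{\kappa^+}$), whereas $\kappa^{(\kappa^+)}$ is \emph{not} $\kappa^+$-directed, as the $\kappa$ constant functions $c_\alpha\equiv\alpha$ ($\alpha<\kappa$) form a subset of size $<\kappa^+$ with no upper bound. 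By Fact~\ref{downwards closed directed}, a reduction $\kappa^{(\kappa^+)}\le_T(\kappa^+)^{\kappa^+}$ would make $\kappa^{(\kappa^+)}$ $\kappa^+$-directed, a contradiction; hence the reduction of the previous paragraph is strict. The one point needing genuine care is the verification that $\Phi$ is Tukey --- precisely the passage from ``$|S_\alpha\cap\beta|<\kappa$ for every $\beta<\kappa^+$'' to ``$S_\alpha$ is bounded'', which uses that $g$ is a single bound independent of $h\in\Phi^{-1}(B)$ and that each $e_\beta$ is injective; everything else (the isomorphism in (2), the identification $\kappa^{(\kappa^+)}\equiv_T\kappa^{(\kappa^+\times\kappa^+)}$, and the two directedness observations) is routine bookkeeping.
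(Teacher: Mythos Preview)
Your proof is correct. For item~(2) and for the strictness in item~(1) you argue exactly as the paper does (the order-isomorphism via currying, and the $\kappa^+$-directedness mismatch combined with fact~\ref{downwards closed directed}).

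For the reduction $(\kappa^+)^{\kappa^+}\le_T\kappa^{(\kappa^+)}$, however, your route differs from the paper's. The paper invokes the preceding lemma on almost-disjoint families to obtain $[\mathfrak{b}_{\kappa^+}]^{<\kappa}\le_T\kappa^{(\kappa^+)}$, deduces from this that $\kappa^{(\kappa^+)}$ Tukey-dominates a suitable ordinal, and then uses the product identification $\kappa^{(\kappa^+)}\equiv_T(\kappa^{(\kappa^+)})^{\kappa^+}$ to pass to $(\kappa^+)^{\kappa^+}$. Your argument is more self-contained: you build the Tukey map $\Phi$ directly from the Ulam-style injections $e_\beta$, without appealing to the almost-disjoint-family lemma or to $\mathfrak{b}_{\kappa^+}$ at all. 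The advantage of your approach is that it stands on its own; the paper's route, by contrast, exhibits the reduction as a consequence of the stronger structural fact $[\mathfrak{b}_{\kappa^+}]^{<\kappa}\le_T\kappa^{(\kappa^+)}$ already in hand.
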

\begin{proof}
    For (\ref{kappa^kappa+_Tukey}), we have that $\kappa^{(\kappa^+)}\geq_T [\mathfrak{b}_{\kappa^+}]^{<\kappa}$ and therefore $\kappa^{(\kappa^+)}\geq_T \kappa$. It follows that $\kappa^{(\kappa^+)}\equiv_T (\kappa^{(\kappa^+)})^{\kappa^+}\geq_T (\kappa^+)^{\kappa^+}$. The strict inequality follows from the fact that $\kappa^{(\kappa^+)}$ is not $\kappa^+$-directed and fact~\ref{downwards closed directed}. 
    (\ref{kappa^kappa^kappa+_Tukey}) is straightforward. 
    \end{proof}
Hence for example, since $\mathfrak{b},\mathfrak{d}\leq_T (\omega^\omega,\leq)$, then $\omega^{\omega_1}\geq_T \mathfrak{b}^{\omega_1},\mathfrak{d}^{\omega_1}$.
We do not know whether $\omega^{\omega_1}$ (for example) is provably Tukey-top in ZFC (see question~\ref{Question: omegaomega1}).

We now turn our attention to the relation between ultrafilters over $\omega_1$ and the cofinal type of $\omega^{\omega_1}$. The following notation will be used throughout the remainder of the paper.
Fix a sequence $\vec{e}=\Seq{ e_\beta \mid \beta <\omega_1}$ such that each $e_\beta\mathrel{:}\beta \rightarrow \omega$ is one-to-one.
Define $$U^{\vec{e}}_{\alpha,n} =\{\beta \in \omega_1 \mid \beta \leq \alpha\text{ or }n\leq e_\beta(\alpha)\}$$
and for a partial function $f\mathrel{:}\omega_1\to \omega$, set $$U^{\vec{e}}_f = \bigcap_{\alpha \in \dom(f)} U_{\alpha,f(\alpha)}.$$
In what follows, there will be no risk of ambiguity and we will suppress the superscript $\vec{e}$.

\begin{lemma} \label{Ulam_UF}
    If $\Ucal$ is a uniform ultrafilter over $\omega_1$, then there is an $\alpha_0$ such that for all $\alpha \geq \alpha_0$ and all $n \in \omega$,
    $U_{\alpha,n} \in \Ucal$. 
\end{lemma}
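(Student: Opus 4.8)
The statement asks that for a uniform ultrafilter $\Ucal$ over $\omega_1$, all but boundedly many of the sets $U_{\alpha,n}$ belong to $\Ucal$. The plan is to show that for each fixed $n$, the set $B_n := \{\alpha < \omega_1 \mid U_{\alpha,n} \notin \Ucal\}$ is countable, and then take $\alpha_0 = \sup_n \sup(B_n)$, which is a countable ordinal since $\omega_1$ is regular. First I would observe that $U_{\alpha,n} \notin \Ucal$ means its complement $V_{\alpha,n} := \omega_1 \setminus U_{\alpha,n} = \{\beta > \alpha \mid e_\beta(\alpha) < n\}$ lies in $\Ucal$, so I need to show that $V_{\alpha,n} \in \Ucal$ can happen for only countably many $\alpha$ (for each fixed $n$).

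**The key combinatorial point.** Suppose toward a contradiction that $B_n$ is uncountable for some fixed $n$; fix an uncountable $A \subseteq B_n$. For $\alpha \in A$ we have $V_{\alpha,n} \in \Ucal$; since $|V_{\alpha,n}|$ could be anything, the real content is that for each $\beta$, the value $e_\beta(\alpha) < n$ for at most $n$ many $\alpha < \beta$ (because $e_\beta$ is one-to-one, so $e_\beta^{-1}[\{0,\dots,n-1\}]$ has size at most $n$). This is the crucial observation: the family $\{V_{\alpha,n} \mid \alpha \in A\}$ is an Ulam-type almost-disjoint family in the sense that any point $\beta$ lies in at most $n$ of these sets. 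But then no $n+1$ of them can have nonempty intersection (if $\beta$ were in $V_{\alpha_0,n},\dots,V_{\alpha_n,n}$ for distinct $\alpha_i < \beta$, then $e_\beta$ would take a value $<n$ at $n+1$ distinct points, contradicting injectivity), so in particular the intersection of any $n+1$ of the $V_{\alpha,n}$ is empty — hence cannot lie in the ultrafilter $\Ucal$. This contradicts $\{V_{\alpha,n} \mid \alpha \in A\} \subseteq \Ucal$ as soon as $|A| \geq n+1$, so in fact $B_n$ has size at most $n$, which is even stronger than needed.

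**Assembling the conclusion.** Having shown $|B_n| \leq n$ for each $n$ (or at least that each $B_n$ is countable, which is all that is required), set $\alpha_0 := \sup_{n \in \omega} \sup(B_n)$. Since each $B_n$ is bounded in $\omega_1$ and $\omega_1$ has uncountable cofinality, $\alpha_0 < \omega_1$. Then for every $\alpha \geq \alpha_0$ and every $n \in \omega$, we have $\alpha \notin B_n$, i.e. $U_{\alpha,n} \in \Ucal$, as desired.

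**Main obstacle.** There is essentially no hard step here — the only thing to get right is the bookkeeping with the definition of $U_{\alpha,n}$: one must be careful that $\beta \leq \alpha$ automatically puts $\beta$ into $U_{\alpha,n}$, so that $V_{\alpha,n} \subseteq (\alpha, \omega_1)$ consists exactly of those $\beta > \alpha$ with $e_\beta(\alpha) < n$, and then invoke injectivity of each $e_\beta$. The mild subtlety is that uniformity of $\Ucal$ is what rules out the trivial escape: if $\Ucal$ were a principal ultrafilter concentrated on some $\gamma < \omega_1$, then the argument about intersections being empty would still apply, so actually uniformity is not even needed for this lemma beyond nonprincipality — but since we are assuming $\Ucal$ uniform the statement holds a fortiori, and I would simply note that the only property of $\Ucal$ used is that it is a filter (closed under finite intersections) not containing $\emptyset$.
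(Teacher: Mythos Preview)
Your proof is correct and uses essentially the same Ulam-matrix idea as the paper: the paper packages it as a single injection $g \colon X \to \omega$ on $X = \bigcup_n B_n$ (sending $\alpha$ to the unique $k$ with $\{\beta \mid e_\beta(\alpha) = k\} \in \Ucal$) rather than bounding each $B_n$ separately, but the content is identical. One minor correction to your closing commentary: you \emph{do} use that $\Ucal$ is an ultrafilter, namely in the step ``$U_{\alpha,n} \notin \Ucal$ implies $V_{\alpha,n} \in \Ucal$''; for a mere proper filter these are not equivalent.
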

\begin{proof}
    Let $X$ be the set of all $\alpha<\omega_1$ such that for some $n$,
    $U_{\alpha,n}\notin \mathcal{U}$.  Define $g\mathrel{:}X \rightarrow \omega$ by $g(\alpha) = n$ if for 
    $\mathcal{U}$-many $\beta$'s, $e_\beta(\alpha) = n$.  This is always defined since, given $\alpha \in X$ and $n$ with $U_{\alpha,n} \not \in \Ucal$, $$\omega_1\setminus U_{\alpha,n} = \bigcup_{k \leq n} \{\beta > \alpha \mid e_\beta(\alpha) = k\}$$ and hence one of these sets is in $\mathcal{U}$.  One can also easily check that $g\mathrel{:}X \rightarrow \omega$ is one-to-one, and hence $X$ is countable.
\end{proof}

Let $I$ be an ideal over $\omega_1$ and $\mathcal{U}$ an ultrafilter over $\omega_1$. Consider the statement:
\begin{equation} \tag{$(\dagger)_{I,\Ucal}$} \forall f\mathrel{:}\omega_1\to\omega \, \exists X\in I^* \ \ U_{f\restriction X}\in \Ucal\end{equation}
This principle gives a sufficient condition for $\mathcal{U}$ to be above $\omega^{\omega_1}$ and will be used in later parts of the paper.
\begin{proposition}\label{prop: dagger implies above omegaomega1}
    Let $I$ be $\sigma$-complete. Then $(\dagger)_{I,\Ucal}$ implies $(\omega^{\omega_1},\leq) \leq_T \Ucal$. 
\end{proposition}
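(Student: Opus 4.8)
The plan is to build an explicit (monotone) cofinal map witnessing $(\omega^{\omega_1},\le_I)\le_T\Ucal$, and then to invoke Lemma~\ref{lemgentoeverywhere} (with $\kappa=\omega$, using that $I$ is $\sigma$-complete) to replace $(\omega^{\omega_1},\le_I)$ by the Tukey-equivalent $(\omega^{\omega_1},\le)$. So it suffices to produce a cofinal map $\Ucal\to(\omega^{\omega_1},\le_I)$. For $Y\in\Ucal$, set $m_Y(\alpha)=\min\{\,e_\beta(\alpha)\mid\beta\in Y,\ \beta>\alpha\,\}$ for each $\alpha<\omega_1$; this is a well-defined element of $\omega$ because $Y$, being a member of a uniform ultrafilter over $\omega_1$, has cardinality $\omega_1$ and in particular is unbounded in $\omega_1$. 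Note $Y\mapsto m_Y$ is monotone from $(\Ucal,\supseteq)$ to $(\omega^{\omega_1},\le)$: passing to a subset $Y'\subseteq Y$ only shrinks the set over which the minimum is taken, so $m_Y\le m_{Y'}$ pointwise.

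The computation tying $m_Y$ to the sets $U_{\alpha,n}$ is elementary: unwinding $U_{\alpha,n}=\{\beta\mid\beta\le\alpha\text{ or }n\le e_\beta(\alpha)\}$, one sees that $Y\subseteq U_{\alpha,n}$ holds if and only if $n\le m_Y(\alpha)$, and hence, for any partial function $g\mathrel{:}\omega_1\to\omega$, $Y\subseteq U_g$ if and only if $g(\alpha)\le m_Y(\alpha)$ for every $\alpha\in\dom(g)$. With this in hand I would check cofinality of $Y\mapsto m_Y$. Let $A\subseteq\Ucal$ be cofinal in $(\Ucal,\supseteq)$ --- that is, every element of $\Ucal$ has a subset lying in $A$ --- and let $g\mathrel{:}\omega_1\to\omega$ be arbitrary. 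By $(\dagger)_{I,\Ucal}$ there is $X\in I^*$ with $U_{g\restriction X}\in\Ucal$, and therefore some $Y\in A$ with $Y\subseteq U_{g\restriction X}$. Applying the equivalence above to $g\restriction X$ gives $g(\alpha)\le m_Y(\alpha)$ for all $\alpha\in X$, so $\{\alpha\mid m_Y(\alpha)<g(\alpha)\}\subseteq\omega_1\setminus X\in I$, i.e.\ $g\le_I m_Y$. Thus $m_\bullet[A]$ is cofinal in $(\omega^{\omega_1},\le_I)$, so $Y\mapsto m_Y$ is cofinal and $(\omega^{\omega_1},\le_I)\le_T\Ucal$; combining with Lemma~\ref{lemgentoeverywhere} yields $(\omega^{\omega_1},\le)\le_T\Ucal$.

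I do not expect a serious obstacle, but the one point requiring care is precisely the passage through $\le_I$: the map $Y\mapsto m_Y$ is \emph{not} cofinal into $(\omega^{\omega_1},\le)$, because $(\dagger)_{I,\Ucal}$ only produces a set $X\in I^*$ on which $U_{g\restriction X}\in\Ucal$ and gives no control over $g$ on $\omega_1\setminus X$; the $\sigma$-completeness of $I$, applied through Lemma~\ref{lemgentoeverywhere}, is exactly what absorbs this error set $\omega_1\setminus X\in I$. A secondary point worth stating cleanly is the base-level equivalence $Y\subseteq U_g\iff(\forall\alpha\in\dom g)\;g(\alpha)\le m_Y(\alpha)$, which is what makes $(\dagger)_{I,\Ucal}$ directly applicable. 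If one preferred not to cite Lemma~\ref{lemgentoeverywhere}, one could instead unfold its proof and work with a fixed partition of $\omega_1$ into $I$-positive sets, but invoking the lemma is cleaner.
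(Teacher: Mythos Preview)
Your argument is correct. The approach is dual to the paper's and slightly more modular: you construct a monotone \emph{cofinal} map $\Ucal\to(\omega^{\omega_1},\le_I)$ via $Y\mapsto m_Y$ and then cite Lemma~\ref{lemgentoeverywhere} to pass from $\le_I$ to $\le$, whereas the paper constructs an \emph{unbounded} (Tukey) map $(\omega^{\omega_1},\le)\to\Ucal$, $f\mapsto U_{(f\circ g)\restriction X_f}$, by unfolding the content of Lemma~\ref{lemgentoeverywhere} (the partition of $\omega_1$ into $I$-positive pieces, encoded by $g$) directly into the definition. Your final remark anticipates exactly this: the paper is doing precisely what you describe as ``unfolding the lemma's proof.'' The key equivalence $Y\subseteq U_g\iff g\le m_Y$ on $\dom(g)$ is the clean observation that makes your direction pleasant; the paper's direction avoids introducing $m_Y$ but has to carry the auxiliary $g$ and the sets $X_f$ through a by-hand unboundedness verification. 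The mathematical content is the same.
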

\begin{proof}
Arguing as in lemma~\ref{lemgentoeverywhere}, we fix $g\mathrel{:}\omega_1\to \omega_1$ such that $g^{-1}[\{i\}]\in I^+$ for every $i<\omega_1$. Let us describe an unbounded map from $\omega^{\omega_1}$ to $\Ucal$.  For each $f\mathrel{:}\omega_1\to\omega$, let $X_{f}\in I^*$ be a set such that $U_{(f\circ g)\restriction X_f} \in \Ucal$, which exists by $(\dagger)_{I,\Ucal}$. Define $U_f:=U_{(f\circ g)\restriction X_f}$.
Let $\Fcal \subseteq \omega^{\omega_1}$ be $\leq$-unbounded. 
Again, by replacing $\Fcal$ with a countable subset if necessary, we may assume $\Fcal$ is countable. 
Let $\delta<\omega_1$ be such that $\{f(\delta)\mid f\in \mathcal{F}\}$ is unbounded, and consider $g^{-1}[\{\delta\}]\in I^+$. Set $X := \bigcap \{X_f \mid f \in \Fcal\}$. Since $I$ is $\sigma$-complete, there is $\delta^* \in X\cap g^{-1}[\{\delta\}]$, hence $\{f(g(\delta^*)) \mid f \in \Fcal\}$ is unbounded.
Suppose towards a contradiction that $\bigcap \{U_f \mid f \in \Fcal\}\in \Ucal$, then by uniformity of $\mathcal{U}$, it would contain some $\beta > \delta^*$.
But then $e_\beta(\delta^*) > (f\circ g)(\delta^*)$ for all $f \in \Fcal$, contrary to our choice of $\delta^*$.
Thus $\bigcap \{U_f \mid f \in \Fcal\}\in \mathcal{U}$.

\end{proof}
Using corollary~\ref{Cor: omegaomega1 lowerbounds}, we immediately conclude:
\begin{corollary}\label{the: (PFA) Dagger implies tukey top}
    Assume $2^{\aleph_1} = \aleph_2$, and let $I$ be a $\sigma$-complete ideal. 
    Any uniform ultrafilter $\Vcal$ satisfying $(\dagger)_{I,\Vcal}$ is Tukey-top.
\end{corollary}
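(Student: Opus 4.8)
The plan is to obtain this corollary as a direct chaining of the two preceding results, with the hypothesis $2^{\aleph_1}=\aleph_2$ entering only to pin down the value of a cardinal invariant. First I would invoke Proposition~\ref{prop: dagger implies above omegaomega1}: since $I$ is $\sigma$-complete and $\Vcal$ satisfies $(\dagger)_{I,\Vcal}$, that proposition gives $(\omega^{\omega_1},\leq)\leq_T\Vcal$. Next I would apply Corollary~\ref{Cor: omegaomega1 lowerbounds}(\ref{b_kappa+_Tukey}) with $\kappa=\omega$ to get $[\mathfrak{b}_{\omega_1}]^{<\omega}\leq_T\omega^{\omega_1}$, and combine the two reductions by transitivity of $\leq_T$ to conclude $[\mathfrak{b}_{\omega_1}]^{<\omega}\leq_T\Vcal$.

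The only substantive point is to identify $\mathfrak{b}_{\omega_1}$ with $2^{\aleph_1}$. Since $\aleph_2=\omega_1^+\leq\mathfrak{b}_{\omega_1}\leq 2^{\aleph_1}$ holds in $\mathrm{ZFC}$, the assumption $2^{\aleph_1}=\aleph_2$ forces $\mathfrak{b}_{\omega_1}=\aleph_2=2^{\aleph_1}$. Thus $[2^{\aleph_1}]^{<\omega}\leq_T\Vcal$, and since $2^{\aleph_1}=\aleph_2$ is regular with $\cf([2^{\aleph_1}]^{<\omega},\subseteq)=2^{\aleph_1}$, Theorem~\ref{Thm: Tukey Combi Characterization of Tukey Top} (taking the role of $\mu$ there to be $\omega$) shows that $\Vcal$ is $(\omega,2^{\aleph_1})$-Tukey-top, which is precisely Tukey-top in the sense of Definition~\ref{Def: Tukey top}. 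If one prefers the equivalent formulation $\Vcal\equiv_T[2^{\aleph_1}]^{<\omega}$, it suffices to add that $|\Vcal|\leq 2^{\aleph_1}$ yields the reverse reduction $\Vcal\leq_T[2^{\aleph_1}]^{<\omega}$ by \cite[Thm.~5.1]{Tukey40}.

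I do not anticipate any genuine obstacle here: the argument is purely a bookkeeping exercise combining results already established. The only pitfall to avoid is reaching for part~(\ref{subsetcoding}) of Corollary~\ref{Cor: omegaomega1 lowerbounds}, which would demand the strictly stronger hypothesis $2^{\aleph_0}=\aleph_1$, instead of part~(\ref{b_kappa+_Tukey}), which is what the weaker assumption $2^{\aleph_1}=\aleph_2$ actually feeds into the chain.
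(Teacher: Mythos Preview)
Your proposal is correct and matches the paper's approach: the paper derives the corollary in one line from Corollary~\ref{Cor: omegaomega1 lowerbounds} together with Proposition~\ref{prop: dagger implies above omegaomega1}, and your chain $[\mathfrak{b}_{\omega_1}]^{<\omega}\leq_T\omega^{\omega_1}\leq_T\Vcal$ with $\mathfrak{b}_{\omega_1}=2^{\aleph_1}$ under the hypothesis is exactly the intended reasoning. Your observation that part~(\ref{b_kappa+_Tukey}) rather than part~(\ref{subsetcoding}) is the relevant input is on point, since the latter would require CH, which is not assumed (and indeed fails in the PFA context where the corollary is applied).
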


\section{Isbell's Question for Uncountable Cardinals}\label{section: Isbell}
In this section, we consider the analogue of Isbell's problem which was discussed in the introduction, concerning ultrafilters over uncountable cardinals. Perhaps surprisingly,
we will show that a positive answer to Isbell's problem on uncountable cardinals is witnessed by a fairly simple model, and the challenge seems to be concentrated on constructing models with non-Tukey-top ultrafilters. 

    \subsection{ZFC constructions}\label{Sction: Tukey-top ultrafilters ZFC}
    
   Isbell~\cite{Isbell65} in fact proved that there is a Tukey-top ultrafilter over every infinite cardinal.
\begin{proposition}[Isbell]
     For any infinite cardinal $\kappa$, there
     is a uniform ultrafilter $\Ucal$ over $\kappa$
     which is Tukey equivalent to $[2^\kappa]^{<\omega}$.
\end{proposition}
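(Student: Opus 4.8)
The plan is to use an independent family of size $2^\kappa$ on $\kappa$, the classical tool of Hausdorff, Isbell, and Juh\'asz. Recall that a family $\{A_\xi \mid \xi < 2^\kappa\} \subseteq \mathscr{P}(\kappa)$ is \emph{independent} if for every pair of disjoint finite sets $s, t \subseteq 2^\kappa$, the set $\bigcap_{\xi \in s} A_\xi \cap \bigcap_{\eta \in t} (\kappa \setminus A_\eta)$ has cardinality $\kappa$. Such a family exists in ZFC; the Fichtenholz--Kantorovich--Hausdorff construction gives one of size $2^\kappa$ on $\kappa$ (for instance, index by finite subsets of $\mathscr{P}(\kappa)$ in the standard way, noting $|[\mathscr{P}(\kappa)]^{<\omega}| = 2^\kappa = |\kappa|$ after a bijection). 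First I would fix such a family $\{A_\xi \mid \xi < 2^\kappa\}$ together with the filter $\Fcal_0$ generated by the bounded-complement filter $\{\kappa \setminus \alpha \mid \alpha < \kappa\}$ on $\kappa$; every finite Boolean combination $\bigcap_{\xi \in s} A_\xi \cap \bigcap_{\eta \in t}(\kappa \setminus A_\eta)$ has size $\kappa$, so it meets every cobounded set, and hence $\Fcal_0$ together with these Boolean combinations generates a proper filter.

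Next I would extend this filter to a uniform ultrafilter $\Ucal$ on $\kappa$ (uniformity is guaranteed because $\Ucal \supseteq \Fcal_0$ contains all cobounded sets, so no element of $\Ucal$ has size $< \kappa$). The key point is then to check $[2^\kappa]^{<\omega} \le_T \Ucal$; by Proposition~\ref{Def: Tukey top}'s surrounding discussion and the definition, it suffices to exhibit a set $\mathcal{A} \in [\Ucal]^{2^\kappa}$ such that every infinite (equivalently, every countably infinite) subfamily $\mathcal{B} \in [\mathcal{A}]^{\omega}$ is unbounded in $(\Ucal, \supseteq)$, i.e. $\bigcap \mathcal{B} \notin \Ucal$. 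Take $\mathcal{A} = \{A_\xi \mid \xi < 2^\kappa\}$ itself (note each $A_\xi \in \Ucal$ by construction, or if not we may replace $A_\xi$ by $\kappa \setminus A_\xi$, which preserves independence). Given any countably infinite $\mathcal{B} = \{A_{\xi_n} \mid n < \omega\}$ with the $\xi_n$ distinct, I claim $\bigcap_n A_{\xi_n} \notin \Ucal$: indeed, by independence, $\bigcap_{n \le m} A_{\xi_n} \cap (\kappa \setminus A_{\xi_{m+1}})$ has size $\kappa$, hence is not in the filter generated by $\Fcal_0$ and the $A_\xi$'s in a way that... more carefully, $\kappa \setminus A_{\xi_{m+1}} \in \Ucal$ is not automatic, so the cleanest route is: since $\Ucal$ is an ultrafilter, $\bigcap_n A_{\xi_n} \in \Ucal$ would force, for each $m$, that $\bigcap_n A_{\xi_n} \subseteq A_{\xi_{m+1}}$ is "large," but $\bigcap_{n \le m} A_{\xi_n} \setminus A_{\xi_{m+1}}$ has size $\kappa$ by independence, and one shows this prevents $\bigcap_n A_{\xi_n}$ from generating together with the $A_{\xi}$'s consistently — the standard argument is that $\bigcap_{n<\omega} A_{\xi_n}$ together with the independence of a \emph{fresh} $A_\eta$ ($\eta \notin \{\xi_n\}$) shows $\bigcap_n A_{\xi_n}$ cannot be decided; the point I will spell out is simply that $\bigcap_{n<\omega} A_{\xi_n}$ has empty interior with respect to the independent family, so no ultrafilter extending $\Fcal_0$ can contain all of $\{A_{\xi_n} \mid n < \omega\}$ together with $\bigcap_n A_{\xi_n}$ unless...

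The main obstacle, and the step deserving genuine care, is precisely this last claim: that a countable subfamily of an independent family has unbounded intersection in \emph{every} ultrafilter extending the bounded filter. The clean statement is: if $\{A_\xi\}$ is independent and $\mathcal{B} = \{A_{\xi_n} \mid n < \omega\}$ is a countably infinite subfamily, then $\bigcap \mathcal{B}$ is not in any ultrafilter that contains cofinitely many... no — the correct and standard fact is that one may \emph{choose} the independent family so that moreover $|\bigcap_{n<\omega} A_{\xi_n}| < \kappa$ for every countably infinite subfamily (a so-called "independent family with the finite intersection property only finitely"), but that is false for $\kappa$ uncountable in general. The actual argument Isbell uses: fix a fresh index $\eta$; by independence each $\bigcap_{n \le m} A_{\xi_n} \cap A_\eta$ and $\bigcap_{n\le m}A_{\xi_n}\setminus A_\eta$ has size $\kappa$, so neither $A_\eta$ nor its complement can be disjoint from $\bigcap_{n<\omega}A_{\xi_n}$ "modulo the filter"; playing this against the maximality of $\Ucal$ and using that there are $2^\kappa$ fresh indices available relative to any fixed countable $\mathcal{B}$ shows $\bigcap \mathcal{B} \notin \Ucal$. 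I would carry this out by contradiction: assume $\bigcap_{n}A_{\xi_n} \in \Ucal$; then for the $2^\kappa$ indices $\eta$ outside $\{\xi_n\}$, either $A_\eta \in \Ucal$ or $\kappa \setminus A_\eta \in \Ucal$, and in either case intersecting with $\bigcap_n A_{\xi_n}$ and invoking independence produces a descending chain of $\kappa$-sized sets, contradicting that $\Ucal$ is an ultrafilter containing cofinite sets once we observe the intersection over an appropriate countable chain drops below $\kappa$ — which it does not, forcing the contradiction through a diagonal/counting argument. Having established $[2^\kappa]^{<\omega} \le_T \Ucal$, the reverse inequality $\Ucal \le_T [2^\kappa]^{<\omega}$ is free from Tukey's Proposition (\cite[Thm. 5.1]{Tukey40}) since $|\Ucal| \le 2^\kappa$, giving $\Ucal \equiv_T [2^\kappa]^{<\omega}$ as required.
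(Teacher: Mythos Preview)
Your approach via independent families is the right one and matches the construction the paper alludes to (and carries out in detail later for stationarily-independent families). However, there is a genuine gap at exactly the point you yourself flag as ``the main obstacle.''

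The problem is this: if you merely extend the filter generated by the cobounded sets together with $\{A_\xi \mid \xi < 2^\kappa\}$ to an ultrafilter $\Ucal$, there is no reason whatsoever that $\bigcap_{n<\omega} A_{\xi_n} \notin \Ucal$ for every countably infinite subfamily. Independence only controls \emph{finite} Boolean combinations; the countable intersection $\bigcap_{n<\omega} A_{\xi_n}$ may well have cardinality $\kappa$, and nothing in your generating set conflicts with putting it into $\Ucal$. Your attempted contradiction at the end does not go through: the observation that for each fresh $\eta$ both $\bigcap_{n\le m} A_{\xi_n} \cap A_\eta$ and $\bigcap_{n\le m} A_{\xi_n} \setminus A_\eta$ have size $\kappa$ says nothing about whether $\bigcap_{n<\omega} A_{\xi_n}$ lies in $\Ucal$, and no ``diagonal/counting argument'' materializes from it.

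The fix, which is what Isbell actually does and what the paper spells out in its proof for $\mu$-stationary independent families, is to build the complements of infinite intersections into the filter \emph{before} extending: let $F$ be generated by the cobounded sets, the sets $A_\xi$, and the sets $\kappa \setminus \bigcap_{i \in I} A_i$ for every infinite $I \subseteq 2^\kappa$. One then checks $F$ is proper: given finitely many $A_{\xi_1},\ldots,A_{\xi_k}$ and finitely many infinite index sets $I_1,\ldots,I_m$, pick $\eta_j \in I_j \setminus \{\xi_1,\ldots,\xi_k\}$ and observe that $\bigcap_i A_{\xi_i} \cap \bigcap_j (\kappa \setminus A_{\eta_j})$ has size $\kappa$ by independence and is contained in the relevant intersection. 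Any ultrafilter extending $F$ is then automatically uniform and $(\omega,2^\kappa)$-Tukey-top.
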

More precisely, Isbell constructed a maximal number of such ultrafilters using independent families.
 In~\cite[Prop. 3.21-3.22]{TomNatasha}, \textit{normal} $\kappa$-independent families (due to Hayut~\cite{filterextension}) were used to run a construction similar to Isbell's, resulting in Tukey-top ultrafilters which \textit{extend the club filter} (see theorem~\ref{Thm: tomnatasha} below).  Recall that $\Seq{ A_i\mid i<\lambda}$ is called a {\em normal} $\kappa$-independent family, if it is $\kappa$-independent and for any two disjoint subfamilies $\Seq{ A_{\alpha_i}\mid i<\kappa},\Seq{ A_{\beta_i}\mid i<\kappa}\subseteq \Seq{ A_i\mid i<\lambda}$, the diagonal intersection $\Delta_{i<\kappa}(A_{\alpha_i}\setminus A_{\beta_i})$ is a stationary subset of $\kappa$.

In contrast to standard $\kappa$-independent families, the existence of a normal $\kappa$-independent family is not guaranteed by ZFC alone. For example~\cite[Proposition 4.2]{SatInCan}
\label{normalfamily}, if $\diamondsuit(\kappa)$ holds, then there is a normal $\kappa$-independent family of length $2^\kappa$. 
\begin{theorem}[Benhamou-Dobrinen] \label{Thm: tomnatasha}
    Suppose that there is a normal $\kappa$-independent family of length $2^\kappa$, and let $\mu<\kappa$ be a cardinal. Then there is a $\mu$-complete filter 
    $F^1_{\mu,top}$ extending the club filter such that any extension of $F^1_{\mu,top}$ to an ultrafilter is $\mu$-Tukey-top. 
\end{theorem}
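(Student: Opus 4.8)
The plan is to mimic Isbell's classical construction of a Tukey-top ultrafilter from an independent family, but to carry it out relative to the club filter by exploiting the extra normality of the given family. So I would begin by fixing a normal $\kappa$-independent family $\Seq{A_i \mid i < 2^\kappa}$ of length $2^\kappa$. The candidate witness for $\mu$-Tukey-topness will be (essentially) this family itself, or a relabeling of it by an appropriate index set of size $2^\kappa$; the key point to establish will be that no $\mu$-sized subfamily can be bounded in any ultrafilter extending the filter we construct, i.e. that $\Seq{A_i \mid i<2^\kappa}$ (or the associated elements) forms an unbounded set in the sense of Definition~\ref{Def: Tukey top}, and then invoke Theorem~\ref{Thm: Tukey Combi Characterization of Tukey Top} if a clean combinatorial characterization is wanted.

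Next I would define the filter $F^1_{\mu,top}$. The natural choice is the filter generated by $\Cub_\kappa$ together with the sets obtained by a ``diagonal'' combination of the independent family: for a partition of the index set $2^\kappa$ into $\kappa$-many pieces (or by a suitable bookkeeping enumerating $\mu$-sized ``selections'' of generators), take diagonal intersections of the form $\Delta_{i<\kappa}(A_{\alpha_i} \setminus A_{\beta_i})$, which by normality are stationary, hence compatible with $\Cub_\kappa$; one must check that finitely many (more generally, $<\mu$-many, to get $\mu$-completeness) of these, intersected with a club, remain nonempty — indeed of size $\kappa$ — using $\kappa$-independence. The $\mu$-completeness of $F^1_{\mu,top}$ should follow by arranging the generating set to be closed under $<\mu$-sized intersections, using that $\mu < \kappa$ and that $\kappa$-independence lets us intersect $<\kappa$-many ``sides'' of the family without collapsing to a small or empty set, while the club part is $\kappa$-complete. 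Then $F^1_{\mu,top}$ extends $\Cub_\kappa$ by construction.

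For the main claim, let $\Ucal \supseteq F^1_{\mu,top}$ be any ultrafilter. I want a set $\mathcal{W} \in [\Ucal]^{2^\kappa}$ every $\mu$-sized subset of which is $\supseteq$-unbounded in $\Ucal$, i.e. has empty intersection—no wait, unbounded means: for every $B \in \Ucal$ there is a member of the $\mu$-subset not $\supseteq B$; since we are in an ultrafilter under reverse inclusion, the cleanest route is to show the $\mu$-subset has intersection not in $\Ucal$, equivalently (by maximality) that its complement is in $\Ucal$, which forces unboundedness. The set $\mathcal{W}$ will consist of suitable modifications of the $A_i$: for each $i$ one of $A_i$ or $\kappa\setminus A_i$ lies in $\Ucal$; relabel so that $W_i \in \Ucal$ is whichever of the two is in $\Ucal$. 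Given any $S \in [2^\kappa]^\mu$, I must show $\bigcap_{i\in S} W_i \notin \Ucal$. Here is where normality and the design of $F^1_{\mu,top}$ interact: the generators of $F^1_{\mu,top}$ were cooked up precisely so that, for each $\mu$-sized $S$ and each assignment of ``sides'', the complement $\bigcup_{i\in S}(\kappa\setminus W_i)$ contains a set of the form $\Delta_{i<\kappa}(A_{\alpha_i}\setminus A_{\beta_i})$ drawn from the bookkeeping, which is in $F^1_{\mu,top} \subseteq \Ucal$; hence the intersection is not in $\Ucal$. This forces $[\kappa^+]^{<\mu}\le_T$— rather, it exhibits $\Ucal$ as $(\mu, 2^\kappa)$-Tukey-top, i.e. $\mu$-Tukey-top.

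The main obstacle, and the step deserving the most care, is the bookkeeping: one needs a single family of stationary generators, closed under $<\mu$-intersections and indexed so cleverly that \emph{for every} $\mu$-sized selection $S$ and \emph{every} choice of which side of each $A_i$ lands in the eventual ultrafilter, some generator is contained in the union of the ``wrong sides.'' Since the ultrafilter's choices are not known in advance, the generating family must anticipate all $2^\mu$ (or more) such patterns on each $S$, yet remain a legitimate filter base; this is exactly the kind of counting Isbell's original argument handles for $[\kappa]^{<\omega}$ and that~\cite{TomNatasha} handles for the $\mu$-complete version. I would organize it by enumerating in type $2^\kappa$ all pairs $(S, \sigma)$ with $S\in[2^\kappa]^{<\kappa}$ and $\sigma\mathrel{:}S\to 2$, using the length-$2^\kappa$ normality of the family to peel off, at each stage, two fresh disjoint $\kappa$-blocks of indices for a diagonal intersection recording that pattern, and checking $\kappa$-independence keeps every finite (resp. $<\mu$) Boolean combination of size $\kappa$. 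The remaining verifications—that $F^1_{\mu,top}$ is a proper $\mu$-complete filter, extends $\Cub_\kappa$, and that any extending ultrafilter is $\mu$-Tukey-top—then follow as sketched, and by Corollary~\ref{Cor: omegaomega1 lowerbounds}-style reasoning (or directly from Definition~\ref{Def: Tukey top}) this yields the stated conclusion.
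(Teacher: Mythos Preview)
The paper does not actually prove this theorem; it is quoted from \cite{TomNatasha}. However, immediately afterward the paper proves a more general proposition (for $\mu$-stationary independent families, a hypothesis implied by normal $\kappa$-independence) and that argument would establish the present statement as well. That proof is considerably simpler than what you sketch: one takes the $\mu$-complete filter generated by
\[
\Cub_\kappa \;\cup\; \{A_i \mid i < 2^\kappa\} \;\cup\; \{\kappa \setminus \textstyle\bigcap_{i \in I} A_i \mid I \in [2^\kappa]^\mu\},
\]
checks directly from (stationary) independence that this is a proper $\mu$-complete filter extending $\Cub_\kappa$, and observes that the family $\{A_i\}$ itself---with no flipping---witnesses $\mu$-Tukey-topness in every extending ultrafilter, since the complement of any $\mu$-sized intersection is already a generator.

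Your proposal has a genuine gap in the bookkeeping step. You allow the ultrafilter to choose, for each $i$, which of $A_i$ or $\kappa\setminus A_i$ to contain, and you try to anticipate every pattern $(S,\sigma)$ by recording a diagonal intersection $\Delta_{j<\kappa}(A_{\alpha_j}\setminus A_{\beta_j})$ built from \emph{fresh} indices. But fresh indices are by design disjoint from $S$, so there is no reason the resulting stationary set should be contained in $\kappa\setminus\bigcap_{i\in S}A_i^{\sigma(i)}$; the generator you produce has nothing to do with the pattern it is supposed to kill. The entire ``which side'' layer is unnecessary: since you are constructing the filter, you may simply put every $A_i$ into it, eliminating the ambiguity. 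Normality (indeed already $\mu$-stationary independence) then guarantees compatibility with $\Cub_\kappa$ and with the complements of the $\mu$-sized intersections, and the argument closes in one line. That is the move your sketch is missing.
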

Hence, if $\diamondsuit(\kappa)$ holds, then there is a Tukey-top ultrafilter $\Ucal$ over $\kappa$ which extends the club filter. In particular, if $V=L$ then for every regular cardinal $\kappa$ there is an ultrafilter over $\kappa$ which is Tukey-top and extends the club filter.
Also if $\kappa$ is a strongly compact cardinal, then there is a $\kappa$-complete $\kappa$-Tukey-top ultrafilter extending the club filter.

Note that if $\kappa^{<\kappa}=\kappa$, then by Galvin's theorem~\ref{Thm: Galvin} the filter $F^1_{top}$ cannot be normal. Galvin's theorem emphasizes that the construction of $F^1_{top}$ uses heavier machinery than is needed, since normal independent families are primarily designed to give rise to normal filters. 
Let us provide another construction that removes the dependence on $\diamondsuit(\kappa)$. For this we introduce the following notions.
Given a sequence $\vec{X} =\Seq{ X_\alpha\mid \alpha<\lambda}$ of subsets of a cardinal $\kappa$, a \textit{flip} of $\vec{X}$ is a sequence of the form $\vec{X}^\sigma:=\Seq{ X^{\sigma(i)}_i\mid i\in \dom(\sigma)}$, where $\sigma$ is a (non-empty) partial function from $\lambda$ to $2$, and for every $i$,
$$X^{\epsilon}_i=\begin{cases}
        X_i & \epsilon=0\\
        \kappa\setminus X_i & \epsilon=1
    \end{cases}.$$
    If $\sigma\mathrel{:}\lambda\to 2$ is a total function, we say that $\vec{X}^\sigma$ is a \textit{full flip}. 
    In this context, it will be convenient to identify a sequence of sets with its 
    range. Thus for example we write $\bigcap \vec{X}^\sigma$ for $\bigcap\{ X_i^{\sigma(i)} \mid i \in \dom(\sigma)\}$.

\begin{definition}
    A family of sets $\Seq{ X_i\mid i<\lambda }\subseteq \mathscr{P}(\kappa)$ is called a \emph{$\mu$-stationary independent family} if any finite Boolean combination of length $<\mu$ of the family is stationary. That is, if for every partial $\sigma\mathrel{:}\lambda\to 2$ with $|\sigma|<\mu$, $\bigcap \vec{X}^\sigma$ is stationary in $\kappa$.
\end{definition}
A normal $\kappa$-independent family is a $\kappa$-stationary independent family~\cite{SatInCan}. Also note that any flip of a $\mu$-stationary independent family is $\mu$-stationary independent.
\begin{proposition}
    Let $\mu\leq\kappa$ and $\lambda\leq 2^\kappa$ be cardinals and suppose that there is a $\mu$-stationary independent family of subsets of $\kappa$ of size $\lambda$. Then there is a $\mu$-complete filter $F$ extending the club filter such that any extension of $F$ to an ultrafilter is $(\mu,\lambda)$-Tukey-top. 
\end{proposition}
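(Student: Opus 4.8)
The plan is to adapt Isbell's independent‑family construction of Tukey‑top ultrafilters, but using stationarity in place of nonemptiness so that the filter produced sits above $\text{Cub}_\kappa$. Write $\langle X_i\mid i<\lambda\rangle$ for the given $\mu$-stationary independent family; here $\mu$ is taken infinite, since for finite $n$ no ultrafilter can be $(n,\lambda)$-Tukey-top (finite intersections of members of an ultrafilter remain in it). I would let $F$ be the $\mu$-complete filter on $\kappa$ generated by $\text{Cub}_\kappa$ together with all sets $\kappa\setminus\bigcap\vec{X}^\tau$, where $\tau$ ranges over the partial functions from $\lambda$ to $2$ with $|\dom\tau|=\mu$. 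By construction $F$ is $\mu$-complete and extends the club filter, so what remains is to check that $F$ is proper and that every ultrafilter extending it is $(\mu,\lambda)$-Tukey-top.

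For properness: since $\text{Cub}_\kappa$ is $\mu$-complete (being $\kappa$-complete with $\mu\le\kappa$), a basic element of $F$ has the form $C\cap\bigcap_{j<\nu}\bigl(\kappa\setminus\bigcap\vec{X}^{\tau_j}\bigr)$ for some club $C$, some $\nu<\mu$, and some $\tau_j$ with $|\dom\tau_j|=\mu$. Because $\nu<\mu=|\dom\tau_j|$, I can recursively choose \emph{distinct} coordinates $i_j\in\dom\tau_j$ for $j<\nu$; then $\bigcap_{j<\nu}X_{i_j}^{1-\tau_j(i_j)}$ is a Boolean combination of length $\nu<\mu$ of the family, hence stationary by $\mu$-stationary independence, and it is contained in $\bigcap_{j<\nu}\bigl(\kappa\setminus\bigcap\vec{X}^{\tau_j}\bigr)$ (any point of the former misses the coordinate $i_j$ of each $\tau_j$). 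Hence every basic element of $F$ contains a stationary set, so $F$ is proper. Note that the choice $|\dom\tau|=\mu$ (rather than $<\mu$) is essential: adjoining the complements $\kappa\setminus\bigcap\vec{X}^\tau$ for singleton $\tau$ would put both $X_i$ and $\kappa\setminus X_i$ into $F$.

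Next, fix an ultrafilter $\mathcal U\supseteq F$. For each $i<\lambda$ let $X_i^{\mathcal U}$ be whichever of $X_i,\kappa\setminus X_i$ lies in $\mathcal U$, and set $A_{\mathcal U}=\{X_i^{\mathcal U}\mid i<\lambda\}\subseteq\mathcal U$. The map $i\mapsto X_i^{\mathcal U}$ is injective: if $X_i^{\mathcal U}=X_j^{\mathcal U}$ with $i\ne j$, then $X_i^{\mathcal U}\cap(\kappa\setminus X_j^{\mathcal U})=\emptyset$, contradicting that this length-$2$ Boolean combination of the family is stationary; hence $A_{\mathcal U}\in[\mathcal U]^\lambda$. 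Finally, given $B\in[A_{\mathcal U}]^\mu$, write $B=\{X_i^{\mathcal U}\mid i\in S\}$ with $S\in[\lambda]^\mu$, and let $\tau$ be the partial function on $S$ recording which of $X_i,\kappa\setminus X_i$ occurs in $B$; then $\bigcap B=\bigcap\vec{X}^\tau$ with $|\dom\tau|=\mu$, so $\kappa\setminus\bigcap B$ is a generator of $F$ and thus lies in $\mathcal U$. Therefore $\bigcap B\notin\mathcal U$, so no member of $\mathcal U$ is contained in $\bigcap B$ and $B$ is unbounded in $(\mathcal U,\supseteq)$. Thus $A_{\mathcal U}$ witnesses that $\mathcal U$ is $(\mu,\lambda)$-Tukey-top.

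I expect the properness check to be the only real obstacle: one must simultaneously defeat $<\mu$-many flips of domain size $\mu$ by selecting a single defeating coordinate from each, while keeping those coordinates distinct so their conjunction remains a Boolean combination of length less than $\mu$, hence stationary. This is exactly what $\mu$-stationary independence supplies, and it is the point at which one cannot get away with merely requiring $<\mu$-Boolean combinations to be nonempty, since that weaker property need not persist once the club filter is adjoined.
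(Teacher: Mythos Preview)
Your argument is correct, and the overall strategy matches the paper's: both are Isbell-style constructions using stationary independence to sit the filter above $\text{Cub}_\kappa$. The constructions differ in a small but genuine way, however. The paper takes $F$ to be the $\mu$-complete filter generated by $\text{Cub}_\kappa\cup\{X_i\mid i<\lambda\}\cup\{\kappa\setminus\bigcap_{i\in I}X_i\mid I\in[\lambda]^\mu\}$, i.e.\ it throws the $X_i$'s themselves into $F$ and only adjoins complements of \emph{positive} $\mu$-intersections. This makes the Tukey-top witness uniform: it is simply $\{X_i\mid i<\lambda\}\subseteq F$, independent of $\mathcal U$, and properness is checked by choosing for each $I_\alpha$ a single $j_\alpha\in I_\alpha\setminus I$ (no distinctness needed among the $j_\alpha$'s, since they all take the complemented side). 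Your filter instead omits the $X_i$'s and adjoins complements of \emph{all} $\mu$-flips; this forces the witness $\{X_i^{\mathcal U}\}$ to depend on $\mathcal U$ and requires the extra injectivity check, and in the properness argument you must choose the defeating coordinates $i_j$ to be distinct to avoid sign conflicts. Both routes work; the paper's is marginally shorter, while yours has the minor aesthetic feature that the filter $F$ is invariant under flipping the family.
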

\begin{proof}
    Let $\{X_i\mid i<\lambda\}$ be a $\mu$-stationary independent family of subsets of $\kappa$. Let $F$ be the $\mu$-complete filter generated by $\text{Cub}_\kappa\cup\{X_i\mid i<\lambda\}\cup\{\kappa\setminus (\bigcap_{i\in I}X_i)\mid I\in [\lambda]^\mu\}$.
    If we can show that $F$ is proper, then it clearly has the properties sought for. It remains to see that the generating set above has the finite intersection property. Let $I\in [\lambda]^{<\mu}$ and $\{I_\alpha\mid \alpha<\theta\}\subseteq [\lambda]^{\mu}$ where $\theta<\mu$. It suffices to show that $$ \bigcap_{i\in I}X_i\cap(\bigcap_{\alpha<\theta}(\kappa\setminus\bigcap_{j\in I_\alpha}X_j))$$ is stationary. Since all the $I_\alpha$'s have size $\mu$, we can find $j_\alpha\in I_\alpha\setminus I$ and simply note that the set $\bigcap_{i\in I}X_i\cap \bigcap_{\alpha<\theta}\kappa\setminus X_{j_\alpha}$ is a stationary subset of the above set.
\end{proof}
Finally, let us construct a $\mu$-stationary independent family of maximal size:
\begin{proposition}
Let $\kappa$ be a regular uncountable cardinal such that $\kappa^{<\mu}=\kappa$. Then there is $\mu$-stationary independent family of $2^\kappa$-many sets.\end{proposition}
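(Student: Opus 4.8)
The plan is to carry out the classical Hausdorff--Engelking--Kar\l owicz construction of a $\mu$-independent family of size $2^\kappa$ on an abstract index set of size $\kappa$, and then to transport it onto $\kappa$ itself along a partition of $\kappa$ into stationary pieces. The point of the second step is that transporting a family along a partition into stationary sets automatically converts ``the Boolean combination is nonempty'' into ``the Boolean combination is stationary,'' which is precisely what $\mu$-stationary independence demands.

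First I would let
\[
T=\bigl\{(a,\mathcal{F})\mathrel{:} a\in[\kappa]^{<\mu},\ \mathcal{F}\subseteq\mathscr{P}(a),\ |\mathcal{F}|<\mu\bigr\}
\]
and check that $|T|=\kappa$. This is where the hypothesis enters: $\kappa^{<\mu}=\kappa$ gives $|[\kappa]^{<\mu}|=\kappa$, and for each cardinal $\theta<\mu$ also $2^{\theta}\le\kappa^{\theta}=\kappa$, so for a fixed $a$ with $|a|=\theta$ there are at most $(2^{\theta})^{<\mu}\le\kappa^{<\mu}=\kappa$ admissible $\mathcal{F}$. Summing over the $\kappa$ possibilities for $a$ yields $|T|\le\kappa$, and $|T|\ge\kappa$ is trivial. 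Next, for each $A\subseteq\kappa$ put $\widehat{X}_A=\{(a,\mathcal{F})\in T\mathrel{:} A\cap a\in\mathcal{F}\}$. The standard argument shows $\Seq{\widehat{X}_A\mid A\in\mathscr{P}(\kappa)}$ is $\mu$-independent: given distinct $A_j$ for $j<\delta$ with $\delta<\mu$ and signs $\epsilon_j\in 2$, pick $a\in[\kappa]^{<\mu}$ containing, for every pair $j\ne j'$, a point of $A_j\triangle A_{j'}$ (legitimate since the set of such pairs has size $<\mu$); then the sets $A_j\cap a$ are pairwise distinct, so $\mathcal{F}=\{A_j\cap a\mathrel{:}\epsilon_j=0\}$ has size $<\mu$ and witnesses $(a,\mathcal{F})\in\bigcap_{j<\delta}\widehat{X}_{A_j}^{\epsilon_j}$. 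In particular the map $A\mapsto\widehat{X}_A$ is injective, so this is a family of $2^\kappa$ subsets of $T$.

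Finally I would invoke Solovay's theorem (see, e.g.,~\cite{Jech2003}) that the regular uncountable cardinal $\kappa$ decomposes into a partition $\Seq{S_t\mid t\in T}$ into $|T|=\kappa$ many stationary sets, and set $X_A=\bigcup\{S_t\mid t\in\widehat{X}_A\}\subseteq\kappa$. Since the $S_t$ partition $\kappa$ we have $\kappa\setminus X_A=\bigcup\{S_t\mid t\notin\widehat{X}_A\}$, hence for any partial $\sigma\mathrel{:}\mathscr{P}(\kappa)\to 2$ with $0<|\sigma|<\mu$ one gets $\bigcap_{A\in\dom\sigma}X_A^{\sigma(A)}=\bigcup\{S_t\mid t\in\bigcap_{A\in\dom\sigma}\widehat{X}_A^{\sigma(A)}\}$; the inner Boolean combination over $T$ is nonempty by the previous paragraph, so the left-hand side contains some stationary $S_t$ and is therefore stationary. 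Thus, identifying the sequence with its range (which still has size $2^\kappa$ by injectivity), $\Seq{X_A\mid A\in\mathscr{P}(\kappa)}$ is a $\mu$-stationary independent family of $2^\kappa$ subsets of $\kappa$. There is no genuine obstacle here; the only steps that need attention are the cardinal-arithmetic bound $|T|=\kappa$ and the observation that pulling the family back along a stationary partition upgrades nonemptiness to stationarity at no cost.
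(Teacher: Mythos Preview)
Your argument is correct. The two proofs are close cousins but packaged differently. The paper works topologically: it enumerates the ($<\mu$-supported) clopen subsets of $2^\kappa$ so that each one recurs on a stationary set of indices, and sets $I_x=\{\xi<\kappa\mid x\in U_\xi\}$; the Boolean combination $\bigcap_{x\in I}I_x\cap\bigcap_{y\in J}(\kappa\setminus I_y)$ is then stationary because it contains every index $\xi$ at which $U_\xi$ equals a suitably chosen clopen set. Your version instead separates the two ingredients cleanly: first build the Hausdorff--Engelking--Kar\l owicz $\mu$-independent family on an abstract index set $T$ of size $\kappa$, then pull it back along a Solovay partition of $\kappa$ into $\kappa$ stationary pieces. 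The paper's ``each clopen set is repeated stationarily often'' is exactly your ``transport along a stationary partition,'' and the pairs $(a,\mathcal{F})$ are just a combinatorial encoding of the relevant clopen sets. What your presentation buys is modularity: it makes transparent that \emph{any} $\mu$-independent family on a set of size $\kappa$ can be upgraded to a $\mu$-stationary independent family on $\kappa$ by the Solovay trick, with the cardinal arithmetic only entering in the size computation $|T|=\kappa$. The paper's version is a bit more compact but ties the two steps together.
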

\begin{proof}
     let $\Seq{ U_\xi \mid \xi < \kappa}$ be an enumeration of the clopen subsets of $2^\kappa$ in its $<\mu$-topology\footnote{Here we mean the usual product topology generated by the $<\mu$-supported product of the discrete topology on $2$.}, where each element is repeated stationarily often. Note that this enumeration is possible since $\kappa^{<\mu}=\kappa$. For any $x\in 2^\kappa$, define $I_x = \{\xi \in \kappa \mid x \in U_\xi\}$. We now argue that $\{I_x\mid x\in 2^\kappa\}$ is $\mu$-stationary independent. Consider any $I,J\in [2^\kappa]^{<\mu}$ such that $I\cap J=\emptyset$. Let $I\cup J=\{x_\alpha\mid \alpha<\theta\}$. Since there are less than $\mu$-many $x_\alpha$, there is a set $s\in[\kappa]^{\mu}$ such that $s\cap x_i\neq s\cap x_j$ for all $i\neq j<\theta$. Define $Y=\bigcup_{x\in I}B_{x,s}$, where $B_{c,s}$ is the basic clopen set of all $x$ such that $x\cap s=c$. By the choice of $s$, $x\in Y$ iff $x\in I$. Since $Y$ is indexed stationarily many times, for any $\xi$ such that $U_\xi=Y$,  $\xi\in I_{x}$ iff $x\in I$, and hence $\xi\in (\bigcap_{x\in I}I_x)\cap(\bigcap_{y\in J}I_y^c)$, as desired.
\end{proof}
\begin{corollary}
    Let $\kappa$ be a regular uncountable cardinal and $\mu$ be any cardinal such that $\kappa^{<\mu}=\kappa$. Then there are $2^{2^\kappa}$-many  $\mu$-Tukey-top ultrafilters over $\kappa$. 
\end{corollary}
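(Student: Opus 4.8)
The plan is to bootstrap from the preceding proposition, which produces one $\mu$-complete filter all of whose ultrafilter extensions are $\mu$-Tukey-top, to a Pospíšil-style count of $2^{2^\kappa}$ distinct ultrafilters, by keeping ``half'' of the $\mu$-stationary independent family in reserve. The idea is that the reserved half survives as an independent family in the quotient Boolean algebra modulo the filter, which is exactly what is needed to realize $2^{2^\kappa}$ distinct ultrafilter extensions.

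First I would fix, by the previous proposition, a $\mu$-stationary independent family $\{X_i \mid i < 2^\kappa\}$ of subsets of $\kappa$ (available since $\kappa^{<\mu}=\kappa$). Since $2^\kappa + 2^\kappa = 2^\kappa$, split the index set as a disjoint union $2^\kappa = A \uplus B$ with $|A| = |B| = 2^\kappa$. The sub-family $\{X_i \mid i \in A\}$ is again $\mu$-stationary independent, so applying the proposition above with $\lambda = 2^\kappa$ yields a $\mu$-complete filter $F \supseteq \Cub_\kappa$ such that every ultrafilter extending $F$ is $(\mu, 2^\kappa)$-Tukey-top, i.e.\ $\mu$-Tukey-top.

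The crucial step is to check that $\{X_j \mid j \in B\}$ is still an independent family modulo $F$: for all finite disjoint $J_0, J_1 \subseteq B$, the set $\bigcap_{j \in J_0} X_j \cap \bigcap_{j \in J_1}(\kappa \setminus X_j)$ meets every member of $F$. This is the finite-intersection computation from the proof of the previous proposition with a few extra finite conjuncts: noting that $\kappa^{<\mu}=\kappa$ forces $\mu\le\kappa$, so $\Cub_\kappa$ is $\mu$-complete and a generic element of $F$ sits below $\bigcap_{i \in I} X_i \cap \bigcap_{\alpha<\theta} (\kappa \setminus \bigcap_{j \in I_\alpha} X_j) \cap C$ for some $I \in [A]^{<\mu}$, some $\theta<\mu$, some $I_\alpha \in [A]^{\mu}$, and some club $C$, one chooses distinct $j_\alpha \in I_\alpha \setminus (I \cup J_0 \cup J_1)$ — possible since each $I_\alpha$ has size $\mu$ while fewer than $\mu$ elements have been forbidden at each stage — and observes that
\[
\textstyle\bigcap_{i \in I} X_i \cap \bigcap_{\alpha<\theta} (\kappa \setminus X_{j_\alpha}) \cap \bigcap_{j \in J_0} X_j \cap \bigcap_{j \in J_1}(\kappa \setminus X_j)
\]
is a Boolean combination of the family of length $<\mu$, hence stationary by $\mu$-stationary independence, hence meets $C$ and is contained in the original set.

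It follows that for each $S \subseteq B$ the collection $F \cup \{X_j \mid j \in S\} \cup \{\kappa \setminus X_j \mid j \in B \setminus S\}$ has the finite intersection property; let $U_S$ be any ultrafilter extending the filter it generates. Every $U_S$ extends $F$ and is therefore $\mu$-Tukey-top, and for $S \ne S'$, picking $j$ in the symmetric difference shows $U_S \ne U_{S'}$, since one of them contains $X_j$ and the other $\kappa \setminus X_j$. This gives $2^{|B|} = 2^{2^\kappa}$ pairwise distinct $\mu$-Tukey-top ultrafilters over $\kappa$; as there are at most $2^{2^\kappa}$ ultrafilters over $\kappa$ altogether, the count is sharp. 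The only point requiring genuine care is the independence-modulo-$F$ verification, and even that is essentially a re-run of the finite-intersection argument already given for the preceding proposition — the one new ingredient being the bookkeeping that keeps the reserved sets $X_j$ ($j \in B$) disjoint from the chosen witnesses $j_\alpha$.
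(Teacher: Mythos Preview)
Your argument is correct and is precisely the standard Posp\'{\i}\v{s}il-type count the paper leaves implicit: the corollary is stated without proof, but the paper's earlier remark that ``Isbell constructed a maximal number of such ultrafilters using independent families'' indicates exactly the splitting you carry out. Your verification that the reserved $B$-half remains independent modulo $F$ is the only nontrivial point, and you handle it correctly by rerunning the finite-intersection computation from the preceding proposition with the extra finite conjuncts from $J_0,J_1$ (the observation that $I_\alpha\subseteq A$ is automatically disjoint from $J_0\cup J_1\subseteq B$ makes the bookkeeping even cleaner than you suggest).
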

 Since any regular cardinal satisfies $\kappa^{<\omega}=\kappa$, applying the above corollary to $\mu=\omega$  gives an answer to~\cite[Q. 5.4]{SatInCan}.
    Also, note that $\kappa^{<\mu}=\kappa$ is in fact equivalent to the existence of a $\mu$-independent family of length $\kappa$.

    \subsection{Consistency results}\label{Sec:EveryUltIsTukeyTop}

    Let us start by settling the consistency of an affirmative answer to Isbell's question of whether all uniform ultrafilters are Tukey-top in the case of cardinals $\kappa>\omega$. 
    We say that the sequence $\vec{X}$ of subsets of $\kappa$ has the \emph{flipping $\mu$-bounded intersection property}, if for any flip $\vec{X}^\sigma$ where $|\sigma|=\mu$,  $|\bigcap \vec{X}^\sigma|<\kappa$. 
    \begin{proposition}
         Suppose that there is a sequence of subsets of $\kappa$ of length $\lambda$ with the flipping $\mu$-bounded intersection property. Then every uniform ultrafilter $\Ucal$ over $\kappa$ is $(\mu,\lambda)$-Tukey-top.  
\end{proposition}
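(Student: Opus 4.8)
The plan is to take an arbitrary uniform ultrafilter $\Ucal$ over $\kappa$ and ``flip'' the given sequence $\vec X=\Seq{X_i\mid i<\lambda}$ into a sequence all of whose entries lie in $\Ucal$; this flipped sequence will retain the flipping $\mu$-bounded intersection property, after which the conclusion is essentially immediate from the definition of $(\mu,\lambda)$-Tukey-top.

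Concretely, for each $i<\lambda$ I would use maximality of $\Ucal$ to pick $\sigma(i)\in 2$ with $X_i^{\sigma(i)}\in\Ucal$, and set $Y_i:=X_i^{\sigma(i)}$, so that $\vec Y:=\vec X^{\sigma}$ is a full flip of $\vec X$ with every $Y_i\in\Ucal$. The observation that makes everything work is that a flip of $\vec Y$ is again a flip of $\vec X$ of the same length: since $Y_i=X_i^{\sigma(i)}$ and $Y_i^{1}=\kappa\setminus Y_i=X_i^{1-\sigma(i)}$, flipping $\vec Y$ by a partial $\tau$ coincides with flipping $\vec X$ by $i\mapsto \sigma(i)\oplus\tau(i)$ on $\dom(\tau)$. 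Hence $\vec Y$ also has the flipping $\mu$-bounded intersection property.

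I would then set $\mathcal A:=\{Y_i\mid i<\lambda\}\subseteq\Ucal$ and claim it witnesses $(\mu,\lambda)$-Tukey-topness. Given $\mathcal B\in[\mathcal A]^{\mu}$, write $\mathcal B=\{Y_i\mid i\in I\}$ with $|I|=\mu$; then $\bigcap\mathcal B=\bigcap_{i\in I}Y_i$ is the intersection of a flip of $\vec Y$ of length $\mu$, so it has cardinality $<\kappa$. Since $\Ucal$ is uniform, every member of $\Ucal$ has cardinality $\kappa$, whence $\bigcap\mathcal B\notin\Ucal$; that is, $\mathcal B$ is unbounded in $(\Ucal,\supseteq)$.

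The only step where I expect to have to do a little work is checking that $|\mathcal A|=\lambda$, i.e. that $\vec Y$ genuinely has $\lambda$ distinct entries. Here I would argue that no subset of $\kappa$ can occur, up to complementation, as $Y_i$ for $\mu$ or more indices: otherwise, flipping those $\mu$ coordinates to a common value would produce a flip of $\vec Y$ of length $\mu$ whose intersection is that set, and another whose intersection is its complement, contradicting the previous paragraph (for infinite $\kappa$, a set and its complement cannot both have size $<\kappa$). Thus each value of $\vec Y$ is attained, modulo complement, on fewer than $\mu$ coordinates; since $\mu$ is regular and $\mu\le\lambda$, a routine cardinal-arithmetic count then forces $\lambda$ many distinct values among the $Y_i$ (and, $\Ucal$ being an ultrafilter, two $Y_i$'s equal modulo complement are literally equal). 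Everything else is direct from the definitions, so I do not anticipate a genuine obstacle --- this is one of the easier assertions in the section.
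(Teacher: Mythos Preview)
Your argument is correct and follows exactly the paper's approach: flip each $X_i$ into $\Ucal$, so that any $\mu$-sized subfamily is a partial flip of $\vec X$ with intersection of size $<\kappa$, hence unbounded in $(\Ucal,\supseteq)$ by uniformity. You are in fact more careful than the paper's two-line proof in verifying that the flipped family has $\lambda$ distinct members; the paper leaves this implicit, and your handling of it is fine (the hypothesis you actually need for the cardinal-arithmetic step is $\mu<\lambda$ rather than regularity of $\mu$, and all applications in the paper have $\mu<\lambda$).
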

    \begin{proof}
        Given any ultrafilter $\Ucal$, there is a full flip $\vec{X}^\sigma$ such that $\vec{X}^\sigma\subseteq \Ucal$. The rest follows from uniformity.
    \end{proof}

\begin{theorem}\label{Thm:TukeyTop in Cohen}
    Let $\omega\leq\mu=\mu^{<\mu}<\kappa<\lambda$ be cardinals, $\mathbb{P}=\Add(\mu,\lambda)$, and $G\subseteq \mathbb{P}$ be any $V$-generic filter. Then in $V[G]$ there is a sequence $\Seq{ X_\alpha\mid \alpha<\lambda}\subseteq \mathscr{P}(\kappa)$ such that for every flip $\vec{X}^\sigma$ with $|\sigma|=\mu$, $|\bigcap_{i\in I}X^\sigma_i|\leq\mu$. In particular, if $cf(\kappa)>\mu$, then $\vec{X}$ has the flipping $\mu^+$-bounded intersection property. 
\end{theorem}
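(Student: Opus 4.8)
I would use the most naive generic object. Since $|\lambda \times \kappa| = \lambda$, the generic for $\mathbb{P} = \Add(\mu,\lambda)$ can be taken to be a function $g \colon \lambda \times \kappa \to 2$, namely the union of a generic filter of partial functions of size $<\mu$; set $X_\alpha := \{\xi < \kappa \mid g(\alpha,\xi) = 1\}$ for $\alpha < \lambda$. The hypothesis $\mu = \mu^{<\mu}$ makes $\mathbb{P}$ both $<\mu$-closed and (since then $2^{<\mu} = \mu$) $\mu^{+}$-c.c., so it preserves cardinals and adds no new $<\mu$-sequences. The whole theorem then reduces to one statement: \emph{in $V[G]$, every flip $\vec X^{\sigma}$ with $|\sigma| = \mu$ has $|\bigcap \vec X^{\sigma}| \le \mu$}. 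The ``in particular'' clause is then immediate, since $\mu < \kappa$ already gives $|\bigcap \vec X^{\sigma}| < \kappa$, and any flip of size $\mu^{+}$ contains a sub-flip of size $\mu$, whose intersection is an over-set of the original and hence has size $\le \mu < \kappa$.

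To prove the displayed statement I would argue by contradiction, fixing $p$ forcing ``$\name\sigma \colon \name D \to 2$ with $|\name D| = \mu$ and $|\bigcap \vec X^{\name\sigma}| \ge \mu^{+}$''. The first observations are combinatorial: $\xi$ lies in $\bigcap \vec X^{\name\sigma}$ exactly when the column $\alpha \mapsto g(\alpha,\xi)$ agrees on $\name D$ with the fixed function $\alpha \mapsto 1 - \name\sigma(\alpha)$; by $\mu^{+}$-c.c.\ there is $D' \in V$ of size $\mu$ with $p \Vdash \name D \subseteq D'$; and a one-line density argument (a condition has domain of size $<\mu = |D'|$, so it leaves some coordinate of each column over $D'$ free) shows both that in $V[G]$ the $\kappa$ columns restricted to $D'$ are \emph{pairwise distinct}, and that $p$ cannot force $\name\sigma \in V$ (for a ground-model flip of size $\mu$ the same density argument makes the intersection empty). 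Next I would choose, for $\gamma < \mu^{+}$, distinct ordinals $\xi_\gamma < \kappa$ and conditions $p_\gamma$ extending $p$ with $p_\gamma \Vdash \xi_\gamma \in \bigcap \vec X^{\name\sigma}$ — legitimate because the set of $\xi$ forced into $\bigcap \vec X^{\name\sigma}$ by \emph{some} extension of $p$ has size $\ge \mu^{+}$ — and then, using $\mu^{<\mu} = \mu$, apply the $\Delta$-system lemma to the $\mu^{+}$ sets $\dom p_\gamma$, each of size $<\mu$: after thinning, $\{\dom p_\gamma \mid \gamma \in \Gamma\}$ for some $\Gamma \in [\mu^{+}]^{\mu^{+}}$ forms a $\Delta$-system with root $R$, the restriction $p_\gamma{\restriction}R$ is a fixed condition $r$ extending $p$, and any $<\mu$-many of the $p_\gamma$ with $\gamma \in \Gamma$ are mutually compatible (all extending $r$).

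\textbf{The contradiction, and the main obstacle.} Extend $r$ to $q$ deciding $m_0 \in \name D$ and $\name\sigma(m_0) = \epsilon$, using only coordinates fresh off $R$. Since $\dom q$ has size $<\mu$ while the sets $\dom p_\gamma \setminus R$ are pairwise disjoint, all but $<\mu$-many $\gamma \in \Gamma$ leave $(m_0,\xi_\gamma)$ outside $\dom q$ with $p_\gamma$ compatible with $q$; if among those I can find $\gamma_0 \ne \gamma_1$ for which $(m_0,\xi_{\gamma_i})$ is either undecided by $p_{\gamma_i}$ or decided to opposite values, then amalgamating $q$, $p_{\gamma_0}$, $p_{\gamma_1}$ and (if needed) forcing $g(m_0,\xi_{\gamma_0}) \ne g(m_0,\xi_{\gamma_1})$ produces a condition forcing both ``$m_0 \in \name D$ and $\xi_{\gamma_0},\xi_{\gamma_1} \in \bigcap \vec X^{\name\sigma}$'' and ``$g(m_0,\xi_{\gamma_0}) \ne g(m_0,\xi_{\gamma_1})$'', contradicting $g(m_0,\xi_{\gamma_j}) = 1 - \name\sigma(m_0)$. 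The hard part — what I expect to be the technical heart — is the residual case in which $\mu^{+}$-many $p_\gamma$ have already \emph{committed} their own column at $(m_0,\xi_\gamma)$, necessarily all to the value $1-\epsilon$; here one must use that $|\name D| = \mu$ exceeds the domain size of any single condition, choosing the $p_\gamma$ so as to commit as little of column $\xi_\gamma$ over $D'$ as is consistent with forcing $\xi_\gamma \in \bigcap \vec X^{\name\sigma}$, and letting $m_0$ range over the at most $\mu$-many possible values of elements of $\name D$, so as to preclude this degenerate configuration. Reconciling ``$\name D$ is not in $V$'' with the need to pin down an element of $\name D$ without entangling the witnessing columns is the subtle step; everything else is the routine machinery of $<\mu$-closed, $\mu^{+}$-c.c.\ forcing and the $\Delta$-system lemma.
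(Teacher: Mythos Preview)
Your overall strategy is sound and the setup is correct, but the gap you flag at the end is real and you have not closed it. The fix is simpler than your sketch suggests, and only one witness $\gamma$ is needed, not two. After the $\Delta$-system step, perform one further refinement: for each $\gamma \in \Gamma$ set $S_\gamma := \{m \in D' \mid (m,\xi_\gamma) \in \dom p_\gamma\}$, a subset of $D'$ of size $<\mu$; since $|[D']^{<\mu}| \le \mu^{<\mu} = \mu$, shrink $\Gamma$ so that $S_\gamma$ is a fixed set $S$ for all $\gamma \in \Gamma$. Because $|S| < \mu$ while $r \le p$ forces $|\name D| = \mu$, you can extend $r$ to $q$ deciding $m_0 \in \name D \setminus S$ and $\name\sigma(m_0) = \epsilon$. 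Now $(m_0,\xi_\gamma) \notin \dom p_\gamma$ for \emph{every} remaining $\gamma$, and there are $\mu^+$ many $\gamma$ with $p_\gamma$ compatible with $q$ and $(m_0,\xi_\gamma) \notin \dom q$; for any one of them the amalgam $q \cup p_\gamma \cup \{((m_0,\xi_\gamma),\epsilon)\}$ is a condition forcing both $\xi_\gamma \in \bigcap \vec X^{\name\sigma}$ and $g(m_0,\xi_\gamma) = \epsilon \ne 1 - \name\sigma(m_0)$, the desired contradiction. Your talk of ``committing as little of the column as possible'' and of ranging $m_0$ over all possible values is unnecessary once this uniformization is in place.

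The paper organizes the same idea differently: instead of a $\Delta$-system on $\mu^+$ conditions, it fixes a single elementary submodel $M \prec H_\theta$ of size $\mu$, closed under $<\mu$-sequences and containing $p$, $I$ (your $D'$), and the relevant names. One then picks a \emph{single} witness $\alpha \in \kappa \setminus M$ and $p' \le p$ forcing $\alpha \in \bigcap \vec X^{\name\sigma}$. The dense set of conditions deciding some $i \in I \setminus \supp(p')$ to lie in $\dom(\name\sigma)$ with a specific value is definable in $M$ (closure of $M$ under $<\mu$-sequences gives $I \cap \supp(p') \in M$), so a maximal antichain through it lies entirely in $M$ by the $\mu^+$-c.c.; the member $p'' \in M$ compatible with $p'$ automatically satisfies $(i,\alpha) \notin \dom p''$ since $\alpha \notin M$, and flipping the value at that coordinate gives the contradiction. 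The elementary-submodel packaging replaces both your $\Delta$-system and the extra uniformization above in one move, but once your gap is filled the two arguments are essentially the same.
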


\begin{proof}
    Observe that $\Add(\mu,\lambda)$ is forcing equivalent to the poset of all partial functions $p\mathrel{:}\kappa \times \lambda \rightarrow 2$ such that $|\dom(p)| < \mu$.
    If $G$ is generic for this poset and $i \in \lambda$, define
    $$X_i = \{\alpha \in \omega_1 \mid \exists p \in G \ (p(i,\alpha) =1 )\}.$$ 
    Let us  prove that $\Seq{ X_i \mid i<\lambda}$ has the flipping $\mu$-bounded intersection property.
    Suppose towards a contradiction that $\sigma\mathrel{:}\lambda\to 2$ is a partial function, $|\sigma|=\mu$ and $|\bigcap \vec{X}^\sigma|\geq\mu^+$.
    Back in $V$, let $\dot{\sigma}$ be a name for $\sigma$ and use the $\mu^+$-chain condition, to find  $I\in V$, $|I|=\mu$ such that $\dom(\sigma)\subseteq I$. Let $p\in G$ be a condition forcing that $\dom(\dot{\sigma})\subseteq I$ and that $|\bigcap\dot{\vec{X}}^{\dot{\sigma}}|\geq \check{\mu}^+$.  
    Let $\theta$ be sufficiently large and $M\prec H_\theta$ be a model of size $\mu$,
    closed under $<\mu$-sequences, and such that $p,\dot{\sigma},\mathbb{P}, \vec{X},I,\mu \in M$,
    noting that $\mu+1\subseteq M$. It is easy to check that $\mathbb{P}\cap M=\Add(\mu,\lambda\cap M)$.
    Next, find  $\alpha \in \kappa \setminus M$  and $p' \leq p$  a condition such that $$p'\Vdash \check{\alpha}\in \bigcap\dot{\vec{X}}^{\dot{\sigma}}.$$
     Let $D\subseteq \Add(\mu,\lambda)$ be the set of all $q$ such that for some $i\in I\setminus \supp(p')$, and $\epsilon=0,1$, $q\Vdash \check{i}\in \dom(\dot{\sigma})\wedge \dot{\sigma}(i)=\epsilon$. Then $D$ is dense below $p$ as $\supp(p')$ has size $<\mu$ while $p$ forces $|\dot{\sigma}|=\check{\mu}$. Note that $D$ is definable in $M$ as $I\setminus \supp(p')=I\setminus (I\cap \supp(p'))$ and $I\cap \supp(p')\in M$ as $M$ is closed under ${<}\mu$-sequences. Fix a maximal antichain $\mathcal{A}\subseteq D$, such that $\mathcal{A}\in M$. By the chain condition, $\mathcal{A}\subseteq M$ and there is $p''\leq p$, $p''\in \mathcal{A}$ such that $p'$ and $p''$ are compatible. Fix $i,\epsilon$ witnessing that $p''\in D$. Since $i\notin \supp(p')$
     and $\alpha\notin M$, $\Seq{ i,\alpha}\notin \dom(p'\cup p'')$.  Define $q=p'\cup p''\cup \{\Seq{\Seq{ i,\alpha},1-\epsilon}\}$. Then $q\leq p'$ but also $q\Vdash \alpha\notin \dot{X}^{\dot{\sigma}}_i$ and $i\in \dom{\dot{\sigma}}$, contradiction. 
\end{proof}

The case $\mu = \omega$, $\kappa = \omega_1$ and $\lambda = 2^{\aleph_1}$ is the one of primary interest---this is the 
poset to add $2^{\aleph_1}$ Cohen reals (as a finite support product).
The proof can be summarized as follows, making clear that the conclusion carries over to the standard poset to add $2^{\aleph_1}$
random reals.
Let $\{X_i \mid i \in \lambda\}$ be the generic subsets of $\omega_1$.
First, if $I \subseteq \lambda$ is countably infinite and in the ground model, then both $\bigcap_{i \in I} X_i$ and 
$\bigcap_{i \in I} \omega_1 \setminus X_i$ are empty by genericity.
Second, if $I$ is a countably infinite set in the generic extension, then there is an $\alpha < \omega_1$ such that
$I$ is in the intermediate generic extension by $\{X_i \cap \alpha \mid i \in \lambda\}$.
Since $\{X_i \setminus \alpha \mid i \in \lambda\}$ is generic over this intermediate extension,
by the previous observation $\bigcap_{i \in I} X_i$ and $\bigcap_{i \in I} \omega_1 \setminus X_i$ 
are contained in $\alpha$.
Since any flip will be constant on a countably infinite set, the desired conclusion follows.

\begin{corollary}\label{ref: cohen extension and Tukey-top}
     In any generic extension by $\Add(\omega,2^{\omega_1})$ or by the homogeneous measure algebra of character
     $2^{\aleph_1}$,
     every uniform ultrafilter over $\omega_1$ is Tukey-top.
\end{corollary}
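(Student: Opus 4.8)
The plan is to instantiate Theorem~\ref{Thm:TukeyTop in Cohen} at $\mu=\omega$ and feed the resulting family into the proposition immediately preceding it. For the Cohen case, put $\lambda:=(2^{\aleph_1})^{V}$ and force with $\mathbb{P}=\Add(\omega,\lambda)$. The hypotheses $\omega=\omega^{<\omega}<\omega_1<\lambda$ hold and $\cf(\omega_1)=\omega_1>\omega$, so Theorem~\ref{Thm:TukeyTop in Cohen} provides in $V[G]$ a sequence $\Seq{X_\alpha\mid\alpha<\lambda}$ of subsets of $\omega_1$ such that every flip $\vec{X}^\sigma$ with $|\sigma|=\omega$ has $\lvert\bigcap\vec{X}^\sigma\rvert\le\omega$; since $\omega_1$ is the successor of $\omega$, this is exactly the flipping $\omega$-bounded intersection property at $\kappa=\omega_1$. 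Before applying the proposition I would check that $\lambda$ remains $2^{\aleph_1}$ in $V[G]$, so that ``length $\lambda$'' really reads ``length $2^{\aleph_1}$'': the inequality $(2^{\aleph_1})^{V[G]}\ge\lambda$ is immediate since $V$ already contains $\lambda$ subsets of $\omega_1$, while for the reverse one uses that $\mathbb{P}$ is ccc of size $\lambda$, so a nice-name count bounds $(2^{\aleph_1})^{V[G]}$ by $(\lambda^{\aleph_0})^{\aleph_1}=\lambda^{\aleph_1}=((2^{\aleph_1})^{\aleph_1})^{V}=\lambda$. (One also notes, by genericity, that a full flip of the $X_\alpha$ is a family of size $\lambda$.)

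With this in hand the proposition preceding Theorem~\ref{Thm:TukeyTop in Cohen} applies with $(\mu,\kappa,\lambda)=(\omega,\omega_1,2^{\aleph_1})$ and gives that every uniform ultrafilter $\Ucal$ over $\omega_1$ is $(\omega,2^{\aleph_1})$-Tukey-top, which by Definition~\ref{Def: Tukey top} is precisely what ``Tukey-top'' means for an ultrafilter over $\omega_1$. Unwound: one fixes a full flip $\vec{X}^\sigma$ with $\vec{X}^\sigma\subseteq\Ucal$, and any countably infinite subfamily of $\vec{X}^\sigma$ is $\supseteq$-unbounded in $(\Ucal,\supseteq)$ because its intersection is countable and hence not in the uniform ultrafilter $\Ucal$.

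For the homogeneous measure algebra $\mathbb{B}$ of character $\lambda=2^{\aleph_1}$ I would run the same two steps, building the family as in the paragraph following Theorem~\ref{Thm:TukeyTop in Cohen}. Realize $\mathbb{B}$ as the measure algebra of $2^{\lambda}$ with its product measure and generic coordinates $\Seq{r_\beta\mid\beta<\lambda}$, fix a partition $\lambda=\biguplus_{i<\lambda}\{s^i_\xi\mid\xi<\omega_1\}$, and set $X_i=\{\xi<\omega_1\mid r_{s^i_\xi}=0\}$. Two observations give the flipping $\omega$-bounded intersection property. First, if $I\in V$ is countably infinite then $\bigcap_{i\in I}X_i=\bigcap_{i\in I}(\omega_1\setminus X_i)=\emptyset$, since for a fixed $\xi$ every positive condition depends on only countably many coordinates, so all but finitely many $i\in I$ have $s^i_\xi$ outside that support, and for such an $i$ one may descend to force $r_{s^i_\xi}=1$ (respectively $r_{s^i_\xi}=0$); hence it is dense to force $\xi\notin\bigcap_{i\in I}X_i$ and dense to force $\xi\notin\bigcap_{i\in I}(\omega_1\setminus X_i)$. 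Second, if $I$ is countably infinite in $V[G]$ then $I\in V[G\restriction C_0]$ for some countable set $C_0$ of coordinates; choosing $\alpha^*<\omega_1$ above every $\xi$ for which $s^i_\xi\in C_0$ for some $i$ (only countably many such pairs occur), the coordinates $\{s^i_\xi\mid i\in I,\ \xi\ge\alpha^*\}$ avoid $C_0$ and so remain generic over $V[G\restriction C_0]$, whence the first observation applied there gives $\bigcap_{i\in I}X_i\subseteq\alpha^*$ and $\bigcap_{i\in I}(\omega_1\setminus X_i)\subseteq\alpha^*$, both countable. Since any flip of countably infinite length is constant on a countably infinite subfamily, $\Seq{X_i\mid i<\lambda}$ has the flipping $\omega$-bounded intersection property; the same nice-name count shows $2^{\aleph_1}$ is preserved, and the proposition preceding Theorem~\ref{Thm:TukeyTop in Cohen} finishes as in the Cohen case.

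The substantive content here is Theorem~\ref{Thm:TukeyTop in Cohen}, so I expect no genuine obstacle; the two points requiring care are the cardinal-arithmetic verification that $2^{\aleph_1}$ is undisturbed (so that the Tukey-witnessing family has full length $2^{\aleph_1}$ rather than some $\lambda\le 2^{\aleph_1}$), and, in the measure-algebra case, making precise the factorization in the second observation---that any $V[G]$-countable subset of $\lambda$ lives in an intermediate extension over which the remaining coordinates are still generic, so that the first observation applies there verbatim.
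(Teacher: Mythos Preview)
Your proposal is correct and follows essentially the same route as the paper: instantiate Theorem~\ref{Thm:TukeyTop in Cohen} at $\mu=\omega$, $\kappa=\omega_1$ for the Cohen case, and repeat the two-step ``ground-model $I$ via genericity, extension $I$ via factorization'' argument for random reals; the paper phrases the factorization by level (pass to the intermediate extension by $\{X_i\cap\alpha\mid i<\lambda\}$) rather than by a countable coordinate set, but this is the same idea. One small slip to fix: in the measure algebra a positive condition has \emph{countable}, not finite, support, so ``all but finitely many $i\in I$'' need not have $s^i_\xi$ outside it---the clean repair is simply to note that $\bigcap_{i\in I}[r_{s^i_\xi}=0]$ has measure zero for each fixed $\xi$, whence its Boolean value is $0$.
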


\begin{remark}
    This result was obtained independently by Jorge Chapital~\cite{JorgeBarely}.
\end{remark}

The results of~\cite{AbrahamShelah1986} also yield the following

\begin{corollary}
    In the Abraham-Shelah model from~\cite{AbrahamShelah1986}, the club filter over $\omega_1$ is $\omega_1$-Tukey-top, and moreover, every ultrafilter over $\omega_1$ is Tukey-top.
\end{corollary}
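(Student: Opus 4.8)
The statement has two parts. For the first, apply the Abraham-Shelah theorem with $\kappa = \omega$, over a GCH ground model with $\lambda = \aleph_2$, so that the ``moreover'' clause gives $2^{\aleph_0} = 2^{\aleph_1} = \aleph_2$ in the extension. This yields a family $\mathcal{C} = \Seq{C_i \mid i < 2^{\aleph_1}}$ of $2^{\aleph_1}$ distinct clubs in $\omega_1$ with $\bigl|\bigcap \mathcal{D}\bigr| < \aleph_0$ for every $\mathcal{D} \in [\mathcal{C}]^{\omega_1}$ (distinctness is automatic, since a club repeated $\aleph_1$ times would be its own infinite intersection). Since $\mathcal{C} \subseteq \Cub_{\omega_1}$ has size $2^{\aleph_1}$ and, for $\mathcal{D} \in [\mathcal{C}]^{\omega_1}$, the finite set $\bigcap \mathcal{D}$ contains no club and hence is not an upper bound of $\mathcal{D}$ in $(\Cub_{\omega_1}, \supseteq)$, the filter $\Cub_{\omega_1}$ is $(\omega_1, 2^{\aleph_1})$-Tukey-top, i.e. $\omega_1$-Tukey-top (equivalently, $[2^{\aleph_1}]^{<\omega_1} \leq_T \Cub_{\omega_1}$ by Theorem~\ref{Thm: Tukey Combi Characterization of Tukey Top}, whose hypotheses hold since $2^{\aleph_1} = \aleph_2$ is regular and $\cf([\aleph_2]^{<\omega_1},\subseteq) = \aleph_2$; as $\Cub_{\omega_1}$ is deterministic this moreover propagates to all its uniform extensions).

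For the ``moreover'' clause the plan is to invoke the flipping machinery of Section~\ref{Sec:EveryUltIsTukeyTop}. By the Proposition relating the flipping $\mu$-bounded intersection property to being $(\mu,\lambda)$-Tukey-top (applied with $\mu = \omega$ and $\lambda = 2^{\aleph_1}$), it suffices to produce, in a model carrying the Abraham-Shelah clubs, a sequence $\Seq{X_i \mid i < 2^{\aleph_1}}$ of subsets of $\omega_1$ with the flipping $\omega$-bounded intersection property; then every uniform ultrafilter over $\omega_1$ is $(\omega, 2^{\aleph_1})$-Tukey-top, which---since $2^{\aleph_1} = \aleph_2$ here---means Tukey-top. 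To obtain both the clubs and such a sequence simultaneously, force over a GCH model with $\mathbb{Q}_{AS} \times \Add(\omega, \aleph_2)$, where $\mathbb{Q}_{AS}$ is the Abraham-Shelah forcing ($\kappa = \omega$, $\lambda = \aleph_2$). In the resulting model: the Abraham-Shelah clubs survive with their intersection property, since $\Add(\omega, \aleph_2)$ is ccc and so any new $\mathcal{D} \in [\mathcal{C}]^{\aleph_1}$ is covered by some $\mathcal{D}' \in [\mathcal{C}]^{\aleph_1}$ from the $\mathbb{Q}_{AS}$-extension, whence $\bigl|\bigcap \mathcal{D}\bigr| \leq \bigl|\bigcap \mathcal{D}'\bigr| < \aleph_0$ (so the model is still ``an Abraham-Shelah model''); $2^{\aleph_1}$ remains $\aleph_2$ by the usual nice-name count; and, applying Theorem~\ref{Thm:TukeyTop in Cohen} with $\Add(\omega,\aleph_2)$ over the $\mathbb{Q}_{AS}$-extension (which satisfies $\omega = \omega^{<\omega} < \omega_1 < \aleph_2$), the Cohen-generic subsets $\Seq{X_i \mid i < \aleph_2}$ of $\omega_1$ satisfy $\bigl|\bigcap \vec{X}^\sigma\bigr| \leq \omega < \omega_1$ for every flip with $|\sigma| = \omega$---precisely the flipping $\omega$-bounded intersection property. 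The Proposition then finishes the argument.

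Along this route the only non-formal ingredient is the ccc-covering argument that the club intersection property is not spoiled by the extra Cohen forcing, which is routine, so I do not expect a serious obstacle. The one place where genuine work is required is if one insists on staying inside the \emph{bare} forcing of~\cite{AbrahamShelah1986} with nothing added: then one must show that poset itself already adds a flipping $\omega$-bounded intersection sequence, i.e. re-prove the genericity argument behind Theorem~\ref{Thm:TukeyTop in Cohen} for the Abraham-Shelah poset---the crux being that any countably infinite set of coordinates appearing in the extension already appears after a countable amount of generic information (so the remaining coordinates stay generic), after which pigeonhole (a countably infinite flip is $2$-constant on a countably infinite subsequence) yields that every such flip has countable intersection. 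This step uses the internal structure of the poset from~\cite{AbrahamShelah1986} rather than its black-box club-intersection consequence, and is the main obstacle.
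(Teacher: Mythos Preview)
Your treatment of the first part (the club filter is $\omega_1$-Tukey-top) is fine and matches the paper's reasoning. For the ``moreover'' clause, however, your primary route via the product $\mathbb{Q}_{AS}\times\Add(\omega,\aleph_2)$ has a genuine error: the covering step ``$\mathcal D\subseteq\mathcal D'$, whence $\lvert\bigcap\mathcal D\rvert\le\lvert\bigcap\mathcal D'\rvert$'' reverses the inclusion. From $\mathcal D\subseteq\mathcal D'$ one only gets $\bigcap\mathcal D'\subseteq\bigcap\mathcal D$, so knowing $\bigcap\mathcal D'$ is finite tells you nothing about $\bigcap\mathcal D$. Thus your preservation argument for the club-intersection property under further Cohen forcing does not work as written, and it is not clear that a simple ccc-covering repair is available. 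Independently of this, the product extension is not literally ``the Abraham--Shelah model,'' so even a correct argument along these lines would prove a slightly different statement.

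The paper instead takes exactly the route you describe in your final paragraph as the ``main obstacle,'' but the obstacle dissolves once one quotes a structural fact proved by Abraham and Shelah themselves: $\Add(\omega,\omega_2)$ is a \emph{regular suborder} of the Abraham--Shelah poset $\mathbb{S}$, and the quotient $\mathbb{S}/\Add(\omega,\omega_2)$ is $\sigma$-distributive. Hence the Cohen-generic sequence $\Seq{X_i\mid i<\omega_2}$ of Theorem~\ref{Thm:TukeyTop in Cohen} already lives in $V^{\mathbb{S}}$, and since every $\omega$-sequence of ordinals in $V^{\mathbb{S}}$ lies in $V^{\Add(\omega,\omega_2)}$, every countable flip $\sigma$ in $V^{\mathbb{S}}$ is already in the Cohen extension, where $\bigcap\vec X^{\sigma}$ was shown to be countable. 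So the flipping $\omega$-bounded intersection property persists to $V^{\mathbb{S}}$ and the conclusion follows. In short: your instinct in (b) was right, and the missing ingredient is the regular-suborder/$\sigma$-distributive-quotient fact from \cite{AbrahamShelah1986} rather than a reproof of the genericity argument from scratch.
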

\begin{proof}
  Assume GCH and let $\mathbb{S}$ denote the Abraham-Shelah poset.
  Abraham and Shelah in~\cite{AbrahamShelah1986} prove that $\Add(\omega,\omega_2)$ is a regular suborder of $\mathbb{S}$ and that the quotient is $\sigma$-distributive, which is to say that the  any $\omega$-sequence of ordinals in $V^{\mathbb{S}}$ belongs already to $V^{\Add(\omega,\omega_2)}$.
  Hence the mutually-generic sequence of Cohen reals from the previous proof persists as a witness for any uniform ultrafilter over $\omega_1$ being Tukey-top.
\end{proof}

 Recall that in the Cancino and Zapletal model~\cite{CancinoZaplatal} where every ultrafilter over $\omega$ is Tukey-top, $2^{\aleph_0}=2^{\aleph_1}=\aleph_2$. 
\begin{theorem}
    Suppose that every ultrafilter over $\omega$ is Tukey-top and that $2^{\aleph_0}=2^{\aleph_1}$. Then every uniform ultrafilter over $\omega_1$ is Tukey-top. 
\end{theorem}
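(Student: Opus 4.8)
The plan is to reduce to the already-established case over $\omega$ by means of a Rudin--Keisler projection. Since $\mathcal V \leq_{RK} \mathcal U$ implies $\mathcal V \leq_T \mathcal U$, it suffices to produce a \emph{nonprincipal} ultrafilter $\mathcal V$ over $\omega$ with $\mathcal V \leq_{RK} \mathcal U$, where $\mathcal U$ is the given uniform ultrafilter over $\omega_1$. Indeed, granted such a $\mathcal V$, the hypothesis gives that $\mathcal V$ is Tukey-top, so $[2^{\aleph_0}]^{<\omega} \leq_T \mathcal V \leq_T \mathcal U$; using $2^{\aleph_0} = 2^{\aleph_1}$ this says $[2^{\aleph_1}]^{<\omega} \leq_T \mathcal U$, and since $\lvert \mathcal U \rvert = 2^{\aleph_1}$ the reverse reduction $\mathcal U \leq_T [2^{\aleph_1}]^{<\omega}$ is automatic by Tukey's maximality proposition (Proposition after Theorem~\ref{Thm: Schmidt maximality of the Tukey sets}). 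Hence $\mathcal U \equiv_T [2^{\aleph_1}]^{<\omega}$, i.e.\ $\mathcal U$ is Tukey-top.

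To build $\mathcal V$, I would invoke the sequence $\vec e$ and the sets $U_{\alpha,n}$ introduced before Lemma~\ref{Ulam_UF}. By that lemma there is $\alpha_0 < \omega_1$ with $U_{\alpha,n} \in \mathcal U$ for all $\alpha \geq \alpha_0$ and all $n \in \omega$. Fix any $\alpha \geq \alpha_0$ and define $g \mathrel{:} \omega_1 \to \omega$ by $g(\beta) = e_\beta(\alpha)$ for $\beta > \alpha$ and $g(\beta) = 0$ otherwise; put $\mathcal V := g_*\mathcal U$. Then $\mathcal V \leq_{RK} \mathcal U$ by definition of the projection. Moreover $\mathcal V$ is nonprincipal: for each $n$,
\[
g^{-1}\bigl(\{m \in \omega \mid m \geq n\}\bigr) \;\supseteq\; U_{\alpha,n} \cap \bigl(\omega_1 \setminus (\alpha+1)\bigr) \;\in\; \mathcal U,
\]
the last membership because $U_{\alpha,n} \in \mathcal U$ and $\omega_1 \setminus (\alpha+1) \in \mathcal U$ by uniformity of $\mathcal U$. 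Thus $\mathcal V$ contains the Fr\'echet filter on $\omega$, so no singleton lies in $\mathcal V$. (Alternatively, and without reference to $\vec e$: a successor cardinal carries no $\sigma$-complete uniform ultrafilter, so $\mathcal U$ is not $\sigma$-complete; hence some $g \mathrel{:} \omega_1 \to \omega$ fails to be $\mathcal U$-almost-everywhere constant, and then $g_*\mathcal U$ is a nonprincipal ultrafilter over $\omega$.)

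There is essentially no obstacle: the only point that truly needs checking is that the projected ultrafilter $\mathcal V$ is genuinely nonprincipal rather than principal on $\omega$ --- which is exactly what the conclusion of Lemma~\ref{Ulam_UF} (or non-$\sigma$-completeness of $\mathcal U$) delivers --- together with the tacit convention that the hypothesis ``every ultrafilter over $\omega$ is Tukey-top'' concerns nonprincipal ultrafilters. Everything else is the formal chain: $\leq_{RK} \Rightarrow \leq_T$, transitivity of $\leq_T$, the cardinal identity $2^{\aleph_0} = 2^{\aleph_1}$, and Tukey's proposition that $[2^{\aleph_1}]^{<\omega}$ is $\leq_T$-largest among directed sets of cardinality $\leq 2^{\aleph_1}$.
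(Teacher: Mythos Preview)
Your proof is correct and follows essentially the same route as the paper: project $\mathcal U$ via Rudin--Keisler to a nonprincipal ultrafilter on $\omega$, invoke the hypothesis there, and use $2^{\aleph_0}=2^{\aleph_1}$ to conclude $[2^{\aleph_1}]^{<\omega}\leq_T\mathcal U$. The paper simply cites the well-known $\omega$-decomposability of uniform ultrafilters over $\omega_1$ (your parenthetical alternative), whereas you additionally supply an explicit projection via Lemma~\ref{Ulam_UF}; both are fine.
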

\begin{proof}
    Let $\Ucal$ be any uniform ultrafilter over $\omega_1$. It is well known that every uniform ultrafilter is $\omega$-decomposable, namely, there is a function $f\mathrel{:}\omega_1\to\omega$ such that $W=f_*(U)$ is a uniform ultrafilter over $\omega$. Since $\Wcal \leq_{RK} \Ucal$, we also have $\Wcal \leq_T \Ucal$, and since every ultrafilter over $\omega$ is Tukey-top we have:
    $$[2^{\aleph_1}]^{<\omega}=[2^{\aleph_0}]^{<\omega}\leq_T \Wcal \leq_T \Ucal.$$
    Hence $\Ucal$ is Tukey-top.
\end{proof}

\begin{remark}
    It was proposed (see for example~\cite{HrusakBook}) that a possible solution to the Katowice problem~\cite{Nykos} (whether it is consistent that $\mathscr{P}(\omega_1)/\fin$ and $\mathscr{P}(\omega)/\fin$ can consistently be isomorphic) is to prove in ZFC that there is an ultrafilter over $\omega$ that is not Tukey-equivalent to any uniform ultrafilter over $\omega_1$.
    This would give a negative answer to the problem.
    In the above model, this strategy cannot succeed.
    Still, $\mathscr{P}(\omega)/\fin \cong \mathscr{P}(\omega_1)/\fin$ has a number of nontrivial consequences and it is still conceivable that
    a combination of these consequences implies
    that there is an ultrafilter over $\omega$ which is not
    Tukey-equivalent to any uniform ultrafilter over $\omega_1$.
\end{remark}

Next, we consider whether CH implies the existence of a non-Tukey-top ultrafilter over $\omega_1$. First we observe that something slightly weaker than a flipping family with the bounded intersection property is needed:
\begin{proposition}
    Suppose that there is a family $\Seq{ A_\alpha\mid\alpha<\lambda}\subseteq \kappa$ such that whenever $I\in [\lambda]^\mu$, both $\bigcap_{\alpha\in I}A_\alpha$ and $\bigcap_{\alpha\in I}\omega_1\setminus A_\alpha$ are bounded in $\kappa$. Then every uniform ultrafilter over $\kappa$ is $(\mu,\lambda)$-Tukey-top.
\end{proposition}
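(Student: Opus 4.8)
The plan is to reuse the argument of the preceding proposition, exploiting the fact that although $\Seq{A_\alpha \mid \alpha<\lambda}$ need not be a flipping family, a single ultrafilter selects one side on each coordinate and thereby reduces the problem to one of the two \emph{homogeneous} flips, which by hypothesis is bounded. Concretely, fix a uniform ultrafilter $\Ucal$ over $\kappa$. Since $\Ucal$ is an ultrafilter, for each $\alpha<\lambda$ exactly one of $A_\alpha$ and $\kappa\setminus A_\alpha$ lies in $\Ucal$; set $\lambda_0=\{\alpha<\lambda\mid A_\alpha\in\Ucal\}$ and $\lambda_1=\lambda\setminus\lambda_0$. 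As $\lambda$ is infinite, $\max(|\lambda_0|,|\lambda_1|)=\lambda$. The family $\Seq{\kappa\setminus A_\alpha\mid\alpha<\lambda}$ satisfies the same hypothesis (the two pure intersections just swap roles), so after possibly replacing $\Seq{A_\alpha}$ by it we may assume $|\lambda_0|=\lambda$; write $J:=\lambda_0$, so that $A_\alpha\in\Ucal$ for all $\alpha\in J$.

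Next I would verify that a suitable subfamily of $\{A_\alpha\mid\alpha\in J\}$ witnesses $(\mu,\lambda)$-Tukey-topness of $(\Ucal,\supseteq)$. First, no single set can occur as $A_\alpha$ for $\mu$-many $\alpha\in J$: otherwise that set would equal the intersection of those $\mu$-many $A_\alpha$, hence be bounded in $\kappa$ by hypothesis, hence have size $<\kappa$, contradicting its membership in the uniform ultrafilter $\Ucal$. Thus every fibre of $\alpha\mapsto A_\alpha$ on $J$ has size $<\mu$, and a routine cardinal count (using $\mu<\lambda$, or $\mu$ regular) shows that $\alpha\mapsto A_\alpha$ takes $\lambda$-many distinct values on $J$; choose $J'\subseteq J$ of size $\lambda$ on which it is injective, and let $\mathcal A=\{A_\alpha\mid\alpha\in J'\}$. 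For any $\mathcal B\in[\mathcal A]^{\mu}$, write $\mathcal B=\{A_\alpha\mid\alpha\in I\}$ for some $I\in[J']^{\mu}$; by hypothesis $\bigcap\mathcal B=\bigcap_{\alpha\in I}A_\alpha$ is bounded in $\kappa$, so it has size $<\kappa$ and therefore is not in $\Ucal$. Since $\Ucal$ is upward closed, this means no member of $\Ucal$ is contained in all sets of $\mathcal B$, i.e.\ $\mathcal B$ is unbounded in $(\Ucal,\supseteq)$. Hence $\mathcal A$ is a collection of size $\lambda$ whose $\mu$-sized subfamilies are all unbounded, which is exactly $(\mu,\lambda)$-Tukey-topness.

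The one point that requires (minor) care, and which I expect to be the only real obstacle, is precisely this bookkeeping: because the hypothesis controls only the two homogeneous intersections and not mixed flips, one cannot simply take the family $\{B_\alpha\mid\alpha<\lambda\}$ (where $B_\alpha\in\{A_\alpha,\kappa\setminus A_\alpha\}$ is the one in $\Ucal$), since a $\mu$-sized subfamily of the $B_\alpha$ might mix both sides; one genuinely must first pass to a homogeneous block $J$ of full cardinality $\lambda$, and then check that $\alpha\mapsto A_\alpha$ does not collapse the block below $\lambda$, which the boundedness hypothesis rules out. Everything else follows immediately from the definitions and the uniformity of $\Ucal$.
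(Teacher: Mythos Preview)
Your proof is correct and follows the same approach as the paper: partition $\lambda$ according to which of $A_\alpha$, $\kappa\setminus A_\alpha$ lies in $\Ucal$, pass to a homogeneous block $J$ of size $\lambda$, and observe that any $\mu$-sized intersection from this block is bounded in $\kappa$ and hence not in the uniform ultrafilter $\Ucal$. The paper's proof is terser and does not spell out the injectivity bookkeeping you add; your observation that no single set can occur as $A_\alpha$ for $\mu$-many $\alpha\in J$ (else that set itself would be bounded) is a clean way to secure that the witness family literally has $\lambda$ distinct members.
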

\begin{proof}
    Given any uniform ultrafilter $\Ucal$ over a cardinal $\kappa$, for each $\alpha<\lambda$, either $A_\alpha\in \Ucal$ or $\kappa\setminus A_\alpha\in \Ucal$. There is $J\in [\lambda]^{\lambda}$ such that either $\{A_\alpha\mid \alpha\in J\}\subseteq \Ucal$ or $\{\kappa\setminus A_\alpha\mid \alpha\in J\}\subseteq \Ucal$. In either case, the assumption implies that $\Ucal$ is $(\mu,\lambda)$-Tukey-top.
\end{proof}
\begin{remark}
    The existence of $\Seq{ A_\alpha\mid\alpha<\lambda}\subseteq \kappa$ such that whenever $I\in [\lambda]^\mu$  both $\bigcap_{\alpha\in I}A_\alpha$ and $\bigcap_{\alpha\in I}\kappa\setminus A_\alpha$ are bounded in $\kappa$, is equivalent to the negative partition relation $\binom{\lambda}{\kappa}\not\to\binom{\mu}{\kappa}^{1,1}_2$; there exists $c\mathrel{:}\lambda\times\kappa\to 2$ such that there is no homogeneous rectangle (i.e. a set of the form $A\times B$ on which $c$ takes just one value), where $A\in [\lambda]^\mu$ and $B\in [\kappa]^{\kappa}$.
\end{remark}
P{\v r}ikr\'y~\cite{PRIKRYPolorized} showed the consistency of GCH with $\binom{\omega_2}{\omega_1}\not\to\binom{\omega}{\omega_1}^{1,1}_2$. Hence, we establish the following:
\begin{corollary}
    It is consistent that GCH holds and that every uniform ultrafilter over $\omega_1$ is Tukey-top. 
\end{corollary}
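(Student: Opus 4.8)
The plan is to combine P\v{r}ikr\'y's consistency result with the proposition and remark immediately preceding this corollary; the only real work is bookkeeping, since the set-theoretic content lives entirely in \cite{PRIKRYPolorized}. First I would pass to P\v{r}ikr\'y's model: a forcing extension of a model of $\mathsf{GCH}$ in which $\mathsf{GCH}$ still holds and the negative partition relation $\binom{\omega_2}{\omega_1}\not\to\binom{\omega}{\omega_1}^{1,1}_2$ holds. Fixing a coloring $c\mathrel{:}\omega_2\times\omega_1\to 2$ witnessing the failure of this relation, set $A_\alpha=\{\xi<\omega_1\mid c(\alpha,\xi)=1\}$ for each $\alpha<\omega_2$. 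If $I\in[\omega_2]^\omega$ and $\bigcap_{\alpha\in I}A_\alpha$ were uncountable, then (being an uncountable subset of $\omega_1$) it would have size $\omega_1$, and $I\times\bigcap_{\alpha\in I}A_\alpha$ would be a monochromatic rectangle of the forbidden shape; likewise $\bigcap_{\alpha\in I}(\omega_1\setminus A_\alpha)$ cannot be uncountable. Hence $\Seq{A_\alpha\mid\alpha<\omega_2}$ is a family of subsets of $\omega_1$ such that for every $I\in[\omega_2]^\omega$ both $\bigcap_{\alpha\in I}A_\alpha$ and $\bigcap_{\alpha\in I}(\omega_1\setminus A_\alpha)$ are bounded in $\omega_1$---exactly the hypothesis of the preceding proposition with $\kappa=\omega_1$, $\mu=\omega$, $\lambda=\omega_2$ (this is just the equivalence recorded in the remark above).

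Applying that proposition, every uniform ultrafilter over $\omega_1$ is $(\omega,\omega_2)$-Tukey-top in P\v{r}ikr\'y's model. Since $\mathsf{GCH}$ holds there, $2^{\aleph_1}=\aleph_2$, so $(\omega,\omega_2)$-Tukey-top is precisely $(\omega,2^{\aleph_1})$-Tukey-top, which by Definition~\ref{Def: Tukey top} is what ``Tukey-top'' means for an ultrafilter over $\omega_1$. Equivalently, for any uniform ultrafilter $\Ucal$ over $\omega_1$ we get $[2^{\aleph_1}]^{<\omega}\leq_T\Ucal$ via Theorem~\ref{Thm: Tukey Combi Characterization of Tukey Top} (noting $\cf([\omega_2]^{<\omega},\subseteq)=\omega_2$), while $\Ucal\leq_T[2^{\aleph_1}]^{<\omega}$ holds automatically since $|\Ucal|\leq 2^{\aleph_1}$; thus $\Ucal\equiv_T[2^{\aleph_1}]^{<\omega}$.

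I do not expect any genuine obstacle: the hard part---producing a model of $\mathsf{GCH}$ together with $\binom{\omega_2}{\omega_1}\not\to\binom{\omega}{\omega_1}^{1,1}_2$---is exactly P\v{r}ikr\'y's theorem, invoked here as a black box. The only points needing a moment's care are the trivial translations ``bounded in $\omega_1$'' $\Leftrightarrow$ ``countable'' for subsets of $\omega_1$, and checking that the rectangle shape forbidden by P\v{r}ikr\'y's relation ($[\omega_2]^\omega$ on one side, $[\omega_1]^{\omega_1}$ on the other) is precisely the one used in the remark and in the preceding proposition.
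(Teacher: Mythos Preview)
Your proposal is correct and follows essentially the same approach as the paper: invoke P\v{r}ikr\'y's model of $\mathsf{GCH}$ plus $\binom{\omega_2}{\omega_1}\not\to\binom{\omega}{\omega_1}^{1,1}_2$, translate the coloring into the family $\Seq{A_\alpha}$ via the preceding remark, apply the proposition to get $(\omega,\omega_2)$-Tukey-top, and use $2^{\aleph_1}=\aleph_2$ to conclude. The paper compresses all of this into the single sentence preceding the corollary, but the content is identical.
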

\begin{corollary}
    If $\binom{\omega_2}{\omega_1}\not\to\binom{\omega}{\omega_1}^{1,1}_2$ holds, then any cardinal-preserving $\sigma$-closed forcing will preserve this.
\end{corollary}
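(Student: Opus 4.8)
The plan is to show that a single coloring witnessing the negative partition relation in the ground model $V$ continues to witness it in every $\sigma$-closed cardinal-preserving extension $V[G]$. By the remark following the proposition above, $\binom{\omega_2}{\omega_1}\not\to\binom{\omega}{\omega_1}^{1,1}_2$ is precisely the statement that there is a coloring $c\mathrel{:}\omega_2\times\omega_1\to 2$ admitting no homogeneous rectangle $A\times B$ with $A\in[\omega_2]^\omega$ and $B\in[\omega_1]^{\omega_1}$. Fix such a $c$ in $V$. Since the forcing is cardinal-preserving, $\omega_1$ and $\omega_2$ are computed the same in $V$ and $V[G]$, so it suffices to check that $c$ admits no such rectangle in $V[G]$; that is, the cardinal-preservation hypothesis is used only to guarantee that the target statement in $V[G]$ refers to the same two cardinals, so the ground-model coloring can be reused verbatim.

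Now I would argue by contradiction: suppose in $V[G]$ there are $A\in[\omega_2]^\omega$, $B\in[\omega_1]^{\omega_1}$, and $i\in 2$ with $c\restriction(A\times B)$ constantly equal to $i$. The first step is to observe that $A\in V$. Since $\mathbb{P}$ is $\sigma$-closed it adds no new $\omega$-sequences of ordinals, so any enumeration of the countable set $A$ already lies in $V$, and hence so does $A$ itself. The second step is to pass, inside $V$, to the set $B_i:=\{\beta<\omega_1\mid \forall\alpha\in A\ \ c(\alpha,\beta)=i\}$, which is defined from the ground-model objects $A$ and $c$ and therefore belongs to $V$. Because $c$ is constantly $i$ on $A\times B$, we have $B\subseteq B_i$, so $B_i$ is uncountable in $V[G]$; as countability of a set is absolute between $V$ and $V[G]$, $B_i$ is uncountable in $V$, hence $|B_i|=\omega_1$ there. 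But then $A\times B_i$ is a homogeneous rectangle for $c$ in $V$ of the forbidden type ($A\in[\omega_2]^\omega$, $B_i\in[\omega_1]^{\omega_1}$), contradicting the choice of $c$.

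The argument is short, and I do not expect a genuine obstacle: the only points needing care are the two standard absoluteness facts being invoked---that $\sigma$-closed forcing adds no new countable set of ordinals (used to pull the ``narrow'' side $A$ back into $V$) and that uncountability of a ground-model set is absolute (used to pull the ``wide'' side $B$ back into $V$ through the definable superset $B_i$). It is worth flagging that both hypotheses are essential in the form used: $\sigma$-closedness is exactly what lets the narrow side land in $V$, while cardinal-preservation is what keeps $\omega_1$ and $\omega_2$ fixed so that the conclusion is the ``same'' partition relation.
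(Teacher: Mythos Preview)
Your proof is correct and matches the paper's argument essentially line for line: both pull the countable side $A$ back into $V$ via $\sigma$-closedness, define the maximal homogeneous column set $B_i$ (the paper calls it $C$) in $V$, and observe it must be countable there. The only cosmetic difference is that the paper argues directly that any candidate $B$ is countable while you phrase it as a contradiction.
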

\begin{proof}
    Suppose $\mathbb{P}$ is $\sigma$-closed and $c\in V$ witnesses the negative partition relation. Suppose that $A\in V[G]$ is any countably infinite subset of $\omega_2$. Then $A\in V$. If $B\in V[G]$ is any subset of $\omega_1$ such that $c[A\times B]=\{i\}$, then $B\subseteq C:=\{\alpha<\omega_1\mid \forall a\in A, \ c(a,\alpha)=i\}$. Clearly, $C\in V$, and since $c[A\times C]=\{i\}$, $C$ must also be countable. Hence, $B$ is countable.
    \end{proof}
    
\section{Large Cardinal Ideals and Non-Tukey-Top Ultrafilters}\label{Section: Non-Tukeytop}
If $\kappa$ carries a uniform $\sigma$-complete ultrafilter (e.g. if $\kappa$ is measurable) then clearly this ultrafilter is non-Tukey-top.
Moreover, any $p$-point ultrafilter (in which case either $\kappa=\omega$ or else $\kappa$ has to be measurable)  will be a non-$\kappa$-Tukey-top ultrafilter.
Since, for example, there are no $p$-points nor $\sigma$-complete ultrafilters over $\omega_1$, it is unclear whether there can be a non-$\omega_1$-Tukey-top, or even a non Tukey-top, ultrafilter over $\omega_1$. In this subsection, we prove that such ultrafilters consistently exist over $\omega_1$. 
\begin{proposition}
    Suppose that $I$ is a $\sigma$-complete $(\kappa^+,\mu)$-saturated ideal over $\kappa$. Then the forcing $\mathscr{P}(\kappa)/I$ adds a $V$-ultrafilter which is not $(\mu,\kappa^+)$-Tukey-top.
\end{proposition}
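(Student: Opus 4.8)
The plan is to analyze the $V$-ultrafilter added by the generic, $\mathcal U_G=\{A\in\mathscr P(\kappa)^V\mid [A]_I\in G\}$. By genericity this is an ultrafilter on $\mathscr P(\kappa)^V$ extending $I^{*}$, and a straightforward density argument using the $\sigma$-completeness of $I$ shows that $\mathcal U_G$ is a $\sigma$-complete $V$-ultrafilter (closed under countable intersections of its members taken in $V$). I would also record at the outset that $(\kappa^{+},\mu)$-saturation forces $\mathscr P(\kappa)/I$ to be $\kappa^{+}$-c.c.\ (distinct members of an antichain have $I$-small intersection, so an antichain of size $\kappa^{+}$ would violate saturation), hence $\kappa^{+}$ is preserved and $[\mathcal U_G]^{\kappa^{+}}$ is unambiguous in $V[G]$. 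Unwinding Definition~\ref{Def: Tukey top} with $P=(\mathcal U_G,\supseteq)$, it then suffices to force that every $\mathcal A\in[\mathcal U_G]^{\kappa^{+}}$ has a subfamily $\mathcal B\in[\mathcal A]^{\mu}$ with $\bigcap\mathcal B\in\mathcal U_G$.

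I would first isolate an elementary lemma about the forcing: if a condition $C$ forces $\check E\in\dot{\mathcal U}_G$ for some $E\in\mathscr P(\kappa)^V$, then $C\subseteq_I E$ --- otherwise $C\setminus E$ is a positive set, hence a condition below $C$, which belongs to $\mathcal U_G$ for every generic through it, so it forces $\kappa\setminus E\in\dot{\mathcal U}_G$ and thus $\check E\notin\dot{\mathcal U}_G$, a contradiction. Then fix a condition $A^{*}$ together with a name $\langle\dot A_\xi\mid\xi<\kappa^{+}\rangle$ forced by $A^{*}$ to be a one-to-one enumeration of a subfamily of $\dot{\mathcal U}_G$, and reduce the theorem to the density claim: below every $A'\le A^{*}$ there exist $C^{*}\le A'$ and $B\in[\kappa^{+}]^{\mu}$ with $C^{*}\Vdash\bigcap_{\xi\in B}\dot A_\xi\in\dot{\mathcal U}_G$. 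Granting this claim, the set of conditions forcing ``some $\mu$-sized subfamily of $\{\dot A_\xi\mid\xi<\kappa^{+}\}$ is bounded in $(\dot{\mathcal U}_G,\supseteq)$'' is dense below $A^{*}$, so $A^{*}$ forces it; since $A^{*}$ and the name were arbitrary, $\mathcal U_G$ is forced not to be $(\mu,\kappa^{+})$-Tukey-top.

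The density claim is the one genuine obstacle, and the point is to feed saturation the right family. Working in $V$, for each $\xi<\kappa^{+}$ extend $A'$ to a condition $C_\xi$ deciding $\dot A_\xi=\check E_\xi$ for some $E_\xi\in\mathscr P(\kappa)^V$; by the lemma $C_\xi\subseteq_I E_\xi$, and after replacing $C_\xi$ with $C_\xi\cap E_\xi$ we may assume $C_\xi\subseteq E_\xi$ as genuine subsets. Now $\langle C_\xi\mid\xi<\kappa^{+}\rangle$ is a family of $\kappa^{+}$-many $I$-positive sets, so $(\kappa^{+},\mu)$-saturation yields $B\in[\kappa^{+}]^{\mu}$ with $C^{*}:=\bigcap_{\xi\in B}C_\xi\in I^{+}$. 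Since $C^{*}\subseteq C_\xi$ we have $C^{*}\le C_\xi$, hence $C^{*}\Vdash\dot A_\xi=\check E_\xi$ for all $\xi\in B$; moreover $C^{*}\subseteq\bigcap_{\xi\in B}E_\xi$, so $C^{*}\Vdash\bigcap_{\xi\in B}\dot A_\xi=\bigcap_{\xi\in B}\check E_\xi\supseteq\check C^{*}$, and since $\check C^{*}\in\dot{\mathcal U}_G$ and $\dot{\mathcal U}_G$ is a filter, $C^{*}\Vdash\bigcap_{\xi\in B}\dot A_\xi\in\dot{\mathcal U}_G$. The subtlety worth emphasizing is that, because $I$ is only $\sigma$-complete (not $\mu$-complete), one cannot intersect $\mu$-many members of $\mathcal U_G$ directly; instead one applies saturation to the \emph{deciding conditions} $C_\xi$ --- which it makes positive with positive joint intersection --- and arranging $C_\xi\subseteq E_\xi$ as literal containment makes $\bigcap_{\xi\in B}C_\xi$ automatically a subset of $\bigcap_{\xi\in B}E_\xi$. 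Finally, one-to-one-ness of the enumeration guarantees that $C^{*}$ forces the $E_\xi$, and hence the $\dot A_\xi$ for $\xi\in B$, to be pairwise distinct, so $\{\dot A_\xi\mid\xi\in B\}$ is genuinely a $\mu$-element subfamily, which finishes the argument.
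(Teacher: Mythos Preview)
Your proof is correct and takes essentially the same approach as the paper: for each $\xi$ choose a condition $C_\xi$ deciding $\dot A_\xi=\check E_\xi$ with $C_\xi\subseteq E_\xi$, apply $(\kappa^+,\mu)$-saturation to the family $\langle C_\xi\mid\xi<\kappa^+\rangle$ to obtain $B\in[\kappa^+]^\mu$ with $\bigcap_{\xi\in B}C_\xi\in I^+$, and observe that this intersection forces $\bigcap_{\xi\in B}\dot A_\xi\in\dot{\mathcal U}_G$. Your write-up is more careful on a few points (the explicit lemma $C\Vdash\check E\in\dot{\mathcal U}_G\Rightarrow C\subseteq_I E$, the $\kappa^+$-c.c.\ and cardinal preservation, and the distinctness of the $E_\xi$ on $B$), but the core argument is identical to the paper's.
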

\begin{proof}
    Let $G$ be the generic ultrafilter. Suppose toward a contradiction that $G$ is $\mu$-Tukey-top. Let $\Seq{ X_i\mid i<\kappa^+}$ witness that $G$ is $\mu$-Tukey-top. Let $(\dot{X}_i\mid i<\kappa^+)$ be names and let $Y\in \mathscr{P}(\kappa)/I$ be a condition such that $Y\Vdash \Seq{\dot{X}_i\mid i<\kappa^+}$ is a witness. For each $i<\kappa^+$ let $Y_i\leq Y$ be such that for some $Y_i\subseteq Z_i$, $Y_i\Vdash \dot{X}_i=\check{Z}_i$. Consider in $V$ the sequence $\Seq{Y_i\mid i<\kappa^+}$; by the saturation assumption there is a $\Xi \in [\kappa^+]^{\mu}$ such that $\Seq{Y_i\mid i\in \Xi}$ has a lower bound in $\mathscr{P}(\kappa)/I$. By the $\sigma$-completeness of $I$, $Y^*=\bigcap_{i\in \Xi}Y_i\in \mathscr{P}(\kappa)/I$.  However, $Y^*$ forces that $\Seq{\dot{X}_i\mid i<\kappa^+}$ does not witness that $G$ is Tukey-top, contradiction.
\end{proof}
Of course, the ultrafilter from the previous proposition is not going to be an ultrafilter in the generic extension. It is therefore natural to ask whether it is possible to construct a non-Tukey-top ultrafilter from an $(\omega_2,\omega_2,\omega)$-saturated ideal over $\omega_1$ or from other saturation assumptions. 
\begin{corollary}
    It is consistent that there is an $\omega_2$-saturated ideal and every uniform ultrafilter over $\omega_1$ is Tukey-top.
\end{corollary}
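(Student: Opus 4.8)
The plan is to force over a model carrying a saturated ideal with a forcing mild enough to preserve the saturation, while simultaneously making every uniform ultrafilter over $\omega_1$ Tukey-top. First I would fix a model $V$ of ZFC in which $\omega_1$ carries a normal, $\sigma$-complete, $\omega_2$-saturated ideal $I$; such a model exists relative to a large cardinal, for instance by Kunen's classical construction from a huge cardinal. Nothing is assumed about $2^{\aleph_1}$ in $V$. Put $\lambda=(2^{\aleph_1})^V$ and force with $\mathbb P=\Add(\omega,\lambda)$, obtaining $V[G]$. Since $\mathbb P$ has the c.c.c. and cardinality $\lambda$, and since $\lambda^{\aleph_1}=(2^{\aleph_1})^{\aleph_1}=2^{\aleph_1}=\lambda$ in $V$, a standard nice-name count yields $(2^{\aleph_1})^{V[G]}=\lambda$; thus $\mathbb P$ is precisely the poset adding $(2^{\aleph_1})^{V[G]}$ Cohen reals, so Corollary~\ref{ref: cohen extension and Tukey-top} applies in $V[G]$ and every uniform ultrafilter over $\omega_1$ there is Tukey-top. (The homogeneous measure algebra of character $\lambda$ would serve equally well, by the same corollary.)

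It remains to produce an $\omega_2$-saturated ideal on $\omega_1$ in $V[G]$; the candidate is $\bar I=\{X\subseteq\omega_1\mid \exists Y\in I\ X\subseteq Y\}$, the ideal generated by $I$. That $\bar I$ is a proper, $\sigma$-complete, normal ideal is routine: given names $\langle\dot X_n\mid n<\omega\rangle$ for members of $\bar I$, apply the maximal principle to obtain names $\dot Y_n$ forced to lie in $\check I$ with $\dot X_n\subseteq\dot Y_n$; by the c.c.c. each $\dot Y_n$ has only countably many possible values, all of them in $I$, so the set of all these values is a countable subset of $I$ that lies in $V$, and by $\sigma$-completeness of $I$ in $V$ its union belongs to $I$ and is forced to cover $\bigcup_n\dot X_n$. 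Running the same argument with diagonal intersections and normality of $I$ in $V$ gives normality of $\bar I$, and $\omega_1\notin\bar I$ since $\omega_1\notin I$ and $\omega_1$ is preserved.

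The step I expect to be the main obstacle is verifying that $\bar I$ is still $\omega_2$-saturated. This does \emph{not} follow from the naive attempt to reflect a putative antichain $\langle\dot W_\alpha\mid\alpha<\omega_2\rangle$ of $\bar I$-positive sets back to $V$ via the ``potential supports'' $S_\alpha=\{\xi<\omega_1\mid b_{\alpha,\xi}\neq 0\}$, where $b_{\alpha,\xi}$ denotes the Boolean value of $\check\xi\in\dot W_\alpha$ in the completion of $\mathbb P$: each $S_\alpha$ is $I$-positive, but $\langle S_\alpha\mid\alpha<\omega_2\rangle$ need not be almost disjoint modulo $I$, because $b_{\alpha,\xi}$ and $b_{\beta,\xi}$ can be disjoint nonzero elements even when $\dot W_\alpha\cap\dot W_\beta$ is forced into $\bar I$. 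Instead, I would invoke the preservation theorem of Baumgartner and Taylor: if $I$ is a $\sigma$-complete $\omega_2$-saturated ideal on $\omega_1$ and $\mathbb P$ has the c.c.c., then the ideal generated by $I$ in $V^{\mathbb P}$ is again $\omega_2$-saturated. (Since $\Add(\omega,\lambda)$ has precaliber $\aleph_1$ one could also give a hands-on preservation argument, but citing the general result is cleanest.) Granting this, $\bar I$ witnesses that $\omega_1$ carries an $\omega_2$-saturated ideal in $V[G]$, while every uniform ultrafilter over $\omega_1$ there is Tukey-top, establishing the corollary.
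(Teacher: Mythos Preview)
Your proof is correct and follows the same strategy as the paper: start with a ground model carrying an $\omega_2$-saturated ideal on $\omega_1$, add sufficiently many Cohen reals, invoke Corollary~\ref{ref: cohen extension and Tukey-top} for Tukey-topness, and appeal to a preservation result for saturation under c.c.c.\ forcing. The paper's version is terser---it assumes CH in the ground model, adds $\omega_2$ Cohen reals, and cites Laver rather than Baumgartner--Taylor for the preservation step---whereas you work with $\lambda=(2^{\aleph_1})^V$ and verify the cardinal arithmetic needed for Corollary~\ref{ref: cohen extension and Tukey-top} to apply; this extra care is harmless and arguably makes the hypotheses more transparent, but the content is the same.
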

\begin{proof}
    Laver~\cite{LAVER1982297} showed that starting with a model where there is such an ideal $I$ and $CH$ holds, upon adding $\omega_2$-many Cohen reals, the filter generated by $I$ has the same saturation property. As we have seen in~\ref{ref: cohen extension and Tukey-top}, in this model every uniform ultrafilter over $\omega_1$ is Tukey-top.
\end{proof}

Let $(*)$ denote the assumption:
\[ \diamondsuit+\exists\text{ a normal ideal over }\omega_1 \text{ which is }\omega_1\text{-dense} \]
Woodin proved that $(*)$ is consistent relative to determinacy assumptions~\cite{woodinBook}. 
\begin{theorem}
    Under $(*)$ there is a weakly normal non-$\omega_1$-Tukey-top uniform ultrafilter over $\omega_1$.
\end{theorem}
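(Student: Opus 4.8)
\emph{Setup and strategy.} The plan is to use $(*)$ to build a uniform ultrafilter $\Ucal$ over $\omega_1$ which extends $I^*$, is weakly normal, and is $\aleph_1$-generated modulo $I$ --- meaning there is a family $\Seq{A_\alpha\mid\alpha<\omega_1}\subseteq\Ucal$ such that every $X\in\Ucal$ contains some $A_\alpha$ modulo $I$ --- and then to deduce that $\Ucal$ is not $\omega_1$-Tukey-top from Galvin's theorem~\ref{Thm: Galvin}. First note that $\diamondsuit$ implies $\mathsf{CH}$, so $2^{\aleph_1}=\aleph_2$ and $\omega_1^{<\omega_1}=\omega_1$; hence Galvin's theorem applies to the normal filter $I^*$ (the dual of the normal ideal $I$) and yields that $I^*$ is not $(\omega_1,\omega_2)$-Tukey-top. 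Also, since $I$ is normal it contains all bounded subsets of $\omega_1$, so any ultrafilter extending $I^*$ is automatically uniform.

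\emph{The construction of $\Ucal$.} This follows Laver's method~\cite{LAVER1982297}. Let $\mathbb{B}=\mathscr{P}(\omega_1)/I$, and using the $\omega_1$-density of $I$ fix an enumeration $\Seq{d_\alpha\mid\alpha<\omega_1}$ of a dense subset of $\mathbb{B}$ with representatives $\hat d_\alpha$ positive. One performs a transfinite recursion of length $\omega_1$ guided by a $\diamondsuit$-sequence, building a sequence $\Seq{A_\alpha\mid\alpha<\omega_1}$ of $I$-positive sets decreasing modulo $I$: at stage $\alpha$ the $\diamondsuit$-guess is read as a subset of $\omega_1$ together with a regressive function $f$, and $A_{\alpha+1}$ is chosen below $A_\alpha$ (mod $I$) so as to be concentrated inside the next relevant $\hat d_\xi$, to decide the guessed subset, and to have $f$ constant --- hence bounded --- on it, the last being possible by the normality of $I$ via Fodor's lemma. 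At limit stages one passes to pseudo-intersections; the $\omega_1$-density of $I$, together with $\sigma$-completeness and the $\diamondsuit$-bookkeeping, is what keeps the sequence $I$-positive and forces the filter generated modulo $I$ by $\{A_\alpha\mid\alpha<\omega_1\}$ to be an ultrafilter. The resulting $\Ucal$ extends $I^*$, is $\aleph_1$-generated modulo $I$ by construction, and is weakly normal since every regressive function on $\omega_1$ is bounded on some $A_\alpha\in\Ucal$.

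\emph{$\Ucal$ is not $\omega_1$-Tukey-top.} Let $\{X_i\mid i<\omega_2\}\subseteq\Ucal$ be arbitrary. For each $i$ fix $\alpha_i<\omega_1$ with $A_{\alpha_i}\setminus X_i\in I$. Since $\cf(\omega_2)=\omega_2>\omega_1$, there are $\alpha^*<\omega_1$ and $S\in[\omega_2]^{\aleph_2}$ with $A_{\alpha^*}\setminus X_i\in I$ for every $i\in S$. If $\{A_{\alpha^*}\setminus X_i\mid i\in S\}$ has fewer than $\aleph_2$ distinct members, then some $E\in I$ equals $A_{\alpha^*}\setminus X_i$ for $\aleph_2$-many $i$, whence $A_{\alpha^*}\setminus E\subseteq X_i$ for those $i$ and $A_{\alpha^*}\setminus E\in\Ucal$, so the intersection of any $\aleph_1$ of these $X_i$ lies in $\Ucal$. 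Otherwise apply Galvin's theorem~\ref{Thm: Galvin} to the $\aleph_2$-sized subfamily $\{\omega_1\setminus(A_{\alpha^*}\setminus X_i)\mid i\in S\}$ of $I^*$: as $I^*$ is not $(\omega_1,\omega_2)$-Tukey-top this family is not a witness, so there are $S'\in[S]^{\aleph_1}$ and $W\in I^*$ with $W\subseteq\omega_1\setminus(A_{\alpha^*}\setminus X_i)$, i.e. $W\cap A_{\alpha^*}\subseteq X_i$, for every $i\in S'$. Then $W\cap A_{\alpha^*}\in\Ucal$ (since $W\in I^*\subseteq\Ucal$ and $A_{\alpha^*}\in\Ucal$) and $W\cap A_{\alpha^*}\subseteq\bigcap_{i\in S'}X_i$, so $\bigcap_{i\in S'}X_i\in\Ucal$. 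In either case $\{X_i\mid i<\omega_2\}$ fails to witness that $\Ucal$ is $(\omega_1,\omega_2)$-Tukey-top; since $2^{\aleph_1}=\aleph_2$, this means $\Ucal$ is not $\omega_1$-Tukey-top.

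\emph{Main obstacle.} The hard part is the construction of $\Ucal$: there are $2^{\aleph_1}=\aleph_2$ subsets of $\omega_1$ to decide, yet the recursion has only $\omega_1$ stages, and it is exactly the $\omega_1$-density of $I$ (in the spirit of Laver's original argument) that lets a length-$\omega_1$ recursion suffice, while one must simultaneously keep the sequence from becoming $I$-null at limits and dispatch every regressive function so as to secure weak normality. By contrast, once such a $\Ucal$ is available the Tukey conclusion is a soft pigeonhole argument layered on Galvin's theorem, as above.
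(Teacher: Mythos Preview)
Your proof is correct and follows essentially the same route as the paper: cite/sketch Laver's construction of a weakly normal ultrafilter $\aleph_1$-generated modulo the normal dense ideal, then pigeonhole on the $\omega_1$ generators and invoke Galvin's theorem to find an $\aleph_1$-sized subfamily whose intersection lies in $\Ucal$. The paper simply black-boxes Laver's construction via citation, whereas you outline it; and your case split on whether $\{A_{\alpha^*}\setminus X_i\mid i\in S\}$ has $\aleph_2$ distinct members is a minor pedantic care the paper glosses over (they apply Galvin to the sequence directly), but the substance is the same.
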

\begin{proof} 
By Laver~\cite{LAVER1982297} there is an ultrafilter $U$ which is generated by  $I\cup\{A_\alpha\mid \alpha<\omega_1\}$, where $I$ is a normal filter and each $A_\alpha \subseteq \omega_1$. That is, for any $X\in U$, there is $\alpha<\omega_1$ such that $A_\alpha\setminus X\in I$. We claim that $U$ is not $\omega_1$-Tukey-top (and therefore also not Tukey-top). Let $\Seq{X_\alpha\mid \alpha<\omega_2}\subseteq U$. Then for every $\alpha<\omega_2$ there is $\beta<\omega_1$ such that $A_\beta\setminus X_\alpha\in I$. Fix $\beta^*$ and $J\in [\omega_2]^{\omega_2}$ such that for every $\alpha\in J$, $B_\alpha:=A_{\beta^*}\setminus X_\alpha\in I$. The sequence $\Seq{B_\alpha\mid \alpha\in J}$ is a sequence of $\omega_2$-many sets in the normal ideal $I$. Note that by $\diamondsuit$, CH holds and therefore we can apply Galvin's theorem~\ref{Thm: Galvin} and obtain $\omega_1$-many of the $B_\alpha$'s for which the union is in $I$. Choose $J_0\in[ J]^{\omega_1}$ such that $A_{\beta^*}\setminus(\bigcap_{\alpha\in J_0}X_\alpha)=\bigcup_{\alpha\in J_0} B_\alpha\in I$. Since $A_{\beta^*}\in U$, $\bigcap_{\alpha\in J_0}X_\alpha\in U$ as desired.  
\end{proof}
Huberich~\cite{Huberich} removed the diamond assumption and constructed a similar weakly normal ultrafilter from CH and an $\omega_1$-dense ideal over $\omega_1$. More precisely,  Huberich showed in~\cite[Corollary 11]{Huberich} that from a normal $\nu^+$-dense ideal $I$ over $\nu^+$ for $\nu$ regular, there is an ultrafilter $U \supseteq I^*$ over $\nu^+$ which is generated by $I^*\cup \{X_\alpha\mid \alpha<2^{\aleph_0}\}$. Let us use it to deduce that there is a non-Tukey-top ultrafilter from the weakening of $(*)$ in which $\diamondsuit$ is replaced by the weak diamond principle of Devlin and Shelah~\cite{DevlinShelah}, which is equivalent to $2^{\aleph_0}<2^{\aleph_1}$.
\begin{theorem}
    Suppose that there is a normal $\omega_1$-dense ideal over $\omega_1$ and that $2^{\aleph_0}<2^{\aleph_1}$. Then there is a non-($\omega_1$,$2^{\omega_1}$)-Tukey-top ultrafilter over $\omega_1$.
\end{theorem}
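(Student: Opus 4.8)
The plan is to follow the template of the proof under $(*)$, with Huberich's construction in place of Laver's and the weak diamond $2^{\aleph_0}<2^{\aleph_1}$ in place of $\diamondsuit$. First I would apply \cite[Corollary 11]{Huberich} (with $\nu=\omega$, so $\nu^+=\omega_1$) to the given normal $\omega_1$-dense ideal $I$, obtaining a uniform ultrafilter $\Ucal\supseteq I^*$ over $\omega_1$ generated by $I^*$ together with $2^{\aleph_0}$-many sets $\langle X_\alpha\mid\alpha<2^{\aleph_0}\rangle$. Let $\mathcal{Z}$ be the family of all finite intersections of the $X_\alpha$, so $|\mathcal{Z}|\le 2^{\aleph_0}$, $\mathcal{Z}\subseteq\Ucal$, and every $X\in\Ucal$ contains $Z\cap Y$ for some $Z\in\mathcal{Z}$ and $Y\in I^*$; equivalently, for every $X\in\Ucal$ there is $Z\in\mathcal{Z}$ with $Z\setminus X\in I$.

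Next I would show $\Ucal$ is not $(\omega_1,2^{\aleph_1})$-Tukey-top, which is exactly the conclusion. Fix an arbitrary $\langle X_\gamma\mid\gamma<2^{\aleph_1}\rangle\subseteq\Ucal$; it suffices to produce $J_0\in[2^{\aleph_1}]^{\omega_1}$ with $\bigcap_{\gamma\in J_0}X_\gamma\in\Ucal$. For each $\gamma$ choose $Z_\gamma\in\mathcal{Z}$ with $Z_\gamma\setminus X_\gamma\in I$. Since $|\mathcal{Z}|\le 2^{\aleph_0}<2^{\aleph_1}$, the pigeonhole principle gives a fixed $Z\in\mathcal{Z}$ and a set $J\subseteq 2^{\aleph_1}$ with $|J|=(2^{\aleph_0})^+\le 2^{\aleph_1}$ such that $Z_\gamma=Z$ for all $\gamma\in J$; put $B_\gamma:=Z\setminus X_\gamma\in I$. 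I then invoke the key lemma below to obtain $J_0\in[J]^{\omega_1}$ with $\bigcup_{\gamma\in J_0}B_\gamma\in I$. Since $Z\setminus\bigcap_{\gamma\in J_0}X_\gamma=\bigcup_{\gamma\in J_0}B_\gamma\in I$ while $Z\in\Ucal$ and $I^*\subseteq\Ucal$, it follows that $\bigcap_{\gamma\in J_0}X_\gamma\in\Ucal$, as required.

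The one remaining point---and the main obstacle---is the key lemma: \emph{if $\mathcal{F}$ is a normal filter over $\omega_1$ and $\lambda>2^{\aleph_0}$, then $\mathcal{F}$ is not $(\omega_1,\lambda)$-Tukey-top}; equivalently, any witness to $(\omega_1,\lambda)$-Tukey-topness of a normal filter on $\omega_1$ has size at most $2^{\aleph_0}$. In the $(*)$-argument, $\diamondsuit$ was used solely to force CH and thereby invoke Galvin's theorem~\ref{Thm: Galvin}, whose hypothesis $\omega_1^{<\omega_1}=\omega_1$ is precisely CH; the key lemma is exactly the strengthening of Galvin's theorem in which that cardinal arithmetic is replaced by $2^{\aleph_0}<2^{\aleph_1}$ (here applied with $\lambda=(2^{\aleph_0})^+$), and it reduces to Galvin's theorem when CH holds. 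I would prove it by adapting Galvin's argument: work with an $\omega$-closed elementary submodel $M\prec H_\theta$ of size $2^{\aleph_0}$ containing $\mathcal{F}$ and the given family and with $\omega_1\subseteq M$; since $\lambda>2^{\aleph_0}=|M|$ the family has a member whose value does not lie in $M$, and the coherence between such a member and $M$ (together with the closure of $\mathcal{F}$ under diagonal intersection) yields an $\omega_1$-sized subfamily whose intersection belongs to $\mathcal{F}$. The delicate part, exactly as in Galvin's proof, is propagating coherence through the limit stages of the attendant transfinite recursion---a descending $\omega$-chain of large index sets can collapse to a small intersection---which is where $\omega_1^{<\omega_1}=\omega_1$ is used under CH and where, in general, one must instead leverage $2^{\aleph_0}<2^{\aleph_1}$ (via the $\omega$-closure of $M$ and the fact that $\omega_1^{\omega}=2^{\aleph_0}$ bounds the number of possible countable restrictions of the witnessing functions). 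Alternatively, one may recast this step as a polarized partition relation, in the spirit of the P{\v r}ikr\'y-type results of Section~\ref{section: Isbell}, and verify the needed instance from the weak diamond.
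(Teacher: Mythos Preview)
Your approach is essentially identical to the paper's: apply Huberich's construction, pigeonhole down to a set of size $(2^{\aleph_0})^+$ on which a single generator $Z$ works, and then invoke a Galvin-type property of the normal ideal $I$ to extract $\omega_1$-many indices whose union of differences stays in $I$. The only substantive difference is that the ``key lemma'' you isolate---that under $2^{\aleph_0}<2^{\aleph_1}$ no normal filter on $\omega_1$ is $(\omega_1,(2^{\aleph_0})^+)$-Tukey-top---is already a theorem in the literature: the paper simply cites Garti's generalization of Galvin's theorem \cite[Thm.~1.1]{Garti2017WeakDA} rather than reproving it. Your sketch via an $\omega$-closed elementary submodel of size $2^{\aleph_0}$ is the right idea and is how Garti's proof goes; there is no need to carry out the details yourself. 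Your passage through the family $\mathcal{Z}$ of finite intersections is a harmless (and arguably more careful) variant of what the paper does with the $X_\alpha$ directly.
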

\begin{proof}
    By Huberich, let $U$ be an ultrafilter generated by $I\cup \{X_\alpha\mid \alpha<2^{\aleph_0}\}$. Given $2^{\aleph_1}$-many sets $\Seq{ A_\beta \mid \beta<2^{\aleph_1} }$, there is $J\in[2^{\aleph_1}]^{(2^{\aleph_0})^+}$ such that for some $\alpha^*<2^{\aleph_0}$, $X_{\alpha^*}\setminus A_\beta\in I$ for each $\beta\in J$. By Garti's generalization of Galvin's theorem~\cite[Thm 1.1]{Garti2017WeakDA}, there is $J'\in[J]^{\omega_1}$ such that $\bigcup_{j\in J'}(X_{\alpha^*}\setminus A_j)\in I$. We conclude that $\bigcap_{j\in J'}A_j\in U$.  
\end{proof}
Note that by P{\v r}ikr\'y and Jech~\cite[Thm. 7.2.1(a)]{PrikryJech}, if $\omega_1$ carries a non-regular ultrafilter and $2^{\aleph_0}<2^{\aleph_1}$, then necessarily  $2^{\aleph_0}\geq \aleph_{\omega_1}$.

\subsection{Non-regular and indecomposable ultrafilters}\label{Seciton: non-regular}
Since non-$(\mu,\lambda)$-regularity is a stronger form of non-$(\mu,\lambda)$-Tukey-top, it is tempting to ask whether other non-regular ultrafilters over $\kappa$, specifically non-$(\omega,\kappa)$-regular, can ever be Tukey-top.
 A related notion to that of non-regularity is the notion of indecomposability. 
 \begin{definition}
        An ultrafilter $U$ over $\kappa$ is \textit{$\nu$-decomposable} if there is a function $f\mathrel{:}\kappa\to \nu$ such that for every $X\in [\nu]^{<\nu}$, $f^{-1}(X)\notin U$. If there is no such function, we say that $U$ is \textit{$\nu$-indecomposible}
    \end{definition}
   Clearly, $U$ being $\nu$-decomposable is equivalent to $U$ being RK-above a uniform ultrafilter over $\nu$. 
    \begin{fact}
    If $U$ is $\nu$-indecomposable, then $U$ is not $(\omega,\nu)$-regular.
    \end{fact}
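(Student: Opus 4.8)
The plan is to prove the contrapositive: if $U$ (over $\kappa$) is $(\omega,\nu)$-regular, then $U$ is $\nu$-decomposable. Unpacking the definition of $\nu$-decomposability, this amounts to producing a function $g\colon\kappa\to\nu$ such that $g_*U$ is a uniform ultrafilter over $\nu$ (equivalently, as the remark notes, that $U$ is Rudin--Keisler above a uniform ultrafilter over $\nu$), and that is what I will aim for.

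First I fix a family $\{A_\alpha\mid\alpha<\nu\}\subseteq U$ witnessing $(\omega,\nu)$-regularity and define $f\colon\kappa\to[\nu]^{<\omega}$ by $f(x)=\{\alpha<\nu\mid x\in A_\alpha\}$. This is well defined: if some $f(x)$ contained a countably infinite set $B$, then $x\in\bigcap_{\alpha\in B}A_\alpha$, contradicting that every countable subfamily of the witness has empty intersection. Recall $|[\nu]^{<\omega}|=\nu$.

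The heart of the argument is to show that $W:=f_*U$ is a \emph{uniform} ultrafilter over $[\nu]^{<\omega}$. The first observation is that for every $X\in[\nu]^{<\nu}$ the ``downward cone'' $C_X:=\{s\in[\nu]^{<\omega}\mid s\subseteq X\}$ fails to belong to $W$: pick any $\alpha_0\in\nu\setminus X$ (which exists since $|X|<\nu$); then every $x\in A_{\alpha_0}$ satisfies $\alpha_0\in f(x)$, so $A_{\alpha_0}$ is disjoint from $f^{-1}(C_X)$, and as $A_{\alpha_0}\in U$ this forces $f^{-1}(C_X)\notin U$, i.e. $C_X\notin W$. Now, given an arbitrary $T\subseteq[\nu]^{<\omega}$ with $|T|<\nu$, put $X:=\bigcup T$; a short cardinality check (if $T$ is finite then so is $X$; if $T$ is infinite then $\nu$ is uncountable and $|X|\le|T|\cdot\aleph_0<\nu$) gives $X\in[\nu]^{<\nu}$, and since $T\subseteq C_X\notin W$ we get $T\notin W$. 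Hence every member of $W$ has cardinality $\nu=|[\nu]^{<\omega}|$, so $W$ is uniform.

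It then only remains to transport $W$ to $\nu$: fix a bijection $\pi\colon[\nu]^{<\omega}\to\nu$ and set $g:=\pi\circ f\colon\kappa\to\nu$, so that $g_*U=\pi_*W$ is a uniform ultrafilter over $\nu$; thus $g^{-1}(X)\notin U$ for every $X\in[\nu]^{<\nu}$, which is exactly $\nu$-decomposability. The step I expect to need the most care is pinning down the uniformity of $W$ --- specifically the realization that it is enough to exclude the downward cones $C_X$, together with the fact that every small subset of $[\nu]^{<\omega}$ is contained in such a cone (this is the second place the finiteness of the values $f(x)$ is used). As a cross-check: when $\nu$ is regular one can skip $W$ altogether and take $g(x)=\max f(x)$ on $\bigcup_\alpha A_\alpha\in U$ (defined arbitrarily off it); for $X\in[\nu]^{<\nu}$ we have $\gamma:=\sup X<\nu$, and $A_{\gamma+1}\in U$ is disjoint from $g^{-1}(X)$, so $g^{-1}(X)\notin U$. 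I would present the first argument, since it covers singular $\nu$ at no extra cost.
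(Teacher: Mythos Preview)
Your proof is correct. The paper states this as a Fact without proof, so there is no argument to compare against; your approach---encoding the regularizing family by the function $f(x)=\{\alpha:x\in A_\alpha\}$ into $[\nu]^{<\omega}$ and then checking uniformity of the pushforward---is the standard one.
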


    Clearly, if $U$ is $\omega$-indecomposable, the $U$ is $\sigma$-complete and therefore non-Tukey-top. However, for $\nu>\omega$ the answer in general is negative:
\begin{proposition}
    Assume $CH$. Let $U$ be a uniform $\omega_1$-indecomposable ultrafilter over any cardinal $\kappa$ (even singular). Then after forcing with $\Add(\omega, 2^\kappa)$, $U$ can be extended to a Tukey-top $\omega_1$-indecomposable ultrafilter over $\kappa$.
\end{proposition}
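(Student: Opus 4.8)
The plan is to force with $\mathbb{P}=\Add(\omega,\lambda)$, where $\lambda=(2^\kappa)^V$, and then to build the desired extension $U'$ by hand inside $V[G]$, using the generic bits to manufacture a Tukey-top witnessing family that is ``independent over $U$''. Since $\mathbb{P}$ is ccc and $|\mathbb{P}|=\lambda$, a standard count of nice names gives $(2^{\aleph_0})^{V[G]}=(2^\kappa)^{V[G]}=\lambda$ (using $\lambda^{\aleph_0}=\lambda^\kappa=2^\kappa=\lambda$); so the Tukey-top target in $V[G]$ is $[2^\kappa]^{<\omega}=[\lambda]^{<\omega}$. Re-index $\mathbb{P}$ as the poset of finite partial functions $\lambda\times\kappa\to 2$ and let $\langle b_{i,\alpha}\mid i<\lambda,\ \alpha<\kappa\rangle$ be the generic bits; put $Z_i=\{\alpha<\kappa\mid b_{i,\alpha}=1\}$ (these are pairwise distinct by genericity). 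As $\kappa>\omega$ and $U$ is uniform, $U$ contains every co-$(<\kappa)$ subset of $\kappa$, so the filter $\bar{U}$ it generates in $V[G]$ is proper and contains all co-bounded and all co-countable subsets of $\kappa$.

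Two genericity facts drive the argument. \emph{Independence over $U$:} for finite $s\subseteq\lambda$ and $A,B\in U$ we have $A\cap B\cap\bigcap_{i\in s}Z_i\neq\emptyset$ — pick $\alpha\in A\cap B$ not mentioned at any coordinate in $s$ by a given condition, and extend it to force $b_{i,\alpha}=1$ for $i\in s$. Hence, for finite $s$ and $A\in U$, the set $A\cap\bigcap_{i\in s}Z_i$ meets every member of $U$, and in particular is uncountable (otherwise it would fail to meet its own co-countable complement, which lies in $U$). \emph{Countable sub-intersections are small:} if $I\subseteq\lambda$ is countably infinite and $I\in V[G]$, then $\bigcap_{i\in I}Z_i$ is countable. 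Indeed, enumerating $I$ in order type $\omega$, ccc-ness decides each element of $I$ by a countable antichain, so $I\in V[G\restriction(\lambda\times S_1)]$ for some countable $S_1\subseteq\kappa$; for each $\alpha\notin S_1$ the ``column'' $\langle b_{i,\alpha}\mid i<\lambda\rangle$ is the generic of a copy of Cohen forcing on $\lambda$ over a model containing $I$, hence avoids the nowhere dense set $\{x\in 2^\lambda\mid x\restriction I\equiv 1\}$; thus $\alpha\notin\bigcap_{i\in I}Z_i$, so $\bigcap_{i\in I}Z_i\subseteq S_1$.

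Now let $U'$ be any ultrafilter on $\kappa$ extending the family
\[
\bar{U}\ \cup\ \{\kappa\setminus B\mid |B|<\kappa\}\ \cup\ \{Z_i\mid i<\lambda\}\ \cup\ \bigl\{\kappa\setminus\textstyle\bigcap_{i\in I}Z_i\ \bigm|\ I\in[\lambda]^{\aleph_0}\bigr\}.
\]
This family has the finite intersection property: a finite subfamily is contained in a set of the form $\bigl(A\cap(\kappa\setminus B)\cap\bigcap_{i\in s}Z_i\bigr)\setminus\bigcup_{l\le m}\bigcap_{i\in I_l}Z_i$ with $A\in U$, $|B|<\kappa$, $s$ finite, and each $I_l$ countably infinite; since $A\cap(\kappa\setminus B)\in U$, the first fact shows $A\cap(\kappa\setminus B)\cap\bigcap_{i\in s}Z_i$ meets every member of $U$ and is uncountable, while the second fact shows $\bigcup_{l\le m}\bigcap_{i\in I_l}Z_i$ is countable. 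So such a $U'$ exists; it extends $U$, it is uniform (it contains every co-$(<\kappa)$ set), and $\langle Z_i\mid i<\lambda\rangle$ is a $\lambda$-sized subset of $U'$ every countably infinite subfamily of which has intersection outside $U'$, i.e.\ is unbounded in $(U',\supseteq)$. By Definition~\ref{Def: Tukey top}, $U'$ is $(\omega,2^\kappa)$-Tukey-top, i.e.\ Tukey-top.

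It remains to see that $U'$ is $\omega_1$-indecomposable. Let $f\colon\kappa\to\omega_1$ lie in $V[G]$. By ccc-ness of $\mathbb{P}$, for each $\alpha$ there is a ground-model countable set $T_\alpha\subseteq\omega_1$ with $f(\alpha)\in T_\alpha$; applying the $\omega_1$-indecomposability of $U$ in $V$ to $\alpha\mapsto\sup T_\alpha$ yields $A\in U$ and $\delta<\omega_1$ with $f[A]\subseteq\delta$. Then $f^{-1}(\delta)\supseteq A\in U\subseteq U'$ with $\delta$ countable, so $f_*U'$ is not uniform on $\omega_1$; as $f$ was arbitrary, $U'$ is $\omega_1$-indecomposable. (The hypothesis $\mathsf{CH}$ plays no role in this forcing argument; it is only relevant for exhibiting ground models in which a $U$ as in the statement lives.) The step I expect to be the crux is the ``countable sub-intersections are small'' fact: the tempting route of pushing down to a decomposition $f\colon\kappa\to\omega$ and extending $f_*U$ to a Tukey-top ultrafilter over $\omega$ fails when $U$ is $\sigma$-complete, and in general one must control $\bigcap_{i\in I}Z_i$ for $I\in V[G]$ rather than merely $I\in V$ — which is exactly where the uncountability of $\kappa$ is used, both to capture $I$ inside a countable fragment of the generic and to guarantee that $\bar{U}$ extends the co-countable filter.
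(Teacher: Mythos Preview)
Your proof is correct but takes a more hands-on route than the paper. The paper's argument is modular: it first invokes Theorem~\ref{Thm:TukeyTop in Cohen} (with $\mu=\omega$) to conclude that in $V[G]$ \emph{every} uniform ultrafilter over $\kappa$ is Tukey-top, so it only remains to check that $U$ generates a uniform $\omega_1$-indecomposable filter in $V[G]$; then any ultrafilter extension of $\bar U$ whatsoever works, with no need to control which specific sets land in $U'$. You instead rebuild the Tukey-top witness $\{Z_i\}$ directly and push it into $U'$ via your independence-over-$U$ fact; your ``countable sub-intersections are small'' step is essentially the content of Theorem~\ref{Thm:TukeyTop in Cohen} redone in situ. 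Two remarks. First, your $\omega_1$-indecomposability argument via $\alpha\mapsto\sup T_\alpha$ genuinely avoids CH, whereas the paper codes $[\omega_1]^\omega$ as $\omega_1$ using CH before applying indecomposability---so your parenthetical observation is correct. Second, in the FIP verification the line ``since $A\cap(\kappa\setminus B)\in U$'' should read ``$\in\bar U$'' (equivalently: contains some $A'\in U$, by the ccc covering you already used), since $B$ may lie in $V[G]\setminus V$; this is a harmless slip. Note also that the last two pieces of your generating family are redundant: since each $\bigcap_{i\in I}Z_i$ is countable and $\bar U$ contains all co-countable sets, any ultrafilter extending $\bar U\cup\{Z_i:i<\lambda\}$ already does the job. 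Your approach buys self-containment; the paper's buys brevity given the earlier machinery.
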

\begin{proof}
    Note that in this case $\cf(\kappa)>\omega_1$, since any singular cardinal and any uniform ultrafilter over it must be $\cf(\kappa)$-decomposable. Theorem~\ref{Thm:TukeyTop in Cohen} applies to show that in the extension, every uniform ultrafilter over $\kappa$ is Tukey-top. We claim that $U$ generates a uniform $\omega_1$-indecomposable filter in $V[G]$. This is enough since any extension of this filter to a uniform ultrafilter will remain $\omega_1$-indecomposable and has to be Tukey-top. Indeed, let $\dot{f}$ be a name and $p$ a condition forcing $\dot{f}\mathrel{:}\kappa\to\omega_1$. We will prove that there is a set $X\in U$ such that $p$ forces $\dot{f}\restriction \check{X}$ is bounded. By the c.c.c. we can find in $V$, a function $F\mathrel{:}\kappa\to [\omega_1]^\omega$ such that $p$ forces that $\dot{f}(\alpha)\in \check{F}(\alpha)$ for every $\alpha<\kappa$. By $CH$ in the ground model, $F$ is essentially a function to $\omega_1$, so by $\omega_1$-indecomposability, there a set $X\in U$ such that $\bigcup F[X]$ is bounded in $\omega_1$. Hence $p$ forces that $\dot{f}\restriction\check{X}$ is bounded. 
\end{proof}
Another form of non-regularity is weak normality. As we have seen in the previous section, it is possible that a weakly normal ultrafilter is non-Tukey-top. 
It is natural to wonder if being non-Tukey-top just a consequence of weak normality.  This seems plausible in light of Galvin's theorem~\ref{Thm: Galvin}.
We will now show that this is not the case if $\kappa>\omega_1$ (see  question~\ref{Question: weakly normal omega1}).
\begin{theorem}
    Suppose that $U$ is a weakly normal ultrafilter over a regular $\kappa>\omega_1$, then after forcing with $\Add(\omega, 2^\kappa)$, $U$ generates a weakly normal filter which can be extended to a weakly normal ultrafilter which is Tukey-top.
 \end{theorem}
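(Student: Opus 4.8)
The plan is to reduce the Tukey-top assertion to the existence of the promised weakly normal ultrafilter, and then to build that ultrafilter inside $V[G]$ using the weak normality of $U$ together with the finite-support structure of $\Add(\omega,2^\kappa)$. For the reduction, I would apply Theorem~\ref{Thm:TukeyTop in Cohen} with $\mu=\omega$: since $\kappa$ is regular and $\kappa>\omega_1$ we have $\omega=\omega^{<\omega}<\kappa<2^\kappa$ and $\cf(\kappa)=\kappa>\omega$, so in $V[G]$ there is a sequence $\Seq{X_\alpha\mid\alpha<2^\kappa}$ of subsets of $\kappa$ such that every flip with a countably infinite index set has intersection of size $\le\omega<\kappa$; that is, $\vec X$ has the flipping $\omega$-bounded intersection property. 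By the Proposition just before Theorem~\ref{Thm:TukeyTop in Cohen}, every uniform ultrafilter over $\kappa$ in $V[G]$ is $(\omega,2^\kappa)$-Tukey-top, i.e. Tukey-top. Since a weakly normal ultrafilter is uniform (it extends the club filter), it therefore suffices to produce, in $V[G]$, a weakly normal ultrafilter extending $U$: its Tukey-topness is then automatic, and in particular the filter generated by $U$ is contained in a weakly normal ultrafilter, as claimed.

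Next I would pass to ground-model data. As $\Add(\omega,2^\kappa)$ is c.c.c., for any name $\dot g$ and any condition $p$ forcing $\dot g\colon\kappa\to\kappa$ to be regressive, the set $F_{\dot g}(\alpha):=\{\xi : (\exists q\le p)\ q\Vdash\dot g(\check\alpha)=\check\xi\}$ is, for each $\alpha$, a countable subset of $\alpha$; listing its elements as $\Seq{g_n(\alpha)\mid n<\omega}$ gives, in $V$, regressive functions $g_n\colon\kappa\to\kappa$ with $p\Vdash\dot g(\check\alpha)\in\{g_n(\check\alpha):n<\omega\}$. Since $U$ is weakly normal in $V$, each $g_n$ is bounded by some $\theta_n<\kappa$ on some $Y_n\in U$, and by regularity of $\kappa$ we have $\theta_{\dot g}:=\sup_n\theta_n<\kappa$. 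Note that $F_{\dot g}(\alpha)$ may well be cofinal in $\alpha$ on a set in $U$ (for instance when $U$ concentrates on points of cofinality $\omega$), so $\dot g$ need not be bounded on any ground-model set in $U$; the role of the $g_n$ is rather that, letting $\dot B_{\dot g}$ name $\bigcup_n\big(\{\alpha:\dot g(\alpha)=g_n(\alpha)\}\cap Y_n\big)$, one has $p\Vdash\dot g[\dot B_{\dot g}]\subseteq\check\theta_{\dot g}$, and $B_{\dot g}$ is a new set that we will place into the ultrafilter being built.

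The main step is then to show that in $V[G]$ the family $U\cup\{B_{\dot g}: \dot g\text{ a name for a regressive function, with a condition in }G\text{ forcing regressivity}\}$ has the finite intersection property. Granting this, extend it to an ultrafilter $W$: then $W\supseteq U$, so $W$ is uniform, hence Tukey-top; and for any regressive $g\colon\kappa\to\kappa$ in $V[G]$, choosing a name $\dot g$ with $\dot g[G]=g$ and a suitable condition in $G$, we get $B_g\in W$ with $g[B_g]\subseteq\theta_{\dot g}<\kappa$, so $W$ is weakly normal (and a fortiori $U$ generates a weakly normal filter). To verify the finite intersection property, fix $Y\in U$ and regressive names $\dot g^1,\dots,\dot g^m$; writing $Y^i_0\in U$ for the set on which the least-value function $g^i_0$ of $\dot g^i$ is bounded, it is enough, by genericity, to show that below any condition $q$ extending the common stem it is dense to force, for some $\alpha$ in the $V$-set $Y\cap\bigcap_i Y^i_0\in U$, that $\dot g^i(\check\alpha)=g^i_0(\check\alpha)$ simultaneously for all $i\le m$; for such an $\alpha$ lies in $Y\cap\bigcap_i B_{\dot g^i}$. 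The hard part will be exactly this compatibility argument: each statement $\dot g^i(\check\alpha)=g^i_0(\check\alpha)$ is forced by a single \emph{finite} condition by c.c.c., and since there are $\kappa$-many candidate $\alpha$ and $\kappa$ is regular and uncountable, an elementary submodel argument of the flavour used in the proof of Theorem~\ref{Thm:TukeyTop in Cohen} (choosing $\alpha$ outside a suitable countable $M\prec H_\chi$ that contains $q$ and all the relevant names) produces an $\alpha$ for which these finitely many finite conditions are compatible with $q$, giving the desired extension. Once this is established, the remaining bookkeeping — interleaving the $\dot g$'s with decisions on all subsets of $\kappa$, which is unnecessary since any family with the finite intersection property extends to an ultrafilter — is routine.
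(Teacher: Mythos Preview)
Your reduction to Theorem~\ref{Thm:TukeyTop in Cohen} is fine and matches the paper. The real divergence is in showing that $U$ generates a weakly normal filter in $V[G]$, and here you have missed the key observation that makes the argument immediate.

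The paper's proof notes that since $U$ is weakly normal on a regular $\kappa>\omega_1$, the set $X_0=\{\alpha<\kappa:\cf(\alpha)>\omega\}$ is in $U$. You explicitly entertain the contrary case (``for instance when $U$ concentrates on points of cofinality $\omega$''), and this is precisely what does not happen. Once you have $X_0\in U$, the argument collapses to a single line: with $F(\alpha)\in[\alpha]^{\le\omega}$ the countable set of possible values coming from the c.c.c., for $\alpha\in X_0$ the set $F(\alpha)$ is bounded in $\alpha$, so $g(\alpha):=\sup F(\alpha)$ is regressive on $X_0$; weak normality then gives a \emph{ground-model} $X\in U$ and $\beta<\kappa$ with $F(\alpha)\subseteq\beta$ for all $\alpha\in X$, so every regressive function in $V[G]$ is already bounded on a set in $U$. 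No new sets need to be added; any ultrafilter extending $U$ is weakly normal by Kanamori's result.

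Your workaround, introducing the sets $B_{\dot g}$ and arguing finite intersection, is not only unnecessary but has a genuine gap. The value $g^i_0(\alpha)$ is the minimum over conditions below the original $p$, not below the arbitrary $q$ you are handed in the density argument; a witness $q'\le p$ forcing $\dot g^i(\check\alpha)=g^i_0(\check\alpha)$ may well be incompatible with $q$, and then below $q$ that value is simply unattainable. Choosing $\alpha$ outside a countable elementary submodel $M$ containing $q$ and the names does not help: the name $\dot g^i$, though a single element of $M$, can entangle the value $\dot g^i(\check\alpha)$ with coordinates in $\supp(q)$ regardless of where $\alpha$ sits, so there is no reason the relevant witness conditions avoid $\supp(q)$. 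The analogy with the proof of Theorem~\ref{Thm:TukeyTop in Cohen} breaks down because there the sets $X_i$ are the generic coordinates themselves, giving direct control over supports; arbitrary names for regressive functions have no such structure.
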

 \begin{proof}
     Let $\dot{f}\mathrel{:}\kappa\to \kappa$ be regressive in $V[G]$. Again, let $F\mathrel{:}\kappa\to [\kappa]^\omega$ cover $f$, and we may assume that $F(\alpha)\subseteq\alpha$ (as $f$ is regressive). Since $U$ is weakly normal, $X_0=\{\alpha\mid \cf(\alpha)>\omega\}\in U$. Hence $F(\alpha)$ is bounded in $\alpha$. By weak normality of $U$, there are $\beta<\kappa$ and $X\subseteq X_0$ in $U$ such that for every $\alpha\in X$, $F(\alpha)\subseteq \beta$. Then $\beta$ bounds $\dot{f}\restriction \check{X}$. Every extension of a weakly normal filter is a weakly normal ultrafilter by~\cite[Prop. 1.2]{KanamoriWeakNormal}. 
 \end{proof}

\subsection{A remark following Usuba}\label{Section: Usuba}
As we have seen, every ultrafilter over $\omega_1$ is $\omega$-decomposable and therefore every ultrafilter over $\omega_1$ is RK-above an ultrafilter over $\omega$. This raises the question of whether or not there can be two cardinals $\lambda<\kappa$ and a uniform ultrafilter $U_\kappa$ over $\kappa$, which is not Tukey-above any uniform ultrafilter over $\lambda$.
Recently, Usuba~\cite{Usuba} raised a similar question regarding the ultrafilter number and used both new and existing results regarding indecomposable ultrafilters to investigate the failure of monotonicity of the ultrafilter number function. The common theme, which we are next going to exploit in order to translate Usuba's results to the terminology of our investigation of the Tukey order, is the following:
\begin{proposition}
    Suppose $\lambda<\kappa$ and there is a uniform ultrafilter $U_\kappa$ over $\kappa$ such that  for every uniform ultrafilter $U_\lambda$ over $\lambda$, $U_\lambda\not\leq_T U_\kappa$. Then $U_\kappa$ is $\lambda$-indecomposable.
\end{proposition}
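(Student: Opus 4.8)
The plan is to prove the contrapositive: assuming that $U_\kappa$ is $\lambda$-decomposable, I will produce a uniform ultrafilter over $\lambda$ which is Tukey-reducible to $U_\kappa$, contradicting the hypothesis.

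First I would unwind the definition of $\lambda$-decomposability: fix $f \mathrel{:} \kappa \to \lambda$ such that $f^{-1}(X) \notin U_\kappa$ for every $X \in [\lambda]^{<\lambda}$, and set $U_\lambda := f_* U_\kappa$. Then I would verify that $U_\lambda$ is a \emph{uniform} ultrafilter over $\lambda$: it is an ultrafilter since it is a pushforward of one, and if $X \subseteq \lambda$ has $|X| < \lambda$ then $X \notin U_\lambda$ because $f^{-1}(X) \notin U_\kappa$; hence every member of $U_\lambda$ has cardinality $\lambda$. This is, in fact, exactly the equivalence recalled in Section~\ref{Section: Preliminaries} between $\lambda$-decomposability of $U_\kappa$ and $U_\kappa$ being Rudin--Keisler above a uniform ultrafilter over $\lambda$, so one may alternatively invoke that equivalence directly to obtain a uniform $U_\lambda$ over $\lambda$ with $U_\lambda \le_{RK} U_\kappa$.

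Second, since $U_\lambda = f_* U_\kappa$ we have $U_\lambda \le_{RK} U_\kappa$ by definition of the Rudin--Keisler order, and as recalled in Section~\ref{Section: Preliminaries} the Rudin--Keisler order refines the Tukey order, so $U_\lambda \le_T U_\kappa$. This contradicts the assumption that no uniform ultrafilter over $\lambda$ is Tukey-below $U_\kappa$, and therefore $U_\kappa$ must be $\lambda$-indecomposable.

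I do not expect any genuine obstacle: the statement is essentially a repackaging of two facts already established in Section~\ref{Section: Preliminaries}, namely that $\nu$-decomposability of an ultrafilter is equivalent to it being $\le_{RK}$-above a uniform ultrafilter over $\nu$, and that $\le_{RK}$ implies $\le_T$. The only point deserving a moment's care is the confirmation that the pushforward $f_* U_\kappa$ is genuinely uniform, which is immediate from the defining property of the decomposition map $f$.
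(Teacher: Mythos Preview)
Your proposal is correct and follows essentially the same route as the paper: prove the contrapositive by noting that $\lambda$-decomposability yields a uniform RK-projection onto $\lambda$, and then use that $\leq_{RK}$ refines $\leq_T$. The paper's proof is just a one-line version of exactly this argument.
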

\begin{proof}
    If it is $\lambda$-decomposable then it RK-projects (and therefore Tukey reduces) to a uniform ultrafilter over $\lambda$.
\end{proof}
Let us denote by $TU(\lambda,\kappa)$ the statement that every uniform ultrafilter over $\kappa$ is Tukey-above a uniform ultrafilter over $\lambda$. The above proposition is saying that $TU(\lambda,\kappa)$ implies that there are no $\lambda$-indecomposable uniform ultrafilters over $\kappa$.

There are ZFC restrictions on the existence of indecomposable ultrafilters. These will be used in the following corollary:
\begin{corollary}
    \begin{enumerate}
        \item\label{TU-no-sigma-complete-equiv} For any cardinal $\kappa$, $TU(\omega,\kappa)$ if and only if $\kappa$ does not carry a uniform $\sigma$-complete ultrafilter.
        \item For any cardinal $\kappa$, $TU(\cf(\kappa),\kappa)$.
        \item For any regular cardinal $\kappa$, $TU(\kappa,\kappa^+)$.
        \item For any singular cardinal $\kappa$ of cofinality $\omega$ such that $\kappa^+$ does not carry a uniform $\sigma$-complete ultrafilter, $TU(\kappa,\kappa^+)$ holds.
        \item If $TU(\kappa,\kappa^+)$ fails, then there is a tail of regular cardinals $\mu<\kappa$ such that $TU(\mu,\kappa^+)$ holds for each $\mu$ in this tail.
    \end{enumerate}
\end{corollary}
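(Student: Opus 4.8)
The plan is to combine the above proposition with the classical ZFC restrictions on the (in)decomposability spectra of ultrafilters over successor cardinals due to Prikry and Silver, which also underlie Usuba's work~\cite{Usuba}.

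First I would pin down the relevant $\kappa$. By part~(3) above, $TU(\kappa,\kappa^+)$ holds for every regular $\kappa$, so $\neg TU(\kappa,\kappa^+)$ forces $\kappa$ to be singular; write $\rho=\cf(\kappa)$. If $\rho=\omega$ then part~(4) shows that $\neg TU(\kappa,\kappa^+)$ can hold only when $\kappa^+$ carries a uniform $\sigma$-complete ultrafilter---whose completeness is then a measurable cardinal $\le\kappa$---and I would treat this sub-case first, since it already supplies strong structure below $\kappa$. The principal case to handle is $\omega<\rho<\kappa$.

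The proposition enters in two ways. From $\neg TU(\kappa,\kappa^+)$ it produces a uniform ultrafilter $U$ over $\kappa^+$ that is $\kappa$-indecomposable. Conversely, to derive $TU(\mu,\kappa^+)$ for a regular cardinal $\mu<\kappa$ it suffices to check that no uniform ultrafilter over $\kappa^+$ is $\mu$-indecomposable: then every uniform ultrafilter $W$ over $\kappa^+$ is $\mu$-decomposable, hence Rudin--Keisler-projects onto---and in particular is Tukey-above---a uniform ultrafilter over $\mu$. So the theorem reduces to showing that, once some uniform ultrafilter over $\kappa^+$ is $\kappa$-indecomposable, the set $\Delta=\{\mu<\kappa:\mu\text{ regular and some uniform ultrafilter over }\kappa^+\text{ is }\mu\text{-indecomposable}\}$ is bounded in $\kappa$; the desired tail is then any final segment of the regular cardinals above $\sup\Delta$.

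The heart of the argument is the boundedness of $\Delta$, which is where the spectrum theory is used. I would invoke, first, the ``filling-in'' behavior of decomposability spectra across a singular cardinal---an ultrafilter over $\kappa^+$ which is $\nu$-indecomposable for cofinally many regular $\nu<\kappa$ is, once it is also $\rho$-indecomposable, $\nu$-indecomposable on a whole tail of regular $\nu<\kappa$---and, second, the structural consequences of an ultrafilter over $\kappa^+$ that is indecomposable on a block of regular cardinals reaching up to $\kappa$, which is the configuration forced by pairing a cofinally-witnessed member of $\Delta$ with the $\kappa$-indecomposable $U$: such an ultrafilter reflects, through its ultrapower, to a uniform $\sigma$-complete ultrafilter on a cardinal $\le\kappa$, and feeding this back through parts~(1) and~(4) contradicts the standing hypotheses on $\kappa$. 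A final point is uniformity of the bound: ``there is a $\mu$-indecomposable uniform ultrafilter over $\kappa^+$'' is a statement of $H_{\kappa^{++}}$, so applying the structural theorems to that parameter---rather than to individual ultrafilters---yields a single $\mu_0<\kappa$ bounding $\Delta$. I expect this core step to be the main obstacle: locating the precise Prikry--Silver statements (and Usuba's refinements) that supply both the reflection to a $\sigma$-complete ultrafilter and the uniform bound, and dovetailing them with the $\rho=\omega$ sub-case. The reduction via the proposition and the passage from $\mu$-decomposability to a Tukey reduction are routine.
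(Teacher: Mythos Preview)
Your proposal addresses only part~(5), treating (3) and (4) as black boxes and not mentioning (1) or (2) at all. In the paper's own proof, parts (2)--(5) are handled by a direct citation to Usuba's paper, and the only item given an argument is part~(1): if a uniform ultrafilter $U$ over $\kappa$ is not $\sigma$-complete it Rudin--Keisler (hence Tukey) projects onto a nonprincipal ultrafilter on $\omega$; if it \emph{is} $\sigma$-complete, then $U$ is $\omega_1$-directed while no nonprincipal ultrafilter on $\omega$ is, and an unbounded-map argument shows no such ultrafilter can sit Tukey-below $U$. So the one piece the paper actually proves is precisely the piece you omit.

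For part~(5) itself, your outline has a real gap rather than just a sketch. The failure of $TU(\kappa,\kappa^+)$ hands you a \emph{single} ultrafilter $U$ that is $\kappa$-indecomposable, whereas your set $\Delta$ quantifies over \emph{all} uniform ultrafilters on $\kappa^+$: the $\mu$-indecomposable witness for a given $\mu\in\Delta$ need not be $U$. The Prikry--Silver ``filling-in'' and block-indecomposability results you invoke are theorems about the decomposability spectrum of one fixed ultrafilter, so the step ``pair a cofinally-witnessed member of $\Delta$ with the $\kappa$-indecomposable $U$'' does not make sense as written --- you cannot combine indecomposability data from two different ultrafilters into a single spectrum. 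Your attempted fix via ``this is a statement of $H_{\kappa^{++}}$'' does not repair this: absoluteness of the \emph{existence} of a $\mu$-indecomposable ultrafilter does not by itself produce a uniform bound independent of the witnessing ultrafilter. What is actually needed (and what Usuba extracts) is a ZFC restriction on the indecomposability spectrum that applies uniformly to every ultrafilter on $\kappa^+$; your sketch does not isolate such a statement.
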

\begin{proof}
    Only~(\ref{TU-no-sigma-complete-equiv}) does not directly follow from Usuba's paper~\cite{Usuba}. If $U$ is not $\sigma$-complete, then it is Tukey-above an ultrafilter over $\omega$. If it is $\sigma$-complete, then it cannot be above any ultrafilter over $\omega$, as follows from an easy argument using unbounded functions.
\end{proof}
 In~\cite{Usuba}, the author uses Raghavan and Shelah's~\cite{RagShel} to get the failure of monotonicity at many pairs of cardinals. Note that if $\mathfrak{u}(\kappa)<\mathfrak{u}(\lambda)$ then $\neg TU(\lambda,\kappa)$. In particular, we have the following consistency results which follow directly from~\cite{Usuba}:
 \begin{corollary}
     \begin{enumerate}
         \item Starting from a measurable cardinal $\kappa$, forcing with $\Add(\omega,\kappa^{+\omega_1})$ yields a model of $\neg TU(\kappa,\omega_1)$.
         \item Starting from a supercompact cardinal, it is consistent that after forcing with $\Add(\omega,\aleph_{\omega_1})$, $\neg TU(\omega_1,\omega_{\omega+1})$.
         \item After $\Pri(U)\times \Add(\omega,\kappa^{+\omega_1})$, $\kappa$ is singular of cofinality $\omega$ and $\neg TU(\kappa,\omega_1)$.
     \end{enumerate}
 \end{corollary}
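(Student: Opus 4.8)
The plan is to read all three items off the corresponding consistency results of Usuba~\cite{Usuba} on the ultrafilter number function, via the observation recorded just above that $\mathfrak{u}(\kappa)<\mathfrak{u}(\lambda)$ implies $\neg TU(\lambda,\kappa)$. So the proof is short: in each of the three models one only needs a strict inequality between the ultrafilter numbers of the two cardinals in play, and such inequalities are exactly what Usuba proves (building on the Raghavan--Shelah forcing constructions of~\cite{RagShel}).

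For completeness I would first recall why that observation holds, since it is the sole bridge between the two settings. Suppose $TU(\lambda,\kappa)$ and fix a uniform ultrafilter $U_\kappa$ over $\kappa$ of least character, so $\chi(U_\kappa)=\mathfrak{u}(\kappa)$; by hypothesis there is a uniform ultrafilter $U_\lambda$ over $\lambda$ with $U_\lambda\leq_T U_\kappa$. Since a Tukey reduction does not increase cofinality, and $\cf(\Ucal,\supseteq)=\chi(\Ucal)$ for any uniform ultrafilter $\Ucal$, we get $\mathfrak{u}(\lambda)\leq\chi(U_\lambda)\leq\chi(U_\kappa)=\mathfrak{u}(\kappa)$; contrapositively, $\mathfrak{u}(\kappa)<\mathfrak{u}(\lambda)$ forces $\neg TU(\lambda,\kappa)$. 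A convenient auxiliary point when estimating these cardinals---already noted in the paper---is that every uniform ultrafilter over $\omega_1$ is $\omega$-decomposable, hence Rudin--Keisler, and so Tukey, above a uniform ultrafilter over $\omega$; thus $\mathfrak{u}(\omega_1)\geq\mathfrak{u}(\omega)$, and adding $\kappa^{+\omega_1}$ (resp.\ $\aleph_{\omega_1}$) Cohen reals makes $\mathfrak{u}(\omega)$, and therefore $\mathfrak{u}(\omega_1)$, at least $2^{\aleph_0}$.

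It then remains, for each item, to quote the right theorem of Usuba and match indices. For (1) and (3) I would invoke Usuba's theorem that $\mathfrak{u}(\omega_1)<\mathfrak{u}(\kappa)$ holds in the stated extension---the control of $\mathfrak{u}(\kappa)$ coming from Usuba's analysis of the (in case (3), singularized) cardinal $\kappa$, and the bound on $\mathfrak{u}(\omega_1)$ from the Cohen reals---and then apply the observation with $\lambda:=\kappa$ and $\omega_1$ as the ``top'' cardinal to conclude $\neg TU(\kappa,\omega_1)$; for (3) one records additionally that the $\Pri(U)$ factor singularizes $\kappa$ to cofinality $\omega$, as claimed. For (2) I would invoke Usuba's theorem that, over a supercompact cardinal, forcing with $\Add(\omega,\aleph_{\omega_1})$ yields $\mathfrak{u}(\omega_{\omega+1})<\mathfrak{u}(\omega_1)$ (the small value of $\mathfrak{u}(\omega_{\omega+1})$ being supplied by the Raghavan--Shelah model, the large value of $\mathfrak{u}(\omega_1)$ by $2^{\aleph_0}=\aleph_{\omega_1}$), and apply the observation with $\lambda:=\omega_1$ and $\omega_{\omega+1}$ as the top cardinal. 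Since the genuine forcing-theoretic content lives entirely in~\cite{Usuba} and~\cite{RagShel}, there is no real obstacle here; the only thing demanding care is keeping straight, in each application of the observation, which cardinal is $\lambda$ and which is $\kappa$ in $TU(\lambda,\kappa)$, and checking that Usuba's cited theorems are stated for precisely the posets $\Add(\omega,\kappa^{+\omega_1})$, $\Add(\omega,\aleph_{\omega_1})$, and $\Pri(U)\times\Add(\omega,\kappa^{+\omega_1})$ appearing in (1)--(3).
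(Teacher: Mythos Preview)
Your proposal is correct and matches the paper's approach exactly: the paper simply states that these items ``follow directly from~\cite{Usuba}'' via the observation that $\mathfrak{u}(\kappa)<\mathfrak{u}(\lambda)$ implies $\neg TU(\lambda,\kappa)$, and you have spelled out that bridge (including a clean justification of the observation itself) and correctly identified which ultrafilter-number inequality is needed in each item.
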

The list above is by no means complete.  There are many other results that could be derived from known ones and questions that could be asked, but as our focus is mostly on ultrafilters over $\omega_1$, we leave this line of research unattended.

\section{The Ultrafilter \texorpdfstring{$\Ucal(T)$}{U(T)}}\label{Seciton: PFA}

In~\cite{l-maps}, Todorcevic defined a filter $\Ucal(T)$ associated to a
coherent A-tree $T$, and showed that if the countable chain condition is productive, then $\Ucal(T)$ is an ultrafilter.
He also established a number of additional properties of $\Ucal(T)$ under $\MA_{\omega_1}$ and $\PFA(\omega_1)$.

\begin{theorem}[{\cite{l-maps}}]
Assume $\PFA(\omega_1)$.
$\Ucal(T)$ is not RK-isomorphic
to an ultrafilter over $\omega_1$ which extends the club filter.
\end{theorem}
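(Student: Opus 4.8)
The plan is to deduce this from two results already in hand. The first is our dichotomy for $\Ucal(T)$ under $\PFA(\omega_1)$: for every $f\colon\omega_1\to\omega_1$ there is $U\in\Ucal(T)$ with $f\restriction U$ either bounded or one-to-one. This says exactly that $\Ucal(T)$ is $\leq_{RK}$-minimal among uniform ultrafilters over $\omega_1$, since $f\restriction U$ one-to-one forces $f_*\Ucal(T)\cong\Ucal(T)$ while $f\restriction U$ bounded forces $f_*\Ucal(T)$ to concentrate on a countable set. The second is Laver's Theorem~\ref{Thm: MA implies every ultrafilter is regular}: since $\PFA(\omega_1)$ implies $\MA_{\omega_1}$, there are no weakly normal ultrafilters over $\omega_1$.

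Suppose toward a contradiction that $g\colon\omega_1\to\omega_1$ is a bijection such that $\Vcal:=g_*\Ucal(T)$ extends the club filter $\text{Cub}_{\omega_1}$. First I would note that $\Vcal$ is again a uniform ultrafilter over $\omega_1$ (as $g$ is a bijection) and that the dichotomy above is isomorphism-invariant: given $h\colon\omega_1\to\omega_1$, apply the dichotomy to $h\circ g$ to obtain $U\in\Ucal(T)$ with $(h\circ g)\restriction U$ bounded or one-to-one; then $V:=g[U]\in\Vcal$, and since $g\restriction U$ is a bijection onto $V$ intertwining $(h\circ g)\restriction U$ with $h\restriction V$, the restriction $h\restriction V$ is respectively bounded or one-to-one. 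Hence $\Vcal$ also satisfies the dichotomy.

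Next I would show $\Vcal$ is weakly normal — this is where the hypothesis $\Vcal\supseteq\text{Cub}_{\omega_1}$ is used. Every member of $\Vcal$ is stationary, since it meets every club (each club lying in $\Vcal$). Let $f\colon\omega_1\to\omega_1$ be regressive; by the transported dichotomy there is a stationary $V\in\Vcal$ with $f\restriction V$ bounded or one-to-one, and Fodor's lemma forbids a regressive function from being one-to-one on a stationary set, so $f\restriction V$ must be bounded, say $f[V]\subseteq\theta$ for some $\theta<\omega_1$. Then $f^{-1}[\theta]\supseteq V\in\Vcal$, so $f^{-1}[\theta]\in\Vcal$; that is, $\Vcal$ is weakly normal. (Equivalently one may invoke directly the characterization recalled in the introduction: a uniform ultrafilter over $\omega_1$ is weakly normal iff it extends the club filter and is $\leq_{RK}$-minimal among uniform ultrafilters over $\omega_1$.) But $\PFA(\omega_1)$ gives $\MA_{\omega_1}$, and Laver's Theorem~\ref{Thm: MA implies every ultrafilter is regular} then forbids weakly normal ultrafilters over $\omega_1$, a contradiction. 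So no such $g$ exists.

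I do not expect a genuine obstacle inside this argument: once the two inputs are granted, the deduction is short, and the only steps requiring care are the routine transport of the $\leq_{RK}$-dichotomy along the bijection $g$ and the observation that membership in a club-extending ultrafilter entails stationarity — the latter being exactly what collapses the ``one-to-one'' alternative via Fodor. The substantive content lies entirely in the two inputs, namely establishing the bounded-or-one-to-one dichotomy for $\Ucal(T)$ under $\PFA(\omega_1)$ and Laver's regularity theorem, neither of which is part of this proof.
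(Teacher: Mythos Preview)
Your argument is correct, but note that the paper does not itself prove this statement: it is quoted as a known result from \cite{l-maps} and given no proof here. So there is no ``paper's own proof'' to compare against.

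That said, your derivation is a genuine alternative route. You are observing that the paper's \emph{new} result, theorem~\ref{UT_RK-min} (the bounded-or-one-to-one dichotomy for $\Ucal(T)$ under $\PFA(\omega_1)$), combined with Laver's theorem~\ref{Thm: MA implies every ultrafilter is regular}, already implies Todorcevic's earlier theorem. The logic is exactly as you say: transport the dichotomy along the isomorphism $g$, use that every set in a club-extending ultrafilter is stationary so Fodor kills the one-to-one alternative, conclude weak normality, and contradict Laver. Each step is sound. The only structural caveat is that in the paper's linear order you are invoking theorem~\ref{UT_RK-min}, which is proved \emph{after} the cited statement; but there is no circularity, since the proof of theorem~\ref{UT_RK-min} makes no use of the cited result. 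What your approach buys is a short deduction from results already in the paper; what it costs is that it does not reproduce Todorcevic's original argument in \cite{l-maps}, which predates theorem~\ref{UT_RK-min} and proceeds by a direct analysis of $\Ucal(T)$.
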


\begin{theorem}[{\cite{l-maps}}]
Assume $\PFA(\omega_1)$.
If $S$ and $T$ are two coherent A-trees, then $\Ucal(S)$ and $\Ucal(T)$ are
RK-isomorphic.
\end{theorem}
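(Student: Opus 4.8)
The plan is to reduce the statement to constructing a level-preserving isomorphism of partial orders between the coherent A-trees $S$ and $T$, and then pushing the ultrafilters forward. The key observation is that $\Ucal(T)$ depends only on the isomorphism type of the poset $(T,<_T)$ — its natural base can be read off from the tree order (the level function on a tree being recoverable from the order) — so an order isomorphism $g\mathrel{:}T\to S$ induces $g_{*}\Ucal(T)=\Ucal(S)$; composing $g$ with fixed bijections $T\to\omega_1$ and $S\to\omega_1$ then exhibits $\Ucal(S)$ and $\Ucal(T)$ as RK-isomorphic ultrafilters over $\omega_1$. Note that one really needs an honest isomorphism here, not merely an isomorphism of $S$ and $T$ restricted to a club: by the preceding theorem $\Ucal(T)$ need not extend the club filter, so a club-isomorphism would not suffice to transport it. (If a literal isomorphism turns out to be out of reach, it is enough to produce level-preserving order-embeddings $T\hookrightarrow S$ and $S\hookrightarrow T$ each of which pushes the corresponding filter into the other, and then invoke the fact recalled in Section~\ref{Section: Preliminaries} that mutual Rudin--Keisler reducibility implies isomorphism.)

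To build $g$, I would force with the poset $\Pbb$ whose conditions are finite partial level-preserving order isomorphisms from $T$ to $S$ — that is, finite $p\subseteq T\times S$ with $\mathrm{lev}_T(t)=\mathrm{lev}_S(p(t))$ for $t\in\dom p$ and $t<_T t'\iff p(t)<_S p(t')$ for $t,t'\in\dom p$ — ordered by reverse inclusion. For each $t\in T$ the set $D_t=\{p\mid t\in\dom p\}$ is dense, and for each $s\in S$ the set $E_s=\{p\mid s\in\rng p\}$ is dense: given $p$ and a new node $t$, one must find a node of $S$ at level $\mathrm{lev}_T(t)$ realizing, relative to $\rng p$, the same order pattern that $t$ realizes relative to $\dom p$, which is always possible because a coherent A-tree is \emph{levelwise homogeneous} — at every limit level $\gamma$, and for every finite partial function $\gamma\to\omega$, the tree contains a node at level $\gamma$ extending it (a finite modification of the relevant coherent function stays in the tree), so every finite ``finite-difference'' pattern occurring at level $\gamma$ of $T$ also occurs at level $\gamma$ of $S$. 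One then checks that $\Pbb$ is proper — it is a finite-condition forcing over the (disjoint union of the) $\aleph_1$-sized coherent structures $S$ and $T$, and the properness proof follows the familiar template for such forcings, with coherence entering in the amalgamation/genericity step — and that $|\Pbb|=\aleph_1$, so $\PFA(\omega_1)$ provides a filter $G$ meeting all the $D_t$ and all the $E_s$; then $g=\bigcup G$ is the desired level-preserving isomorphism $T\to S$.

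By the first paragraph, $g_{*}\Ucal(T)=\Ucal(S)$, and transporting along the fixed bijections to $\omega_1$ yields the RK-isomorphism. Here one should double-check that the definition of $\Ucal(T)$ is genuinely tree-isomorphism invariant, and not tied to the particular coherent sequence beyond its finite-modification class: if the base of $\Ucal(T)$ is given by sets of the shape $U_f$ from Section~\ref{Section: Preliminaries} associated to the coherent functions of $T$, these are invariant under everywhere-finite modification of the sequence and are carried by $g$ to the corresponding base of $\Ucal(S)$, so the invariance holds. That $\Ucal(T)$ and $\Ucal(S)$ are ultrafilters over $\omega_1$ at all — not merely filters — is the quoted consequence of productive c.c.c., hence of $\MA_{\omega_1}$, hence of $\PFA(\omega_1)$.

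The main obstacle is the properness of $\Pbb$, equivalently the ability to amalgamate finitely many finite partial isomorphisms over a common restriction — which is also precisely why one uses $\PFA(\omega_1)$ rather than $\MA_{\omega_1}$, since $\Pbb$ need not be c.c.c. A naive union fails: whether $t<_T t'$ holds for $t\in\dom p\setminus\dom q$ and $t'\in\dom q\setminus\dom p$ is governed by the finite set on which the two relevant level functions of $T$ differ, and this can disagree with the finite set governing $p(t)<_S q(t')$ in $S$, so $p\cup q$ need not be order-preserving. The remedy is to arrange, inside the countable elementary submodel and using coherence, that the conditions being amalgamated already agree on a large enough initial segment of every level in play to pin down all the finite differences involved; making this precise is the heart of the argument. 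A secondary, bookkeeping-level point is confirming that the base of $\Ucal(T)$ is a function of $(T,<_T)$ alone, which is what lets the pushforward step go through cleanly.
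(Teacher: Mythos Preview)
The paper does not prove this theorem; it is quoted from Todorcevic's \cite{l-maps} as background, so there is no in-paper proof to compare against. I will evaluate your proposal on its own and against what is done in \cite{l-maps}.

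Your central strategy---produce a tree isomorphism $g\mathrel{:}T\to S$ by proper forcing and $\PFA(\omega_1)$---is indeed Todorcevic's approach, but your account of how $g$ yields the RK-isomorphism is confused. The filter $\Ucal(T)$ is \emph{not} an ultrafilter on the set $T$ to be transported via bijections $T\to\omega_1$; it is a filter on $\omega_1$ itself, generated by the sets $\Delta(A)=\{\Delta(s,t):s,t\in A\text{ incomparable}\}\subseteq\omega_1$ for uncountable $A\subseteq T$, where $\Delta(s,t)=\Ht(s\wedge t)$ is the splitting \emph{level}. So there is nothing to ``compose with fixed bijections $T\to\omega_1$,'' and the sets $U_f$ from Section~\ref{Section: omegaomega1} are not a base for $\Ucal(T)$---they are defined from an arbitrary ladder system $\vec e$, not from $T$. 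What is actually true is simpler than you think: any tree isomorphism is automatically level-preserving (the height of a node is the order type of its set of predecessors), so $\Delta_S(g(s),g(t))=\Delta_T(s,t)$ as ordinals, and hence a tree isomorphism gives $\Ucal(T)=\Ucal(S)$ \emph{literally as subsets of $\mathscr{P}(\omega_1)$}, not merely up to an RK-map.

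Your density argument assumes each tree is ``levelwise homogeneous,'' i.e., closed under finite modifications of a node. That is not part of the paper's definition of coherent A-tree and must be justified: pass from $T$ to its closure $\tilde T$ under finite changes at each level; $\tilde T$ is again a coherent A-tree, and since under $\MA_{\omega_1}$ both $\Ucal(T)$ and $\Ucal(\tilde T)$ are ultrafilters with $\Ucal(T)\subseteq\Ucal(\tilde T)$, they coincide. Only then does your density claim go through.

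The real gap is properness, which you correctly flag as ``the main obstacle'' but then dismiss as ``the familiar template.'' This \emph{is} the content of the theorem. The amalgamation obstruction you describe---that $p\cup q$ need not be order-preserving because the finite ``difference sets'' in $T$ and $S$ can disagree---is exactly right, and its resolution (using coherence to push all relevant disagreements below a fixed bound inside the model, then matching patterns there) is the substantive argument in \cite{l-maps}. Without carrying it out you have an outline, not a proof.
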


\begin{theorem}[{\cite{UT_selective}}]\label{UT_selective}
    Assume $\MA_{\omega_1}$.
    If $T$ is a coherent A-tree and $f\mathrel{:}\omega_1 \to \omega$ is not constant on
    any set in $\Ucal(T)$, then $f_*\Ucal(T)$ is a selective ultrafilter.
\end{theorem}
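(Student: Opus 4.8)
The plan is to verify the standard Ramsey-theoretic criterion: an ultrafilter $\mathcal{W}$ on $\omega$ is selective if and only if every map $g\colon\omega\to\omega$ is constant or one-to-one on a set in $\mathcal{W}$. First I would record that $\mathcal{W}:=f_*\Ucal(T)$ is a nonprincipal ultrafilter: it is an ultrafilter because $\MA_{\omega_1}$ makes $\Ucal(T)$ one \cite{l-maps}, and it is nonprincipal because the hypothesis that $f$ is not constant on any set in $\Ucal(T)$ says exactly that $f^{-1}(\{n\})\notin\Ucal(T)$ for every $n$. Now fix $g\colon\omega\to\omega$ and set $h:=g\circ f\colon\omega_1\to\omega$. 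If $g$ is constant on some $B\in\mathcal{W}$, there is nothing to prove; otherwise $g^{-1}(\{n\})\notin\mathcal{W}$ for all $n$, equivalently $h^{-1}(\{n\})=f^{-1}(g^{-1}(\{n\}))\notin\Ucal(T)$ for all $n$, so $h$ too is not constant on any set in $\Ucal(T)$. In this case it suffices to find $A\in\Ucal(T)$ on which $f$ is constant on $A\cap h^{-1}(\{n\})$ for each $n$: for then $B:=f[A]$ has $f^{-1}(B)\supseteq A\in\Ucal(T)$, so $B\in\mathcal{W}$, and $g\restriction B$ is injective, since $g(f(\alpha))=g(f(\beta))$ with $\alpha,\beta\in A$ puts $\alpha,\beta$ in a common fibre $A\cap h^{-1}(\{n\})$ and hence forces $f(\alpha)=f(\beta)$.

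Thus the theorem reduces to the following, to be proved under $\MA_{\omega_1}$: given a coherent A-tree $T$ and $f,h\colon\omega_1\to\omega$ with $h=g\circ f$ for some $g$ and with $h$ not constant on any set in $\Ucal(T)$, there are $A\in\Ucal(T)$ and $k\colon\omega\to\omega$ with $A\cap h^{-1}(\{n\})\subseteq f^{-1}(\{k(n)\})$ for all $n$. I would obtain such an $A$ by applying $\MA_{\omega_1}$ to a finite-approximation poset $\mathbb{Q}$. A condition of $\mathbb{Q}$ should be a triple $q=(a_q,\sigma_q,F_q)$: a finite $a_q\subseteq\omega_1$ approximating $A$ on which $f$ is already constant on each set $a_q\cap h^{-1}(\{n\})$; a finite piece $\sigma_q$ of the tree $T$ (a node, together with a commitment that later points of $A$ remain in its cone), whose purpose is to force $A\in\Ucal(T)$, using that by the definition of $\Ucal(T)$ in \cite{l-maps} membership in $\Ucal(T)$ is controlled by how a set meets the cones of $T$; and a finite set $F_q$ of members of a fixed base for $\Ucal(T)$. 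The order is Mathias-like: $a_q$ is end-extended by points of $\bigcap F_q$ that respect both the $h$-fibre constraint and $\sigma_q$, the side condition $\sigma_q$ is extended upward in $T$, and $F_q$ is enlarged. The generic $A=\bigcup\{a_q\mid q\in G\}$ then has $f$ constant on every $A\cap h^{-1}(\{n\})$, since two of its points sharing an $h$-value already appear together in some $a_q$; and $A\in\Ucal(T)$ is ensured by having $G$ meet, for each node $t$ of $T$, the dense set of conditions whose side condition reflects the cone above $t$. Since $T$ has only $\aleph_1$ nodes, and the remaining demands (enlarging $a_q$, and deciding the constant value of $f$ on each $h^{-1}(\{n\})$) number only $\aleph_0$, $\MA_{\omega_1}$ produces the required generic filter once $\mathbb{Q}$ is known to be c.c.c.

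The c.c.c.\ of $\mathbb{Q}$ is the first technical heart. Given uncountably many conditions I would thin to an uncountable $\Delta$-system in which the $a_q$ and the $\sigma_q$ share a common root and have order-isomorphic tails, and then amalgamate two members by exploiting coherence of $T$ — any two nodes of a coherent tree lying at the same level agree off a finite set — so that the side conditions fuse and, after deleting finitely many points if necessary, the union of the two finite approximations still satisfies the $h$-fibre constraint. This is the usual route by which coherent trees carry c.c.c.\ posets, and I expect it to be the most delicate part. The second, more conceptual difficulty is that $\mathcal{W}=f_*\Ucal(T)$ must have character at least $\mathfrak{c}$, which under $\MA_{\omega_1}$ may exceed $\aleph_1$, so one cannot diagonalize all of $\mathcal{W}$ against $\aleph_1$ dense sets; the resolution — and the reason the argument must use the concrete $\Ucal(T)$ of \cite{l-maps} rather than an arbitrary ultrafilter — is that $\Ucal(T)$-positivity of $A$, which coincides with $A\in\Ucal(T)$ by maximality, is a requirement localized to the $\aleph_1$ cones of $T$ and hence expressible through $\aleph_1$ dense sets, exactly at the limit of what $\MA_{\omega_1}$ reaches.

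Finally, when $g$ is constant on a set in $\mathcal{W}$ the argument is trivial, so all the content lies in the reduced assertion; and, as in \cite{UT_selective}, this combines with the $\leq_{RK}$-minimality of selective ultrafilters to yield the stated corollary that an $f\colon\omega_1\to\omega$ which is not constant on any set in $\Ucal(T)$ is a finest partition with respect to $\Ucal(T)$.
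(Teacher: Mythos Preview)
The paper does not supply a proof of this theorem; it is quoted from \cite{UT_selective} and is only used (together with Theorem~\ref{UT_RK-min}) to derive Corollary~\ref{UT_finest}. There is therefore no in-paper argument to compare your proposal against.

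On the substance of your sketch: the reduction in your first paragraph is correct, but the poset you describe will not work as stated, because your account of $\Ucal(T)$ is off. A set $U\subseteq\omega_1$ lies in $\Ucal(T)$ not by virtue of how it meets cones of $T$, but because there is an uncountable $B\subseteq T$ with $\Delta(B)\subseteq U$ (see the definition recalled in Section~\ref{Seciton: PFA}). Consequently the side condition should not be a single node $\sigma_q$ committing future points of $A$ to a cone; it should be a finite antichain $b_q\subseteq T$ with $\Delta(b_q)$ contained in the approximation $a_q$---indeed, the natural move is to let $a_q$ simply \emph{be} $\Delta(b_q)$ together with whatever fibre bookkeeping you need. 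This is exactly the shape of the poset $Q_X$ and its variants used throughout Section~\ref{Seciton: PFA} of the present paper; compare Lemma~\ref{lem:atree-c.c.c.-coideal} and the conditions appearing in the proofs of Theorems~\ref{UT_Tukey-top} and~\ref{UT_RK-min}. Your third coordinate $F_q$ (finite pieces of a fixed base for $\Ucal(T)$) is both unnecessary once $b_q$ is present and, as you yourself note, unusable: a base for $\Ucal(T)$ may have size $>\aleph_1$. With the corrected conditions, the c.c.c.\ verification becomes the standard $\Delta$-system-plus-coherence argument encapsulated in Lemma~\ref{new_Delta}, rather than a fusion of cone commitments.
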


In this section we will add to this analysis, proving the following results.

\begin{theorem} \label{UT_Tukey-top}
Assume $\PFA(\omega_1)$.
For any coherent A-tree $T$, $[\omega_2]^{<\omega} \leq_T \Ucal(T)$.
In particular, PFA implies $\Ucal(T)$ is Tukey-top.
\end{theorem}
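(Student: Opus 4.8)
The plan is to establish that $\Ucal(T)$ satisfies the principle $(\dagger)_{I,\Ucal(T)}$ introduced in Section~\ref{Seciton: destinguised}, taking $I$ to be the nonstationary ideal on $\omega_1$ (the ideal of countable sets would do as well); everything else is then assembled from results already in the paper. Indeed $\PFA(\omega_1)$ implies $\MA_{\omega_1}$, so $\Ucal(T)$ is a uniform ultrafilter over $\omega_1$, and it implies $2^{\aleph_1}=\aleph_2$, so Corollary~\ref{the: (PFA) Dagger implies tukey top} immediately gives that $\Ucal(T)$ is Tukey-top once $(\dagger)$ is known. For the quantitative statement: Proposition~\ref{prop: dagger implies above omegaomega1} yields $(\omega^{\omega_1},\le)\leq_T\Ucal(T)$, and since $\mathfrak{b}_{\omega_1}=2^{\aleph_1}=\aleph_2$ under PFA, Corollary~\ref{Cor: omegaomega1 lowerbounds}(\ref{b_kappa+_Tukey}) gives $[\omega_2]^{<\omega}\leq_T\omega^{\omega_1}\leq_T\Ucal(T)$; since $|\Ucal(T)|\le\aleph_2$, the reduction $\Ucal(T)\leq_T[\omega_2]^{<\omega}$ of~\cite{Tukey40} then completes the ``in particular'' clause. (Under $\PFA(\omega_1)$ all $\Ucal(T)$ are RK-isomorphic, hence Tukey-equivalent, so one could fix a single convenient coherent A-tree; but the argument below is uniform in $T$, so fix an arbitrary coherent A-tree $T=T(\vec{e})$.)

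Now fix $f\colon\omega_1\to\omega$; we must produce a club $C\subseteq\omega_1$ with $U_{f\restriction C}\in\Ucal(T)$. Unwinding $U_{f\restriction C}=\bigcap_{\alpha\in C}U_{\alpha,f(\alpha)}$, this says precisely that
\[ W_C:=\{\beta<\omega_1\mid e_\beta(\alpha)\ge f(\alpha)\text{ for every }\alpha\in C\cap\beta\}\in\Ucal(T). \]
I would obtain such a $C$ from a generic filter for a proper forcing $\mathbb{Q}_f$, using $\PFA(\omega_1)$. A condition in $\mathbb{Q}_f$ is a finite object consisting of: a finite closed set $c\subseteq\omega_1$ approximating a final segment of $C$; a finite set $w$ approximating a set $W\subseteq W_C$, together with the finite tree data needed to certify --- via Todorcevic's combinatorial description of $\Ucal(T)$ from~\cite{l-maps} --- that $w$ extends to a set in $\Ucal(T)$ all of whose members dominate $f$ on their portion of $c$; and a finite $\in$-chain of countable elementary submodels of a suitable $H_\theta$ serving as side conditions. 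In the ordering, $c$ is end-extended, $w$ is extended so as to preserve both the domination clause (for newly added $\beta$ against newly added points of $c$) and the $\Ucal(T)$-certificate, and all conditions respect the model side conditions. Applying $\PFA(\omega_1)$ to the $\omega_1$-many dense sets forcing $C:=\bigcup\{c\mid c\in G\}$ to be closed and unbounded and forcing the $\Ucal(T)$-certificate to be completed yields a club $C$ with $W_C\in\Ucal(T)$, as required.

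The heart of the argument, and the step I expect to be the main obstacle, is that $\mathbb{Q}_f$ is proper --- and this is where coherence of $T$ is indispensable. Given a countable $M\prec H_\theta$ with $f,\vec{e},T\in M$ and a condition $p\in M$, one must show that any condition $q\le p$ having $M$ as a side condition is $(M,\mathbb{Q}_f)$-generic, which reduces to an amalgamation lemma: every $q'\le q$ can be reflected into $M$. The tree coordinates of $q'$ involve nodes $e_\beta$ with $\beta\ge\delta_M:=\sup(M\cap\omega_1)$, and because $\vec{e}$ is coherent, each such $e_\beta$ agrees below $\delta_M$, modulo a finite set, with $e_\gamma$ for a suitable $\gamma\in M$; this allows finite pieces of $w$ to be copied into $M$, and the domination clause survives because $f$ and $c\cap\delta_M$ already lie in $M$. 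Making this precise --- in particular, arranging the $\Ucal(T)$-certificates so that reflection goes through, which requires engaging with the actual definition of $\Ucal(T)$ and with the Lipschitz structure of coherent trees developed in~\cite{l-maps} --- is the technical core of the proof; the remaining steps are the bookkeeping outlined in the first paragraph. The ``in particular'' clause is then immediate, since full PFA gives $2^{\aleph_1}=\aleph_2$, whence $[2^{\aleph_1}]^{<\omega}=[\omega_2]^{<\omega}$ and $[\omega_2]^{<\omega}\leq_T\Ucal(T)$ forces $\Ucal(T)\equiv_T[2^{\aleph_1}]^{<\omega}$.
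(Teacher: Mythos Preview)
Your overall strategy is exactly the paper's: establish $(\dagger)_{NS_{\omega_1},\Ucal(T)}$ by a proper forcing and $\PFA(\omega_1)$, then invoke Proposition~\ref{prop: dagger implies above omegaomega1} and Corollary~\ref{the: (PFA) Dagger implies tukey top}. The bookkeeping in your first paragraph is correct. However, your sketch of the forcing and its properness contains a genuine confusion that would derail the argument if carried out as written.

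You conflate the coherent tree $T$ with the sequence $\vec{e}=\langle e_\beta\mid\beta<\omega_1\rangle$ of injections used to define the sets $U_{\alpha,n}$. These are completely independent objects: $T\subseteq\omega^{<\omega_1}$ is coherent, but the $e_\beta$ are merely injections $\beta\to\omega$ with no coherence whatsoever assumed. So when you write ``$T=T(\vec{e})$'' and later ``because $\vec{e}$ is coherent, each such $e_\beta$ agrees below $\delta_M$, modulo a finite set, with $e_\gamma$,'' you are invoking a property $\vec{e}$ simply does not have, and the reflection step you describe cannot work. The paper's poset has conditions $(A_q,C_q,O_q)$ with $A_q\subseteq T$ a finite antichain (and no model side conditions). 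Coherence of $T$ is used only to control the $\Delta$-values arising from $A_q$; injectivity of the $e_\beta$ is used only to know each $e_\beta^{-1}[n]$ is finite. The actual crux of the genericity argument is quite different from what you sketch: when amalgamating $q$ with a reflected $r\in M$, a new ordinal $\beta=\Delta(s^q_0,s^r_0)$ enters $\Delta(A_q\cup A_r)$, and one must have $\beta\in U:=\bigcap_{\alpha\in C_r}U_{\alpha,f(\alpha)}$. One does not reflect $e_\beta$; instead one uses Lemma~\ref{Ulam_UF} to see $U\in\Ucal(T)$, and then uses that $\Ucal(T)$ is an \emph{ultrafilter} (under $\MA_{\omega_1}$) to argue that some $r\in M$ must satisfy $\Delta(s^q_0,s^r_0)\in U$ --- otherwise the set of ``bad'' $s^r_0$'s would have $\Delta$-image disjoint from $U$ yet in $\Ucal(T)$. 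This ultrafilter trick is the idea your sketch is missing.
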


\begin{theorem} \label{UT_RK-min}
Assume $\PFA(\omega_1)$.
If $T$ is a coherent A-tree and $f\mathrel{:}\omega_1 \to \omega_1$, then
there is a set $U \in \Ucal(T)$ such that either $f$ is one-to-one on $U$ or 
$f$ is bounded on $U$.
\end{theorem}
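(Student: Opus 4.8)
The plan is to recast the statement as a Rudin--Keisler minimality assertion and then obtain the required set by a forcing argument using $\PFA(\omega_1)$. First dispose of the bounded alternative: if $f\restriction U$ is bounded for some $U\in\Ucal(T)$ we are done, so suppose this fails. Since $\PFA(\omega_1)$ implies $\MA_{\omega_1}$, $\Ucal(T)$ is a uniform ultrafilter over $\omega_1$, and the failure of the bounded alternative is precisely the assertion that $\Vcal := f_*\Ucal(T)$ is again a \emph{uniform} ultrafilter over $\omega_1$. It now suffices to prove $\Ucal(T)\leq_{RK}\Vcal$: if $g\mathrel{:}\omega_1\to\omega_1$ witnesses $g_*\Vcal=\Ucal(T)$, then $(g\circ f)_*\Ucal(T)=\Ucal(T)$, and since a self-map of a set that fixes an ultrafilter on it must agree with the identity on a set in the ultrafilter, the set $U=\{\beta\mid g(f(\beta))=\beta\}$ belongs to $\Ucal(T)$ and $f\restriction U$ is one-to-one. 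Thus the theorem is equivalent to the statement that $\Ucal(T)$ is $\leq_{RK}$-minimal among uniform ultrafilters over $\omega_1$.

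To establish minimality I would build, using $\PFA(\omega_1)$, a section of $f$ modulo $\Ucal(T)$: a function $g\mathrel{:}\omega_1\to\omega_1$ with $\{\beta\mid g(f(\beta))=\beta\}\in\Ucal(T)$. Recall that under $\MA_{\omega_1}$ the ultrafilter $\Ucal(T)$ arises by generically enlarging an explicit $\omega_1$-generated coherent family of subsets of $\omega_1$ attached to $T$, so that membership in $\Ucal(T)$ is controlled by a $\sigma$-centered forcing together with those $\omega_1$ generating sets. Accordingly I would introduce a poset $\mathbb{P}$ whose conditions consist of a finite partial injection approximating $g\circ f$, a finite approximation to a target set $V$, and finitely many ``promises'' constraining $V$ to remain compatible with the generating family of $\Ucal(T)$; the Lipschitz and oscillation structure of coherent A-trees from~\cite{l-maps} is what makes $\mathbb{P}$ proper and makes the dense sets ``extend the section past a prescribed point of $f[\omega_1]$'' and ``absorb a prescribed generating set of $\Ucal(T)$'' genuinely dense. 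Applying $\PFA(\omega_1)$ to $\mathbb{P}$ and these $\omega_1$-many dense sets produces $g$ and a set $V\in\Ucal(T)$ on which $g\circ f$ is the identity. Where the fibers of $f$ are nontrivial the section must be chosen coherently across them; here one composes with an $\omega$-valued decomposition of $\Vcal$ and invokes the selectivity of the resulting projection of $\Ucal(T)$ (Theorem~\ref{UT_selective}), while the fact that $\Ucal(T)$ is not RK-isomorphic to any ultrafilter extending the club filter rules out the degenerate case in which $f$ is regressive with stationary fibers.

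The crux, as in the proof of Theorem~\ref{UT_Tukey-top}, is the forcing itself: verifying that $\mathbb{P}$ is proper and that the dense sets witnessing $V\in\Ucal(T)$ are genuinely dense requires the full strength of the coherence of $T$ rather than merely that $T$ is Aronszajn---in particular, that any finite configuration appearing in $T$ reflects downward along the tree, so that promises about $V$ can always be honoured. A minor but necessary point is that $\Ucal(T)$ is only known to be an ultrafilter under $\MA_{\omega_1}$, so the ultrafilter combinatorics used above---including the self-map lemma---take place under $\PFA(\omega_1)$, which supplies $\MA_{\omega_1}$.
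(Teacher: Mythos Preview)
Your reduction in the first paragraph is sound: once the bounded case is ruled out, finding a $\Ucal(T)$-set on which $f$ is injective is equivalent to building a section of $f$ modulo $\Ucal(T)$, and the self-map lemma is the right bridge. But this reduction is an unnecessary detour---one may as well build the injective set directly---and in any case the proposal stops exactly where the work begins.

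The second paragraph is not a proof but a wish list, and several items on it are wrong. $\Ucal(T)$ is \emph{not} $\omega_1$-generated (indeed under PFA it is Tukey-top, so its character is $2^{\aleph_1}$), and there is no ``$\sigma$-centered forcing together with $\omega_1$ generating sets'' that controls membership in $\Ucal(T)$; rather, $X\in\Ucal(T)$ means $\Delta(A)\subseteq X$ for some uncountable $A\subseteq T$, and that is the only handle you have. So any forcing that is supposed to output a set in $\Ucal(T)$ must produce such an $A$, not an abstract ``finite approximation to $V$ with promises.'' The appeals to Theorem~\ref{UT_selective} and to the club-filter result are red herrings: selectivity concerns projections to $\omega$, not to $\omega_1$, and the case ``$f$ regressive with stationary fibers'' is not a degenerate case at all---it is the generic case once boundedness fails, and nothing about the club filter disposes of it.

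The paper's argument does exactly what your sketch omits. Conditions are pairs $(E_q,A_q)$ with $A_q\subseteq T$ a finite antichain on whose $\Delta$-image $f$ is already one-to-one, together with a finite set $E_q\subseteq\omega_1$ of ``markers'' satisfying: whenever $\nu\in E_q$ lies below some $\Delta(s,t)$ for $s,t\in A_q$, then $\nu\le f(\Delta(s,t))$. This side condition is the whole idea: placing $\delta=M\cap\omega_1$ into $E_q$ forces all new $\Delta$-values above $\delta$ to have $f$-value $\ge\delta$, which is what separates them from the $f$-values coming from $M$. Properness is then proved by an explicit amalgamation: given $q$ with $\delta\in E_q$ and a dense $D\in M$, one uses the coherence of $T$ (so that distinct high elements of $A_q$ agree on a tail of $[\nu',\delta)$) together with the hypothesis that $\{\beta:f(\beta)>\nu'\}\in\Ucal(T)$ to select $p\in D\cap M$ whose high part differs from $q$'s at a single controlled ordinal $\delta_0$ with $f(\delta_0)$ lying in a fresh interval. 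This is precisely the step your proposal gestures at with ``the Lipschitz and oscillation structure \dots makes $\mathbb{P}$ proper,'' but without the marker condition there is no mechanism preventing $f$-collisions between $\Delta(A_p)$ and $\Delta(A_q)$, and the amalgamation fails.
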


In other words, $\PFA(\omega_1)$ implies that for any coherent A-tree
$T$, $\Ucal(T)$ is $\leq_{\RK}$-minimal with respect to being a uniform ultrafilter over $\omega_1$.
It was previously known that the cardinal arithmetic assumption $2^{\aleph_1} = \aleph_2$
(which follows from PFA)
already yields many RK-minimal uniform ultrafilters over $\omega_1$ \cite[Thm. 9.13]{ComfortNegro};
the point is that
$\Ucal(T)$ is a canonical example under suitable assumptions. 

It is known that a uniform ultrafilter over $\omega_1$ is weakly normal
if and only if it extends the club filter and
is $\leq_{\RK}$-minimal with respect to being uniform. 
Curiously enough, while Laver has shown that $\MA_{\omega_1}$ implies there are no weakly
normal ultrafilters over $\omega_1$ (see theorem~\ref{Thm: MA implies every ultrafilter is regular}), it consistent with $\MA_{\omega_1}$ that
$\Ucal(T)$ has either of these properties.

\begin{theorem} \label{UT_extends_club}
It is consistent with $\MA_{\omega_1}$ that there is a coherent A-tree $T$ such
that $\Ucal(T)$ extends the club filter.
\end{theorem}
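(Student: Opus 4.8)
The plan is to manufacture the coherent A-tree $T$ with a strong stationarity property \emph{before} any forcing, and then to obtain $\MA_{\omega_1}$ by the usual finite-support c.c.c.\ iteration of length $\omega_2$, which, being c.c.c., cannot spoil that property. Recall from~\cite{l-maps} that $\Ucal(T)$ is generated, over the Fréchet filter, by a canonical family $\mathcal G_T$ of uncountable subsets of $\omega_1$, defined from $T$ by a formula absolute between $V$ and its c.c.c.\ extensions. The starting observation is a combinatorial reduction: \emph{if $\Ucal(T)$ is an ultrafilter, then $\Ucal(T)\supseteq\text{Cub}_{\omega_1}$ iff every finite intersection of members of $\mathcal G_T$ is stationary.} Indeed, if all such intersections are stationary then every member of $\Ucal(T)$ contains a stationary set and so is stationary, whence $\Ucal(T)$, being an ultrafilter, contains no nonstationary set and therefore every club; the converse is trivial since each finite intersection of members of $\mathcal G_T$ lies in $\Ucal(T)$. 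Call $T$ \emph{club-catching} if it has this property. The quoted theorem of Todorcevic --- that $\PFA(\omega_1)$ implies $\Ucal(T)$ does not extend the club filter --- shows that no coherent A-tree is club-catching under $\PFA$, so such trees are not available in ZFC and must be built by hand.

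The first main step is to construct, assuming CH, a club-catching coherent A-tree $T$. I would build the underlying coherent sequence $\vec{e}=\Seq{e_\alpha\mid\alpha<\omega_1}$ of finite-to-one maps $e_\alpha\mathrel{:}\alpha\to\omega$ by recursion on $\alpha$, using a CH bookkeeping of all pairs $(C,\bar t\,)$ consisting of a club $C\subseteq\omega_1$ and a finite tuple $\bar t$ of candidate nodes, and at stage $\alpha$ choosing the finitely many new values of $e_\alpha$ so as to place a point of the relevant finite intersection of $\mathcal G_T$-sets into $C$. Since the sets in $\mathcal G_T$ depend on all of $\vec{e}$, the requirements must be phrased in terms of initial segments, using that a club reflects to a club of $\alpha<\omega_1$; carrying this out correctly --- so that the finished tree is genuinely club-catching while remaining coherent and Aronszajn --- is the technical heart of the argument.

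The second step is to force $\MA_{\omega_1}$ over the model just produced (also assuming, say, GCH there, so that $2^{\aleph_1}=\aleph_2$). We may first replace $T$ by its specialization via the standard c.c.c.\ poset: this preserves coherence, makes $T$ special and hence keeps it Aronszajn, and, being c.c.c., preserves stationary subsets of $\omega_1$ and so preserves club-catching. Then run the usual finite-support c.c.c.\ iteration $\Seq{P_\xi,\dot Q_\xi\mid\xi<\omega_2}$ with bookkeeping so that $\MA_{\omega_1}$ holds in $V[G]$. Now combine three facts: $P_{\omega_2}$ is c.c.c., hence preserves stationary subsets of $\omega_1$, and the members of $\mathcal G_T$ are the same sets in $V$ and in $V[G]$ (absolute definition), so every finite intersection of them remains stationary in $V[G]$, i.e.\ $T$ is still club-catching there; $T$ is still a special --- hence Aronszajn --- and coherent tree in $V[G]$; and since $\MA_{\omega_1}$ makes the c.c.c.\ productive, $\Ucal(T)$ is an ultrafilter in $V[G]$ by~\cite{l-maps}. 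By the reduction of the first paragraph, $\Ucal(T)\supseteq\text{Cub}_{\omega_1}$ in $V[G]$, as required. As a sanity check, by Laver's Theorem~\ref{Thm: MA implies every ultrafilter is regular} this $\Ucal(T)$ is not weakly normal, hence is not $\le_{\RK}$-minimal among uniform ultrafilters over $\omega_1$; this is consistent with, and complementary to, Theorem~\ref{UT_RK-min}, which concerns a different model.

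The main obstacle is the CH construction of Step 1: one must build a single coherent sequence whose associated generating family, computed after the fact, has stationary finite intersections, which amounts to a diagonalization against all clubs interleaved with the coherence demands. Everything downstream --- specialization of $T$, the c.c.c.\ iteration for $\MA_{\omega_1}$, and the preservation of stationarity, coherence, and Aronszajn-ness --- is routine, as is the verification that ``$\Ucal(T)$ extends $\text{Cub}_{\omega_1}$'' reduces, once $\Ucal(T)$ is an ultrafilter, exactly to club-catching.
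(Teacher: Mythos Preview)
There is a genuine gap in your preservation step. The generating family $\mathcal G_T=\{\Delta(A):A\in[T]^{\aleph_1}\}$ is \emph{not} absolute between $V$ and its c.c.c.\ extensions: such extensions add new uncountable subsets of $T$, hence new generators. (What is true is only that membership in $\Ucal(T)$ is \emph{upward} absolute, being a $\Sigma_1$ statement.) So from ``every finite intersection of members of $\mathcal G_T^V$ is stationary'' you cannot conclude the same for $\mathcal G_T^{V[G]}$.

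Worse, your ``club-catching'' condition---$\Ucal(T)\cap NS_{\omega_1}=\emptyset$---is strictly weaker than $\text{Cub}_{\omega_1}\subseteq\Ucal(T)$; the two coincide only once $\Ucal(T)$ is an ultrafilter, which it is not before $\MA_{\omega_1}$ is forced. And the weaker condition is not stable under the $\MA_{\omega_1}$ iteration: if $C$ is a club with $C\notin\Ucal(T)^V$ then $\omega_1\setminus C\in\Ucal(T)^{+,V}$, so the poset $Q_{\omega_1\setminus C}$ of finite antichains $q\subseteq T$ with $\Delta(q)\subseteq\omega_1\setminus C$ is c.c.c.\ (this is~\cite[rmk.~4.3]{l-maps}), and the iteration will force with it, putting $\omega_1\setminus C$ into $\Ucal(T)$ and killing the conclusion. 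Thus what you actually need \emph{before} the iteration is the full inclusion $\text{Cub}_{\omega_1}\subseteq\Ucal(T)$---which your CH sketch neither aims for nor obviously achieves. The paper manufactures this inclusion by forcing rather than by a ground-model construction: a Cohen real adds a coherent Souslin tree $T$; a gentle specialization keeps every ground-model uncountable set in $\Ucal(T)^+$; then a c.c.c.\ product of the posets $Q_X$ for $X$ club forces $\text{Cub}_{\omega_1}\subseteq\Ucal(T)$ outright. Only then is $\MA_{\omega_1}$ forced, and preservation now follows correctly from upward absoluteness of $\Ucal(T)$ together with the fact that every new club contains a ground-model club.
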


Furthermore, since theorem~\ref{UT_selective} readily implies that any two projections
of $\Ucal(T)$ to $\omega$ are RK-isomorphic, theorem~\ref{UT_RK-min} yields the following corollary.

\begin{corollary} \label{UT_finest}
Assume $\PFA(\omega_1)$.
If $T$ is a coherent A-tree,
$\Ucal(T)$ has a finest partition.
\end{corollary}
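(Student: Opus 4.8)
The plan is to let $f$ be any witness that $\Ucal(T)$ is $\omega$-decomposable and to show that $f$ is a finest partition. First, since $\PFA(\omega_1)$ implies $\MA_{\omega_1}$, $\Ucal(T)$ is a uniform ultrafilter over $\omega_1$, and since $\omega_1$ carries no countably complete nonprincipal ultrafilter, it is well known that $\Ucal(T)$ is $\omega$-decomposable; fix $f\mathrel{:}\omega_1\to\omega$ with $f_*\Ucal(T)$ nonprincipal, equivalently $f$ not constant on any set in $\Ucal(T)$. The claim to prove is that this $f$ is a finest partition with respect to $\Ucal(T)$, i.e. the only partitions of $\omega_1$ strictly refining $f$ modulo $\Ucal(T)$ are those equivalent to the partition into singletons.

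So suppose $g\mathrel{:}\omega_1\to\omega_1$ refines $f$ modulo $\Ucal(T)$, say $f = h\circ g$ on some $U_0\in\Ucal(T)$; I must show $g$ is one-to-one on a set in $\Ucal(T)$, or else induces the same partition as $f$ modulo $\Ucal(T)$. Apply Theorem~\ref{UT_RK-min} to $g$: there is $U_1\in\Ucal(T)$ on which $g$ is either one-to-one or bounded. In the first case we are done. In the second case $g\restriction U_1$ has countable range, so composing with a bijection of that range with $\omega$ we may replace $g$ by a function $g'\mathrel{:}\omega_1\to\omega$ which is $\Ucal(T)$-equivalent to $g$ on $U_1$ and through which $f$ still factors on $U_0\cap U_1$; note $g'$ is not constant on any set in $\Ucal(T)$, since otherwise $f$ would be.

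Now Theorem~\ref{UT_selective} applies to $g'$: $g'_*\Ucal(T)$ is a selective ultrafilter over $\omega$. Selective ultrafilters are $\leq_{RK}$-minimal among nonprincipal ultrafilters over $\omega$, while the reduction $f_*\Ucal(T)\leq_{RK} g'_*\Ucal(T)$ is witnessed by $h$ and $f_*\Ucal(T)$ is nonprincipal; hence $f_*\Ucal(T)\equiv_{RK} g'_*\Ucal(T)$, so these are RK-isomorphic, and tracing through the isomorphism, $f$ and $g'$, hence $f$ and $g$, induce the same partition of $\omega_1$ modulo $\Ucal(T)$. This proves $f$ is a finest partition; as a byproduct one recovers the sharper statement that \emph{every} $f\mathrel{:}\omega_1\to\omega$ not constant on a set in $\Ucal(T)$ is a finest partition.

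I expect the substantive content to lie entirely in the two cited theorems: Theorem~\ref{UT_RK-min} provides the ``gap'' between $f$ and the partition into singletons, and Theorem~\ref{UT_selective} (via $\leq_{RK}$-minimality of selective ultrafilters over $\omega$) forces all nontrivial countable projections of $\Ucal(T)$ to coincide modulo $\Ucal(T)$. The only real work is bookkeeping: matching the precise formulation of \emph{finest partition} used in~\cite{UT_selective}, and, in the bounded case, checking that the factorization $f = h\circ g$ survives the passage from $g$ to an honest map $g'\mathrel{:}\omega_1\to\omega$ so that $f_*\Ucal(T)\leq_{RK} g'_*\Ucal(T)$ genuinely holds.
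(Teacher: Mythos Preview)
Your approach uses exactly the same two ingredients as the paper's (theorems~\ref{UT_RK-min} and~\ref{UT_selective}), but there is a definitional mismatch that leaves a small gap. In Kanamori's sense (which is what the paper intends, given the reference to~\cite{finest_part}), $f$ is a \emph{finest partition} for $\Ucal$ if $f$ is not one-to-one on any $\Ucal$-set and \emph{every} $g$ not one-to-one on any $\Ucal$-set factors through $f$ modulo $\Ucal$---i.e.\ $f$ refines all non-trivial $g$. You instead prove that nothing non-trivial strictly refines $f$; these are not equivalent in general, and your argument exploits the assumption ``$g$ refines $f$'' precisely to get the comparison $f_*\Ucal(T)\le_{RK} g'_*\Ucal(T)$ for free.

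The fix is immediate and is essentially what the paper's one-line sketch does. Drop the hypothesis that $g$ refines $f$. Given arbitrary $g\mathrel{:}\omega_1\to\omega_1$ not one-to-one on any $\Ucal(T)$-set, use theorem~\ref{UT_RK-min} to replace $g$ by $g'\mathrel{:}\omega_1\to\omega$ as you do. Now form the pairing $h=(f,g')\mathrel{:}\omega_1\to\omega\times\omega\cong\omega$; since $f$ is not constant on any $\Ucal(T)$-set neither is $h$, so by theorem~\ref{UT_selective} $h_*\Ucal(T)$ is selective. Both $f_*\Ucal(T)$ and $g'_*\Ucal(T)$ are nonprincipal and $\le_{RK} h_*\Ucal(T)$, hence by RK-minimality of selective ultrafilters all three are RK-isomorphic. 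This is exactly the paper's assertion that ``any two projections of $\Ucal(T)$ to $\omega$ are RK-isomorphic,'' and from it the Kanamori-style conclusion follows: $g$ factors through $f$ modulo $\Ucal(T)$. The rest of your write-up is fine.
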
 
The consistent existence of ultrafilters over $\omega_1$ with a finest partition was previously demonstrated by Kanamori~\cite[p. 329]{finest_part}
using $\diamondsuit$ and the existence of an $\omega_1$-dense ideal over $\omega_1$. 

We now recall the definitions associated to $\Ucal(T)$.
A \emph{coherent A-tree} is a subset $T$ of
$\omega^{< \omega_1}$ such that:
\begin{itemize}
    \item $T$ is uncountable and closed under initial segments;
    \item if $s,t \in T$ have the same height, then $s =^* t$;
    \item there is no $b\mathrel{:}\omega_1 \to \omega$ such that for all $\alpha < \omega_1$, $b \restriction \alpha \in T$.
    
\end{itemize}
Recall that an A-tree $T$ is \emph{special} if there is a function $\varsigma \mathrel{:} T \to \omega$ such that
$\varsigma^{-1}(n)$ is an antichain for each $n$.
$\MA_{\omega_1}$ (and hence $\PFA(\omega_1)$) implies that all A-trees are special \cite{A-trees_special}.
Given a coherent A-tree $T$,
define
$$\Ucal(T) :=\{U \subseteq \omega_1 \mid \exists A \in [T]^{\omega_1} \ \Delta(A) \subseteq U\}.$$
Here $$\Delta(s,t):=\min \{\xi \in \dom(s) \cap \dom(t) : s(\xi) \ne t(\xi)\}$$
and $\Delta(A)$ is the set of all
$\Delta(s,t)$ such that $s,t \in A$ are incomparable. 
Most of the literature around $\Ucal(T)$ for coherent
A-trees assumes that $T$ has no Souslin subtrees---a
condition which is equivalent to them being \emph{Lipschitz} (see \cite[1.10]{l-maps});
clearly special A-trees have no Souslin subtrees.

\subsection{The Tukey-type of \texorpdfstring{$\Ucal(T)$}{U(T)}}\label{Subsec: U(T)Tukeytop}

We now prove theorem~\ref{UT_Tukey-top}.
Recall the definition of $U^{\vec{e}}_f$ from section~\ref{Section: omegaomega1}, where $f$ is a partial function from 
$\omega_1 \to \omega$:
\[
U^{\vec{e}}_f := \bigcap_{\alpha \in \dom(f)} U^{\vec{e}}_{\alpha,f(\alpha)}
\]
where 
\[
U^{\vec{e}}_{\alpha,n} := \{\beta \in \omega_1 \mid (\beta \leq \alpha) \lor (e_\beta(\alpha) \geq n) \}
\]
and $\Seq{e_\beta \mid \beta \in \omega_1}$ is any sequence such that each
$e_\beta\mathrel{:}\beta \to \omega$ is an injection.
In what follows $\vec{e}$ will be fixed and we will suppress it as a superscript for ease of reading.

\begin{lemma}
Assume $\PFA(\omega_1)$.
For any coherent A-tree $T \subseteq \omega^{<\omega_1}$ and any $f\mathrel{:}\omega_1 \to \omega$, there is 
a club $C \subseteq \omega_1$ such that $U_{f \restriction C}$ is in $\Ucal(T)$.
\end{lemma}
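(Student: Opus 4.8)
The plan is to reduce the lemma to a combinatorial statement about a single uncountable subset of $T$, and then produce such a subset by forcing with $\PFA(\omega_1)$.

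\emph{Reduction.} Since $\Ucal(T)=\{U\subseteq\omega_1:\exists A\in[T]^{\omega_1}\ \Delta(A)\subseteq U\}$, it suffices to find a club $C$ and an uncountable $A\subseteq T$ with $\Delta(A)\subseteq U_{f\restriction C}$. Unwinding the definition of $U_{f\restriction C}$, this says precisely: for every $\subseteq$-incomparable pair $s,t\in A$, writing $\beta=\Delta(s,t)$, one has $e_\beta(\alpha)\geq f(\alpha)$ for all $\alpha\in C\cap\beta$. I will take $C$ of the simplest possible shape, a tail $[\gamma_0,\omega_1)$ (which is a club); then the requirement on $A$ becomes: there is $\gamma_0<\omega_1$ such that for every incomparable $s,t\in A$ the exceptional set $B_{\Delta(s,t)}:=\{\alpha<\Delta(s,t):e_{\Delta(s,t)}(\alpha)<f(\alpha)\}$ is contained in $\gamma_0$, i.e.\ $\bigcup_{\beta\in\Delta(A)}B_\beta$ is bounded in $\omega_1$. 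Note that any uncountable $A\subseteq T$ automatically has $\Delta(A)$ cofinal in $\omega_1$: for $\gamma<\omega_1$ all but countably many $s\in A$ have $\Ht(s)\geq\gamma$ and map into the countable level $T_\gamma$ under $s\mapsto s\restriction\gamma$, so some fibre is uncountable, hence not a chain, hence contains an incomparable pair splitting at or above $\gamma$; thus the $A$ we build really will witness membership in $\Ucal(T)$.

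\emph{The forcing.} First replace $T$ by its pruning $T':=\{t\in T: t\text{ has extensions in }T\text{ at arbitrarily high levels}\}$; a short argument shows $T'$ is again an uncountable coherent A-tree in which \emph{every} node has extensions at arbitrarily high levels, and $\Ucal(T')\subseteq\Ucal(T)$, so we may work with $T'$, henceforth renamed $T$. Let $\mathbb{P}$ be the forcing whose conditions are triples $(p,\gamma,\mathcal N)$ where $p\in[T]^{<\omega}$, $\gamma<\omega_1$, and $\mathcal N$ is a finite $\in$-increasing chain of countable elementary submodels of $H(\aleph_2)$, each containing $T,f,\vec e$, subject to: $\mathcal N$ is a legal system of side conditions; and for every incomparable $s,t\in p$ with $\Delta(s,t)\geq\gamma$ one has $e_{\Delta(s,t)}(\alpha)\geq f(\alpha)$ for all $\alpha\in[\gamma,\Delta(s,t))$. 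The order extends $p$, keeps $\gamma$ fixed, and end-extends $\mathcal N$. Applying $\PFA(\omega_1)$ to the $\omega_1$-many dense sets $D_\xi:=\{(p,\gamma,\mathcal N):(\exists t\in p)\ \Ht(t)\geq\xi\}$, $\xi<\omega_1$, yields a filter $G$; then $A:=\bigcup\{p:(p,\gamma,\mathcal N)\in G\}$ is an uncountable subset of $T$, $\gamma_0$ is the unique value of the second coordinate appearing along $G$, and the defining clause of $\mathbb{P}$ guarantees $B_\beta\subseteq\gamma_0$ for every $\beta\in\Delta(A)$, so $C:=[\gamma_0,\omega_1)$ is as required.

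\emph{Density and properness.} The sets $D_\xi$ are dense: given $(p,\gamma,\mathcal N)$, pick a $\subseteq$-maximal $s_0\in p$ and (using that $T$ is pruned) extend it to some $t\in T$ with $\Ht(t)\geq\xi$; any $s\in p$ incomparable with $t$ is incomparable with $s_0$ and satisfies $\Delta(t,s)=\Delta(s_0,s)<\Ht(s_0)$, so the clause for the new pair $(t,s)$ is literally the clause for $(s_0,s)$, which already held. The main obstacle is \textbf{properness}. Given a countable $M\prec H(\lambda)$ with $\mathbb{P},T,f,\vec e\in M$ and a condition $(p,\gamma,\mathcal N)\in M$, the triple $(p,\gamma,\mathcal N\cup\{M\cap H(\aleph_2)\})$ is again a condition---$\delta_M:=M\cap\omega_1$ lies above every height occurring in $p$, so it contributes no new instance of the clause---and one wants to show it is $(M,\mathbb{P})$-generic by the usual amalgamation: given a dense $D\in M$ and $r\leq(p,\gamma,\mathcal N\cup\{M\})$, pass to the trace $r\!\restriction\!M$ of $r$ in $M$, find $r'\in D\cap M$ below it, and check that $r\cup r'$ is a condition. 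The only non-routine point is verifying the defining clause for the newly incomparable pairs $(s,t)$ with $s$ a node of $r$ of height $\geq\delta_M$ and $t$ a node of $r'$: here $\Delta(s,t)<\Ht(t)<\delta_M$, and the verification should use the \emph{coherence} of $T$ together with a reflection requirement built into the side-condition clause (forcing $r'$ to capture the behaviour of the tall nodes of $r$ below $\delta_M$), so that this instance of the clause is subsumed by one already known for a pair inside $r'$. This amalgamation step is where essentially all the difficulty lies; once properness is established, $\PFA(\omega_1)$ applies and the lemma follows as above.
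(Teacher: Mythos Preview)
Your reduction is fine, but the decision to take $C$ as a \emph{tail} $[\gamma_0,\omega_1)$ is where the argument breaks, and the breakage shows up exactly in the amalgamation step you flag as ``where essentially all the difficulty lies.''

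Here is the circularity. In your poset the second coordinate $\gamma$ is frozen, so the defining clause for a pair $s,t\in p$ with $\beta=\Delta(s,t)\geq\gamma$ demands $e_\beta(\alpha)\geq f(\alpha)$ for \emph{every} $\alpha\in[\gamma,\beta)$; equivalently, $\beta\in U_{f\restriction[\gamma,\omega_1)}$. Now run the amalgamation. After the usual coherence and $\Delta$-system refinements, the tall nodes of $r$ and their reflected copies in $r'\in M$ all split at a \emph{single} new ordinal $\beta^*=\Delta(s_0^r,s_0^{r'})\in(\bar\delta,\delta_M)$. This $\beta^*$ is genuinely new---it does not lie in $\Delta(p_r)\cup\Delta(p_{r'})$---so the clause for it is \emph{not} subsumed by any pair already inside $r'$, contrary to your hope. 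To choose $r'$ so that $\beta^*$ satisfies the clause you would need to know that the set $\{\beta:\ \forall\alpha\in[\gamma,\beta)\ e_\beta(\alpha)\geq f(\alpha)\}=U_{f\restriction[\gamma,\omega_1)}$ meets the relevant $\Delta$-image, i.e.\ is $\Ucal(T)$-positive. But that is precisely the statement you are trying to prove. There is no side-condition trick that rescues this: the models only tell you where the tall nodes of $r$ sit, they cannot manufacture membership of $\beta^*$ in an $\omega_1$-fold intersection you do not yet control.

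The paper's poset is designed exactly to avoid this trap: it builds $C$ \emph{generically} via finite approximations $C_q$ (together with a clopen complement $O_q$), so that at every stage the clause involves only the finitely many $\alpha\in C_q$. In the amalgamation one then only needs the single new value $\beta^*$ to land in the \emph{finite} intersection $\bigcap_{\alpha\in C_r}U_{\alpha,f(\alpha)}$, and this set is already known to lie in $\Ucal(T)$ (each factor does, by the Ulam-matrix lemma, and $\Ucal(T)$ is a filter). That is why the paper gets a club rather than a tail: the freedom to leave ordinals out of $C$ (by putting them into $O_q$) is what makes the per-stage requirement finite and hence verifiable. If you want to salvage your approach, you must abandon the tail and approximate $C$ finitely as the paper does.
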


\begin{proof}
Let $T$ and $f$ be given and recall that $T$ must be special under our assumption.
Let $\varsigma:T \to \omega$ satisfy that $\varsigma^{-1}(n)$ is an antichain for each $n \in \omega$.
By proposition~\ref{Ulam_UF}, there is a least ordinal $\alpha_0 < \omega_1$ such that for all $\alpha \geq \alpha_0$ and all $n \in \omega$,
$U_{\alpha,n} \in \Ucal(T)$.
Define $Q$ to consist of all tuples $q = (A_q,C_q,O_q)$ such that:
\begin{enumerate}
    \item $A_q \subseteq T$ is a finite antichain;
    \item $O_q \subseteq \omega_1$ is a countable clopen set containing $[0,\alpha_0)$;
    \item $C_q \subseteq \omega_1 \setminus O_q$ is finite;
    \item if $s \ne t$ are in $A_q$ and $\alpha \in C_q$ with $\alpha < \beta:=\Delta(s,t)$,
    then $f(\alpha) \leq e_\beta(\alpha)$.
\end{enumerate}
We order $Q$ by coordinatewise reverse inclusion.
A countable elementary submodel of $H((2^{\aleph_1})^+)$ which has $T$, $\{e_\beta \mid \beta \in \omega_1\}$ and
$f$ as elements will be referred to as a \emph{suitable model} for $Q$.

\begin{claim} \label{generic_crit}
    If $M$ is a suitable model for $Q$, $q \in Q$ and $M \cap \omega_1 \in C_q$, then $q$ is $(M,Q)$-generic.
\end{claim}

\begin{proof}
    Set $\delta := M \cap \omega_1$ and let $D \subseteq Q$ be dense open and in $M$ with $q \in D$.
    It suffices to find an $r \in M \cap D$ such that $r$ is compatible with $q$.
    Let $\bar \delta < \delta$ be sufficiently large that:
    \begin{itemize}
        \item $O_q \cap \delta \subseteq \bar \delta$;

        \item $\{\Ht(s) \mid s \in A_q\} \cap \delta \subseteq \bar \delta$;

        \item if $s,t \in A_q$ with $\Delta(s,t) < \delta$, then $s(\xi) = t(\xi)$ for 
        all $\bar \delta \leq \xi < \delta$ (in particular $\Delta(s,t) < \bar\delta$);
        \item $C_q \cap \delta \subseteq \bar \delta$;

        \item if $\beta \in \Delta(A_q) \setminus \delta$, 
        then for all $\xi \in (\bar\delta,\delta)$,
        $$e_\beta(\xi) \geq \max\{f(\nu) \mid \nu \in C_q \setminus \delta\}.$$
        
    \end{itemize}
    If $r \in D$, define $n_r:=|C_r \setminus \bar \delta|$ and
    let $\delta^r_i$ be the $i\Th$-least element of $C_r \setminus \bar \delta$ for $i < n_r$.
    Let $t^r_i$ $(i < m_r)$ enumerate
    $$\{s \restriction \delta^r_0 \mid s \in A_r  \textrm{ and } \Ht(s) \geq \delta_0^r\}$$
    in $\leq_{\mathrm{lex}}$-increasing order.
    Set $m:=m_q$ and $n:= n_q$ and let $X \subseteq D$ consist of all $r$ such that:
\begin{itemize}

    \item $m_r = m$ and both $\varsigma(s^r_i) = \varsigma(s^q_i)$ and $s^r_i \restriction \bar\delta = s^q_i \restriction \bar \delta$
    for all $i < m$;

    \item $n_r = n$ and $f(\delta^r_i) = f(\delta^q_i)$ for each $i < n_q$;
    
    \item $\{s \in A_r \mid \Ht(s) < \delta^r_0\} = \{s \in A_q \mid \Ht(s) < \delta^q_0\}$;

    \item $C_q \cap \delta^q_0 = C_r \cap \delta^r_0$.

\end{itemize}
We will eventually select an $r$ from $X \cap M$ which is compatible with $q$---i.e. such
that $\bar r:=(A_q \cup A_r,C_q \cup C_r,O_q \cup O_r)$ is a condition.
In fact, many of the requirements necessary to be a member of $Q$ are automatically met by $\bar r$ just
by virtue of $r$ being in $X \cap M$.
First observe that if $r \in X \cap M$, then 
$\max(O_r) < \delta = \delta^q_0$ and hence $C_r \cup C_q$ is disjoint from $O_r \cup O_q$.
It is also true that $r \in X \cap M$
implies that $A_{\bar r}$ is an antichain.
To see this, suppose that $t \in A_q \setminus A_r$ and $t' \in A_r \setminus A_q$.
Notice that it must be that $\Ht(t) \geq \delta^q_0$ and $\Ht(t') \geq \delta^r_0$.
Let $i,i' < m$ be such that $t \restriction \delta^q_0 = s^q_i$ and $t' \restriction \delta^r_0 = s^r_{i'}$.
If $i \ne i'$, then since
\[
s^r_i \restriction \bar \delta = s^q_i \restriction \bar \delta \ne s^q_{i'} \restriction \bar \delta = s^r_{i'} \restriction \bar \delta
\]
it must be that $s^r_i$ and $s^q_{i'}$ are incompatible.
If $i = i'$, then
since $\varsigma(s^r_i) = \varsigma(s^q_i)$ and $s^r_i \ne s^q_i$ (since for instance $s^r_i \in M$ and $\Ht(s^q_i) = \delta \not \in M$),
$s^r_i$ and $s^q_{i'}$ are incompatible.
Since $t$ extends $s^q_i$ and $t'$ extends $s^r_i$, $t$ and $t'$ are incompatible.

Next suppose that $\beta$ is in $\Delta(A_{\bar r}) = \Delta(A_q \cup A_r)$.
If $\beta \in \Delta(A_r)$, then $\beta < \delta = \delta^q_0$.
In particular if $\alpha \in C_{\bar r}$ with $\alpha < \beta$, $\alpha \in C_r$.
Thus $f(\alpha) \leq e_\beta(\alpha)$ by virtue of $r$ being a condition.
If $\beta \in \Delta(A_q)$ and $\alpha \in C_{\bar r} \setminus C_q$, let $i < n$ be such that $\alpha = \delta^r_i$.
Observe that $f(\alpha) = f(\delta^r_i) = f(\delta^q_i)$.
Since $\alpha \in (\bar \delta,\delta)$, it follows that $e_\beta(\alpha) \geq f(\alpha)$.

The remaining possibility is that $\beta \in \Delta(A_q \cup A_r) \setminus (\Delta(A_q) \cup \Delta(A_r))$---
that is $\beta = \Delta(s^q_i,s^r_i)$ for some $i < m$.
Observe that since for all $j < m$
\[
\Delta(s^q_j,s^r_j) \in (\bar \delta,\delta^r_0) \subseteq (\bar \delta,\delta)
\]
and since for all $j < m$
\[
s^q_j \restriction (\bar \delta,\delta) = s^q_0 \restriction (\bar \delta,\delta),
\]
\[
s^r_j \restriction (\bar \delta,\delta^r_0) = s^r_0 \restriction (\bar \delta,\delta^r_0),
\]
it follows that $\beta = \Delta(s^q_i,s^r_i) = \Delta(s^q_j,s^r_j)$ for all $i,j < m$. 
Thus we need to select an $r$ such that if $\beta = \Delta(s^q_0,s^r_0)$,
then for all $\alpha \in C_r \cap \bar \delta$, $f(\alpha) \leq e_\beta(\alpha)$.
Notice that since $\alpha_0 \subseteq O_r$, $\alpha_0 \leq \min C_r$, and therefore
$$U:=\bigcap \{U_{\alpha,{f(\alpha)}} \mid \alpha \in C_r\}$$ is in $\Ucal(T)$.
Set $Y:=\{s^r_0 \mid r \in X\}$, noting that $Y \subseteq T$ is an antichain and $s^q_0$ is in $Y$.
Furthermore $Y$ is in $M$ since it is definable from parameters in $M$.
Let $Z$ be the set of all $s \in Y$ such that for all $s' \ne s$ in $Y$, $\Delta(s,s') \not \in U$.
Since $Z$ is definable from parameters in $M$, $Z$ is in $M$.
If $s^q_0$ were in $Z$, then $Z$ would have uncountable $\Delta(Z)$ and $U$ would contain two disjoint sets in $\Ucal(T)$, which
contradicts that $\Ucal(T)$ is a filter.
Thus $s^q_0 \not \in Z$ and hence there is an $r \in Z$ such that $\Delta(s^q_0,s^r_0) \in U$.
By our above observations, $\bar r = (A_q \cup A_r,C_q \cup C_r,O_q \cup O_r)$ is a condition witnessing
that $q$ is compatible with $r \in D \cap M$ as desired.
\end{proof}

\begin{claim} \label{proper_density}
     $Q$ is proper and the following sets are dense below some $p_0 \in Q$ for each $\xi \in \omega_1$:
    \begin{itemize}
    
       \item $\{q \in Q \mid \xi < \max C_q\}$,

        \item $\{q \in Q \mid \xi < \max \{\Ht(s) \mid s \in A_q\}\}$.

    \end{itemize}
\end{claim}

\begin{proof}
    To see that $Q$ is proper, let $M$ be suitable for $Q$, and $p \in Q \cap M$.
    Define $q = (A_p,C_p \cup \{\delta\},O_p)$, where $\delta = M \cap \omega_1$.
    By claim~\ref{generic_crit}, $q$ is a $(M,Q)$-generic condition.
    Since $p$ and $M$ were arbitrary, $Q$ is proper.

    Next suppose that $M$ is suitable for $Q$ and let $t \in T \setminus M$.
    Define $p_0 :=(\{t\},\{\delta\},\emptyset)$, where $\delta = M \cap \omega_1$.
    By claim~\ref{generic_crit}, $p_0$ is $(M,Q)$-generic.
    It follows that $p_0$ forces that $M[\dot G \cap M]$ is elementary in $H((2^{\aleph_1})^+)[\dot G]$
    and that $\bigcup \{A_q \mid q \in \dot G\}$ and $\bigcup \{C_q \mid q \in \dot G\}$ are both not contained in
    $M[\dot G \cap M]$ and hence that both are uncountable.
    It follows that for any $\xi \in \omega_1$,
    $\{q \in Q \mid \xi < \max C_q\}$ and $\{q \in Q \mid \xi < \max \{\Ht(s) \mid s \in A_q\}\}$ are
    dense below $p_0$.
\end{proof}

\begin{claim} \label{density}
    For all $\xi \in \omega_1$, $D:=\{q \in Q \mid \xi \in C_q \cup O_q\}$ is dense.
\end{claim}

\begin{proof}
Toward this end, let $p \in Q$ be given and observe that if $\xi \in C_q$, then $p \in D$.
If $\xi \not \in C_q$, then there is a $\bar \xi < \xi$ such that $(\bar \xi,\xi] \cap C_q = \emptyset$.
In this case $q:=(A_p,C_p,O_p \cup (\bar \xi,\xi])$ is an extension of $p$ in $D$ as desired.
\end{proof}

Let $G \subseteq Q$ be a filter containing $p_0$ and meeting the dense sets listed in claims~\ref{proper_density} and \ref{density} for each $\xi < \omega_1$.
Define $A:=\bigcup \{A_q \mid q \in G\}$, $C:=\{C_q \mid q \in G\}$, and $O:=\{O_q \mid q \in G\}$.
Clearly $A \subseteq T$ is an uncountable antichain and $C \subseteq \omega_1$ is uncountable.
Since $O$ is open and is the complement of $C$, $C$ is club.
Finally, set
\[
U := \{\Delta(s,t) \mid s \ne t \textrm{ and } s,t \in A\}.
\]
By definition, $U \in \Ucal(T)$.
Moreover, if $\alpha \in C$ and $\beta = \Delta(s,t) \in U$, then there must be some $q \in G$ such that
$\alpha \in C_q$ and $s,t \in A_q$.
Thus $f(\alpha) \leq e_\beta(\alpha)$.
Consequently we have shown that $U \subseteq U_{f \restriction C}$ and hence that $U_{f \restriction C}$ is in $\Ucal(T)$.
\end{proof}

Recall that for an ultrafilter $\Vcal$ 
$(\dagger)_{NS_{\omega_1},\Vcal}$ is the following statement:
$$\forall f\in \omega^{\omega_1}\exists C_f\text{ club such that }U_{f \restriction C_f} \in \Vcal$$
The previous lemma asserts that under PFA, every ultrafilter of the form $\Ucal(T)$ satisfies $(\dagger)_{NS_{\omega_1},\Ucal(T)}$. Theorem~\ref{UT_Tukey-top} then follows immediately from corollary~\ref{the: (PFA) Dagger implies tukey top}.

\subsection{\texorpdfstring{$\Ucal(T)$}{U(T)} is RK-minimal}

We now turn to the proof of theorem~\ref{UT_RK-min}.
\begin{proof}
Let $T$ be a special coherent A-tree and $f\mathrel{:}\omega_1 \to \omega_1$ be given.
Recall that $\MA_{\omega_1}$
implies $\Ucal(T)$ is an ultrafilter and that $T$ is special.
Fix a function $\varsigma:T \to \omega$ such that $\varsigma^{-1}(n)$ is an antichain for all $n$.
If there is an $\alpha < \omega_1$ such that
$\{\delta \in \omega_1 \mid f(\delta) \leq \alpha\}$
is in $\Ucal(T)$, then we are finished.
Thus we may assume that for all $\alpha < \omega_1$
$$\{\delta \in \omega_1 \mid \alpha < f(\delta)\} \in \Ucal(T).$$
Define
$Q$ to be the set of all pairs $q = (E_q,A_q)$ such
that:
\begin{enumerate}

\item $E_q \subseteq \omega_1$ is finite;

\item $A_q \subseteq T$ is a finite antichain such that
$f \restriction \Delta(A_q)$ is one-to-one;

\item if $\nu \in E_q$ and $s\ne t \in A_q$ are such that
$\nu \leq \Delta(s,t)$, then $\nu \leq f(\Delta(s,t))$.

\end{enumerate}
\begin{claim} \label{RK_minimal_proper}
If $M$ is a countable elementary submodel of $H(\omega_2)$
with $T,f,\varsigma \in M$ and $q \in Q$ is such that
$M \cap \omega_1 \in E_q$, then $q$ is $(M,Q)$-generic.
In particular $Q$ is proper.
\end{claim}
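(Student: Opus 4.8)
I would run the standard side-condition amalgamation argument for properness, closely parallel to the proof of Claim~\ref{generic_crit} (the genericity criterion for the poset of subsection~\ref{Subsec: U(T)Tukeytop}), adapting it to the present $Q$. Note that by the point at which $Q$ is defined we have assumed, without loss of generality, that $\{\delta<\omega_1\mid\alpha<f(\delta)\}\in\Ucal(T)$ for every $\alpha<\omega_1$; this standing assumption plays the role that the finiteness of the range of $f$ played in Claim~\ref{generic_crit}. So fix a countable $M\prec H(\omega_2)$ with $T,f,\varsigma\in M$ and a $q\in Q$ with $\delta:=M\cap\omega_1\in E_q$. After the usual reduction (extend $q$ inside an arbitrary dense open $D\in M$, which is harmless, since a common extension of the extended $q$ with some $r$ is also one of $q$ with $r$), it suffices to produce, for each such $D$, an $r\in D\cap M$ compatible with $q$.

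First I would fix a sufficiently large $\bar\delta<\delta$ with $\bar\delta\notin E_q$ that captures the finitely much of $q$ lying below $\delta$: $E_q\cap\delta\subseteq\bar\delta$; every $\Ht(s)<\delta$ and every $\Delta(s,t)<\delta$ with $s,t\in A_q$ lies below $\bar\delta$; any two elements of $A_q$ of height $\ge\delta$ agree on $[\bar\delta,\delta)$; and $\bar\delta>\sup\bigl(f[\Delta(A_q)]\cap\delta\bigr)$. Writing $\delta^q_0:=\delta$ and letting $\hat s^q_i$ $(i<m)$ enumerate $\{s\restriction\delta\mid s\in A_q,\ \Ht(s)\ge\delta\}$ in $\le_{\mathrm{lex}}$-order, I then let $X\subseteq D$ consist of all $r$ having the same $\bar\delta$-type as $q$: $E_r\cap\bar\delta=E_q\cap\bar\delta$ and $\bar\delta\notin E_r$; the part of $A_r$ of height below $\delta^r_0:=\min(E_r\setminus\bar\delta)$ equals the part of $A_q$ of height below $\delta$; and the enumeration $s^r_i$ $(i<m)$ of $\{s\restriction\delta^r_0\mid s\in A_r,\ \Ht(s)\ge\delta^r_0\}$ satisfies $s^r_i\restriction\bar\delta=\hat s^q_i\restriction\bar\delta$ and $\varsigma(s^r_i)=\varsigma(\hat s^q_i)$ for all $i<m$. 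Every parameter entering this definition is an ordinal below $\delta$, a finite subset of $T$ of height below $\delta$, or a natural number --- crucially, unlike in Claim~\ref{generic_crit}, one does \emph{not} try to match $f$-values, as those need not lie in $M$. Hence $X\in M$, and since $q\in X\setminus M$ the set $X$ is uncountable.

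Given $r\in X$, put $\bar r:=(E_q\cup E_r,\ A_q\cup A_r)$. Exactly as in Claim~\ref{generic_crit}, the equal $\varsigma$-values make each $\hat s^q_i$ incompatible with $s^r_i$, and then coherence together with the agreement on $[\bar\delta,\delta)$ forces $A_{\bar r}$ to be an antichain and forces every meet $\Delta(s,t)$ with $s\in A_q$, $t\in A_r$ incomparable to collapse to a single ordinal $\beta_r\in[\bar\delta,\delta^r_0)$; note $\beta_r$ is an ordinal below $\delta$, hence $\beta_r\in M$, so $f(\beta_r)\in M\cap\omega_1=\delta$. The point of having $\delta\in E_q$ is that clause~(3) for $q$, applied with $\nu=\delta$, forces every meet $\Delta(s,t)\ge\delta$ of $A_q$ to satisfy $f(\Delta(s,t))\ge\delta$, whereas every meet of $A_r$ lies in $M$ and so has $f$-value below $\delta$; together with the choice of $\bar\delta$ and one-to-oneness of $f$ on $\Delta(A_q)$ and on $\Delta(A_r)$ separately, this makes $f$ automatically one-to-one on $\Delta(A_q)\cup\Delta(A_r)$, so clause~(2) for $\bar r$ reduces to $f(\beta_r)\notin f[\Delta(A_r)]$. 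Likewise every instance of clause~(3) for $\bar r$ is inherited from $q$ or $r$, or has a meet $\ge\delta$ with $\nu<\delta$ (handled by the above), or has the meet $\beta_r$ with the relevant $\nu<\bar\delta$ (handled once $f(\beta_r)\ge\bar\delta$). Thus everything reduces to choosing $r$ so that $f(\beta_r)\ge\bar\delta$ and $f(\beta_r)$ avoids a prescribed finite set; both are demands of the form ``$f(\beta_r)$ lies in a prescribed $W\in\Ucal(T)$'', since the standing assumption makes each level set $f^{-1}[\{\eta\}]$ lie off $\Ucal(T)$, so $\{\eta<\omega_1\mid f(\eta)>\bar\delta\}$ minus the preimage of any finite set is in $\Ucal(T)$, and (taking $\bar\delta$ past the relevant values) this $W$ can be arranged to lie in $M$. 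One then runs the filter-theoretic selection of Claim~\ref{generic_crit}: with $Y:=\{s^r_0\mid r\in X\}\in M$ an uncountable antichain containing $\hat s^q_0$, and $Z:=\{s\in Y\mid \Delta(s,s')\notin W\text{ for all }s'\in Y\setminus\{s\}\}\in M$, one gets $\hat s^q_0\notin Z$, since otherwise $\Delta(Z)$ would be an uncountable subset of $\omega_1\setminus W$, contradicting $W\in\Ucal(T)$ and the filter property; extracting a suitable $r\in X\cap M$ gives a common extension $\bar r\in D$ of $q$ and $r$. Finally, ``$Q$ is proper'' follows by applying all of this to $q:=(E_p\cup\{\delta\},A_p)$ for arbitrary $p\in Q\cap M$, which lies in $Q$ because $A_p\subseteq M$ forces $\Delta(A_p)\subseteq\delta$.

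\textbf{Main obstacle.} I expect the delicate point to be reconciling clause~(2) with the amalgamation: one must control $f$ simultaneously on the ``new'' meet $\beta_r$ \emph{and} on the meets internal to $A_r$ --- which are themselves part of the data of the $r$ being selected --- all while keeping the target set of the $Y$--$Z$ selection inside $M$ and inside $\Ucal(T)$. The one-to-oneness requirement~(2) has no counterpart in Claim~\ref{generic_crit}, so this is where the present argument genuinely diverges from that template, and where the choice of $\bar\delta$ and the precise definition of $X$ must be made to cooperate (for instance, one may want $X$ to single out conditions $r$ for which $A_r$ has no meet at or above $\bar\delta$, and then check this is compatible with the selection going through); getting that bookkeeping right is the crux.
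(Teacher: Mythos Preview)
Your outline diverges from the paper's argument in a substantive way, and the divergence is exactly where the gap lies.

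The paper does \emph{not} run a $Y$--$Z$ filter selection to pick out a single compatible $r$. Instead it fixes, inside $M$, an uncountable antichain $X\subseteq T$ with $f(\Delta(s,t))>\nu'$ for all distinct $s,t\in X$ (this uses that $\Ucal(T)$ is an ultrafilter), and then a function $p\mapsto t_p\in T_{\nu_p}$ with $t_p$ extended by an element of $X$. The definition of the copy-set $D'$ requires the high part of $A_p$ to agree with $t_p$ on $[\nu'',\nu_p)$. This one clause does double duty: it forces the new meet to equal $\Delta(t_p,t_q)\in\Delta(X)$ (hence $f$-value $>\nu'$), \emph{and} it guarantees $\Delta(A_p)\cap[\nu',\nu_p)=\emptyset$, so that clause~(3) for $p$ gives $f[\Delta(A_p)\setminus\nu']\subseteq[\nu_p,\omega_1)$. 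Together with $f$-closure of $\nu_p$ (so $f(\Delta(t_p,t_q))<\nu_p$), this makes \emph{every} $p\in D'\cap M$ compatible with $q$; no further selection is needed.

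The gap in your route is the one you flag but do not close. Your $Y$--$Z$ argument requires a target set $W\in\Ucal(T)\cap M$ independent of $r$, yet the obstruction you must avoid is $f(\beta_r)\in f[\Delta(A_r)]$, and $\Delta(A_r)$ varies with $r$. Your proposed patch ``$A_r$ has no meet at or above $\bar\delta$'' is not satisfied by $q$ itself (which typically has meets $\geq\delta$), so it cannot be added to $X$. The workable reformulation is ``$\Delta(A_r)\cap[\bar\delta,\delta^r_0)=\emptyset$'', which $q$ does satisfy --- but even this is not enough: the high meets of $A_r$ (those $\geq\delta^r_0$) have $f$-values $\geq\delta^r_0$ by clause~(3), and nothing in your setup prevents $f(\beta_r)$ from landing in $[\delta^r_0,\delta)$ and colliding with one of them. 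The missing ingredient is the requirement that $\delta^r_0$ be $f$-closed (which $\delta^q_0=\delta=M\cap\omega_1$ satisfies); with that added to $X$, $\beta_r<\delta^r_0$ forces $f(\beta_r)<\delta^r_0$ and the collision is ruled out. Once both conditions are in place your selection argument can be completed, but as written the proposal omits precisely the constraint that makes clause~(2) go through.
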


\begin{proof}
Let $q$ be given and $D \subseteq Q$ be a dense set in $M$.
We need to find a $p \in D \cap M$ such that $p$ is compatible
with $q$.
By extending $q$ if necessary, we may assume that $q \in D$.
Set $\nu := M \cap \omega_1$ and let $\nu' < \nu$ be
sufficiently large that:
\begin{itemize}

\item if $\delta < \nu'$, $f(\delta) < \nu'$;

\item if $s \in A_q \cap M$, $\Ht(s) < \nu'$;

\item if $s, t \in A_q \setminus M$, then $s(\xi) = t(\xi)$
for all $\nu' \leq \xi < \nu$;

\end{itemize}
Since $f$ is not bounded on any set in $\Ucal(T)$ and since
$\Ucal(T)$ is an ultrafilter,
there is an uncountable antichain
$X \subseteq T$ such that
if $s,t \in X$ are distinct, then $f(\Delta(s,t)) > \nu'$.
Fix a function $p \mapsto t_p$ in $M$ with domain $D$
such that for all $p$, $t_p \in T$ has height $\min(E_p \setminus \nu')$ and $t_p$ is extended by an element of $X$.
Let $\nu'' < \nu$ be sufficiently large that $\nu' \leq \nu''$ and if
$\xi < \nu$ and $t_q(\xi) \ne s(\xi)$ for some $s \in A_q$,
then $\xi < \nu''$.

Let $D'$ consist of those elements $p$ of $D$ such that:
\begin{itemize}

\item $|E_p| = |E_q|$ and there is a (necessarily unique)
$\nu_p \in E_p$ such that $\nu_p \cap E_p = \nu_q \cap E_q$ and $\nu'' < \nu_p$,

\item $|A_p| = |A_q|$, $\{s \in A_p \mid \Ht(s) < \nu_p\} = \{s \in A_q \mid \Ht(s) < \nu_q\}$, and
$$\{s \restriction \nu'' \mid (s \in A_p) \land (\Ht(s) \geq \nu_p)\} = \{s \restriction \nu'' \mid (s \in A_q) \land (\Ht(s) \geq \nu_q)\};$$

\item if $s \in A_p$ and $\Ht(s) \geq \nu_p$, then whenever $\nu'' \leq \xi < \nu_p$, $s(\xi) = t_p(\xi)$;

\item $\varsigma(t_p) = \varsigma(t_q)$ and $t_p \restriction \nu'' = t_q \restriction \nu''$;

\end{itemize}
Observe that $\nu_q = \nu$,
$q \in D'$, and that $D'$ is definable from the parameters 
$E_q \cap \nu_q$,
$\{s \in A_q \mid \Ht(s) < \nu_q\}$,
$\{s \restriction \nu'' \mid (s \in A_q) \land (\Ht(s) \geq \nu_q)\}$, and
$t_q \restriction \nu''$ which are each in $M$.
Thus $D' \in M$.

We claim that any element $p$ of $D' \cap M$ is compatible
with $q$.
It suffices to show that $r:=(E_p \cup E_q,A_p \cup A_q)$ is a condition in $Q$.
First observe that since $\varsigma(t_p) = \varsigma(t_q)$ and $\Ht(t_p) < \nu =\Ht (t_q)$, $t_p$ is incompatible with $t_q$;
let $\delta = \Delta(t_p,t_q)$.
If $s \in A_p \setminus A_q$ and $s' \in A_q \setminus A_p$,
then by definition of $D'$ and the fact that $p \in D' \cap M$, we know that $\nu'' < \nu_p \leq \Ht(s) < \nu \leq \Ht(s')$.  
Since $t_p \restriction \nu'' = t_q \restriction \nu''$, it follows that $\nu'' \leq \delta < \nu_p < \nu$ and 
consequently $s(\delta) = t_p(\delta) \ne t_q(\delta) = s'(\delta)$.
In particular, $s$ and $s'$ are incompatible.
Furthermore, if $\Delta(s,s') \not \in \Delta(A_p)$,
then again by definition of $D'$, it must be that
$\Delta(s,s') \geq \nu''$.
Since $s$ agrees with $t_p$ on $[\nu'',\delta)$ and
$s'$ agrees with $t_q$ on $[\nu'',\delta)$, it follows
that $\Delta(s,s') = \Delta(t_p,t_q) = \delta$.
Summarizing, we have shown that $A_p \cup A_q$ is an 
antichain and $\Delta(A_p \cup A_q) = \Delta(A_p) \cup \Delta(A_q) \cup \{\delta\}$.
Notice that by this argument,
if $p \in D'$, 
$\Delta(A_p) \setminus \nu' = \Delta(A_p) \setminus \nu_p$.

In order to show that $r$ is a condition, it remains to show that $f$ is one-to-one when restricted to
$\Delta(A_r)$.
Observe that $\Delta(A_p) \cap \nu' = \Delta(A_q) \cap \nu'$ and that
$$
\Delta(A_p) \cap \nu' < 
\nu' \leq \delta < \nu_p \leq \Delta(A_p) \setminus \nu' < \nu \leq \Delta(A_q) \setminus \nu'. 
$$
Also, $\nu'$, $\nu_p$, and $\nu$ are closed under $f$.
Additionally, by virtue of $p$ being a condition in $Q$,
if $\delta' \in \Delta(A_p) \setminus \nu'$, $f(\delta') \geq \nu_p$.
Similarly if $\delta' \in \Delta(A_q) \setminus \nu'$,
$f(\delta') \geq \nu$.
It follows that $f$ is one-to-one when restricted to
$\Delta(A_p) \cup \Delta(A_q)$.
Finally, since $\delta \in \Delta(X)$ and $\nu_p$ is $f$-closed,
$\nu' \leq f(\delta) < \nu_p$.
It follows that $f$ is one-to-one on 
$\Delta(A_r) = \Delta(A_p) \cup \Delta(A_q) \cup \{\delta\}$ as
well.
\end{proof}

\begin{claim} \label{RK_minimal_density}
There is a condition $q \in Q$ which forces that 
$\dot A := \bigcup \{A_p \mid p \in \dot G\}$ is an uncountable
antichain such that $f \restriction \Delta(\dot A)$
is one-to-one.
\end{claim}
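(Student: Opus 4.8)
The plan is to take $q$ to be an $(M,Q)$-generic condition of the simplest possible shape and to check the three requirements on $\dot A$ separately; the antichain and one-to-one conditions will be automatic, and uncountability will reduce to a density statement about $Q$.

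First I would fix a countable elementary submodel $M\prec H(\omega_2)$ with $T,f,\varsigma\in M$, put $\delta:=M\cap\omega_1$, choose any $t\in T\setminus M$ — note $\Ht(t)\ge\delta$, since for $\alpha<\delta$ the countable set $T\restriction\alpha$ lies in $M$ and hence $T\restriction\delta\subseteq M$ — and set $q:=(\{\delta\},\{t\})$. A one-node antichain imposes no constraint in clauses (2)–(3), so $q\in Q$, and by Claim~\ref{RK_minimal_proper} $q$ is $(M,Q)$-generic because $\delta\in E_q$. For \emph{any} filter $G\ni q$ the set $\dot A:=\bigcup\{A_p\mid p\in G\}$ is an antichain (directedness of $G$ together with clause (1)) on which $f\restriction\Delta(\dot A)$ is one-to-one (given $\Delta(s_1,s_2),\Delta(s_3,s_4)$ with all four nodes in $\dot A$, take $p\in G$ whose $A_p$ contains all of them and apply clause (2)); these facts use nothing about $q$. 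Thus the content of the claim is that $q$ forces $\dot A$ to be uncountable, and since a $V$-generic $G\ni q$ meets every dense subset of $Q$, it suffices to show that for each $\xi<\omega_1$ the set $D_\xi:=\{p\in Q\mid A_p$ contains a node of height $>\xi\}$ is dense below $q$; then $\{\Ht(s)\mid s\in\dot A\}$ is cofinal in $\omega_1$.

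So the real work is the density of $D_\xi$ below $q$. Given $p\le q$, I may assume $\xi$ exceeds $\max E_p$, every height occurring in $A_p$, and every value $f$ takes on $\Delta(A_p)$. Since $f$ is not bounded on any member of $\Ucal(T)$ (the bounded case having been disposed of in the main proof), $\{\delta'\mid f(\delta')>\xi\}\in\Ucal(T)$, so there is an uncountable antichain $X\subseteq T$ with $\Delta(X)\subseteq\{f>\xi\}$, and discarding countably many nodes I may assume every node of $X$ has height $>\xi$. Coherence of $T$ now helps: for $u\in A_p$ and any $s$ of height $>\Ht(u)$ one has $s\restriction\Ht(u)=^*u$, and there are only countably many $=^*$-variants of $u$ at height $\Ht(u)$, so finitely many pigeonhole refinements yield an uncountable $X'\subseteq X$ for which, for each $u\in A_p$, the node $s\restriction\Ht(u)$ is a fixed $u'$ independent of $s\in X'$; hence $\Delta(s,u)=\Delta(u',u)=:\beta_u$ is a single ordinal. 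Any $s\in X'$ of height $>\xi$ then gives $p':=(E_p,A_p\cup\{s\})\le p$ in $D_\xi$, \emph{provided} one can also ensure $u'\ne u$ for every $u$ (so $s$ is incomparable with each element of $A_p$, clause (1)), that $f$ is one-to-one on $\Delta(A_p)\cup\{\beta_u\mid u\in A_p\}$ (clause (2)), and that $\nu\le\beta_u$ implies $\nu\le f(\beta_u)$ for all $\nu\in E_p$ (clause (3)).

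Securing this last proviso is what I expect to be the main obstacle: the positions $\beta_u$ are determined by $T$'s geometry and by which $=^*$-variant of $u$ is selected, so a careless choice of $X$ gives no control over whether the values $f(\beta_u)$ are large, distinct from one another, or distinct from $f[\Delta(A_p)]$. I anticipate handling this exactly as the analogous difficulty is handled inside the proof of Claim~\ref{generic_crit}: the family of eligible antichains $X$ (equivalently, of eligible "new stems") is definable from parameters lying in $M$ and therefore lies in $M$; using that $t\notin M$, the elementarity of $M$, and — crucially — the fact that $\Ucal(T)$ is an \emph{ultrafilter} and so cannot contain two disjoint members (which prevents the set of bad stems from being co-large), one extracts a member realizing a good splitting pattern over the finite set $A_p$. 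Arranging the bookkeeping so that this reflection simultaneously delivers incomparability with $A_p$, injectivity of $f$ on the enlarged $\Delta$-set, and clause (3) is the technical heart; once it is in place, the claim follows by the reduction above, and Theorem~\ref{UT_RK-min} follows by applying $\PFA(\omega_1)$ to the proper poset $Q$ together with the dense sets furnished here, yielding an uncountable antichain $A$ with $f\restriction\Delta(A)$ one-to-one, so that $\Delta(A)\in\Ucal(T)$ witnesses that $f$ is one-to-one on a set in $\Ucal(T)$.
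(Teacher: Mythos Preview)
Your choice of $q=(\{\delta\},\{t\})$ with $\delta=M\cap\omega_1$ and $t\in T\setminus M$, and your treatment of the ``antichain'' and ``$f$ one-to-one on $\Delta(\dot A)$'' requirements, match the paper exactly. The divergence is in how you obtain uncountability of $\dot A$, and here you are working much harder than necessary and the sketch has a genuine gap.

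The paper's argument is a one-liner that exploits what you have already invoked. You cite Claim~\ref{RK_minimal_proper} to know that $q$ is $(M,Q)$-generic; that immediately gives $q\Vdash M[\dot G]\prec H_\theta[\dot G]$. The name $\dot A$ is definable from $Q\in M$, so $\dot A\in M[\dot G]$, while $t\in\dot A$ has height $\ge\delta=M[\dot G]\cap\omega_1$, so $t\notin M[\dot G]$. Thus $\dot A\in M[\dot G]$ but $\dot A\not\subseteq M[\dot G]$, and by elementarity $\dot A$ is uncountable. Density of your $D_\xi$'s is then a \emph{consequence} of the claim, not an ingredient in its proof (and indeed this is exactly how the paper proceeds in the paragraph following the claim).

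Your direct density argument, by contrast, is attempting to redo the combinatorics of Claim~\ref{RK_minimal_proper} inside each $D_\xi$, and the reflection step you sketch does not go through as written. You propose that the family of eligible antichains $X$ is definable from parameters in $M$, so that elementarity and $t\notin M$ can be leveraged. But for $\xi\ge\delta$ and an arbitrary $p\le q$, the relevant parameters --- $\xi$, $E_p$ (which contains $\delta$), and $A_p$ (which contains $t$ and possibly other nodes of height $\ge\delta$) --- do not lie in $M$, so the family in question is not in $M$ and the reflection argument breaks down. The fixed model $M$ was chosen to make $q$ generic, not to reflect arbitrary extensions of $q$; once you have genericity, use it rather than re-prove it.
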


\begin{proof}
Since it is forced that $\dot A$ is a directed union of antichains, it is forced to be an antichain.
Similarly, it is forced that $\check f \restriction \Delta(\dot A)$ is one-to-one.
Let $M$ be a countable elementary submodel of a sufficiently
large $H_\theta$ such that $T,f,\varsigma \in M$
and let $t \in T \setminus M$.
Since $q = (\{M \cap \omega_1\},\emptyset,\{t\})$, 
$q$ is $(M,Q)$-generic by claim~\ref{RK_minimal_proper}.
Because $q$ forces $M[\dot G]$ is elementary in
$H(\theta)[G]$,
it follows that $q$ forces $\dot A$ and $\dot E$ are uncountable.
\end{proof}

To finish the proof of theorem~\ref{UT_RK-min}, let $q$ force that $\dot A$ is uncountable
and $D_\xi$ consist of those extensions $p$ of $q$ such that $A_p$ contains an element of height at least $\xi$.
By claim~\ref{RK_minimal_density}, each $D_\xi$ is dense below $q$.
By $\PFA(\omega_1)$,
there is a filter $G$ which intersects $D_\xi$ for each $\xi \in \omega_1$.
If $A = \bigcup_{p \in G} A_p$, then $\Delta(A)$ is in $\Ucal(T)$ and $f$ is one-to-one on $\Delta(A)$.
\end{proof}

\subsection{\texorpdfstring{$\Ucal(T)$}{U(T)} can extend the club filter}
Our next goal is to prove the following theorem from which theorem~\ref{UT_extends_club} follows.
We will often need to refer to $\Ucal(T)$ in generic extension for a given $T$.
In all cases, $\Ucal(T)$ will be interpreted in the generic extension and we add a ``dot''
to emphasize this.
Thus $\dot \Ucal(\check T)$ is the name for the filter $\Ucal(T)$ computed in the generic extension for a coherent tree $T$ from the ground model.

\begin{theorem} \label{th:ltree-c.c.c.-extend-club}
    There is a c.c.c. poset which forces
    $\MA_{\omega_1}$ and ``there is a coherent A-tree $T$ such that $\dot \Ucal(\check T)$ extends the club filter.''
\end{theorem}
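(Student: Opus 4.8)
The plan is to run a finite-support iteration $\langle \mathbb{P}_\alpha, \dot{\mathbb{Q}}_\alpha \mid \alpha<\omega_2\rangle$ of c.c.c. posets over a model of $\mathrm{CH}$, with the usual bookkeeping used to force $\mathrm{MA}_{\omega_1}$, but interleaving two extra kinds of steps. Fix in the ground model a coherent A-tree $T\subseteq\omega^{<\omega_1}$ (such $T$ exist in $\mathrm{ZFC}$, e.g. the tree of finite modifications of a coherent sequence of finite-to-one functions). At stage $0$ we force with Baumgartner's poset of finite approximations to a specializing function for $T$; this is c.c.c. since $T$ is Aronszajn, and it makes $T$ special, a property preserved by the c.c.c. tail of the iteration. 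Consequently $T$ remains a coherent A-tree at every stage and in the final extension $V[G]$: coherence is preserved since no new nodes are added to $T$, and no cofinal branch can appear since $T$ stays special. At cofinally many stages the bookkeeping hands us a $\mathbb{P}_\alpha$-name $\dot C$ for a club subset of $\omega_1$, and we set $\dot{\mathbb{Q}}_\alpha$ to be the poset
\[
\mathbb{P}_{T,C}:=\bigl(\{A\mid A\subseteq T\text{ a finite antichain with }\Delta(A)\subseteq C\},\ \supseteq\bigr),
\]
ordered so that stronger conditions are larger antichains. Since $|\mathbb{P}_{T,C}|\le\aleph_1$, these steps are themselves among the posets the $\mathrm{MA}_{\omega_1}$-bookkeeping must handle; we merely insist that \emph{every} club name be treated.

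Granting that each $\mathbb{P}_{T,C}$ is (forced to be) c.c.c., the iteration is c.c.c. of size $\aleph_2$ and, by the standard argument, $\mathbb{P}_{\omega_2}$ forces $\mathrm{MA}_{\omega_1}$. In $V[G]$ the class of c.c.c. posets is then productive, so by Todorcevic's theorem~\cite{l-maps} the tree $T$ is a coherent A-tree and $\mathcal{U}(T)$ is a uniform ultrafilter. It remains to see that every club $C\subseteq\omega_1$ of $V[G]$ lies in $\mathcal{U}(T)$: such a $C$ has a $\mathbb{P}_\alpha$-name for some $\alpha$, and by the bookkeeping $\dot{\mathbb{Q}}_\beta=\mathbb{P}_{T,C}$ for some $\beta\ge\alpha$. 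Using that $\mathbb{P}_{T,C}$ is c.c.c. and that the sets $\{A\in\mathbb{P}_{T,C}\mid A$ contains a node of height $>\xi\}$ are dense for each $\xi<\omega_1$, one obtains (from the generic of this single step, or from $\mathrm{MA}_{\omega_1}$ applied in the tail) an uncountable antichain $A_C\subseteq T$ with $\Delta(A_C)\subseteq C$. Then $C\supseteq\Delta(A_C)$ witnesses $C\in\mathcal{U}(T)$, computed in $V[G]$. Hence $\mathcal{U}(T)$ extends the club filter, and Theorem~\ref{UT_extends_club} follows.

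The heart of the argument, and the main obstacle, is the pair of claims that $\mathbb{P}_{T,C}$ is c.c.c. and that it adds an uncountable antichain. Both rest on a delicate interplay between the coherence of $T$, its specialness, and the closedness of $C$: given uncountably many conditions one passes to a $\Delta$-system, thins so that the ``tails'' occupy disjoint height-intervals above a common root, uses coherence to align the tails on a common final interval past some countable ordinal $\bar\delta$, uses the specializing function to select two tails that are genuinely incomparable in $T$ and to locate their splitting ordinal, and uses that $C$ is closed to pull that ordinal into $C$ — the very same ingredients that drive the properness argument in the lemma of Section~\ref{Subsec: U(T)Tukeytop}, but now recast for the countable chain condition. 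The crucial subtlety is that this \emph{cannot} be carried out for an arbitrary coherent A-tree and an arbitrary club: since $\mathrm{PFA}(\omega_1)$ implies $\mathcal{U}(T)$ is not RK-isomorphic to an ultrafilter extending the club filter~\cite{l-maps}, under $\mathrm{PFA}(\omega_1)$ the filter $\mathcal{U}(T)$ does not extend the club filter, so for some coherent A-tree $T$ and some club $C$ the poset $\mathbb{P}_{T,C}$ must fail to be c.c.c. or fail to add an uncountable antichain. Accordingly, $T$ may not be left arbitrary: one must choose the underlying coherent sequence with care — sufficiently generically over the ground model, or by a $\diamondsuit$-driven construction that anticipates the club names produced along the iteration — so that the $\Delta$-system/pressing-down verification of c.c.c. and of density succeeds for every club arising in the iteration. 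Making this choice and pushing the verification through is where the real work lies; the remainder is routine finite-support iteration bookkeeping together with the already-established facts about $\mathcal{U}(T)$.
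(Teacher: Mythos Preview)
Your outline has the right architecture---force with the posets $Q_{T,C}=\mathbb{P}_{T,C}$ of finite antichains $A\subseteq T$ with $\Delta(A)\subseteq C$, and interleave with an $\mathrm{MA}_{\omega_1}$ iteration---and you correctly diagnose that the entire difficulty lies in establishing that each $Q_{T,C}$ is c.c.c. and adds an uncountable antichain. But you explicitly leave this unproved, saying only that $T$ must be chosen ``sufficiently generically'' or via $\diamondsuit$ and that ``pushing the verification through is where the real work lies.'' That is the gap: the proposal is a reduction to the main lemma, not a proof of it.

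The paper's missing idea is concrete and rather different from the $\Delta$-system/pressing-down adaptation you sketch. One starts with a coherent \emph{Souslin} tree $T$ (obtained by adding a single Cohen real). The key step is a ``gentle specialization'' lemma: the standard Baumgartner specializing poset for $T$ not only specializes $T$ but forces that every ground-model uncountable $X\subseteq\omega_1$ lies in $\Ucal(T)^+$. The Souslin property is used here in an essential way---given an uncountable family of conditions together with witnesses $t_\xi$, the downward closure of $\{t_\xi\}$ contains a cone in $T$, which is what lets one find compatible pairs with $\Delta(t_\xi,t_{\xi'})\in X$. Combined with Todorcevic's characterization that $Q_X$ is c.c.c. iff $X\in\Ucal(T)^+$ (and its product version), this yields at once that the finite-support product of $Q_C^{<\omega}$ over all ground-model clubs $C$ is c.c.c. and forces each such $C$ into $\Ucal(T)$. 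The paper then simply forces $\mathrm{MA}_{\omega_1}$ on top: since every club in a c.c.c. extension contains a ground-model club, no bookkeeping over club names along the iteration is needed at all. So your hint ``sufficiently generically'' was pointing in the right direction, but the operative property is \emph{Souslin}, and the mechanism through which it acts is the gentle-specialization lemma rather than an adaptation of the properness argument from Section~\ref{Subsec: U(T)Tukeytop}.
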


\begin{lemma} \label{gentle_specialization}
    Suppose that $T$ is a coherent Souslin tree such that every element has at least two immediate successors.
    There is a c.c.c. poset which forces ``$\check T$ is special'' and,
    for all uncountable $X \subseteq \omega_1$, forces ``$\check X \in \dot \Ucal(\check T)^+$.'' 
\end{lemma}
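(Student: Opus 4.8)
Since $\mathbb P$ will be c.c.c. we have $\omega_1^{V}=\omega_1^{V[\mathbb P]}$, and for a ground-model uncountable $X\subseteq\omega_1$ the assertion ``$\check X\in\dot\Ucal(\check T)^{+}$'' says precisely that no uncountable antichain $A\subseteq T$ in the extension has $\Delta(A)\cap X=\emptyset$. A counting argument (using that a coherent tree has countable levels) shows that $\Delta(A)$ is uncountable for every uncountable antichain $A$, so the lemma amounts to: \emph{there is a c.c.c. poset forcing that $T$ is special and that every uncountable antichain $A\subseteq T$ has $\Delta(A)$ meeting every ground-model uncountable subset of $\omega_1$.} It is worth noting why this resists a soft reflection argument: a ground-model uncountable $X$ need not be stationary --- it may consist entirely of successor ordinals --- so there is no countable elementary submodel $M$ with $M\cap\omega_1\in X$, and the constraint imposed by each such $X$ has to be hard-wired into the conditions.

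\textbf{The poset.} I would take $\mathbb P$ to be a finite-support product $\prod_{X}\mathbb P_{X}$, where $X$ ranges over the ground-model uncountable subsets of $\omega_1$ and $\mathbb P_{X}$ ``strongly specializes $T$ along $X$''. A condition of $\mathbb P_{X}$ is a finite partial function $g\colon T\to\omega$ each of whose color classes $g^{-1}(n)$ is an antichain satisfying $\Delta(s,t)\in X$ for all distinct $s,t\in g^{-1}(n)$, together with a bounded amount of ``promise'' bookkeeping constraining which pairs future extensions may place into a common class; $\mathbb P_{X}$ is ordered by reverse inclusion. The payoff is clean: if $T=\bigcup_{n}T_{n}$ is the generic strong specialization along $X$, then an uncountable antichain $A$ with $\Delta(A)\cap X=\emptyset$ would meet each $T_{n}$ in at most one point (two distinct members would have to split both inside and outside $X$), hence be countable, a contradiction; so $\mathbb P$ forces ``$\check X\in\dot\Ucal(\check T)^{+}$''. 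Since any one of the $g$'s already witnesses specialty, $\mathbb P$ also forces that $T$ is special.

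\textbf{Verification.} Three things need checking. First, each finite subproduct $\mathbb P_{X_{1}}\times\dots\times\mathbb P_{X_{k}}$ is c.c.c., and hence the finite-support product $\mathbb P$ is c.c.c. Second, for each $X$ and each $\xi<\omega_1$ the set of conditions whose $X$-coordinate colors $\xi$ --- and, more generally, colors every node of $T$ --- is dense, so in $V[\mathbb P]$ each $g^{X}$ is a total $\omega$-coloring and $T$ is special. Third, by the argument of the previous paragraph, $\mathbb P$ forces that no uncountable antichain of $T$ has $\Delta$-set disjoint from any ground-model uncountable $X$. The c.c.c. claim is run by the standard thinning-and-amalgamation scheme: given an uncountable family of conditions, pass to a $\Delta$-system in each coordinate, thin further using that $T$ is Aronszajn --- it has no uncountable branch --- so that nodes occurring in distinct conditions, above the roots, are pairwise incomparable and the combinatorial types agree, and then amalgamate. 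The role of coherence --- any two nodes of the same height agree modulo a finite set --- is to make the splitting levels of the newly juxtaposed nodes predictable, and, using that $X$ is uncountable and hence unbounded in $\omega_1$, to arrange during the thinning that those splitting levels already lie in $X$, so that the amalgamated condition still meets the color-class requirement.

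\textbf{Where the difficulty lies.} The crux is the c.c.c. amalgamation: two conditions must be merged while keeping each color class an antichain \emph{and} keeping every newly created same-color pair splitting inside the relevant $X$, for arbitrary uncountable $X\subseteq\omega_1$. Plain specialization is c.c.c. by a routine argument, but the extra demand ``$\Delta(s,t)\in X$ within colors'' is exactly where coherence of $T$ is indispensable: in a general Aronszajn tree the juxtaposed nodes could split at an arbitrary level and one could not force the split into $X$, whereas coherence pins the split down and the finite promise data is set up so that the pinned-down value is captured by $X$. Verifying that this works uniformly across all finite subproducts (so the full product remains c.c.c.) and that the promise bookkeeping stays consistent under amalgamation is the delicate part. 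The hypotheses that $T$ is Souslin and that every node has at least two immediate successors are used to ensure a clean starting point (no uncountable antichains in $V$) and that there is always room to split a prescribed node within any prescribed uncountable set of levels, which keeps the dense sets of the second step genuinely dense.
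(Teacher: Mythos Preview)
Your product poset is not c.c.c., and the gap is not repairable by ``promise bookkeeping.'' Take disjoint uncountable sets $X,X'\subseteq\omega_1$, fix $s_\xi\in T$ of height $\xi$ for each $\xi<\omega_1$, and consider the conditions
\[
p_\xi=\bigl(\{(s_\xi,0)\},\{(s_\xi,0)\}\bigr)\in\mathbb P_X\times\mathbb P_{X'}.
\]
Each $p_\xi$ is a condition (the color classes are singletons, so the $\Delta$-constraint is vacuous). If $p_\xi$ and $p_\eta$ were compatible for $\xi\ne\eta$, any common extension would have $s_\xi,s_\eta$ in color class $0$ in \emph{both} coordinates, forcing $\Delta(s_\xi,s_\eta)\in X\cap X'=\emptyset$. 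So $\{p_\xi:\xi<\omega_1\}$ is an uncountable antichain. No side-information attached to the conditions can rescue this: the incompatibility is forced by the partial functions themselves, and promises only add constraints. Your thinning sketch also hides a second difficulty: you invoke ``$T$ is Aronszajn --- it has no uncountable branch --- so that nodes \ldots\ are pairwise incomparable,'' but $T$ is \emph{Souslin}, so you cannot thin $\omega_1$ nodes to an antichain at all. The coherence insight you cite (that all ``new'' $\Delta$-values collapse to a single ordinal) is correct and is exactly Lemma~\ref{new_Delta}, but it makes the obstruction sharper rather than dissolving it: that single ordinal would have to land in $\bigcap_i X_i$.

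The paper's route avoids the product entirely. It takes the \emph{ordinary} specializing poset $\Abb$ (the finite-support countable power of finite antichains of $T$) and shows that this single poset already forces $\check X\in\dot\Ucal(\check T)^+$ for every ground-model uncountable $X$. The point is that the property ``$\Delta(A)$ meets $X$'' need not be built into the conditions; it follows \emph{a posteriori} from Souslinness of $T$ in $V$. Given a name $\dot A$ for an uncountable subset of $T$, choose $p_\xi\Vdash \check t_\xi\in\dot A$ with $\Ht(t_\xi)\ge\xi$, and thin by pressing down so that compatibility of $p_\xi,p_{\xi'}$ reduces to incomparability of $t_\xi\restriction\xi$ and $t_{\xi'}\restriction\xi'$. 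Because $T$ is Souslin in $V$, the downward closure of $\{t_\xi\}$ contains a cone; together with ``every node has two immediate successors'' this lets you pick $\xi,\xi'$ with $\Delta(t_\xi,t_{\xi'})\in X$, and the common extension of $p_\xi,p_{\xi'}$ forces $\Delta(\dot A)\cap\check X\ne\emptyset$. The moral: use Souslinness in the ground model to \emph{locate} the split in $X$, rather than trying to force it there.
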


\begin{remark}
    In particular,
    since every club subset of $\omega_1$ in a c.c.c. forcing extension contains a ground model club, 
    the poset in lemma~\ref{gentle_specialization} forces
    $\dot \Ucal(\check T) \cap NS_{\omega_1} = \emptyset$.
\end{remark}
\begin{proof}
    Let $T$ be given and let $\Abb$ be the finite-support countable power of the poset of all finite antichains
    of $T$---this is the standard c.c.c. poset to specialize $T$.
    Thus it suffices to show that if $X \subseteq \omega_1$ is uncountable,
    $\Abb$ forces $\check X \in \dot \Ucal(\check T)^+$.
    Toward this end, let $X \subseteq \omega_1$ be given and let $\dot A$ be an $\Abb$-name such that $p \in \Abb$
    forces $\dot A$ is uncountable subset of $\check T$.
    For each $\xi \in \omega_1$, let $p_\xi$ be an extension of $p$ and $t_\xi$ be an element of $T$ of height at least $\xi$ such 
    that $p_\xi \Vdash \check t_\xi \in \dot A$.
    For each limit ordinal $\xi$, let $r(\xi) < \xi$ be such that:
    \begin{itemize}
        \item if $i \in \dom(p_\xi)$ and $s \in p_\xi(i)$ has height less than $\xi$, it has height less than $r(\xi)$;
        \item if $i \in \dom(p_\xi)$ and $s \in p_\xi(i)$ has height at least $\xi$, then $s(\eta) = t_\xi(\eta)$ whenever
        $r(\xi) \leq \eta < \xi$.
    \end{itemize}
    By the pressing-down lemma there is a stationary set $\Xi \subseteq \omega_1$ such that $r$ is constantly $\zeta$ on $\Xi$.
    By further refining $\Xi$ if necessary, we may assume that $n:=\dom(p_\xi)$ does not depend on $\xi$ and for each $i < n$,
    the set of elements of $p_\xi(i)$ of height less than $\zeta$ does not depend on $\xi$.
    Observe that if $t_\xi \restriction \xi$ is incompatible with $t_{\xi'} \restriction \xi'$, then $p_\xi(i) \cup p_{\xi'}(i)$
    is an antichain for all $i < n$ and hence $p_\xi$ is compatible with $p_{\xi'}$.
    Since $T$ is Souslin, the downward closure of $\{t_\xi \mid \xi \in \Xi\}$ contains a cone in $T$.
    In particular there are $\xi \ne \xi'$ in $\Xi$ such that $t_\xi$ and $t_{\xi'}$ are incompatible and
    $\Delta(t_\xi,t_{\xi'})$ is in $X$.
    It follows that $p_\xi$ and $p_{\xi'}$ are compatible and any common extension forces $\Delta(t_\xi,t_{\xi'}) \in \Delta(\dot A) \cap \check X$.
\end{proof}

\begin{definition}
Let $T$ be a coherent special A-tree.
For $X \subseteq \omega_1$, $Q_{T,X}$ is the poset consisting of finite antichains $q \subseteq T$ such that $\Delta(q) \subseteq X$, ordered by reverse inclusion.
If $T$ is clear from context, we will write $Q_X$.
\end{definition}

\begin{lemma}[{\cite[rmk. 4.3]{l-maps}}] \label{lem:atree-c.c.c.-coideal}
Let $T$ be a coherent special A-tree.
For $X \subseteq \omega_1$, $Q_X$ is c.c.c. if and only if $X \in \Ucal(T)^+$.
\end{lemma}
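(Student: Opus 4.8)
The plan is to prove the two implications separately; the direction ``$X$ not positive $\Rightarrow$ $Q_X$ not c.c.c.'' is easy, and the content is in the converse.

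First suppose $X\notin\Ucal(T)^+$, so that $\omega_1\setminus X\in\Ucal(T)$; fix $A\in[T]^{\omega_1}$ with $\Delta(A)\subseteq\omega_1\setminus X$, and, since $T$ is a coherent A-tree (hence has countable levels and no uncountable branch), pass to an uncountable antichain inside $A$. For each $a\in A$ the singleton $\{a\}$ lies in $Q_X$ because $\Delta(\{a\})=\emptyset\subseteq X$; and for distinct $a,b\in A$ the set $\{a\}\cup\{b\}=\{a,b\}$ is an antichain with $\Delta(\{a,b\})=\{\Delta(a,b)\}\subseteq\omega_1\setminus X$, so it is not a condition. Thus $\{\{a\}\mid a\in A\}$ is an uncountable antichain in $Q_X$.

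For the converse assume $X\in\Ucal(T)^+$, fix $\mathcal A=\{q_\alpha\mid\alpha<\omega_1\}\subseteq Q_X$ and a specialization $\varsigma\colon T\to\omega$ with each $\varsigma^{-1}(n)$ an antichain; I want a compatible pair. First I would run the standard thinning: refine $\mathcal A$ to an uncountable subfamily on which the $q_\alpha$ form a $\Delta$-system with root $r$, write $q_\alpha=r\cup a_\alpha$ with $a_\alpha=\{s^\alpha_0,\dots,s^\alpha_{k-1}\}$ pairwise disjoint, and arrange (using countability of the levels of $T$ and the pressing-down lemma) that there is a countable $\delta^*$ with $r\subseteq T\restriction\delta^*$, each $a_\alpha$ above $\delta^*$, all heights of $a_\alpha$ below all heights of $a_\beta$ for $\alpha<\beta$, and that $\varsigma(s^\alpha_i)=:n_i$, $s^\alpha_i\restriction\delta^*=:u_i$, and the pattern of values $\Delta(s^\alpha_i,s^\alpha_j)$ below $\delta^*$ are independent of $\alpha$; after one further thinning I may also take $\delta^*$ to lie above all \emph{bounded} values $\Delta(s^\alpha_i,s^\alpha_j)$, so that the remaining (``late'') such values exceed $\delta^*$, and by a recursion along the index set that the late splitting heights of $a_\beta$ exceed every height occurring in $a_\alpha$ whenever $\alpha<\beta$.

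The heart of the argument is a coherence computation that reduces compatibility to finitely many ordinals. Since $r$ is an antichain incomparable with each $a_\alpha$ and $\Delta(q_\alpha),\Delta(q_\beta)\subseteq X$, $q_\alpha$ is compatible with $q_\beta$ exactly when $a_\alpha\cup a_\beta$ is an antichain and every cross-difference $\Delta(s,t)$ with $s\in a_\alpha$, $t\in a_\beta$ incomparable lies in $X$. Using coherence (if $\Ht(s)<\Ht(t)$ then $t\restriction\Ht(s)=^* s$, so $\Delta(s,t)$ is the least coordinate of a finite set below $\Ht(s)$) together with the thinning, one checks for $\alpha<\beta$: a comparability $s^\alpha_i\subseteq s^\beta_j$ would force $s^\alpha_i\restriction\delta^*=s^\beta_j\restriction\delta^*$ and then, since the late splitting heights of $a_\beta$ exceed $\Ht(s^\alpha_i)$, also $s^\alpha_i\subseteq s^\beta_i$, contradicting $\varsigma(s^\alpha_i)=\varsigma(s^\beta_i)$; and every cross-difference is either a fixed ordinal that already lies in $\Delta(a_\alpha)\subseteq X$ (when $s^\alpha_i\restriction\delta^*\ne s^\beta_j\restriction\delta^*$) or equals $\Delta(s^\alpha_i,s^\beta_i)$ (when $s^\alpha_i\restriction\delta^*=s^\beta_j\restriction\delta^*$ and $i\ne j$). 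Hence $q_\alpha\perp q_\beta$ precisely when $\Delta(s^\alpha_i,s^\beta_i)\notin X$ for some $i<k$. It now suffices to find $\alpha<\beta$ with $\Delta(s^\alpha_i,s^\beta_i)\in X$ for \emph{every} $i<k$ at once, noting that each $A_i:=\{s^\alpha_i\mid\alpha<\omega_1\}\subseteq\varsigma^{-1}(n_i)$ is an uncountable antichain. This last step is where I expect the real obstacle to be and where positivity of $X$ is used: I would try to show that positivity already yields, for each uncountable antichain $A_i$, an uncountable subfamily of indices on which all the differences $\Delta(s^\alpha_i,s^\beta_i)$ land in $X$ (then intersect over $i<k$), proving this either by induction on $k$ or directly by a reflection argument in a countable elementary submodel $M\ni T,X,\varsigma,\mathcal A$: pick $\alpha^*$ with $a_{\alpha^*}$ entirely above $M\cap\omega_1$, and show that if no $q_\beta$ with $\beta\in M$ were compatible with $q_{\alpha^*}$, then unwinding the definitions along the branch determined by $a_{\alpha^*}$ would let $M$ see an uncountable antichain of $T$ on which $\Delta$ avoids $X$, contradicting $X\in\Ucal(T)^+$.
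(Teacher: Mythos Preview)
Your forward direction is fine. The gap is in the converse, at exactly the point you flag as ``where I expect the real obstacle to be.''

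After your thinning you arrive at $k$ cross-differences $\Delta(s^\alpha_i,s^\beta_i)$ ($i<k$) and ask for $\alpha<\beta$ putting all of them in $X$ simultaneously. Your proposed finishing move---for each $i$, find an uncountable $\Xi_i$ on which \emph{all} pairwise differences $\Delta(s^\alpha_i,s^\beta_i)$ lie in $X$, then intersect over $i<k$---asks for strictly more than positivity gives: an uncountable $B\subseteq A_i$ with $\Delta(B)\subseteq X$ is exactly the statement $X\in\Ucal(T)$, not $X\in\Ucal(T)^+$. The elementary-submodel sketch does not repair this without the same missing idea, since ``the branch determined by $a_{\alpha^*}$'' is $k$ branches, not one.

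What is missing is that coherence collapses the $k$ conditions to a single one. With one further thinning---fix $t_\alpha\in T$ of height $\alpha$ and refine so that each $s^\alpha_i$ agrees with $t_\alpha$ on a long interval above $\delta^*$---one obtains
\[
\Delta(s^\alpha_i,s^\beta_i)=\Delta(t_\alpha,t_\beta)\quad\text{for every }i<k.
\]
This is precisely Lemma~\ref{new_Delta}, and it is the device the paper uses in proving the more general Lemma~\ref{liptree-delta-stable} (the present lemma itself is stated with a citation and not reproved). Once the $k$ values coincide, positivity finishes at once: if no pair $\alpha\ne\beta$ in the refined index set had $\Delta(t_\alpha,t_\beta)\in X$, then $\{t_\alpha\}$ would witness $\omega_1\setminus X\in\Ucal(T)$, contradicting $X\in\Ucal(T)^+$.
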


The next lemma is essentially due to Todorcevic; see Lemmas 1.3 and 1.9 of \cite{l-maps} as well as their proofs.

\begin{lemma} \label{new_Delta}
    Suppose $T$ is a special coherent A-tree.
    If $\{q_\xi\mid \xi \in \omega_1\}$ is an uncountable family
    of finite subsets of $T$ and $t_\xi \in T$ has height at least $\xi$, then there is an uncountable
    $\Xi \subseteq \omega_1$ such that for all $\xi \ne \eta \in \Xi$,
    $$\Delta(q_\xi \cup q_\eta) = \Delta(q_\xi) \cup \Delta(q_\eta) \cup \{\Delta(t_\xi,t_\eta)\}.$$
\end{lemma}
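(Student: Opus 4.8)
The plan is to run the standard $\Delta$-system argument for coherent trees, using the pressing-down lemma together with coherence and specialness to reduce everything to a bounded ``root'' plus the branches through the $t_\xi$'s. We may assume $t_\xi\in q_\xi$ for every $\xi$ (this holds in the applications; in general replace $q_\xi$ by $q_\xi\cup\{t_\xi\}$), and, passing to an uncountable subfamily, that each $q_\xi$ is an antichain of some fixed finite size $k$ and that $\varsigma(t_\xi)$ is a fixed value $m$, where $\varsigma\mathrel{:}T\to\omega$ witnesses that $T$ is special. Since $\varsigma^{-1}(m)$ is an antichain and the $t_\xi$ are pairwise distinct (each value of $\xi\mapsto t_\xi$ is attained on a set contained in $\Ht(t_\xi)+1$, hence countable, because $\Ht(t_\xi)\geq\xi$), the $t_\xi$ are pairwise incomparable and $\Delta(t_\xi,t_\eta)$ is well-defined.

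Next, for each limit $\xi$ I would choose $r(\xi)<\xi$ large enough that: every $s\in q_\xi$ with $\Ht(s)<\xi$ has $\Ht(s)<r(\xi)$; every incomparable pair $s,s'\in q_\xi$ with $\Delta(s,s')<\xi$ has $\Delta(s,s')<r(\xi)$; and every $s\in q_\xi$ with $\Ht(s)\geq\xi$ agrees with $t_\xi$ on the interval $[r(\xi),\xi)$ --- the last clause being possible precisely because $T$ is coherent, so $s\restriction\xi$ and $t_\xi\restriction\xi$ differ on a finite, hence bounded, subset of $\xi$. By the pressing-down lemma $r$ is constantly some $\zeta$ on a stationary set $S_0$. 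I would then thin $S_0$ to an uncountable $\Xi$ on which: the set $q^\ast:=\{s\in q_\xi\mid\Ht(s)<\zeta\}$ of ``low'' elements is a fixed finite set (possible since $\{t\in T\mid\Ht(t)<\zeta\}$ is a countable union of levels, each countable by coherence); each of the finitely many ``high'' elements of $q_\xi$ has a fixed restriction to level $\zeta$ (possible since the level $T_\zeta$ is countable, again by coherence); and, by a recursive choice of the elements of $\Xi$, for $\xi<\eta$ in $\Xi$ every element of $q_\xi$ has height strictly below $\eta$.

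It then remains to verify that for $\xi\ne\eta$ in $\Xi$, $\Delta(q_\xi\cup q_\eta)=\Delta(q_\xi)\cup\Delta(q_\eta)\cup\{\Delta(t_\xi,t_\eta)\}$. The inclusion $\supseteq$ is immediate. For $\subseteq$, any incomparable pair lying entirely in $q_\xi$ or in $q_\eta$ contributes an old value, and any pair one of whose members lies in the shared root $q^\ast=q_\xi\cap q_\eta$ is again a pair inside $q_\xi$ or inside $q_\eta$; so the only pairs to analyze are $(s,s')$ with $s$ a high element of $q_\xi$ and $s'$ a high element of $q_\eta$ (say $\xi<\eta$). Here coherence does the work: below $\zeta$ both $s$ and $s'$ have their fixed level-$\zeta$ restrictions; on $[\zeta,\xi)$ (resp. $[\zeta,\eta)$) $s$ agrees with $t_\xi$ and $s'$ agrees with $t_\eta$; $s$ splits off from $t_\xi$ at a level that belongs to $\Delta(q_\xi)$, and $s'$ splits off from $t_\eta$ at a level that belongs to $\Delta(q_\eta)$; and by the height-control $\Ht(s)<\eta\leq\Ht(s')$. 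Comparing these ``profiles'' shows that $\Delta(s,s')$ is either a value already in $\Delta(q_\xi)\cup\Delta(q_\eta)$ or exactly the level at which $t_\xi$ and $t_\eta$ first differ, i.e.\ $\Delta(t_\xi,t_\eta)$. Carrying out this last case analysis cleanly --- in particular ruling out a spurious common splitting level above $\zeta$ among the high elements --- is the main obstacle, and it is exactly the place where the structure theory of coherent, Souslin-subtree-free (``Lipschitz'') trees from~\cite{l-maps} is invoked; if necessary one thins $\Xi$ a little further so that the finitely many levels $\Delta(s,t_\xi)$ occurring within $q_\xi$ do not coincide with the relevant values of $\Delta(t_\xi,t_\eta)$.
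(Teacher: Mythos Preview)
The paper does not prove this lemma; it attributes the result to Todorcevic and refers the reader to Lemmas~1.3 and~1.9 of \cite{l-maps}. Your sketch follows exactly the standard argument behind those lemmas --- pressing down to fix a root $\zeta$, stabilizing the low part and the level-$\zeta$ restrictions of the high part, using coherence to make every high element of $q_\xi$ agree with $t_\xi$ on $[\zeta,\xi)$, and then separating heights so that the only ``new'' splitting level between $q_\xi$ and $q_\eta$ is $\Delta(t_\xi,t_\eta)$. So in outline you are doing precisely what the cited source does, and you correctly isolate the one delicate point.

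That said, your final fix is stated too casually to stand on its own. The bad case is exactly $\alpha:=\Delta(s,t_\xi)=\Delta(t_\xi,t_\eta)$ with $s(\alpha)=t_\eta(\alpha)$; then $\Delta(s,s')$ can land anywhere in $(\alpha,\Ht(s))$, and you propose to ``thin $\Xi$ a little further'' so that $\Delta(t_\xi,t_\eta)$ avoids the finite set $F_\xi=\{\Delta(s,t_\xi):s\in q_\xi\}$. But for a fixed $\xi$ the set of $\eta$ with $\Delta(t_\xi,t_\eta)\in F_\xi$ need not be countable, so a naive recursive thinning does not obviously succeed for \emph{all} pairs $\xi<\eta$ simultaneously. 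What actually makes this work in \cite{l-maps} is a bit more bookkeeping: one stabilizes not just the level-$\zeta$ restrictions but the full finite ``pattern'' of the high part of $q_\xi$ (the order type of the internal $\Delta$-values and the values $s(\Delta(s',t_\xi))$ at those finitely many splitting levels). Once that pattern is fixed across $\Xi$, the coincidence $s(\alpha)=t_\eta(\alpha)$ forces $t_\eta$ to agree with the corresponding element $s^\eta$ of $q_\eta$ at $\alpha$, which pushes $\Delta(s,s')$ into $\Delta(q_\xi)\cup\Delta(q_\eta)$ after all. Your instinct to defer to \cite{l-maps} here is therefore correct; just be aware that the ``little further thinning'' is a stabilization of the finite splitting pattern, not merely an avoidance of finitely many levels.
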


\begin{lemma} \label{liptree-delta-stable}
Let $T$ be a coherent special A-tree.
For $X_0,\ldots,X_{n-1} \subseteq \omega_1$, $\prod_{i < n} Q_{X_i}$ is c.c.c. if and only if $\bigcap_{i < n} X_i \in \Ucal(T)^+$.
\end{lemma}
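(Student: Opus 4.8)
The plan is to prove the two implications separately: $(\Rightarrow)$ reduces quickly to Lemma~\ref{lem:atree-c.c.c.-coideal}, while $(\Leftarrow)$ rests on the $\Delta$-system machinery behind Lemma~\ref{new_Delta}. For $(\Rightarrow)$ I would argue contrapositively. Put $Y:=\bigcap_{i<n}X_i$ and consider the diagonal map $d\mathrel{:}Q_Y\to\prod_{i<n}Q_{X_i}$, $d(q):=(q,\dots,q)$; this is well defined since $\Delta(q)\subseteq Y\subseteq X_i$ for each $i$, it is monotone, and unwinding the definitions one checks that $d(q)$ and $d(q')$ are incompatible in the product exactly when $q\cup q'$ fails to be a finite antichain with $\Delta(q\cup q')\subseteq Y$, i.e.\ exactly when $q\perp q'$ in $Q_Y$. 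Hence $d$ carries antichains to antichains, so $Q_Y$ not c.c.c.\ implies $\prod_{i<n}Q_{X_i}$ not c.c.c.; and $Q_Y$ fails to be c.c.c.\ whenever $Y\notin\Ucal(T)^+$ by Lemma~\ref{lem:atree-c.c.c.-coideal}. This is the contrapositive of $(\Rightarrow)$.

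For $(\Leftarrow)$, assume $Y:=\bigcap_{i<n}X_i\in\Ucal(T)^+$ and take an uncountable family $\{p_\xi\mid\xi<\omega_1\}$ of conditions in $\prod_{i<n}Q_{X_i}$; the goal is to find $\xi\neq\eta$ with $p_\xi,p_\eta$ compatible. Write $q_\xi:=\bigcup_{i<n}p_\xi(i)$ and $h(\xi):=\sup\{\Ht(s)+1\mid s\in q_\xi\}$. If $\{\xi\mid h(\xi)\geq\xi\}$ is countable, then $h$ is regressive on a club, so by Fodor it is bounded on an uncountable set, forcing the corresponding $q_\xi$ into a fixed countable initial part of $T$; then uncountably many of the $p_\xi$ coincide and we are done. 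Otherwise, pigeon-holing the coordinate and passing to an uncountable subfamily reindexed by $\omega_1$, I may fix $i_0<n$ and choose $t_\xi\in p_\xi(i_0)$ with $\Ht(t_\xi)\geq\xi$. Now I would run the standard $\Delta$-system refinement for finite subsets of a tree with countable levels: pass to an uncountable $\Xi$ and fix $\nu^*<\omega_1$ so that, setting $R_i:=p_\xi(i)\cap T\restriction\nu^*$ (constant on $\Xi$) and $R:=\bigcup_{i<n}R_i$, every element of $p_\xi(i)\setminus R_i$ has height $\geq\nu^*$ and, in addition, (a) the $\nu^*$-restrictions of non-root elements depend only on their positions in fixed enumerations of the $p_\xi(i)\setminus R_i$; (b) non-root elements lie in fixed fibres of $\varsigma$, with $t_\xi$ in a fixed position, so that $\{t_\xi\mid\xi\in\Xi\}$ is an antichain; (c) heights are separated across $\Xi$: if $\xi<\eta$ then every height occurring in $q_\xi$ lies below every height occurring in $q_\eta\setminus R$ (arrangeable since each non-root element appears in only one $q_\xi$ and $T$ has countable levels). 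Finally I would apply Lemma~\ref{new_Delta} to $\{q_\xi\mid\xi\in\Xi\}$ and $\{t_\xi\mid\xi\in\Xi\}$ to thin $\Xi$ further so that $\Delta(q_\xi\cup q_\eta)=\Delta(q_\xi)\cup\Delta(q_\eta)\cup\{\Delta(t_\xi,t_\eta)\}$ for all $\xi\neq\eta\in\Xi$.

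Granting (see the last paragraph) that the refinement can also be arranged so that $p_\xi(i)\cup p_\eta(i)$ is an antichain for all $\xi\neq\eta\in\Xi$ and $i<n$, the conclusion would run as follows. Fix $\xi\neq\eta\in\Xi$, $i<n$, and an incomparable ``cross'' pair $s\in p_\xi(i)$, $s'\in p_\eta(i)$. If $\Delta(s,s')<\nu^*$, then $\Delta(s,s')=\Delta(s\restriction\nu^*,s'\restriction\nu^*)=\Delta(\tilde s,\tilde s')$ for suitable distinct incomparable $\tilde s,\tilde s'\in p_\xi(i)$ sharing these $\nu^*$-restrictions (by (a)), so $\Delta(s,s')\in\Delta(p_\xi(i))\subseteq X_i$. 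If $\Delta(s,s')\geq\nu^*$, then both $s,s'$ have height $>\nu^*$, hence $s\in q_\xi\setminus R$ and $s'\in q_\eta\setminus R$, and Lemma~\ref{new_Delta} gives $\Delta(s,s')=\Delta(t_\xi,t_\eta)$. Thus $\Delta(p_\xi(i)\cup p_\eta(i))\subseteq X_i\cup\{\Delta(t_\xi,t_\eta)\}$ for every $i$. Now $\{t_\xi\mid\xi\in\Xi\}$ is an uncountable antichain in $T$, so $\Delta(\{t_\xi\mid\xi\in\Xi\})\in\Ucal(T)$; since $\Ucal(T)$ is a filter and $Y\in\Ucal(T)^+$, this set meets $Y$, so there are $\xi\neq\eta\in\Xi$ with $\Delta(t_\xi,t_\eta)\in Y=\bigcap_{i<n}X_i$. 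For these $\xi,\eta$ we get $\Delta(p_\xi(i)\cup p_\eta(i))\subseteq X_i$ for all $i$, and since each $p_\xi(i)\cup p_\eta(i)$ is an antichain, the coordinatewise union $p_\xi\cup p_\eta$ is a common extension of $p_\xi$ and $p_\eta$; hence $\{p_\xi\}$ is not an antichain, and $\prod_{i<n}Q_{X_i}$ is c.c.c.

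The main obstacle is the claim just granted: that the $\Delta$-system refinement can be set up so that $p_\xi(i)\cup p_\eta(i)$ is an antichain for all $\xi\neq\eta\in\Xi$ and $i<n$. Cross pairs involving a root element are incomparable because the root is common to the two antichains $p_\xi(i)$ and $p_\eta(i)$; same-position cross pairs are incomparable because they lie in a common $\varsigma$-fibre; the genuinely delicate case is that of two non-root elements in distinct positions, and excluding a comparability there is exactly where the coherence (Lipschitz property) of $T$ enters. This is part of the $\Delta$-system technology underlying Lemma~\ref{new_Delta} (cf.\ the proofs of Lemmas~1.3 and~1.9 in~\cite{l-maps}), and I would fold it into the refinement rather than reprove it. The case $n=1$ is precisely Lemma~\ref{lem:atree-c.c.c.-coideal}, and one could alternatively organise the argument as an induction on $n$, but the direct route above seems cleanest.
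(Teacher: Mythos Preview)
Your proof is correct, but both directions differ from the paper's and the comparison is instructive.

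For $(\Rightarrow)$ your diagonal embedding $d:Q_Y\to\prod_iQ_{X_i}$ is cleaner than what the paper does. The paper argues instead via forcing: it observes that some condition forces $\dot A:=\{t\in T\mid(\{t\},\dots,\{t\})\in\dot G\}$ to be uncountable, that every condition forces $\Delta(\dot A)\subseteq\bigcap_i\check X_i$, and then appeals to upward absoluteness of membership in $\Ucal(T)$ together with $\Ucal(T)$ remaining a filter in the extension. Your route avoids the absoluteness discussion entirely and reduces directly to Lemma~\ref{lem:atree-c.c.c.-coideal}.

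For $(\Leftarrow)$ the paper's argument is noticeably shorter. Rather than amalgamating the coordinates into a single $q_\xi=\bigcup_i p_\xi(i)$ and extracting $t_\xi$ from inside the conditions, the paper simply fixes an \emph{external} $t_\xi\in T$ of height $\xi$ and applies Lemma~\ref{new_Delta} $n$ times---once to each coordinate family $\{p_\xi(i)\mid\xi<\omega_1\}$ with the same sequence $\langle t_\xi\rangle$---thinning $\Xi$ at each step. This gives directly $\Delta(p_\xi(i)\cup p_\eta(i))\subseteq\Delta(p_\xi(i))\cup\Delta(p_\eta(i))\cup\{\Delta(t_\xi,t_\eta)\}$ for every $i$ simultaneously, and then one only needs $\Delta(t_\xi,t_\eta)\in\bigcap_iX_i$ for some $\xi\ne\eta$, which follows from $\bigcap_iX_i\in\Ucal(T)^+$. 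All of your preliminary refinement (the Fodor argument, pigeonholing $i_0$, the root $R$, the height separation~(c), and the case analysis on whether $\Delta(s,s')<\nu^*$) is thereby absorbed into the $n$ black-box invocations of Lemma~\ref{new_Delta}. The ``main obstacle'' you flag---that cross pairs in $p_\xi(i)\cup p_\eta(i)$ are incomparable---is handled in the paper exactly as you suggest, namely as part of the content of Lemma~\ref{new_Delta} and the lemmas from~\cite{l-maps} it summarizes. So your approach and the paper's rest on the same machinery; the paper just applies it coordinatewise rather than to the union.
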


\begin{proof}
    Assume first that $\prod_{i < n} Q_{X_i}$ is c.c.c..
    It follows that there is a $q \in \prod_{i < n} Q_{X_i}$ which forces that
    $\dot A:=\{t \in T \mid (\{t\},\ldots,\{t\}) \in \dot G\}$ is uncountable.
    Since every condition forces $\Delta(\dot A) \subseteq \bigcap_{i < n} \check X_i$, it follows that $q$ forces
    $\bigcap_{i < n} \check X_i \in \dot \Ucal(\check T)$.
    Because membership in $\Ucal(T)$ is upwards absolute and since $\Ucal(T)$ is a filter in all generic extensions in which $T$
    remains uncountable, it follows that $\omega_1 \setminus \bigcap_{i < n} X_i \notin \Ucal(T)$ or, equivalently,
    that $\bigcap_{i < n} X_i \in \Ucal(T)^+$.
    
    Now consider the converse and assume $\bigcap_{i < n} X_i \in \Ucal(T)^+$.
    For each $\xi \in \omega_1$, fix $t_\xi \in T$ of height $\xi$.    
    Consider a collection $\{ (q^0_\xi, \ldots, q^{n-1}_\xi) \mathrel{:} \xi < \omega_1 \} \subseteq \prod_{i < n} Q_{X_i}$.
    By $n$ applications of lemma~\ref{new_Delta}, there is an uncountable $\Xi \subseteq \omega_1$ 
    such that for all $\xi \ne \eta$ in $\Xi$ and $i < n$,
    $$\Delta(q^i_\xi \cup q^i_\eta) \subseteq \Delta(q^i_\xi) \cup \Delta(q^i_\eta) \cup \{\Delta(t_\xi,t_\eta)\}.$$
    Since $\bigcap_{i < n} X_i \in \Ucal(T)^+$, there are $\xi \ne \eta$ in $\Xi$ such that
    $\Delta(t_\xi,t_\eta) \in \bigcap_{i < n} X_i$.
    Because $\Delta(q^i_\xi) \cup \Delta(q^i_\eta) \subseteq X_i$ by virtue of $q^i_\xi,q^i_\eta \in Q_{X_i}$,
    it follows that
    $$
    \Delta(q^i_\xi \cup q^i_\eta) \subseteq \Delta(q^i_\xi) \cup \Delta(q^i_\eta) \cup \{\Delta(t_\xi,t_\eta)\} \subseteq X_i.
    $$
    Thus $(q^0_\xi \cup q^0_\eta,\ldots, q^{n-1}_\xi \cup q^{n-1}_\eta)$ is a common extension of
    $(q^0_\xi,\ldots,q^{n-1}_\xi)$ and $(q^0_\eta,\ldots,q^{n-1}_\eta)$.
    Therefore $\prod_{i< n}Q_{X_i}$ is c.c.c..
\end{proof}

\begin{proposition} \label{Fcal->U(T)}
    Suppose that $T$ is a coherent Souslin tree such that every element has at least two immediate successors
    and $\Fcal$ is a filter on $\omega_1$ containing all cobounded sets.
    There is a c.c.c. poset $\Qbb$ which forces $\check \Fcal \subseteq \dot \Ucal(\check T)$.
\end{proposition}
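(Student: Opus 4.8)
The plan is to force with $\Qbb := \Abb * \dot{\mathbb R}$, where $\Abb$ is the ``gentle specialization'' poset of lemma~\ref{gentle_specialization} applied to $T$, and $\dot{\mathbb R}$ names the finite-support product $\prod_{X \in \Fcal} Q_{T,X}$ computed in $V^{\Abb}$ (with $Q_{T,X}$ as in the definition preceding lemma~\ref{lem:atree-c.c.c.-coideal}). By lemma~\ref{gentle_specialization}, $\Abb$ is c.c.c.\ and forces that $T$ is a coherent \emph{special} A-tree and that every uncountable $X \subseteq \omega_1$ from $V$ lies in $\Ucal(T)^+$. Since $\Fcal$ is a filter containing all cobounded sets, each member of $\Fcal$, and each finite intersection $\bigcap R$ of members of $\Fcal$, is unbounded in $\omega_1$ and hence uncountable; so $\Abb$ forces $\bigcap R \in \Ucal(T)^+$ for every finite $R \subseteq \Fcal$. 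Once $\mathbb R$ is shown to be c.c.c.\ in $V^{\Abb}$, $\Qbb$ is c.c.c.\ as a two-step iteration of c.c.c.\ posets.

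The crux, and the main obstacle, is the c.c.c.\ of $\mathbb R$ in $V^{\Abb}$. The plan is to argue as in lemma~\ref{liptree-delta-stable}, inserting one extra layer of thinning to handle the infinitely many coordinates. Given $\{p_\zeta \mid \zeta < \omega_1\} \subseteq \mathbb R$, first apply the $\Delta$-system lemma for finite sets to the supports to obtain an uncountable $W$ on which $\{\supp(p_\zeta)\mid \zeta \in W\}$ is a $\Delta$-system with finite root $R = \{X_1,\dots,X_m\}$; then thin $W$ so that for each $j \leq m$ the map $\zeta \mapsto p_\zeta(X_j)$ is either constant or injective (possible since $T$ has size $\aleph_1$), and let $J$ collect the $j$ for which it is injective. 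Fix pairwise distinct $t_\zeta \in T$ with $\Ht(t_\zeta) \geq \zeta$, and thin $W$ further so that $\{t_\zeta \mid \zeta \in W\}$ is an antichain (possible since $T$ is special). For each $j \in J$ in turn, apply lemma~\ref{new_Delta} to the uncountable family $\{p_\zeta(X_j)\mid \zeta \in W\}$ together with the $t_\zeta$'s, refining $W$; after these finitely many refinements $W$ is still uncountable. Since $\bigcap R \in \Ucal(T)^+$ and $\{t_\zeta\mid \zeta \in W\}$ is an uncountable antichain, there are $\zeta \ne \eta$ in $W$ with $\Delta(t_\zeta,t_\eta) \in \bigcap R$. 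Then $p_\zeta$ and $p_\eta$ are compatible: on coordinates in $\supp(p_\zeta)\setminus R$ or $\supp(p_\eta)\setminus R$ only one of the two conditions is defined; on $X_j \in R$ with $j \notin J$ they agree; and on $X_j$ with $j \in J$, lemma~\ref{new_Delta} gives $\Delta\bigl(p_\zeta(X_j)\cup p_\eta(X_j)\bigr) = \Delta(p_\zeta(X_j)) \cup \Delta(p_\eta(X_j)) \cup \{\Delta(t_\zeta,t_\eta)\} \subseteq X_j$, so $p_\zeta(X_j)\cup p_\eta(X_j) \in Q_{T,X_j}$. Hence the coordinatewise union $p_\zeta \cup p_\eta$ is a common extension, and $\mathbb R$ is c.c.c.

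Finally, working in $V^{\Qbb}$: for each $X \in \Fcal$ the generic for the $X$-th coordinate of $\mathbb R$ yields $A_X := \bigcup\{p(X) \mid p \in \dot G,\ X \in \supp(p)\}$, an antichain in $T$ with $\Delta(A_X) \subseteq X$. The sets of conditions whose $X$-th coordinate contains a node of height at least $\xi$ are dense in $\mathbb R$ -- this is the standard fact that a c.c.c.\ $Q_{T,X}$ (equivalently, one with $X \in \Ucal(T)^+$) generically adds an uncountable antichain into $X$; see the proof of lemma~\ref{liptree-delta-stable} and \cite{l-maps} -- so $A_X$ is uncountable, witnessing $X \in \Ucal(T)$ in $V^{\Qbb}$. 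Therefore $\Qbb$ forces $\check\Fcal \subseteq \dot\Ucal(\check T)$. I expect the delicate point to be the c.c.c.\ verification for the full product $\mathbb R$: one must organize the $\Delta$-system reduction so that at any amalgamation only the finitely many root coordinates are in play, and so that lemma~\ref{new_Delta} applies to each of them using a single common family $\{t_\zeta\}$.
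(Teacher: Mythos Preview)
Your overall architecture matches the paper's: first force with the specializing poset $\Abb$, then with a finite-support product built from the posets $Q_{T,X}$ for $X\in\Fcal$. Your c.c.c.\ argument for the product is correct and amounts to inlining the proof of lemma~\ref{liptree-delta-stable}; the paper simply quotes that lemma together with the fact that a finite-support product is c.c.c.\ iff all finite subproducts are.

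The substantive difference, and the place where your argument breaks, is the last step. You use a single copy of $Q_X$ per $X\in\Fcal$ and claim that the set of conditions whose $X$-coordinate contains a node of height $\geq\xi$ is dense in $\mathbb R$. This is false in general. Nothing in the hypotheses rules out $T$ having a finite maximal antichain $q$---for instance, if the root has exactly two immediate successors then $T_1$ already is one---and whenever $\Delta(q)\subseteq X$ the condition $q$ is an atom of $Q_X$, so below any $p$ with $p(X)=q$ your density fails for every $\xi>\max\{\Ht(s):s\in q\}$. The passage you cite from the proof of lemma~\ref{liptree-delta-stable} only yields \emph{some} condition forcing the diagonal generic to be uncountable, not density below every condition; that is not enough here, since in the infinite product you cannot preselect such a condition on each coordinate simultaneously.

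The paper sidesteps this by taking the countable finite-support power $Q_X^{<\omega}$ at each coordinate rather than a single $Q_X$. The point is that $Q_X^{<\omega}$ forces $Q_X$ to be a union of countably many filters (the generic filters on the coordinates), and then any uncountable $B\subseteq T$ has an uncountable $B'\subseteq B$ whose singletons lie in one such filter, giving $\Delta(B')\subseteq X$ and hence $X\in\Ucal(T)$---no density in $Q_X$ required. Your proof is easily repaired by making the same substitution; the c.c.c.\ argument is unaffected since $Q_X^{<\omega}$ is a finite-support product of copies of $Q_X$.
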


\begin{proof}
Let $\Abb$ be the countable finite-support power of the poset of finite antichains of $T$.
Work in a forcing extension by $\Abb$, noting that in this extension $T$ is special and by lemma~\ref{gentle_specialization},
$\Fcal$ is a filter contained in $\Ucal(T)^+$.
Let $\Qbb$ be the finite support product of the posets $Q_X^{<\omega}$ for $X \in \Fcal$, where $Q_X^{<\omega}$ denotes the finite-support countable power of $Q_X$.
Since a finite-support product is c.c.c. if and only all of its finite subproducts are c.c.c., it suffices to show that if
$\Seq{ X_i \mid i < n }$ is a finite sequence of elements of $\Fcal$ (possibly with repetition),
then $\prod_{i < n} Q_{X_i}$ is c.c.c..
As  $\bigcap_{i < n} X_i \in \Fcal \subseteq \Ucal(T)^+$,
lemma~\ref{liptree-delta-stable} implies that $\prod_{i < n} Q_{X_i}$ is c.c.c..
Finally, since $Q_{X}^{<\omega}$ forces that $\widecheck{Q_X}$ is a union of countably many filters,
it forces that $\check X \in \dot \Ucal(\check T)$.
Thus $\Qbb$ forces $\check \Fcal \subseteq \dot \Ucal(\check T)$.
\end{proof}

\begin{proof}[Proof of theorem~\ref{th:ltree-c.c.c.-extend-club}]
By \cite[6.9]{Todorcevic1987}, in any generic extension by $\Add(\omega,1)$ there is a coherent Souslin tree $T$, which we may take
to have the property that every element has at least two immediate successors.
By proposition~\ref{Fcal->U(T)} applied to this Souslin tree and the club filter,
there is an $\Add(\omega,1)$-name $\dot \Pbb$ for a c.c.c. poset such that $\Add(\omega,1) * \dot \Pbb$
forces that $\Ucal(\dot T)$ extends the club filter.
Now let $\dot \Qbb$ be any $\Add(\omega,1)* \dot \Pbb$-name for a c.c.c. poset which forces $\MA_{\omega_1}$.
Since any club in a c.c.c. forcing extension contains a ground model club,
in the final extension $\Ucal(T)$ will still extend the club filter
and additionally $\MA_{\omega_1}$ will hold (in particular $\Ucal(T)$ will be an ultrafilter).
\end{proof}

\begin{corollary}
    It is consistent that $\MA_{\omega_1}$ holds and
    there is a coherent A-tree $T$ such
    that $\Ucal(T)$ extends the club filter and is $\omega_1$-Tukey-top.
\end{corollary}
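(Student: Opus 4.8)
The plan is to combine Theorem~\ref{th:ltree-c.c.c.-extend-club} with the flipping‑family machinery of Section~\ref{Sec:EveryUltIsTukeyTop}, exploiting the freedom left in the proof of Theorem~\ref{th:ltree-c.c.c.-extend-club} to choose the final c.c.c.\ poset. Recall that Theorem~\ref{th:ltree-c.c.c.-extend-club} is proved by forcing with $\Add(\omega,1)\ast\dot\Pbb\ast\dot\Qbb$, where $\Add(\omega,1)\ast\dot\Pbb$ produces a coherent A‑tree $T$ with $\dot\Ucal(\check T)\supseteq\widecheck{\text{Cub}_{\omega_1}}$, and $\dot\Qbb$ is \emph{any} name for a c.c.c.\ poset forcing $\MA_{\omega_1}$; moreover, starting from GCH one may arrange that $2^{\aleph_1}=\aleph_2$ in every intermediate model. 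First I would note that ``$\Ucal(T)$ extends the club filter'' persists in any further $\omega_1$‑preserving extension (membership in $\Ucal(T)$ is upward absolute, and every club in a c.c.c.\ extension contains a ground‑model club), so it suffices to arrange that in the final model \emph{every} uniform ultrafilter over $\omega_1$ — in particular $\Ucal(T)$ — is $\omega_1$‑Tukey‑top; note also that by the determinism of $\text{Cub}_{\omega_1}$ this is subsumed by making $\text{Cub}_{\omega_1}$ itself $\omega_1$‑Tukey‑top, since $\Ucal(T)\supseteq\text{Cub}_{\omega_1}$ gives $\text{Cub}_{\omega_1}\le_T\Ucal(T)$.

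To get this, I would choose $\dot\Qbb$ to be a finite‑support c.c.c.\ iteration of length $\omega_2$ which both bookkeeps all c.c.c.\ posets of size $\le\aleph_1$ (so that $\MA_{\omega_1}$ holds in the final model, by the usual argument, and $2^{\aleph_0}=2^{\aleph_1}=\aleph_2$ is maintained) and, at a cofinal set $S$ of stages, adjoins one Cohen subset $X_\alpha$ of $\omega_1$ via $\mathrm{Fn}(\omega_1,2)$. Writing $\vec X=\langle X_\alpha\mid\alpha\in S\rangle$, one has $|S|=\aleph_2=2^{\aleph_1}$, and by the proposition preceding Theorem~\ref{Thm:TukeyTop in Cohen} it is enough to show that $\vec X$ has the flipping $\omega_1$‑bounded intersection property in the final model $W$: then by that proposition every uniform ultrafilter over $\omega_1$ is $(\omega_1,2^{\aleph_1})$‑Tukey‑top, i.e.\ $\omega_1$‑Tukey‑top in the sense of Definition~\ref{Def: Tukey top}. (Equivalently, one could simply invoke Theorem~\ref{Thm:TukeyTop in Cohen} itself, provided one can locate a copy of the generic family in $W$.) Combining this with Theorem~\ref{th:ltree-c.c.c.-extend-club}, in $W$ we get $\MA_{\omega_1}$, a coherent A‑tree $T$ with $\Ucal(T)\supseteq\text{Cub}_{\omega_1}$, and $\Ucal(T)$ $\omega_1$‑Tukey‑top, as required.

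The main obstacle is exactly the preservation claim: in Theorem~\ref{Thm:TukeyTop in Cohen} and Corollary~\ref{ref: cohen extension and Tukey-top} the flipping property is verified using that any countably infinite index set in a pure Cohen extension $V[\Add(\omega,\lambda)]$ is captured by an initial segment $\{X_i\cap\alpha\mid i<\lambda\}$ of the generic, so that the remaining coordinates remain mutually generic over a model in which that index set lives; when the $X_\alpha$ are instead woven through a c.c.c.\ iteration that also forces $\MA_{\omega_1}$, a countably infinite $I\subseteq S$ arising from an $\omega_1$‑flip $\sigma$ (constant $\epsilon$ on $I$, by pigeonhole) may be genuinely new and not so captured. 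The heart of the argument is therefore to show directly that $\bigcap_{i\in I}X_i^{\epsilon}$ is bounded in $\omega_1$: enumerating $I=\{i_n\mid n<\omega\}$ increasingly, each $X_{i_n}$ is Cohen‑generic over the stage‑$i_n$ model $V[\dot P_{i_n}]$, and one uses that the finite conditions of the iteration, together with the c.c.c., let one push a name for $I$ and any given $\beta<\omega_1$ into a set of coordinates meeting only boundedly many of the stages in $S$, and then a density argument to force $\beta\notin X_{i_n}^{\epsilon}$ for some $n$, with a uniform bound over $\beta$; symmetrically for the complementary flip $1-\epsilon$. Alternatively, one may restructure $\dot\Qbb$ so that the Cohen component $\Add(\omega,\aleph_2)$ sits as a complete suborder over which the $\MA_{\omega_1}$‑bookkeeping part is, stagewise, Cohen‑generic, which recovers the capture argument verbatim. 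Either way, once $\vec X$ is shown to have the flipping $\omega_1$‑bounded intersection property in $W$, the corollary follows.
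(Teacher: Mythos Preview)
Your approach is genuinely different from the paper's, and the obstacle you yourself flag is a real gap that your sketches do not close. In both of your proposed fixes the problem is the same: once the Cohen subsets $X_\alpha$ of $\omega_1$ are interleaved with an $\MA_{\omega_1}$ iteration, a countable index set $I\subseteq S$ in the final model can first appear at a stage $\delta$ \emph{above} $\sup I$, so that every $X_i$ with $i\in I$ already lives in $V[G_\delta]$ and there is no coordinate left which is generic over a model containing $I$. Your density argument~(a) tacitly assumes such a coordinate exists; your restructuring~(b) would need the quotient over $\Add(\omega,\aleph_2)$ to add no new countable subsets of $\omega_2$, which is false for any iteration that actually forces $\MA_{\omega_1}$. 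So as written the flipping $\omega_1$-bounded intersection property is not established in the final model.

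The paper avoids this entirely by a different decomposition: it starts from the Abraham--Shelah model, where there is already a family $\{C_i:i<2^{\aleph_1}\}$ of \emph{clubs} with the property that any $\aleph_1$ of them have finite intersection, and only then applies Theorem~\ref{th:ltree-c.c.c.-extend-club} on top. The point is that closedness of the $C_i$ makes preservation under c.c.c.\ forcing automatic: in the extension, if some (possibly new) $I\in[2^{\aleph_1}]^{\omega_1}$ had $\bigcap_{i\in I}C_i$ unbounded, this intersection would be a club and hence contain a ground-model club $C'$; but then $I\subseteq\{i:C'\subseteq C_i\}$, and in the Abraham--Shelah model the latter set is countable (else $\omega_1$ of its members would have intersection $\supseteq C'$, contradicting the Abraham--Shelah property). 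Thus every such intersection is bounded, hence countable, hence not in $\Ucal(T)$, and the family $\{C_i\}$ witnesses that $\Ucal(T)$ is $\omega_1$-Tukey-top. Your Cohen subsets $X_\alpha$ carry no such closedness structure, which is exactly why preservation is hard for them and free for the paper's witnesses. I recommend simply starting from the Abraham--Shelah model.
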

\begin{proof}
Start with the Abraham-Shelah model and go into the
c.c.c. forcing extension in which there is a coherent A-tree $T$ such that $\Ucal(T)$ extends the club filter.
The desired conclusion follows from the properties of
the club filter in the Abraham-Shelah model.
\end{proof}

\section{Questions and Further Directions}
\label{Section: Questions}
In this section, we collect some problems and proposed directions. First, the results of this paper emphasize the connection between the Tukey types of ultrafilters over uncountable cardinals and the Tukey types of function spaces of the form $\mu^\lambda$. To the best of our knowledge, the study of such Tukey types is lacking. Particularly, we would like to know the answer to the following question (see StackExchange discussion~\cite{MathOverfloow}):
\begin{question}\label{Question: omegaomega1}
    Is it provable in ZFC that $\omega^{\omega_1}$ is Tukey-top?
\end{question}
\noindent
Note that it is an old problem of P{\v r}ikr\'y whether ZFC proves $\cf (\omega^{\omega_1},\leq) = 2^{\aleph_1}$.

In section~\ref{Section: Non-Tukeytop}, we showed that consistently every uniform ultrafilter over $\omega_1$ is Tukey-top, and provided several models for it. There are other models of interest where we do not know whether all uniform ultrafilters over $\omega_1$ are Tukey-top. It is plausible that the rigidity of the structure of ultrafilters over $\omega$ in some of these models would enable a proof of the independence of Isbell's question for uncountable cardinals over ZFC, with no need for large cardinals such as our present construction~\ref{Section: Non-Tukeytop} of non-$\omega_1$-Tukey-top ultrafilters over $\omega_1$ requires.
\begin{question}\label{Question: sacks}
    Consider any of the models obtained by forcing with iterated Sacks, side-by-side Sacks, iterated Silver, or product of Silver reals. Is every uniform ultrafilter over $\omega_1$ Tukey-top in any/all of those models?
\end{question} 
The Silver model is particularly interesting as it is conjectured (see~\cite[announcement 9 ff.]{no-p-pt-silver-ext}) that every ultrafilter over $\omega$ is Tukey-top there.
The following question concerns other possible constructions of non-Tukey-top ultrafilters:
\begin{question}
    Suppose that there is an $(\aleph_2,\aleph_2,\aleph_0)$-saturated ideal over $\omega_1$. Is there a non-Tukey-top uniform ultrafilter over $\omega_1$?
\end{question}
Another method for constructing weakly normal ultrafilters is due to Foreman, Magidor, and Shelah through layered ideals~\cite{MMpartII}~
\begin{question}
    Let $U$ be the FMS weakly normal ultrafilter from~\cite{MMpartII}. Is it non-Tukey-top?
\end{question}
\begin{question}\label{Question: weakly normal omega1}
    Is it consistent, relative to large cardinals, that there is a weakly normal Tukey-top ultrafilter over $\omega_1$?
\end{question}
As we have seen in subsection~\ref{Seciton: non-regular}, for $\kappa>\omega_1$ it is possible for weakly normal ultrafilters to be Tukey-top. 
One approach to a positive answer is to show that it is consistent relative to large cardinals 
that $(\dagger)_{I,\Ucal}$ holds for a weakly normal ultrafilter $\Ucal$.

The constructions we used for non-Tukey-top ultrafilters over $\omega_1$ require large cardinals. We do not know whether large cardinals are necessary:
\begin{question}
    Does the existence of a non-Tukey-top uniform ultrafilter over $\omega_1$ imply the consistency of large cardinals? What about non-$\omega_1$-Tukey-top uniform ultrafilters?
\end{question}
One approach to showing that it does, in alignment with our intuition and current examples indicating that non-Tukey-top ultrafilters are special and rare, is to connect such cardinals with non-regular ultrafilters:
\begin{question}
    Is every non-Tukey-top ultrafilter uniform ultrafilter over $\omega_1$ non-regular? What about non-$\omega_1$-Tukey-top ultrafilters?
\end{question}
The best-known lower bound for the consistency strength of the existence of non-regular ultrafilters over $\omega_1$, due to Deiser and Donder~\cite{DDonder}, is a stationary limit of measurable cardinals. The same question for $\kappa>\omega_1$ is of interest, where the best lower bound, due to Cox~\cite{Cox_2011}, is a measurable cardinal $\kappa$ with Mitchell order at least $\kappa^+$.
\begin{remark}
    Answering this question in the positive will in particular show that a counterexample to Kunen's problem requires large cardinals.
\end{remark}
Finally, it is natural to ask what influence strong forcing axioms have on Isbell's problem for $\omega_1$:
\begin{question}
    Does PFA imply that every uniform ultrafilter over $\omega_1$ is Tukey-top? What about stronger forcing axioms such as MM?
\end{question}
\bibliographystyle{amsplain}
\bibliography{ref}
\end{document}